\newtheorem{thm}{Theorem}[section]
\newtheorem{prop}[thm]{Proposition}
\newtheorem{lem}[thm]{Lemma}
\newtheorem{cor}[thm]{Corollary}
\theoremstyle{definition}
\newtheorem{defn}[thm]{Definition}
\theoremstyle{remark}
\newtheorem{rem}[thm]{Remark}
\newtheorem{remark}[thm]{Remark}
\newcommand{\eps}{\varepsilon}
\DeclareMathOperator{\dist}{dist}
\DeclareMathOperator{\esssup}{ess\,sup}
\newcommand{\imbed}{\hookrightarrow}
\renewcommand{\H}{\mathfrak H}
\newcommand{\Ta}{T}
\renewcommand{\T}{\text{tan}}
\DeclareMathOperator{\rea}{Reach}
\newcommand{\X}{\mathcal X}
\newcommand{\tnabla}{\nabla^{\tan}}
\newcommand{\oldb}{\tau}
\newcommand{\oldL}{Z}
\newcommand{\oldD}{\mathfrak{D}}
\begin{document}

\title{Sobolev Spaces and Elliptic Theory on Unbounded Domains in $\R^n$}

\author{Phillip S.\ Harrington and Andrew Raich}

\thanks{The first author is partially supported by NSF grant DMS-1002332 and
second author is partially supported by NSF grant DMS-0855822}

\address{Department of Mathematical Sciences, SCEN 301, 1 University of Arkansas, Fayetteville, AR 72701}
\email{psharrin@uark.edu \\ araich@uark.edu}

\subjclass[2010]{46E35, 35J15, 35J25, 46B70}

\keywords{Sobolev space, Besov space, unbounded domain, elliptic regularity, weighted Sobolev space, traces, harmonic functions}

\begin{abstract}In this article, we develop the theory of weighted $L^2$ Sobolev spaces on unbounded domains in $\R^n$. As an application,
we establish the elliptic theory for elliptic operators and prove trace and extension results analogous to the bounded, unweighted case.
\end{abstract}

\maketitle

\setcounter{tocdepth}{1}
\tableofcontents

In this article, we develop  weighted $L^2$-Sobolev spaces and elliptic theory on unbounded domains in $\R^n$.
For spaces of functions with suitable regularity and domains with
regular boundaries, we show that traces exist and functions defined on the boundary extend in a
bounded manner. In the second part of the paper, we show that elliptic equations gain the full number of derivatives up to the
boundary and satisfying an elliptic equation is sufficient for taking traces
for functions as rough as $L^2$.

With this article, we are laying the groundwork to develop the $L^2$-theory for  the $\dbar$ and $\dbarb$ equations on unbounded domains and their boundaries in $\C^n$. Weighted
$L^2$ spaces are instrumental tools in several complex variables, and the analysis cannot proceed without them.

Unlike the bounded case,
however, unweighted Sobolev spaces on unbounded domains fail to have many critical features, such as the Rellich identity, and so we develop spaces with these properties in mind.
One method to solve $\dbarb$-problem involves using extension and trace operators, so we need fractional Besov and Sobolev spaces. As a result of our several complex
variables considerations (to be developed in later papers), both the types of weighted Sobolev spaces we study and the types of results we prove are quite different than what appears in the literature.
See, for example \cite{Kuf85}. The weights that authors typically study involve powers of the distance to the boundary \cite{MiTa06, Can03}. Although these weights are quite natural and reflect the geometry of
the boundary, they are not the only useful weights in several complex variables (see, e.g., \cite{Har09,Hor65,Sha85,Koh86,HaRa11,HaRa12,Rai10,Str10}). With respect to the literature on
elliptic theory on unbounded domains, authors seem
to be less concerned with proving trace results for solutions to elliptic equations and more interested in solvability, typically for the Dirichlet problem
(e.g., see \cite{BoMoTr08} and the references contained
within). Even when the author
solves an elliptic equation with a nonzero boundary condition (e.g., \cite{Kim08}), the derivatives are the standard derivatives, and not the weighted derivatives that we consider.
Consequently, we must build the theory from the most basic buiding blocks.

Our Sobolev space techniques mainly involve real interpolation, so they define Besov spaces.
The fractional Sobolev spaces and Besov spaces agree (see, for example, \cite{LiMa72} or \cite{BeLo76}),
and in the elliptic regularity section of the paper, we use the fractional Sobolev and Besov spaces interchangeably. In fact, even at the integer levels, the main
results hold for the interpolated spaces $B^{k;2,2}(\Omega,\vp; X)$ and Sobolev spaces $W^{k,2}(\Omega,\vp; X)$ (the former by interpolation and the latter by direct proof).

%
%
\section{Preliminaries}

Let $\Omega\subset \R^n$ be an open set and let $\vp:\Omega\to\R$ be $C^\infty$. Define the weighted $L^p$-space
\[
L^p(\Omega,\vp) = \{f:\Omega\to\C : \int_\Omega |f|^p e^{-\vp}\, dV<\infty \}
\]
where $dV$ is Lebesgue measure on $\C^n$. Let $\bd\Omega$ be the boundary of $\Omega$.  We will always assume that $\bd\Omega$ is at least Lipschitz, so that integration by parts is always justified.  For most results we will need additional boundary regularity, as indicated below.
\subsection{Hypotheses on $\Omega$, $\vp$, and $\rho$}
Let $A\subset\R^n$.

Let $\delta_A$ be the distance function from $A$, i.e., $\delta_A(x) = \inf_{y\in A} |x-y|$.
Let $U_A = \{ x\in\R^n : \text{ there exists a unique point }y\in A \text{ such that }\delta_A(x) = |y-x| \}$.
Define  $\pi_A:U_A \to A$ by $\pi_A(x) =y$. The following concepts were introduced in \cite{Fed59}.
\begin{defn}If $y\in A$, then define the \bfem{reach} of $A$ at $y$ by
\[
\rea(A,y) = \sup\{r\geq 0: B(y,r)\subset U_A\}
\]
and the \bfem{reach} of $A$ to be
\[
\rea(A) = \inf\{ \rea(A,y): y\in A\}.
\]
\end{defn}

The majority of our results use a subset of the following hypotheses.  Fix $m\in\mathbb{N}$, $m\geq 2$.
\begin{enumerate} \renewcommand{\labelenumi}{H\Roman{enumi}.}
\item The domain $\Omega$ has a $C^m$ boundary with positive reach. Moreover,
there exists $\ep>0$ and a defining function $\rho$ so that on $\Omega_\ep' = \{y : \delta_{\bd\Omega}(y)<\ep\}$, $\|\rho\|_{C^m(\Omega_\ep')}<\infty$ (i.e., $\Omega$ is uniformly $C^m$ in the sense of \cite{HaRa13d}).
\item There exists $\theta\in(0,1)$ so that
\[
\lim_{\atopp{|x|\to\infty}{x\in\Omega}} \big(\theta|\nabla\vp|^2 + \triangle \vp \big) = \infty
\]
where
\[
|\nabla\vp|^2 = \sum_{j=1}^n \Big|\frac{\p\vp}{\p x_k}\Big|^2 .
\]
\item There exists $\theta\in(0,1)$ so that
\[
\lim_{\atopp{|x|\to\infty}{x\in\Omega}} \big(\theta|\nabla\vp|^2 - \triangle \vp \big) = \infty
\]
\item There exists a constant $C_m>0 $ so that
\[
|\nabla^k\vp| \leq C_m(1+|\nabla\vp|)
\]
for $1\leq k \leq m$ and $x\in\Omega$.
\item Hypotheses (HII)-(HIV) can be extended to $\R^n$.

\item If $\frac{\partial}{\partial \nu}$ denotes the outward unit normal to $\bd\Omega$, we have
\[
  \inf_{r>0}\sup_{|x|>r,x\in\bd\Omega}|\nabla\varphi|^{-1}\left|\frac{\partial\varphi}{\partial\nu}\right|<1.
\]
\end{enumerate}
(HII) and (HIII) have their origin in \cite{Gan11,GaHa10} (who in turn adapt the ideas in \cite{KnMi94}).

The family of examples \emph{par excellence} of  weight functions is
\[
\vp(x) = t|x|^2
\]
for any nonzero $t\in\R$.  Such functions always satisfy (HII)-(HV) ((HI) is examined in detail in \cite{HaRa13d}). It is possible to construct domains for which (HVI) fails for this choice of $\vp$, but observe that if $\Omega$ satisfies (HVI) for $\vp(x)=t|x|^2$, then any isometry of $\mathbb{R}^n$ will map $\Omega$ to another domain which also satisfies (HVI) (because the composition of $|x|^2$ with any isometry will equal $|x|^2$ plus lower order terms).

\subsection{Weighted Sobolev spaces}
Set $D_j = \frac{\p}{\p x_j}$ and define the weighted differential operators
\[
X_j = \frac{\p}{\p x_j} - \frac{\p\vp}{\p x_j} = e^{\vp}\frac{\p}{\p x_j} e^{-\vp} ,\quad 1\leq j\leq n
\]
and
\[
\nabla_X = (X_1,\dots, X_n).
\]
\begin{defn} \label{defn:sobo space, weighted deriv}
Let $Y_j = X_j$ or $D_j$, $1\leq j \leq n$. For a nonnegative $k\in\Z$, let the weighted Sobolev space
\[
W^{k,p}(\Omega,\vp; Y) = \{ f\in L^p(\Omega,\vp) : Y^\alpha f \in L^p(\Omega,\vp) \text{ for } |\alpha|\leq k \}
\]
where $\alpha = (\alpha_1,\dots,\alpha_n)$ is an $n$-tuple of nonnegative integers and $Y^\alpha = Y_1^{\alpha_1}\cdots Y_n^{\alpha_n}$.
The space $W^{k,p}(\Omega,\vp; Y)$ has norm
\[
\| f \|_{W^{k,p}(\Omega,\vp; Y)}^p=  \sum_{|\alpha|\leq k} \| Y^\alpha f \|_{L^p(\Omega,\vp)}^p .
\]
Also, let
\begin{multline*}
W^{k,p}_0(\Omega,\vp; Y)
= \{ g\in W^{k,p}(\Omega,\vp;Y) :\\
\text{ there exists } \psi_\ell\in C^\infty_c(\Omega) \text{ satisfying } \| g-\psi_\ell \|_{W^{k,p}(\Omega,\vp; Y)} \to 0 \text{ as }\ell\to\infty \}.
\end{multline*}
In other words, $W^{k,p}_0(\Omega,\vp; Y)$ is the closure of $C^\infty_c(\Omega)$ in the $W^{k,p}(\Omega,\vp; Y)$-norm.
\end{defn}

\begin{remark} Our analysis focuses on the weighted spaces $W^{k,p}(\Omega,\vp; X)$ and we prove results on the spaces $W^{k,p}(\Omega,\vp; D)$ only where necessary. The choice
of which space to focus on is not central to the theory. We could have written the arguments with the roles of the two spaces reversed.
\end{remark}

\subsection{Weighted Sobolev spaces on $\bd\Omega$}
Let $\ep>0$ and set $M=\bd\Omega$. Recall that
\[
\Omega'_\ep = \{x\in\R^n : \dist(x,M)<\ep\}
\]
For discussions involving $M$, we always assume \emph{(HI)} and $m\geq 2$. Therefore, by \cite{HaRa13d}, there exists $\ep>0$ and a defining function $\rho$ so that
$\|\rho\|_{C^m(\Omega_\ep')}<\infty$ and $|d\rho|=1$ on $\bd\Omega$.
Let ${\oldL}_1,\dots,{\oldL}_{n-1}\in TM$ be an orthonormal basis near a point $x\in M$ and let  ${\oldL}_n = \frac{\partial}{\partial\nu}$ be the unit outward normal to $\Omega$. Moreover, ${\oldL}_n$
is also the unit normal to the level curves of $\rho$ (pointing in the direction in which $\rho$ increases).
For $1\leq j \leq n$, set
\[
\Ta_j = {\oldL}_j - {\oldL}_j(\vp).
\]
We call a first order differential operator $T$ \bfem{tangential} if the first order component of $T$ is tangential. For $1\leq j \leq n -1$, ${\oldL}_j$ is defined locally, and if $U$ is a neighborhood
on which ${\oldL}_1,\dots, {\oldL}_{n-1}$ form a basis of $T(M\cap U)$, we denote ${\oldL}_j$ by ${\oldL}_j^U$ to emphasize the dependence on $U$. In analogy to $\nabla_X$, we define
$\nabla_\Ta = (T_1,\dots, T_{n})$,
\[
\tnabla_\Ta = (T_1,\dots, T_{n-1}) \quad\text{and}\quad \tnabla_\oldL =  ({\oldL}_1,\dots, {\oldL}_{n-1})
\]

By \emph{(HI)}, we can construct an open cover $\left\{U_j\right\}$ of $\Omega'_{\epsilon}$ where
$U_j$ are of comparable surface area and admit local coordinates ${\oldL}_1,\ldots,{\oldL}_{n-1}$ with coefficients bounded uniformly in $C^{m-1}$. Let $\chi_j$ be a $C^m$ partition of unity
subordinate to $\{U_j\}$ where $\chi_j$ are uniformly bounded in the $C^m$ norm. With $\chi_j$ in hand, we set $v_j = v \chi_j$, so $v = \sum_{j=1}^\infty v_j$.
Observe that we have the following equivalent norms on $W^{k,2}(\Omega_\ep',\vp;X)$ and $W^{k,2}(\Omega_\ep',\vp;D)$, respectively:
\begin{align*}
\| v \|_{W^{k,2}(\Omega_\ep',\vp;X)} \sim \sum_{j=1}^\infty \sum_{|\alpha|\leq k} \| \Ta_{U_j}^\alpha v_j \|_{L^2(\Omega_\ep',\vp)}
& \text{ and } &
\| v \|_{W^{k,2}(\Omega_\ep',\vp;D)} \sim \sum_{j=1}^\infty \sum_{|\alpha|\leq k} \| \oldL^\alpha_{U_j} v_j \|_{L^2(\Omega_\ep',\vp)}
\end{align*}
where $\Ta^U_\ell = \oldL^U_\ell - \oldL^U_\ell(\vp)$ and $\Ta_U^\alpha = \Ta^U_{\alpha_1}\cdots \Ta^U_{\alpha_{|\alpha|}}$ (and similarly for $\oldL^\alpha_U$).

For the boundary Sobolev space, set
\begin{multline*}
W^{k,p}(M,\vp;T) =\\
 \{ f \in L^p(M,\vp): \Ta^\alpha f \in L^p(M,\vp),\ |\alpha|\leq k \text{ and }T_{\alpha_j} \text{ is tangential for } 1\leq j \leq k\}.
\end{multline*}

\subsection{Notation for differential operators}
In the second part of the paper, we establish the elliptic theory for strongly elliptic operators $\Omega\subset\R^n$ that satisfy \emph{(HI)}-\emph{(HV)} (and sometimes \emph{(HVI)} as well). Much of our development follows the outline in \cite{Fol95}.
Let $L$ be a second order operator of the form
\begin{equation}\label{eqn:L def'n}
L = \sum_{j,k=1}^n X_j^* a_{jk} X_k + \sum_{j=1}^n \big( b_j X_j + X_j^* b_j'\big) + b
\end{equation}
where $a_{jk}$ and $b_j'$ are functions on a neighborhood of $\bar\Omega$ that are bounded in the $C^1$ norm,
and $b_j$ and $b$ are bounded functions on a neighborhood of $\bar\Omega$.

Note that the formal adjoint $(X^\alpha)^* = (-1)^{|\alpha|} D^\alpha$.

The \bfem{formal adjoint} of $L$ is the operator given by the formula
\[
(L^* v, u)_\vp = (v, Lu)_\vp
\]
so integration by parts yields that
\[
L^* = \sum_{j,k=1}^n X_k^* \overline{a_{jk}} X_j + \sum_{j=1}^n \big( \overline{b_j'} X_j + X_j^* \overline{b_j} \big) + \bar b.
\]
We say that the operator $L$ is \bfem{strongly elliptic} on $\bar\Omega$ if there exists a constant
$\theta>0$ so that
\begin{equation}\label{eqn:A is positive definite}
\Rre\Big( \sum_{j,k=1}^n \overline{a_{jk}} \xi_j\bar\xi_k\Big) \geq \theta |\xi|^2.
\end{equation}

Associated to $L$ is a (nonunique) sesquilinear form $\oldD$ called a \bfem{Dirichlet form} given by
\begin{equation}\label{eqn:Dirichlet form}
{\oldD}(v,u) = \sum_{j,k=1}^n (X_j v, a_{jk} X_k u)_\vp + \sum_{j=1}^n (v, b_j X_j  u)_\vp + \sum_{j=1}^n (X_j v, b_j'  u)_\vp + (v,bu)_\vp.
\end{equation}
$\oldD$ is called a \bfem{Dirichlet form for the operator $L$} if
\[
{\oldD}(v,u) = (v, Lu)_\vp \quad\text{for all }u,v\in C^\infty_c(\Omega).
\]
The Dirichlet form $\oldD$ given by \eqref{eqn:Dirichlet form} is called \bfem{strongly elliptic} on $\bar\Omega$ if \eqref{eqn:A is positive definite} holds.

\begin{defn}\label{defn: coercive estimate}The Dirichlet form $\oldD$ on $\Omega$ is called \bfem{coercive over $\X$} if
$W^{1,2}_0(\Omega,\vp; X) \subset\X\subset W^{1,2}(\Omega,\vp; X)$, $\X$ is closed in $W^{1,2}(\Omega,\vp;X)$,  and there exist $C>0$ and $\lambda\geq 0$ such that
\begin{equation}\label{eqn:coercive estimate}
\Rre {\oldD}(u,u) \geq C \| u \|_{W^{1,2}(\Omega,\vp; X)}^2 - \lambda \|u\|_{L^2(\Omega,\vp)}^2
\quad\text{for all }u\in\X.
\end{equation}
$\oldD$ is called \bfem{strictly coercive} if we can take $\lambda =0$.
\end{defn}
If $\oldD$ is coercive, then ${\oldD}'(v,u) = {\oldD}(v,u)+\lambda(v,u)_\vp$ is strictly coercive.

We can also consider the adjoint Dirichlet form
\[
{\oldD}^*(v,u) = \overline{{\oldD}(u,v)} \qquad\text{for all }u,v\in W^{1,2}(\Omega,\vp; X).
\]
The form $\oldD$ is called \bfem{self-adjoint} if $\oldD = \oldD^*$.

%
%
\section{Main Results}\label{sec:main results}

\subsection{Sobolev space and trace theorems}

Let $\ep>0$ and set $M=\bd\Omega$ and
\[
\Omega'_\ep = \{x\in\R^n : \dist(x,M)<\ep\}.
\]
In Section \ref{sec:real_interpolation}, we will use interpolation to define the Besov space $B^{s;p,q}$.
The following theorem is the analog of the Trace Theorem \cite[Theorem 7.39]{AdFo03}

\begin{thm}\label{thm:Trace Theorem for integer m on M} Let $m\geq 2$ and assume that
$\Omega\subset\R^n$ satisfies \emph{(HI)}-\emph{(HVI)}. If $1\leq k\leq m-1$, then
the following two conditions  on a measurable function $u$ on $M$ are equivalent:
\begin{enumerate}\renewcommand{\labelenumi}{(\alph{enumi})}
\item There exists $U\in W^{k,2}(\Omega_\ep',\vp;X)$ supported in $\Omega_\ep'$ so that $u = \Tr U$;
\item $u \in B^{k-\frac 12;2,2}(M,\vp;T)$.
\end{enumerate}
\end{thm}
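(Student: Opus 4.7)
The plan is to reduce Theorem~\ref{thm:Trace Theorem for integer m on M} to a weighted half-space trace theorem via the cover $\{U_j\}$ and partition of unity $\{\chi_j\}$ set up in the preliminaries. Since the $U_j$ have comparable surface area, the local tangential frames $Z_1^{U_j},\dots,Z_{n-1}^{U_j}$ are uniformly $C^{m-1}$-bounded, and the $\chi_j$ are uniformly $C^m$-bounded, working on each piece $v_j=\chi_j U$ separately will produce local estimates that reassemble into global ones via the equivalent norms on $W^{k,2}(\Omega'_\ep,\vp;X)$ and $W^{k,2}(M,\vp;T)$ recorded there. On a single patch I would flatten the boundary using the coordinates $(y',\rho)$ supplied by the defining function from \emph{(HI)}: since $|d\rho|=1$ on $M$ and $\|\rho\|_{C^m(\Omega'_\ep)}<\infty$, the change of variables is uniformly $C^{m-1}$-bicontinuous, and each weighted field $X_j$ becomes a combination of the tangential operators $T_\ell^{U_j}=Z_\ell^{U_j}-Z_\ell^{U_j}(\vp)$ plus the single normal operator $X_n=Z_n-Z_n(\vp)$.

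For direction $(a)\Rightarrow(b)$, I would first establish the base case $k=1$. The identity
\[
|U(y',0)|^2 e^{-\vp(y',0)} = -\int_0^\ep \partial_t\bigl(\eta(t)\,|U(y',t)|^2 e^{-\vp(y',t)}\bigr)\,dt,
\]
for a cutoff $\eta$ with $\eta(0)=1,\eta(\ep)=0$, combined with Cauchy--Schwarz in $t$, gives the $L^2$ bound $\|\mathrm{Tr}\,U\|_{L^2(M,\vp)}\lesssim \|U\|_{W^{1,2}(\Omega'_\ep,\vp;X)}$. Sharpening to $\mathrm{Tr}\colon W^{1,2}(\Omega'_\ep,\vp;X)\to B^{1/2;2,2}(M,\vp;T)$ is the weighted analogue of Gagliardo's theorem and follows by bounding the difference $U(y_1',0)-U(y_2',0)$ by an integral of $\nabla_X U$ along a two-point path into $\Omega'_\ep$. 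For $k>1$, I would apply the base case to each tangential product $T^\alpha U$ with $|\alpha|\le k-1$, using that tangential $T^\alpha$ commutes with the trace and that commutators between $T^\alpha$ and the cutoff $\eta$ produce only lower-order terms controlled by \emph{(HIV)}. By the real-interpolation definition of $B^{k-1/2;2,2}(M,\vp;T)$, having $T^\alpha\mathrm{Tr}\,U\in B^{1/2;2,2}$ for all such $\alpha$ is equivalent to $\mathrm{Tr}\,U\in B^{k-1/2;2,2}$.

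For $(b)\Rightarrow(a)$, the plan is to define the extension by a direct normal mollification. In the flattened patch I take
\[
U(y',x_n)=\eta(x_n)\int_{\mathbb{R}^{n-1}}\phi_{x_n}(y'-z')\,u(z')\,dz',
\]
with $\phi_t$ an approximate identity at scale $|t|$ and $\eta$ a cutoff on $(-\ep,\ep)$ equal to $1$ at $0$. Expressing each $X^\alpha U$ in terms of tangential $T^\beta u$ plus commutator terms involving $\nabla^j\vp$ reduces the problem to the standard unweighted Besov-into-Sobolev extension estimate, provided the commutators are controlled; \emph{(HIV)} delivers precisely this control via $|\nabla^j\vp|\le C_m(1+|\nabla\vp|)$, since $|\nabla\vp|$ can be absorbed into a lower-order weighted derivative.

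The main obstacle is that $\vp$ and its derivatives can grow arbitrarily at infinity, so both directions require the normal mollification and the cutoff identity to remain quantitatively compatible with the exponential weight. This is where \emph{(HVI)} plays its role: the strict inequality $|\nabla\vp|^{-1}|\partial\vp/\partial\nu|<1$ outside a compact set guarantees that $\vp(y',x_n)-\vp(y',0)$ equals a constant strictly less than $1$ times $x_n|\nabla\vp(y',0)|$ plus lower-order terms, so the weight-shift factor $e^{\vp(y',0)-\vp(y',x_n)}$ that appears when passing from the boundary weight to the bulk weight stays uniformly bounded for $|x_n|$ small and does not overwhelm the Gaussian decay of $\phi_{x_n}$. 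Without \emph{(HVI)} this factor can blow up at infinity and break both the trace and extension bounds; this weight-sensitivity is what replaces the routine half-space analysis in the unweighted Adams--Fournier proof and is the genuinely new ingredient here.
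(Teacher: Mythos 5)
Your overall architecture (localize via the cover $\{U_j\}$, flatten, do a half-space trace/extension, handle the weight via (HVI)) differs substantially from the paper's, and the step where you bring in (HVI) contains a genuine error. You claim (HVI) makes the weight-shift factor $e^{\vp(y',0)-\vp(y',x_n)}$ uniformly bounded for $|x_n|$ small. It does not: (HVI) gives $|\partial\vp/\partial\nu|\le(1-\delta)|\nabla\vp|$ asymptotically, which still leaves $\partial\vp/\partial\nu$ unbounded on $M$ (for the model weight $\vp=t|x|^2$ one has $\partial\vp/\partial\nu \sim |x|$). Hence $x_n\,\partial\vp/\partial\nu(y',0)$ is unbounded in $y'$ for any fixed $\ep$-size collar, and the weight-shift factor blows up. Moreover the tangential mollification at scale $|x_n|$ introduces a second uncontrolled factor $e^{\vp(z',0)-\vp(y',0)}$ with $|y'-z'|\lesssim|x_n|$, which again is not bounded because $|\nabla\vp|\to\infty$. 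So the extension $U(y',x_n)=\eta(x_n)\int\phi_{x_n}(y'-z')u(z')\,dz'$ does not admit the weighted Sobolev bound you need, and the $(b)\Rightarrow(a)$ direction as written fails. The Gagliardo half-norm estimate in $(a)\Rightarrow(b)$ has the same weight-shift obstacle, and additionally you would still need to identify the half-norm space with the interpolation space $B^{k-1/2;2,2}(M,\vp;T)$ used in the statement.

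What the paper does instead is never allow a weight shift to appear. In Lemma \ref{lem:traces exist for u in sobo spaces on M} and Theorem \ref{thm:traces of normal derivatives} the extension is built as $U_j(y)=\frac{1}{\ell'!}e^{\frac12\vp(y)}\eta(\rho(y))\psi_j(\rho(y))\rho(y)^{\ell'}u_j(\tilde\pi(y))e^{-\frac12\vp(\tilde\pi(y))}$, so the bulk weight $e^{-\vp(y)}$ in the $L^2(\Omega'_\ep,\vp)$ norm cancels the explicit factor $e^{\vp(y)}$ and leaves only the boundary weight $e^{-\vp(\tilde\pi(y))}$; dually, the trace computation is run on $Ue^{-\vp/2}$ via the operators $Y_j=e^{\vp/2}\oldL_j e^{-\vp/2}$, again eliminating the shift. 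The dyadic scales $\psi_j$ and the discrete $J$-method then produce the $B^{k-1/2;2,2}$ regularity directly from the interpolation definition, with no tangential mollification at all. In this scheme (HVI) is used for an entirely different purpose: combined with (HII)--(HV) it gives $|\tnabla_\oldL\vp|\gtrsim|\nabla\vp|\to\infty$ on $M$, which is what proves the boundary Rellich conditions (BI)--(BIII) and thereby the compactness, approximation, and norm-equivalence results (Propositions \ref{prop:compactness of W^1 into L^2 on M}, \ref{prop:approximation theorem on M}, Corollary \ref{cor:compactness of W^k into W^k-1}) that make the $J$-method machinery on $M$ available. So (HVI) is a boundary-coercivity hypothesis for the interpolation theory, not a control on weight shifts, and any repair of your argument would need to replace the direct mollification with the $e^{\pm\vp/2}$-conjugated construction.
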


The proof of Theorem \ref{thm:Trace Theorem for integer m on M} is divided into two results, each of which is more general than one direction of Theorem \ref{thm:Trace Theorem for integer m on M}.  In Section \ref{sec:proof_traces exist for u in sobo spaces on M} we will show
\begin{lem} \label{lem:traces exist for u in sobo spaces on M}
Given the hypotheses of Theorem \ref{thm:Trace Theorem for integer m on M}, if $U\in W^{k,2}(\Omega_\ep'\cap\Omega,\vp;X)$, then
$\Tr U\in  B^{k-\frac 12;2,2}(M,\vp;T)$ and there exists a constant $K$ independent of $U$ so that
\[
\| \Tr U \|_{B^{k-\frac 12;2,2}(M,\vp;T)} \leq K \|U \|_{W^{k,2}(\Omega_\ep',\vp;X)}.
\]
\end{lem}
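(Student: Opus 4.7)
The plan is to reduce to the half-space case via localization and boundary flattening, then handle general $k$ by applying tangential operators and invoking the $k=1$ version together with the tangential characterization of $B^{k-1/2;2,2}(M,\vp;T)$.

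First, using the partition of unity $\{\chi_j\}$ from \emph{(HI)} and the equivalent norm for $W^{k,2}(\Omega_\ep',\vp;X)$ stated in the Preliminaries, the estimate reduces to the local bound on each $U_j=\chi_j U$: one must control $\|\Tr U_j\|_{B^{k-1/2;2,2}(M\cap U_j,\vp;T)}$ by $\|U_j\|_{W^{k,2}(\Omega_\ep'\cap U_j,\vp;X)}$ with constants uniform in $j$. The uniform $C^m$ bounds on $\rho$, on the local frames $\oldL_1^{U_j},\ldots,\oldL_n^{U_j}$, and on $\chi_j$ yield this uniformity, so summing recovers the global estimate.

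Second, straightening the boundary in each chart reduces the problem to a weighted trace estimate on a half-strip $\R^{n-1}\times(-\ep,0)$, where the tangential operators $T_1,\ldots,T_{n-1}$ pull back to $\partial_{y_\ell}-\partial_{y_\ell}\tilde\vp$ plus $C^{m-1}$ coefficients coming from the change of coordinates. For $k=1$ the estimate $\Tr:W^{1,2}\to B^{1/2;2,2}$ can be obtained by real interpolation between the crude $W^{1,2}\to L^2$ trace bound (which follows from the fundamental theorem of calculus applied to $|u|^2 e^{-\vp}$ in the normal direction) and the trivial identity $W^{1,2}\to W^{1,2}$, because $B^{1/2;2,2}$ is defined as the real interpolation space between $L^2$ and $W^{1,2}$. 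For $k\geq 2$ I would apply every purely tangential composition $T^\alpha$ with $|\alpha|\leq k-1$ to $U$. Each $T^\alpha U$ lies in $W^{1,2}(\Omega_\ep',\vp;X)$ with norm bounded by $\|U\|_{W^{k,2}(\Omega_\ep',\vp;X)}$, since the tangential operators are combinations of the $X_j$ with uniformly bounded $C^{m-1}$ coefficients. Because tangential operators commute with $\Tr$ up to zero-order remainders, one has $\Tr(T^\alpha U)=T^\alpha \Tr U$ modulo lower-order terms, and the $k=1$ case delivers $T^\alpha \Tr U\in B^{1/2;2,2}(M,\vp;T)$; assembling these for all $|\alpha|\leq k-1$ and invoking the tangential characterization of $B^{k-1/2;2,2}(M,\vp;T)$ coming from the interpolation definition completes the argument.

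The main obstacle is tracking the commutators $[T^\alpha,X_j]$ and $[T^\alpha,\chi_j]$ that arise when moving between the $X$-frame used to measure interior regularity and the tangential $T$-frame used at the boundary. These commutators produce lower-order terms involving derivatives of the frame $\oldL_\ell^{U_j}$, of the partition of unity, and of $\vp$. The first two are controlled by \emph{(HI)} and the uniform $C^m$ construction of $\chi_j$, while higher derivatives of $\vp$ are precisely what \emph{(HIV)} governs through $|\nabla^k\vp|\leq C_m(1+|\nabla\vp|)$. This bound is exactly what is needed to absorb the commutator remainders into the weighted Sobolev norm on the unbounded domain and to keep every constant uniform as $|x|\to\infty$, which is the core difficulty absent in the classical bounded-domain trace theorem.
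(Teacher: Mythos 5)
There is a genuine gap at the heart of your proposal, in the $k=1$ step. You claim that the bound $\Tr\colon W^{1,2}\to B^{1/2;2,2}(M,\vp;T)$ follows ``by real interpolation between the crude $W^{1,2}\to L^2$ trace bound \dots\ and the trivial identity $W^{1,2}\to W^{1,2}$.'' This is not a valid application of the Exact Interpolation Theorem (Theorem \ref{thm:exact interpolation}). Interpolation of operators requires one and the same map $T$ bounded $X_0\to Y_0$ and $X_1\to Y_1$ for a compatible interpolation pair on each side; the ``trivial identity $W^{1,2}\to W^{1,2}$'' is not the trace operator and shares neither domain nor target structure with your $W^{1,2}\to L^2$ bound, so there is nothing to interpolate. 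If instead you meant to interpolate $\Tr\colon W^{1,2}(\Omega_\ep')\to L^2(M)$ against $\Tr\colon W^{2,2}(\Omega_\ep')\to W^{1,2}(M)$ (the latter obtained by applying tangential derivatives to the $L^2$ bound), the Exact Interpolation Theorem gives $\Tr\colon (W^{1,2},W^{2,2})_{1/2,2}\to B^{1/2;2,2}(M)$, whose domain is the \emph{smaller} space $B^{3/2;2,2}(\Omega_\ep')$, not $W^{1,2}(\Omega_\ep')$. The half-derivative gain at the boundary cannot be produced by operator interpolation starting from the crude $W^{1,2}\to L^2$ estimate; it must be established directly.

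This is exactly where the paper does something different. Rather than reduce to $k=1$ by peeling off tangential derivatives, the paper works directly at level $k$: it applies the Reiteration Theorem (using Proposition \ref{prop:W^k in H on M}) to write
$B^{k-1/2;2,2}(M,\vp;T) = (W^{k-1,2}(M,\vp;T),\,W^{k,2}(M,\vp;T))_{1/2,2;J}$
and then exhibits a concrete discrete $J$-method decomposition $\Tr U=\sum_{i\in\Z}u_i$, with the $u_i$ built from dyadic cutoffs $\psi_i$ in the normal parameter and averaging along the flow $e^{t\oldL_n}$ of the normal vector field. The two required $\ell^2$ bounds on $\{2^{-i/2}\|u_i\|_{W^{k-1,2}(M)}\}$ and $\{2^{i/2}\|u_i\|_{W^{k,2}(M)}\}$ are then read off by the fundamental theorem of calculus and Cauchy--Schwarz, with (HIV) and the norm equivalence $\|\cdot\|_{W^{j,2}(M,\vp;T)}\sim\sum_{|\alpha|\le j}\|Y_\T^\alpha\cdot\|_{L^2(M)}$ (Corollary \ref{cor:compactness of W^k into W^k-1}) keeping the constants uniform in $|x|$. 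Your observation that (HIV) is what controls the commutator remainders is apt, and the idea of measuring regularity tangentially is in the right spirit, but the missing ingredient is the explicit construction of a $J$-decomposition that witnesses the half-derivative gain --- which your interpolation shortcut cannot supply.

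A secondary issue: even granting the $k=1$ case, your step ``$T^\alpha\Tr U\in B^{1/2;2,2}$ for all $|\alpha|\le k-1$ implies $\Tr U\in B^{k-1/2;2,2}(M,\vp;T)$'' invokes a tangential lifting characterization of the interpolation-defined Besov space on $M$ that the paper does not establish; it is plausible but would require its own proof (in effect another application of Reiteration plus an argument that differentiation is an isomorphism between the relevant scales). The paper sidesteps this by working at level $k$ directly.
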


\begin{remark}The result also holds (by the same proof) if we replace $\Omega_\ep'\cap\Omega$ with $\Omega_\ep'\cap\Omega^c$.\end{remark}

The second half of the proof of Theorem \ref{thm:Trace Theorem for integer m on M} is proven in Section \ref{sec:proof_traces of normal derivatives}, as part of the general result:
\begin{thm}\label{thm:traces of normal derivatives}
Let $\ell,\ell'$ be integers so that $0\leq \ell+\ell' \leq m-2$.
If $u\in B^{\ell+\frac 12;2,2}(M,\vp;T)$, then
\[
u = \Tr \frac{\p^{\ell'}U}{\p\nu^{\ell'}}
\]
for some $U\in W^{\ell+\ell'+1,2}(\Omega_\ep',\vp;X)$ supported in $\Omega_\ep'$ satisfying
\[
\Tr U = \cdots = \Tr \frac{\p^{\ell'-1}U}{\p\nu^{\ell'-1}} =0
\]
and
\[
\| U \|_{W^{\ell+\ell'+1,2}(\Omega,\vp; X)} \leq C \| u\|_B
\]
for some $C$ independent of $U$ and $u$.
\end{thm}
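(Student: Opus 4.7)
The plan is to construct $U$ by an explicit extension formula in the collar neighborhood $\Omega_\ep'$, combining the product-coordinate structure from \emph{(HI)} with tangential mollification, following the classical Hestenes/Lions--Magenes template adapted to the weighted setting.

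First I would localize using the uniformly finite cover $\{U_j\}$ of $\Omega_\ep'$ and the partition of unity $\{\chi_j\}$ from the preliminaries. Hypothesis \emph{(HI)} provides a diffeomorphism $(y,t)\mapsto y+t\nu(y)$ from $(M\cap U_j)\times(-\ep,\ep)$ onto $U_j$ under which $t=\rho(x)$ serves as a normal coordinate. Let $u_j=\chi_j u$, let $\eta\in C^\infty_c((-\ep,\ep))$ with $\eta\equiv 1$ near $0$, and let $\{\phi_s\}_{s>0}$ be a tangential mollifier family of scale $s$ built from the local frame $\oldL^{U_j}_1,\dots,\oldL^{U_j}_{n-1}$. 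I would define
\[
U_j(y,t)=\eta(t)\,\frac{t^{\ell'}}{\ell'!}\,(\phi_{|t|}*u_j)(y),
\]
and set $U=\sum_j U_j$. Direct differentiation in $t$ yields $\Tr(\p^{j}U/\p\nu^{j})=0$ for $0\leq j<\ell'$ and $\Tr(\p^{\ell'}U/\p\nu^{\ell'})=\sum_j \chi_j u=u$ on $M$, while $U$ is automatically supported in $\Omega_\ep'$.

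The core of the proof is the norm estimate $\|U\|_{W^{\ell+\ell'+1,2}(\Omega,\vp;X)}\leq C\|u\|_{B^{\ell+1/2;2,2}(M,\vp;T)}$. Using the chart-based equivalent norm for $W^{\ell+\ell'+1,2}(\Omega_\ep',\vp;X)$ recorded in the preliminaries, I would expand $\Ta_{U_j}^\alpha U_j$ via Leibniz, converting the weighted derivatives $X_j=D_j-\p_j\vp$ to ordinary $D_j$ at the cost of factors $\nabla^k\vp$ bounded by $1+|\nabla\vp|$ through \emph{(HIV)}, which is absorbed into the weighted $L^2$-norm. Tangential differentiation commutes with $\phi_{|t|}*$, while each normal derivative either consumes a power of $t$ from $t^{\ell'}/\ell'!$ or introduces a factor of order $1/t$ from the scale-$t$ mollifier. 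The resulting sum of $L^2(\Omega_\ep',\vp)$ integrals is then identified, via the real-interpolation / $K$-functional characterization of $B^{\ell+1/2;2,2}(M,\vp;T)$ set up earlier in the paper, with the Besov norm of $u_j$; summing over $j$ using the uniform bounds on $\{\chi_j\}$ yields the global estimate.

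The main obstacle is the three-way interplay between the weighted derivatives $X_j$ appearing in the target space, the unweighted normal derivatives $\p/\p\nu$ entering the conclusion, and the scale-$t$ tangential mollifiers that drive the fractional $1/2$-gain. The vanishing trace conditions on $\p^j U/\p\nu^j$ for $j<\ell'$ are robust, since the smooth factor $e^{-\vp}$ does not affect them, but the norm bound requires careful verification that \emph{(HIV)} tames the cross-terms between differentiation of the weight and of the convolution, uniformly in $j$. The estimate ultimately reduces to the classical scaling identity
\[
\int_0^\ep t^{2\ell'}\,\|\p_t(\phi_t*u_j)\|_{W^{\ell+\ell',2}(M,\vp;T)}^2\,dt\;\lesssim\;\|u_j\|_{B^{\ell+1/2;2,2}(M,\vp;T)}^2,
\]
which is essentially a reformulation of the real-interpolation definition of the Besov space at the half-integer index $\ell+\tfrac12$.
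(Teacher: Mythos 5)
Your construction takes a genuinely different route from the paper's: where the paper uses the \emph{discrete $J$-method} decomposition $u=\sum_{j\in\Z}u_j$ (each $u_j\in W^{\ell+\ell'+1,2}(M,\vp;T)$) and extends each piece with a dyadic-scale normal cutoff $\psi_j(\rho(y))$ supported where $|\rho|\lesssim\delta^j$, you propose a continuous-parameter Lions--Magenes extension $U_j(y,t)=\eta(t)\,\frac{t^{\ell'}}{\ell'!}(\phi_{|t|}*u_j)(y)$ in which the scale of the tangential mollifier is slaved to the normal coordinate. Conceptually these are the two classical templates for trace extension, and the roughness of $u$ is encoded differently: in the paper through the weighted $\ell^2$-summability of the $J$-decomposition norms, in your version through the decay rate of the mollified family.

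The gap is in the reduction at the end. You write that the key estimate
\[
\int_0^\ep t^{2\ell'}\,\|\p_t(\phi_t*u_j)\|_{W^{\ell+\ell',2}(M,\vp;T)}^2\,dt\;\lesssim\;\|u_j\|_{B^{\ell+1/2;2,2}(M,\vp;T)}^2
\]
``is essentially a reformulation of the real-interpolation definition of the Besov space,'' but this is precisely what is \emph{not} available. The paper's space $B^{s;2,2}(M,\vp;T)$ is defined abstractly by the $J$-method (Definition~\ref{defn:Besov spaces on M}), and the only concrete handle on its norm that the paper establishes is the discrete $J$-decomposition (Theorem~\ref{thm:discrete version J method}), not a mollifier or $K$-functional characterization in terms of $\phi_t*u$. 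Bridging from the interpolation definition to a mollifier characterization on a weighted, uniformly-$C^m$, possibly unbounded hypersurface $M$ with the tangential operators $T_j$ is a substantive independent argument (one would essentially have to prove a weighted version of the semigroup/modulus-of-smoothness characterization, and check that the commutators $[T_j,\phi_t*\,\cdot\,]$ and the moment conditions on $\phi$ behave uniformly). The paper's use of the discrete $J$-method is exactly what lets it avoid this. You would also need to insert the conjugating factor $e^{\frac12\vp(y)}e^{-\frac12\vp(\tilde\pi(y))}$ (as the paper does) so that the $X$-derivative norm of $U_j$ reduces, via the $Y_j$-operators and \eqref{eqn:Y,X,D equiv}, to unweighted derivatives of $f_j(\rho)g_j(\tilde\pi)$; without it, $X^\alpha U_j$ produces derivatives of $\vp$ hitting both the boundary and normal factors, and (HIV) alone does not give you a clean Leibniz bookkeeping. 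Finally, the vanishing of $\Tr(\p^k U_j/\p\nu^k)$ for $k<\ell'$ is not automatic in your formula: the Leibniz terms $t^{\ell'-i}\p_t^{k-i}(\phi_{|t|}*u_j)$ are only $O(t^{\ell'-k})$ if $\phi$ is chosen with suitable vanishing moments, which is a hypothesis you have not stated.
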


The trace and extension theorems above allow us to prove the following result concerning the equality of the spaces with weighted and unweighted derivatives. We also prove the following Rellich identity in Section \ref{sec:Besov_spaces_and_Additional_Trace_Results}.
\begin{prop}\label{prop:weighted and unweighted derivs have equiv norm}
Let $\Omega$ satisfy (HI)-(HVI). Then for $0 \leq k \leq m$, $W^{k,2}(\Omega,\vp;X) = W^{k,2}(\Omega,\vp;D)$. Furthermore, if $m\geq 2$ and $1\leq k\leq m-1$, then
$W^{k,2}(\Omega,\vp;X)$ embeds compactly in $W^{k-1,2}(\Omega,\vp;X)$.
\end{prop}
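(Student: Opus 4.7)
I will establish the norm equivalence by first proving a $k=1$ estimate, bootstrapping to general $k$ by induction, and then combining the $k=1$ estimate with $|\nabla\vp|\to\infty$ at infinity to extract compactness.

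\textbf{The $k=1$ identity.} For $u\in C^\infty_c(\R^n)$---which is legal via (HV), since (HII)--(HIV) extend to $\R^n$---a direct integration by parts on the cross term in $|D_j u-(D_j\vp)u|^2$ gives the basic identity
\[
\|\nabla_X u\|_{L^2(\R^n,\vp)}^2=\|\nabla u\|_{L^2(\R^n,\vp)}^2+\int_{\R^n}(\triangle\vp)|u|^2 e^{-\vp}\,dV,
\]
while expanding the left side \emph{without} integrating by parts produces
\[
\||\nabla\vp|u\|_{L^2(\R^n,\vp)}^2=2\,\re(\nabla u,(\nabla\vp)u)_\vp+\int_{\R^n}(\triangle\vp)|u|^2 e^{-\vp}\,dV.
\]
Applying Cauchy--Schwarz to the first term with a parameter $\delta$ and estimating the integral by (HIII) in the form $\triangle\vp\leq\theta|\nabla\vp|^2+C_0$ allow me to absorb $\||\nabla\vp|u\|_{L^2(\R^n,\vp)}^2$ from the right provided $\delta^{-1}+\theta<1$; such a $\delta$ exists precisely because $\theta<1$. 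The result is
\[
\||\nabla\vp|u\|_{L^2(\R^n,\vp)}^2\leq C(\|\nabla u\|_{L^2(\R^n,\vp)}^2+\|u\|_{L^2(\R^n,\vp)}^2).
\]
A symmetric computation starting from $\nabla u=\nabla_X u+(\nabla\vp)u$ and invoking (HII) instead of (HIII) yields the same bound with $\nabla_X u$ in place of $\nabla u$. Combined with the pointwise relation $X_j-D_j=-D_j\vp$, these give the $W^{1,2}$ norm equivalence on $C^\infty_c(\R^n)$, and hence on all of $W^{1,2}(\R^n,\vp;X)$ by density.

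\textbf{Descent to $\Omega$ and higher $k$.} To transport the equivalence to $\Omega$, I take $u\in W^{k,2}(\Omega,\vp;X)$, use Lemma \ref{lem:traces exist for u in sobo spaces on M} together with Theorem \ref{thm:traces of normal derivatives} to construct $\tilde u\in W^{k,2}(\R^n,\vp;X)$ agreeing with $u$ on $\Omega$ and with norm controlled by $\|u\|_{W^{k,2}(\Omega,\vp;X)}$---hypothesis (HVI) enters here, since it is what powers the trace theorems---apply the $\R^n$ equivalence, and restrict. For $k\geq2$ I induct on $k$: expanding $X^\alpha=D^\alpha+\sum_{|\gamma|<|\alpha|}q_{\alpha,\gamma}(\vp)D^\gamma$, the coefficients $q_{\alpha,\gamma}$ are products of derivatives of $\vp$ of order $\leq|\alpha|\leq m$, so (HIV) bounds each factor by $C(1+|\nabla\vp|)$, giving $|q_{\alpha,\gamma}|\leq C(1+|\nabla\vp|)^{|\alpha|-|\gamma|}$. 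Iterating the $k=1$ estimate on lower-order weighted derivatives of $u$ and commuting $X_j$ past $D^\gamma$ (the commutator coefficients also bounded by $C(1+|\nabla\vp|)$) controls each remainder term in $L^2(\Omega,\vp)$ and closes the induction.

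\textbf{Compactness.} From (HIV) one has $|\triangle\vp|\leq C(1+|\nabla\vp|)$, so the assertion in (HII) that $\theta|\nabla\vp|^2+\triangle\vp\to\infty$ forces $|\nabla\vp(x)|\to\infty$ as $|x|\to\infty$ in $\Omega$. Let $\{u_n\}$ be bounded in $W^{k,2}(\Omega,\vp;X)$ and $|\beta|\leq k-1$. The $k=1$ estimate applied to $X^\beta u_n$ (itself bounded in $W^{1,2}(\Omega,\vp;X)$) yields $\||\nabla\vp|\,X^\beta u_n\|_{L^2(\Omega,\vp)}\leq C$ uniformly, so
\[
\int_{\Omega\setminus B_R}|X^\beta u_n|^2 e^{-\vp}\,dV\leq\frac{C}{\inf_{\Omega\setminus B_R}|\nabla\vp|^2}\longrightarrow 0
\]
uniformly in $n$ as $R\to\infty$. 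On the bounded piece $\Omega\cap B_R$ the weight $e^{-\vp}$ is two-sided bounded and the weighted and standard Sobolev norms are equivalent, so classical Rellich--Kondrachov extracts an $L^2(\Omega\cap B_R)$-convergent subsequence of each $X^\beta u_n$; a diagonal extraction over $R=1,2,\dots$ and all multi-indices $|\beta|\leq k-1$ produces a subsequence converging in $W^{k-1,2}(\Omega,\vp;X)$.

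\textbf{Main obstacle.} The delicate step is the parameter balance in the $k=1$ estimate: the Cauchy--Schwarz parameter $\delta$ and the (HIII) constant $\theta$ must satisfy $\delta^{-1}+\theta<1$ in order to absorb $\||\nabla\vp|u\|_{L^2(\Omega,\vp)}^2$ from the right, which is available precisely because (HII)--(HIII) require $\theta\in(0,1)$. Once this absorption is in hand, the extension to $\Omega$, the higher-order induction, and the compactness argument are technical but conceptually routine.
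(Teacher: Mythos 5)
Your proposal is correct in substance and reaches the same conclusion, but the route differs from the paper's in the details. For the $k=1$ equivalence you establish the inequality $\||\nabla\vp|u\|_{L^2}^2\leq C(\|\nabla u\|_{L^2}^2+\|u\|_{L^2}^2)$ (and its $\nabla_X$ mirror) directly on $C^\infty_c(\R^n)$ by the algebraic identity/integration-by-parts computation, invoking (HV), and transport to $\Omega$ via the extension operator; the paper instead decomposes $v=(v-v')+v'$ with $v-v'\in W^{1,2}_0(\Omega,\vp;X)$ and $v'$ supported in $\Omega'_\eps$, applying Corollary \ref{cor:adjoint dominated in L^2} and its unweighted companion to each piece. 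The two are essentially re-packagings of the same integration-by-parts estimate, though your version isolates the $\theta<1$ absorption cleanly. For the higher-order step, the paper peels off only one $|\nabla\vp|$ factor at a time by using the inductive norm equivalence at order $k-1$, converting $D^\beta$ to $X^\gamma$ before touching $\vp$; your expansion $X^\alpha=D^\alpha+\sum q_{\alpha,\gamma}D^\gamma$ with $|q_{\alpha,\gamma}|\lesssim(1+|\nabla\vp|)^{|\alpha|-|\gamma|}$ instead requires an iterated bound $\||\nabla\vp|^j D^\gamma u\|\lesssim\|u\|_{W^{|\gamma|+j,2}(X)}$, which is a genuine extra bootstrap (fine on $C^\infty_c$ via (HIV), but it should be spelled out, and the recursion on $j$ must be done carefully since $\nabla|\nabla\vp|$ reintroduces factors of $1+|\nabla\vp|$). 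For compactness, the paper first proves $W^{j,2}_0\hookrightarrow W^{j-1,2}_0$ compactly by a recursion on the gradient and then transfers via $E$; you go directly for $W^{k,2}\hookrightarrow W^{k-1,2}$ using the uniform tail estimate from $|\nabla\vp|\to\infty$ plus classical Rellich--Kondrachov on $\Omega\cap B_R$ and a diagonal extraction. This is arguably more transparent, though it quietly uses the equivalence of weighted and classical Sobolev norms on the bounded piece and a modest boundary-regularity fact for $\Omega\cap B_R$ (fine for a.e.\ $R$ under (HI)). One caution on your ``descent'' step: Theorem \ref{thm:simple extension operators exist} furnishes a $(k,2)$-extension only for $1\leq k\leq m-1$, so you cannot extend at level $k=m$ directly. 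The fix is to use the extension only to establish the $k=1$ estimate on $\Omega$ (which needs $m\geq 2$) and then carry out the $k\geq 2$ induction intrinsically on $\Omega$, exactly as the paper does; as written, your phrasing leaves it ambiguous whether you intend to re-extend at each level, which would not reach $k=m$. Finally, the obstacle you single out (the Cauchy--Schwarz parameter balance) is the easy part; the delicate bookkeeping is in the $|\nabla\vp|^j$ bootstrap and the level-$m$ boundary case just noted.
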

The analog of Proposition \ref{prop:weighted and unweighted derivs have equiv norm} for $M$ is contained in Corollary \ref{cor:compactness of W^k into W^k-1}.
It is easier in this case since $C^\infty_c(M)$ is dense in $W^{\ell,2}(M,\vp;\cdot)$ where $\cdot$ is either $\oldL$ or $T$. Likewise, for $W^{1,2}_0(\Omega,\vp;X)$, the result
is easier (though not easy) and is contained Proposition \ref{prop:compactness of W^1_0 into L^2} and its corollaries.

A useful application of the trace and extension theorems is the construction of a simple $(k,2)$-extension operator for each $k$, $1\leq k\leq m-1$. Recall that a simple $(k,2)$-extension operator $E:W^{k,2}(\Omega,\vp;X)\to W^{k,2}(\R^n,\vp;X)$ is one that satisfies
$Eu(x) = u(x)$ for a.e.\ $x\in\Omega$ and there exists a constant $C = C(k)$ so that $\|Eu\|_{W^{k,2}(\R^n,\vp;X)} \leq C\|u\|_{W^{k,2}(\Omega,\vp;X)}$.  In Section \ref{sec:proof_traces of normal derivatives} we will show:
\begin{thm}\label{thm:simple extension operators exist}
Let $\Omega$ satisfy (HI)-(HVI). Then for $1 \leq k \leq m-1$, there exists a simple $(k,2)$-extension operator.
\end{thm}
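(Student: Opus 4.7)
The plan is to glue $u$ on $\Omega$ with a suitable extension $V$ on $\R^n\setminus\bar\Omega$ chosen so that all normal traces up to order $k-1$ agree along $M$; this matching is precisely the gluing criterion that forces the resulting function to lie in $W^{k,2}(\R^n,\vp;X)$. To set up the gluing, I would first extract the normal traces of $u$. Near $\bd\Omega$ the outward normal vector field $\frac{\p}{\p\nu}$ is a first-order operator with $C^{m-1}$ coefficients by \emph{(HI)}, so $\frac{\p^j u}{\p\nu^j}\in W^{k-j,2}(\Omega_\ep'\cap\Omega,\vp;X)$ for $0\le j\le k-1$, with norm controlled by $\|u\|_{W^{k,2}(\Omega,\vp;X)}$ (using Proposition \ref{prop:weighted and unweighted derivs have equiv norm} to interchange $X$- and $D$-derivatives where convenient). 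Applying Lemma \ref{lem:traces exist for u in sobo spaces on M} yields
\[
u_j:=\Tr\frac{\p^j u}{\p\nu^j}\in B^{k-j-\frac12;2,2}(M,\vp;T),\qquad \|u_j\|_B\le C\|u\|_{W^{k,2}(\Omega,\vp;X)}.
\]

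I would then build $V_0,\ldots,V_{k-1}\in W^{k,2}(\Omega_\ep',\vp;X)$ iteratively, applying Theorem \ref{thm:traces of normal derivatives} at the $j$-th step with $\ell=k-j-1$ and $\ell'=j$ (so $\ell+\ell'=k-1\le m-2$, which is where the hypothesis $k\le m-1$ enters). Assume inductively that the partial sum $W_{j-1}:=V_0+\cdots+V_{j-1}$ satisfies $\Tr\frac{\p^i W_{j-1}}{\p\nu^i}=u_i$ for $0\le i<j$. Set
\[
w_j:=u_j-\Tr\frac{\p^j W_{j-1}}{\p\nu^j},
\]
which lies in $B^{k-j-\frac12;2,2}(M,\vp;T)$ by Lemma \ref{lem:traces exist for u in sobo spaces on M} applied to $W_{j-1}$. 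Theorem \ref{thm:traces of normal derivatives} then produces $V_j$ supported in $\Omega_\ep'$ with $\Tr\frac{\p^j V_j}{\p\nu^j}=w_j$, $\Tr\frac{\p^i V_j}{\p\nu^i}=0$ for $0\le i<j$, and $\|V_j\|_{W^{k,2}}\le C\|w_j\|_B$. A direct check shows $W_j=W_{j-1}+V_j$ advances the inductive hypothesis, so $V:=W_{k-1}$ satisfies $\Tr\frac{\p^j V}{\p\nu^j}=u_j$ for every $0\le j\le k-1$, together with $\|V\|_{W^{k,2}(\Omega_\ep',\vp;X)}\le C\|u\|_{W^{k,2}(\Omega,\vp;X)}$.

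With $V$ in hand I would define
\[
Eu(x)=\begin{cases}u(x),&x\in\Omega,\\ V(x),&x\in\R^n\setminus\bar\Omega,\end{cases}
\]
extended by zero beyond $\Omega\cup\Omega_\ep'$. The main obstacle I anticipate is verifying that $Eu\in W^{k,2}(\R^n,\vp;X)$. This is the standard gluing criterion: because $u$ and $V$ share all normal traces up to order $k-1$ along $M$, pairing $Eu$ against a test function and integrating by parts separately on $\Omega$ and on $\R^n\setminus\bar\Omega$ yields cancelling boundary contributions, so the distributional derivatives of order $\le k$ of $Eu$ coincide with the piecewise classical derivatives. Since $e^{-\vp}$ is smooth and locally bounded above and below, in a finite-width collar of $M$ (handled using the partition of unity $\{\chi_j\}$ from Section 1.3) the weighted claim reduces to the classical unweighted gluing lemma; away from $M$ no gluing is needed. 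The estimate
\[
\|Eu\|_{W^{k,2}(\R^n,\vp;X)}^2\le \|u\|_{W^{k,2}(\Omega,\vp;X)}^2+\|V\|_{W^{k,2}(\Omega_\ep'\setminus\bar\Omega,\vp;X)}^2\le C\|u\|_{W^{k,2}(\Omega,\vp;X)}^2
\]
then exhibits $E$ as the desired simple $(k,2)$-extension operator.
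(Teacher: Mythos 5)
Your proof is correct and is essentially the paper's own argument: iterate Lemma \ref{lem:traces exist for u in sobo spaces on M} and Theorem \ref{thm:traces of normal derivatives} to produce a function supported in the collar $\Omega_\ep'$ whose normal traces through order $k-1$ agree with those of $u$ along $M$, then use that to extend. The only difference is in how the final step is packaged: you invoke a matching-trace gluing criterion to conclude $Eu\in W^{k,2}(\R^n,\vp;X)$, whereas the paper writes $f=(u_1+\cdots+u_k)+(f-u_1-\cdots-u_k)$, observes that the second summand lies in $W^{k,2}_0(\Omega,\vp;X)$ and therefore extends by zero by the very definition of $W^{k,2}_0$, and sets $Ef=u_1+\cdots+u_k$ off $\bar\Omega$ — but the step you label a ``gluing criterion'' and the paper's implicit claim that vanishing traces through order $j-1$ put $f-u_1-\cdots-u_j$ into $W^{j,2}_0$ are the same underlying characterization, so the two write-ups are equivalent.
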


Our final embedding result is proven in Section \ref{sec:Besov_spaces_and_Additional_Trace_Results}.
\begin{thm}\label{thm:trace theorem on M}
Let $M, \Omega$, and $\vp$ satisfy the hypotheses of Theorem \ref{thm:traces of normal derivatives}
If  $s>1/2$ and $1\leq q\leq \infty$, then
\[
B^{s;2,q}(\Omega,\vp; X) \hookrightarrow B^{s-1/2; 2,q}(M,\vp;T).
\]
\end{thm}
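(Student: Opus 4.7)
The plan is to establish Theorem \ref{thm:trace theorem on M} by real interpolation, using Lemma \ref{lem:traces exist for u in sobo spaces on M} as the base case. That lemma provides a bounded trace operator
\[
\Tr : W^{k,2}(\Omega,\vp;X) \longrightarrow B^{k-\frac12;2,2}(M,\vp;T)
\]
for every integer $k$ with $1\leq k\leq m-1$ (after precomposing with the obvious restriction $W^{k,2}(\Omega,\vp;X)\to W^{k,2}(\Omega'_\eps\cap\Omega,\vp;X)$). All of these maps are restrictions of a single pointwise trace, so they form a compatible family on which the real interpolation functor $(\,\cdot\,,\,\cdot\,)_{\theta,q}$ acts.

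For $s>1/2$ not an integer, I would write $s=k+\theta$ with integer $k\geq 1$ and $\theta\in(0,1)$. By the definition of the Besov spaces in Section \ref{sec:real_interpolation},
\[
B^{s;2,q}(\Omega,\vp;X) = \bigl(W^{k,2}(\Omega,\vp;X),\,W^{k+1,2}(\Omega,\vp;X)\bigr)_{\theta,q},
\]
and analogously $B^{s-\frac12;2,q}(M,\vp;T)$ is identified as an interpolation space between $B^{k-\frac12;2,2}(M,\vp;T)$ and $B^{k+\frac12;2,2}(M,\vp;T)$ at parameter $\theta$. Interpolating the trace maps at the integer levels $k$ and $k+1$ then yields the desired bounded embedding, with norm controlled by the product of the two endpoint constants. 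The integer case $s=k\geq 1$ with $q\neq 2$ is recovered by a reiteration step between two non-integer exponents flanking $s$ that have just been treated.

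The one range not covered by this scheme is $s\in(1/2,1)$, and I expect this to be the main obstacle. Here the only Sobolev endpoint below $s$ is $L^2(\Omega,\vp)$, on which the pointwise trace is unbounded, so one cannot directly interpolate between $k=0$ and $k=1$. My plan is to upgrade $\Tr$ to a bounded operator $L^2(\Omega,\vp)\to B^{-\frac12;2,2}(M,\vp;T)$, where the negative-order boundary space is defined by duality against the extension operator supplied by Theorem \ref{thm:traces of normal derivatives}; one verifies that this distributional trace agrees with the classical one on $W^{1,2}(\Omega,\vp;X)$. Interpolating the pair
\[
\Tr: L^2(\Omega,\vp)\to B^{-\frac12;2,2}(M,\vp;T), \qquad \Tr: W^{1,2}(\Omega,\vp;X)\to B^{\frac12;2,2}(M,\vp;T)
\]
at parameter $s\in(1/2,1)$ and identifying the target via real interpolation with $B^{s-\frac12;2,q}(M,\vp;T)$ then closes the remaining case.
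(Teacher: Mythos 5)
The interpolation framework you set up is the right one, and for $s\geq 1$ it matches the paper's strategy: interpolate the integer-level trace maps from Lemma \ref{lem:traces exist for u in sobo spaces on M} using the reiteration identity \eqref{eqn:interpolation, q's change on M} and the Exact Interpolation Theorem, then recover integer $s$ with $q\neq 2$ by a further reiteration. That part is sound.

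Your handling of the range $s\in(1/2,1)$, however, has a genuine gap. You propose to upgrade $\Tr$ to a bounded operator $L^{2}(\Omega,\vp)\to B^{-1/2;2,2}(M,\vp;T)$ defined by duality against the extension operator of Theorem \ref{thm:traces of normal derivatives}. But dualizing the extension $E:B^{1/2;2,2}(M,\vp;T)\to W^{1,2}(\Omega,\vp;X)$ produces $E^{*}:W^{-1,2}(\Omega,\vp;X)\to B^{-1/2;2,2}(M,\vp;T)$, and $E^{*}$ restricted to $L^{2}(\Omega,\vp)$ is not the trace operator; there is no Green's-type identity relating $\int_{M}(\Tr f)(\Tr E\psi)\,e^{-\vp}d\sigma$ to a bulk pairing of $f$ with $E\psi$ without expending a derivative of $f$, which $L^{2}$ control does not provide. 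Indeed the trace operator is genuinely not closable from $L^{2}(\Omega,\vp)$: an $L^{2}$ function has no well-defined boundary value, distributional or otherwise, and the quantity you would want to call $\Tr f$ depends on the representative of $f$. (A distributional boundary value does become available once one adds the constraint $Lf=0$; this is precisely the content of Theorem \ref{thm:traces of L-harmonic functions}, and it is not available here.)

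The paper closes the low-regularity end differently, via Lemma \ref{lem:Besov functions have L^2 traces on M}: the trace is bounded from $B^{1/2;2,1}(\Omega_\ep'\cap\Omega,\vp;X)$ into $L^{2}(M,\vp)$. The second Besov index $q=1$ is essential here — this is the critical space on which an $L^{2}$ trace just barely survives, and it is strictly weaker than asking for a trace on $L^{2}(\Omega,\vp)$ itself. With this endpoint in hand one interpolates the pair $\Tr:B^{1/2;2,1}(\Omega,\vp;X)\to L^{2}(M,\vp)$ and $\Tr:W^{k,2}(\Omega,\vp;X)\to B^{k-1/2;2,2}(M,\vp;T)$, and the parameters line up to give $B^{s;2,q}(\Omega,\vp;X)\to B^{s-1/2;2,q}(M,\vp;T)$ for all $s>1/2$ at once, including $s\in(1/2,1)$ and the integer values. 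So you need to replace your negative-order endpoint with something like Lemma \ref{lem:Besov functions have L^2 traces on M}, which requires a direct argument (the paper's proof uses the discrete $J$-method decomposition together with a Fundamental-Theorem-of-Calculus estimate along the normal flow) and cannot be manufactured by pure duality.
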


\subsection{Elliptic regularity -- solvability}
The Sobolev space theory that we develop is powerful enough that it allows us to adapt the proofs in the bounded, unweighted setting in a straight
forward manner and establish the following theorems, see \cite[Chapter 7]{Fol95}. In particular, we can establish that
 strong ellipticity is equivalent to G\aa rding's inequality and solve the $(\X,\oldD)$ Boundary Value
Problem (BVP): namely,
for a closed subspace $\X$ satisfying $W^{1,2}_0(\Omega,\vp; X) \subset \X \subset W^{1,2}(\Omega,\vp; X)$ and
$f\in L^2(\Omega)$, find $u\in \X$ so that ${\oldD}(v,u) = (v,f)_{\varphi}$ for all $v\in \X$. The case $\X  = W^{1,2}_0(\Omega,\vp; X)$
is the classical Dirichlet problem, but we also want
to include the case $\X = W^{1,2}(\Omega,\vp; X)$.

Note that $C^\infty_c(\Omega)\subset \X$, so a solution of the $(\X,\oldD)$ BVP
will satisfy $(L^* v, u)_\vp = (v,f)_\vp$ for all $v\in C^\infty_c(\Omega)$ and hence will be a distributional
solution to $Lu=f$. Furthermore, the requirement that ${\oldD}(v,u) = (v,f)_\vp$ for all $v\in \X$ leads to a free boundary condition, i.e., integration by parts
imposes a boundary condition on $u$.

\begin{thm}[G\aa rding's inequality] \label{thm: Garding's inequality}Let
\[
{\oldD}(v,u) = \sum_{j,k=1}^n (X_j v, a_{jk} X_k u)_\vp + \sum_{j=1}^n (v, b_j X_j  u)_\vp + \sum_{j=1}^n (X_j v, b_j'  u)_\vp + (v,bu)_\vp
\]
be a strongly elliptic Dirichlet form on $\Omega$ and suppose that $a_{jk}, b_j, b_j', b$ are bounded on $\Omega$.  Then
$\oldD$ is coercive over $W^{1,2}(\Omega,\vp; X)$ (and hence over any $\X\subset W^{1,2}(\Omega,\vp; X)$ that contains
$W^{1,2}_0(\Omega,\vp;X)$).
\end{thm}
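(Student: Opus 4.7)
The plan is to imitate the classical unweighted argument (as in Folland, Chapter 7), taking advantage of the fact that our weighted inner product is already intrinsically adapted to the operators $X_j$. Because the Dirichlet form is given directly in terms of $X_j$ rather than $D_j$, no integration by parts is needed, so we do not need to invoke density of $C^\infty_c(\bar\Omega)$ or $C^\infty_c(\Omega)$ -- the estimate will hold for every $u\in W^{1,2}(\Omega,\vp;X)$ by a direct pointwise argument inside the integral.

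First I would treat the principal part. Applying the strong ellipticity hypothesis \eqref{eqn:A is positive definite} pointwise with $\xi_j = X_j u(x)\in\mathbb{C}$ gives
\[
\re\Bigl(\sum_{j,k=1}^n \overline{a_{jk}(x)}\,X_j u(x)\,\overline{X_k u(x)}\Bigr)\geq \theta\sum_{j=1}^n |X_j u(x)|^2
\]
for a.e.\ $x\in\Omega$. Multiplying by $e^{-\vp}$ and integrating, and recognizing the left-hand side as the integrand defining $\re\sum_{j,k}(X_j u,a_{jk}X_k u)_\vp$, yields
\[
\re\sum_{j,k=1}^n (X_j u,a_{jk}X_k u)_\vp \geq \theta\,\|\nabla_X u\|_{L^2(\Omega,\vp)}^2.
\]
This is the only place where strong ellipticity is used; nothing about the weight or the unboundedness of $\Omega$ enters.

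Next I would dispatch the lower order terms. Since $b_j,b_j',b$ are bounded on $\Omega$, Cauchy--Schwarz in $L^2(\Omega,\vp)$ combined with the $\epsilon$-Young inequality gives, for every $\epsilon>0$,
\[
\sum_{j=1}^n\!\bigl|(u,b_j X_j u)_\vp\bigr|+\sum_{j=1}^n\!\bigl|(X_j u,b_j' u)_\vp\bigr|+\bigl|(u,bu)_\vp\bigr| \leq 2\epsilon\,\|\nabla_X u\|_{L^2(\Omega,\vp)}^2 + C_\epsilon\,\|u\|_{L^2(\Omega,\vp)}^2,
\]
where $C_\epsilon$ depends only on $\epsilon$ and the $L^\infty$ bounds of $b_j,b_j',b$. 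Combining with the principal part estimate and choosing $\epsilon=\theta/4$,
\[
\re{\oldD}(u,u) \geq \tfrac{\theta}{2}\,\|\nabla_X u\|_{L^2(\Omega,\vp)}^2 - C_\epsilon\,\|u\|_{L^2(\Omega,\vp)}^2.
\]
Finally, writing $\|\nabla_X u\|^2 = \|u\|_{W^{1,2}(\Omega,\vp;X)}^2 - \|u\|_{L^2(\Omega,\vp)}^2$ and absorbing the extra $L^2$ term into $\lambda$ produces the coercive estimate \eqref{eqn:coercive estimate} on all of $W^{1,2}(\Omega,\vp;X)$, which automatically restricts to any closed subspace $\X$ between $W^{1,2}_0(\Omega,\vp;X)$ and $W^{1,2}(\Omega,\vp;X)$.

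I do not anticipate a genuine obstacle here: the strong ellipticity is a pointwise symbol condition that is indifferent to the weight, the coefficients are globally bounded so no cutoff/localization scheme is required despite $\Omega$ being unbounded, and the absence of integration by parts in $\mathfrak{D}$ means the usual commutator terms that complicate the classical proof simply do not appear. The only mild point of care is keeping track of constants in the $\epsilon$-Young step so that the final $\lambda$ depends only on $\theta$ and the $L^\infty$ norms of the coefficients.
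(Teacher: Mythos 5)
Your proof is correct and follows exactly the route the paper intends: the authors state they adapt the classical argument from Folland Chapter~7 in a straightforward way, and the pointwise application of the strong ellipticity symbol condition with $\xi = \nabla_X u(x)$ followed by Cauchy--Schwarz and $\epsilon$-Young absorption for the lower-order terms is precisely that argument, with the only notational change being that the form is already written in the $X_j$'s so no integration by parts arises. The observation that $\|\nabla_X u\|_{L^2(\Omega,\vp)}^2 = \|u\|_{W^{1,2}(\Omega,\vp;X)}^2 - \|u\|_{L^2(\Omega,\vp)}^2$ completes the passage to the coercive estimate \eqref{eqn:coercive estimate} exactly as required.
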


The converse to G\aa rding's inequality holds as well.
\begin{thm}\label{thm:Garding converse}
If the Dirichlet form $\oldD$ is coercive over $W^{1,2}_0(\Omega,\vp; X)$ and $a_{jk}\in C(\bar\Omega)$, then $\oldD$ is strongly elliptic.
\end{thm}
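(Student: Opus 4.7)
The plan is to plug highly oscillatory, spatially localized test functions into the coercive estimate in order to extract the principal symbol of $\oldD$ at each point of $\bar\Omega$. First I fix an interior point $x_0\in\Omega$ and $\xi\in\R^n$; choose $\eta\in C^\infty_c(\Omega)$ supported in a small ball $B(x_0,r)\subset\Omega$ with $\eta\not\equiv 0$, and for $\tau>0$ set
\[
u_\tau(x) = \eta(x)\,e^{i\tau\xi\cdot x}.
\]
Since $u_\tau\in C^\infty_c(\Omega)\subset W^{1,2}_0(\Omega,\vp;X)$, the coercive estimate \eqref{eqn:coercive estimate} applies to $u_\tau$.

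A direct computation gives $X_j u_\tau = i\tau\xi_j u_\tau + R_j$ with $R_j = e^{i\tau\xi\cdot x}\bigl(\p_j\eta - (\p_j\vp)\eta\bigr)$; since $\eta$ and $\vp$ are smooth on the fixed compact set $\operatorname{supp}\eta\subset\Omega$, one has $\|R_j\|_{L^2(\Omega,\vp)}\leq C\,\|u_\tau\|_{L^2(\Omega,\vp)}$ with $C$ independent of $\tau$. Expanding $\oldD(u_\tau,u_\tau)$ and collecting the leading $\tau^2$ contribution of the principal part gives
\[
\sum_{j,k}(X_j u_\tau, a_{jk} X_k u_\tau)_\vp = \tau^2\sum_{j,k}\xi_j\xi_k\int_\Omega \overline{a_{jk}}\,|\eta|^2 e^{-\vp}\,dV + O(\tau)\,\|u_\tau\|_{L^2(\Omega,\vp)}^2,
\]
while the lower-order terms of $\oldD$ and the term $\lambda\|u_\tau\|_{L^2}^2$ all contribute $O(\tau)\,\|u_\tau\|_{L^2(\Omega,\vp)}^2$, and $\|u_\tau\|_{W^{1,2}(\Omega,\vp;X)}^2 = \tau^2|\xi|^2\|u_\tau\|_{L^2(\Omega,\vp)}^2 + O(\tau)\,\|u_\tau\|_{L^2(\Omega,\vp)}^2$. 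Substituting into \eqref{eqn:coercive estimate}, dividing by $\tau^2\|u_\tau\|_{L^2(\Omega,\vp)}^2$, and sending $\tau\to\infty$ yields (using $|u_\tau|^2 = |\eta|^2$ for real $\xi$)
\[
\re\sum_{j,k}\xi_j\xi_k\,\frac{\int_\Omega \overline{a_{jk}}\,|\eta|^2 e^{-\vp}\,dV}{\int_\Omega |\eta|^2 e^{-\vp}\,dV}\geq C|\xi|^2.
\]

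Next I localize by replacing $\eta$ with $\eta_r(x)=\eta_0((x-x_0)/r)$ concentrating at $x_0$. By continuity of $a_{jk}$, the weighted average converges to $\overline{a_{jk}(x_0)}$ as $r\to 0$, proving $\re\sum_{j,k}\overline{a_{jk}(x_0)}\xi_j\xi_k\geq C|\xi|^2$ for every $x_0\in\Omega$ and $\xi\in\R^n$. For $x_0\in\bd\Omega$ the hypothesis $a_{jk}\in C(\bar\Omega)$ allows approximation by a sequence of interior points. To upgrade to complex $\xi\in\C^n$ in \eqref{eqn:A is positive definite}, one repeats the argument with the same test function for complex $\xi$: the only change is that $|u_\tau|^2 = |\eta|^2 e^{-2\tau(\im\xi)\cdot x}$ appears in both numerator and denominator of the averaged inequality and cancels, so the localization step proceeds identically. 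The main technical obstacle is the careful bookkeeping of these lower-order terms, since the derivatives $\p_j\vp$ appearing in $X_j u_\tau$ need not be bounded on $\Omega$; the saving grace is that $\eta$ always has fixed compact support in $\Omega$, so those derivatives are bounded on $\operatorname{supp}\eta$ and the $\tau^2$ scaling of the principal part dominates every remainder.
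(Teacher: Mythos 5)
Your main argument — the oscillatory test function $u_\tau = \eta e^{i\tau\xi\cdot x}$ for \emph{real} $\xi$, localization of $\eta$, and passage to the boundary by continuity — is exactly the standard approach of Folland (\cite{Fol95}, Chapter~7), which the paper explicitly follows, and that part of the proof is correct.

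The trouble is the final paragraph, where you claim that the same test function extends to complex $\xi$ because the extra factor $e^{-2\tau(\im\xi)\cdot x}$ ``appears in both numerator and denominator and cancels.'' That is not the case, for two reasons.  First, the remainder $R_j = e^{i\tau\xi\cdot x}\big(\p_j\eta - (\p_j\vp)\eta\big)$ no longer satisfies $\|R_j\|_{L^2(\Omega,\vp)}\leq C\|u_\tau\|_{L^2(\Omega,\vp)}$ with $C$ independent of $\tau$: when $\xi''=\im\xi\neq 0$ the modulus $|u_\tau|=|\eta|\,e^{-\tau\xi''\cdot x}$ concentrates at the boundary of $\operatorname{supp}\eta$ as $\tau\to\infty$, precisely where $\eta$ vanishes but $\p_j\eta$ does not, and a saddle-point estimate shows $\|R_j\|/\|u_\tau\|\sim\tau$, so the cross terms $\tau\|R_j\|\|u_\tau\|$ are the same size as the leading $\tau^2\|u_\tau\|^2$ term.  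Second, and more decisively, if one keeps track of those terms (integrating by parts in $\int\eta\,\p_k\eta\, e^{-2\tau\xi''\cdot x}e^{-\vp}$, which produces an extra factor of $\tau\xi_k''$), the quadratic form that the test function actually isolates in the $\tau\to\infty$ limit is $\Re\sum_{j,k}\overline{a_{jk}}\big(\xi_j'\xi_k'+\xi_j''\xi_k''\big)$, i.e.\ the \emph{real}-vector condition applied to $\re\xi$ and $\im\xi$ separately, not the Hermitian condition $\Re\sum\overline{a_{jk}}\xi_j\bar\xi_k$ on the complex vector $\xi$.  This is not just a defect of the method: the Hermitian positivity is genuinely \emph{not} a consequence of coercivity for non-symmetric $(a_{jk})$.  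For example with $a_{11}=a_{22}=1$, $a_{12}=2i$, $a_{21}=0$, $\vp\equiv 0$ one checks that $\Re\oldD(u,u)=\|\nabla u\|_{L^2}^2$ (the imaginary cross term $\Im\int\p_1u\,\overline{\p_2 u}$ vanishes by integration by parts), so $\oldD$ is coercive, yet $\Re\sum\overline{a_{jk}}\xi_j\bar\xi_k=0$ for $\xi=(-i,1)$.  The condition \eqref{eqn:A is positive definite} is therefore to be understood for $\xi\in\R^n$ (equivalently, for the symmetrized coefficient matrix), which is what your test functions prove; the attempted upgrade to complex $\xi$ should be deleted.
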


We can prove existence and uniqueness of weak solutions for operators giving rise to strictly coercive Dirichlet forms.
\begin{thm}\label{thm:existence of weak solns -- strictly coercive}
Let $\X$ be a closed subspace of $W^{1,2}(\Omega,\vp; X)$ that contains $W^{1,2}_0(\Omega,\vp; X)$ and let $\oldD$ be a Dirichlet form that is strictly coercive
over $\X$. There is a bounded, injective operator $A:L^2(\Omega,\vp)\to \X$ that solves the $(\X,\oldD)$ BVP, that is, ${\oldD}(v,Af) = (v,f)_\vp$ for all $v\in\X$ and $f\in L^2(\Omega,\vp)$.
\end{thm}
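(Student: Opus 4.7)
The plan is to recognize this as a direct application of the Lax--Milgram theorem, once the sesquilinear form $\oldD$ is shown to be bounded and the right-hand side $f\mapsto(\cdot,f)_\vp$ is shown to be a bounded antilinear functional on $\X$. Since $\X$ is a closed subspace of the Hilbert space $W^{1,2}(\Omega,\vp;X)$, it is itself a Hilbert space in the inherited inner product, and strict coercivity of $\oldD$ on $\X$ gives exactly the hypothesis needed for Lax--Milgram.

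First I would verify that $\oldD$ is bounded on $\X\times\X$. Using the definition \eqref{eqn:Dirichlet form}, the uniform bounds on $a_{jk},b_j,b_j',b$, and the Cauchy--Schwarz inequality in $L^2(\Omega,\vp)$, we get
\[
|\oldD(v,u)|\leq C_1\|v\|_{W^{1,2}(\Omega,\vp;X)}\|u\|_{W^{1,2}(\Omega,\vp;X)}
\]
for every $u,v\in\X$. Next, for fixed $f\in L^2(\Omega,\vp)$, the map $v\mapsto\overline{(v,f)_\vp}$ is antilinear on $\X$, and since $\X\hookrightarrow W^{1,2}(\Omega,\vp;X)\hookrightarrow L^2(\Omega,\vp)$ continuously,
\[
|(v,f)_\vp|\leq\|v\|_{L^2(\Omega,\vp)}\|f\|_{L^2(\Omega,\vp)}\leq\|v\|_{W^{1,2}(\Omega,\vp;X)}\|f\|_{L^2(\Omega,\vp)}.
\]
By the Lax--Milgram theorem applied to the Hilbert space $\X$ with sesquilinear form $\oldD$ (strictly coercive by hypothesis with $\lambda=0$ in Definition \ref{defn: coercive estimate}), there exists a unique $u\in\X$ with $\oldD(v,u)=(v,f)_\vp$ for all $v\in\X$. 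Define $Af=u$; linearity is immediate from uniqueness.

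To show $A$ is bounded, test the defining equation against $v=Af$ and use strict coercivity:
\[
C\|Af\|_{W^{1,2}(\Omega,\vp;X)}^2\leq\Rre\,\oldD(Af,Af)=\Rre\,(Af,f)_\vp\leq\|Af\|_{L^2(\Omega,\vp)}\|f\|_{L^2(\Omega,\vp)},
\]
which upon bounding the $L^2$-norm by the $W^{1,2}$-norm yields $\|Af\|_{W^{1,2}(\Omega,\vp;X)}\leq C^{-1}\|f\|_{L^2(\Omega,\vp)}$. For injectivity, suppose $Af=0$. Then $0=\oldD(v,Af)=(v,f)_\vp$ for every $v\in\X$. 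Since $C^\infty_c(\Omega)\subset W^{1,2}_0(\Omega,\vp;X)\subset\X$ and $C^\infty_c(\Omega)$ is dense in $L^2(\Omega,\vp)$, we conclude $f=0$.

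The main obstacle, which is essentially absent here, is checking the hypotheses of Lax--Milgram; everything is set up so that this is routine. The only mild subtlety is ensuring that the $L^2$ pairing $(v,f)_\vp$ is controlled by the $W^{1,2}(\Omega,\vp;X)$-norm without needing any Poincar\'e-type inequality on $\X$, which is fine because $W^{1,2}(\Omega,\vp;X)$ continuously embeds in $L^2(\Omega,\vp)$ by definition of the norm.
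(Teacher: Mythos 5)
Your proof is correct and takes exactly the approach the paper intends: the authors state that these solvability results are obtained ``in a straight forward manner'' by adapting the bounded, unweighted case from \cite[Chapter 7]{Fol95}, and that argument is the Lax--Milgram theorem applied on the closed subspace $\X$. Your verification of boundedness of $\oldD$, continuity of $v\mapsto(v,f)_\vp$, derivation of the operator bound for $A$ via testing against $v=Af$, and the injectivity argument via density of $C^\infty_c(\Omega)$ in $L^2(\Omega,\vp)$ all match the standard route.
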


Even in the case $\oldD$ is not strictly coercive, we can still gain information regarding weak solutions.
\begin{thm}\label{thm:weak solutions, info about kernel}
Let $\X$ be a closed subspace of $W^{1,2}(\Omega,\vp; X)$ that contains $W^{1,2}_0(\Omega,\vp; X)$. Let $\oldD$ be a Dirichlet form that is coercive
over $\X$. Define
\[
V = \{u\in \X: {\oldD}(v,u)=0 \text{ for all } v\in \X\}
\]
and
\[
W = \{u\in \X: {\oldD}(u,v)=0\text{ for all } v\in\X\}.
\]
Then $\dim V = \dim W <\infty$. Moreover,
if $f\in L^2(\Omega,\vp)$, there exists $u\in\X$ so that ${\oldD}(v,u) = (v,f)_\vp$ for all $v\in\X$ if and only if $f$ is orthogonal to $W$ in
$L^2(\Omega,\vp)$ in which case the solution is unique modulo $V$. In particular, if $V = W = \{0\}$, the solution always exists and is unique.
\end{thm}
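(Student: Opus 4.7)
My plan is to reduce the theorem to the Riesz--Schauder Fredholm alternative for compact operators on $L^2(\Omega,\vp)$. Since $\oldD$ is coercive over $\X$, the shifted form $\oldD'(v,u) = \oldD(v,u) + \lambda(v,u)_\vp$ is strictly coercive for some $\lambda \geq 0$, so Theorem \ref{thm:existence of weak solns -- strictly coercive} produces a bounded injective operator $A : L^2(\Omega,\vp) \to \X$ with $\oldD'(v, Af) = (v, f)_\vp$ for every $v \in \X$. Rewriting $\oldD(v,u) = (v,f)_\vp$ as $\oldD'(v,u) = (v, f + \lambda u)_\vp$ identifies the $(\X,\oldD)$ BVP with the operator equation
\[
(I - \lambda A)\,u = Af
\]
in $L^2(\Omega,\vp)$; any $L^2$ solution automatically lies in $\X$ via $u = \lambda A u + Af$.

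The compactness of $A$ on $L^2(\Omega,\vp)$ follows from Proposition \ref{prop:weighted and unweighted derivs have equiv norm} applied with $k=1$: the embedding $W^{1,2}(\Omega,\vp;X) \hookrightarrow L^2(\Omega,\vp)$ is compact, and since $\X$ is closed in $W^{1,2}(\Omega,\vp;X)$ the inclusion $\X \hookrightarrow L^2(\Omega,\vp)$ inherits compactness; composing with the bounded $A : L^2(\Omega,\vp) \to \X$ gives the desired compactness. Riesz--Schauder then delivers $\dim \ker(I - \lambda A) = \dim \ker(I - \lambda A^*) < \infty$ and $\range{I - \lambda A} = \ker(I - \lambda A^*)^\perp$ in $L^2(\Omega,\vp)$.

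To match these kernels with $V$ and $W$, I would apply Theorem \ref{thm:existence of weak solns -- strictly coercive} to the strictly coercive adjoint form $(\oldD')^*(v,u) = \overline{\oldD'(u,v)}$, producing $A' : L^2(\Omega,\vp) \to \X$ with $\oldD'(A'g, v) = (g,v)_\vp$. Testing $\oldD'(v, Af) = (v,f)_\vp$ against $v = A'g$ and $\oldD'(A'g, v) = (g,v)_\vp$ against $v = Af$ and comparing the two expressions for $\oldD'(A'g, Af)$ yields $(A'g, f)_\vp = (g, Af)_\vp$, so $A' = A^*$. Unwinding the variational definitions then shows $u \in V$ iff $u = \lambda A u$ and $u \in W$ iff $u = \lambda A^* u$, giving $V = \ker(I - \lambda A)$ and $W = \ker(I - \lambda A^*)$, hence $\dim V = \dim W < \infty$.

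Finally, solvability of the BVP for a given $f \in L^2(\Omega,\vp)$ is equivalent to $Af \in W^\perp$. When $\lambda > 0$, every $w \in W$ satisfies $A^* w = \lambda^{-1} w$, so $(Af, w)_\vp = \lambda^{-1}(f, w)_\vp$ and $Af \perp W$ coincides with $f \perp W$; when $\lambda = 0$ the form is already strictly coercive and $V = W = \{0\}$, so there is nothing to check. Uniqueness modulo $V$ is immediate from linearity. The step that needs real care is the clean identification $A^* = A'$, since the Fredholm characterization of the range is phrased in $L^2(\Omega,\vp)$ while the defining identities for $A$ and $A'$ live in $\X$; the compactness step itself is short, but it relies crucially on the nontrivial compact embedding already furnished by Proposition \ref{prop:weighted and unweighted derivs have equiv norm}.
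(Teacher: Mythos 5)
Your proof is correct and follows the same route the paper intends: the paper omits an explicit proof and simply cites Folland, Chapter 7, where the argument is exactly the Riesz--Schauder Fredholm alternative applied to the solution operator $A$ of the strictly coercive shifted form $\oldD'$, with the weighted-domain ingredient being the compact embedding $W^{1,2}(\Omega,\vp;X)\hookrightarrow L^2(\Omega,\vp)$ from Proposition~\ref{prop:weighted and unweighted derivs have equiv norm}. Your identification $A'=A^*$ via the two evaluations of $\oldD'(A'g,Af)$, the characterizations $V=\ker(I-\lambda A)$ and $W=\ker(I-\lambda A^*)$, and the reduction of $Af\perp W$ to $f\perp W$ when $\lambda>0$ (and the trivial $\lambda=0$ case) are all handled correctly.
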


In the case that $\oldD$ is self-adjoint,
we can prove that $L^2(\Omega,\vp)$ has a basis of eigenvectors. We will see in Proposition \ref{prop:weighted and unweighted derivs have equiv norm} that
$W^{1,2}(\Omega,\vp; X)$ embeds compactly in $L^2(\Omega,\vp)$. Thus,
\begin{thm}\label{thm:basis of eigenvectors}
Let $\X$ be a closed subspace of $W^{1,2}(\Omega,\vp; X)$ that contains $W^{1,2}_0(\Omega,\vp; X)$.
Suppose that $\oldD$ is a Dirichlet form that is coercive
over $\X$ and satisfies $\oldD = {\oldD}^*$. There exists an orthonormal basis $\{u_j\}$ of $L^2(\Omega,\vp)$ consisting of eigenfunctions for the $(\X,\oldD)$ BVP; that is, for each $j$, there
exists $u_j\in\X$ and a constant $\mu_j\in\R$ so that ${\oldD}(v,u_j) = \mu_j(v,u_j)_\vp$ for all $v\in\X$. Moreover, $\mu_j>-\lambda$ for all $j$ where $\lambda$ is the constant
in the coercive estimate (\ref{eqn:coercive estimate}), $\lim_{j\to\infty}\mu_j = \infty$, and $u_j\in C^\infty(\Omega)$ for all $j$.
\end{thm}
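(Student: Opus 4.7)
The plan is to reduce to the strictly coercive case, invoke the existence theorem to produce a compact self-adjoint resolvent-type operator on $L^2(\Omega,\vp)$, and then apply the spectral theorem for compact self-adjoint operators on a Hilbert space. First I would replace $\oldD$ by
\[
\oldD'(v,u) = \oldD(v,u) + \lambda(v,u)_\vp,
\]
which is strictly coercive over $\X$ and still self-adjoint because the correction $\lambda(\cdot,\cdot)_\vp$ is itself a self-adjoint sesquilinear form. Theorem \ref{thm:existence of weak solns -- strictly coercive} then furnishes a bounded injective operator $A\colon L^2(\Omega,\vp)\to \X$ characterized by $\oldD'(v,Af)=(v,f)_\vp$ for every $v\in \X$.

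Next, I would verify that $A$, regarded as an operator on $L^2(\Omega,\vp)$, is both compact and self-adjoint. Compactness follows because $A$ factors as a bounded map into $\X\subset W^{1,2}(\Omega,\vp;X)$ followed by the compact embedding $W^{1,2}(\Omega,\vp;X)\hookrightarrow L^2(\Omega,\vp)$ furnished by Proposition \ref{prop:weighted and unweighted derivs have equiv norm}. For self-adjointness, setting $v=Ag$ in the defining identity gives $\oldD'(Ag,Af)=(Ag,f)_\vp$, so using $\oldD'=(\oldD')^*$ and the defining identity again (with $v=Af$) produces
\[
(Af,g)_\vp = \oldD'(Af,Ag) = \overline{\oldD'(Ag,Af)} = \overline{(Ag,f)_\vp} = (f,Ag)_\vp.
\]

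The spectral theorem for compact self-adjoint operators now yields a countable collection of real eigenvalues $\alpha_j$ with $\alpha_j\to 0$ and corresponding orthonormal eigenvectors $u_j$, and since $A$ is injective the zero eigenspace is trivial, so $(\ker A)^\perp = L^2(\Omega,\vp)$ and $\{u_j\}$ is in fact a complete orthonormal basis with all $\alpha_j\neq 0$. Plugging $v=u_j$ and $f=u_j$ into the defining identity gives $\alpha_j\,\oldD'(u_j,u_j)=1$, and strict coercivity of $\oldD'$ forces $\oldD'(u_j,u_j)>0$, so $\alpha_j>0$. Translating back, $u_j=A(\alpha_j^{-1}u_j)\in \X$ and
\[
\oldD(v,u_j) = \oldD'(v,u_j) - \lambda(v,u_j)_\vp = \bigl(\alpha_j^{-1}-\lambda\bigr)(v,u_j)_\vp \qquad \text{for all } v\in \X,
\]
so $u_j$ solves the $(\X,\oldD)$ eigenvalue problem with $\mu_j = \alpha_j^{-1}-\lambda > -\lambda$, and $\mu_j\to\infty$ because $\alpha_j\to 0^+$. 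Finally, each $u_j$ is a distributional solution of the strongly elliptic equation $Lu_j = \mu_j u_j$ on $\Omega$, so interior elliptic regularity (bootstrapping on the interior regularity result from the preceding section) gives $u_j\in C^\infty(\Omega)$.

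The main obstacle I anticipate is bookkeeping in the self-adjointness calculation: one must track the conjugate-linearity conventions of $\oldD$ carefully to be sure that $\oldD=\oldD^*$ actually delivers $A=A^*$ on $L^2(\Omega,\vp)$. A secondary subtlety is justifying that the spectral theorem produces a genuine orthonormal basis of $L^2(\Omega,\vp)$ rather than merely a system complete in $\overline{\range A}$; this is handled by the observation that $A$ is injective, and hence the closure of its range is all of $L^2(\Omega,\vp)$.
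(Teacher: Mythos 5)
Your proposal is correct and follows exactly the approach the paper intends: the paper defers the proof of this and the surrounding solvability results to Folland's Chapter 7, stating only that the weighted Sobolev theory developed earlier (in particular the compact embedding $W^{1,2}(\Omega,\vp;X)\hookrightarrow L^2(\Omega,\vp)$ from Proposition \ref{prop:weighted and unweighted derivs have equiv norm}) makes the classical argument go through. Your reduction to the strictly coercive form $\oldD'$, the construction of the compact self-adjoint solution operator $A$ via Theorem \ref{thm:existence of weak solns -- strictly coercive}, the spectral-theorem step with injectivity of $A$ ensuring completeness, the translation $\mu_j=\alpha_j^{-1}-\lambda$, and the interior bootstrap via Theorem \ref{thm: elliptic regularity, higher order, interior} are precisely the Folland argument transplanted to this setting.
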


\subsection{Elliptic regularity -- estimates}

We can prove elliptic regularity in the interior of $\Omega$ in Section \ref{sec:interior est}.
\begin{thm}\label{thm: elliptic regularity, higher order, interior}
Let $\Omega\subset\R^n$ satisfy {(HI)}-{(HV)} for some $m\geq 2$. Let $L$ be defined by (\ref{eqn:L def'n}) and
$a_{jk},  b_j'\in C^{\ell+1}(\Omega)\cap W^{\ell+1,\infty}(\Omega)$ and $b_j,b \in C^\ell(\Omega)\cap W^{\ell,\infty}(\Omega)$ for some $0\leq\ell\leq m$.
Assume that $f\in W^{\ell,2}(\Omega,\vp; X)$ and $L$ is strongly elliptic.
Suppose that  $u\in W^{1,2}(\Omega,\vp; X)$ is a weak solution (i.e., ${\oldD}(v,u) = (v,f)_\vp$ for all $v\in W^{1,2}_0(\Omega,\vp; X)$) of the elliptic PDE
\[
Lu =f \quad\text{in }\Omega.
\]
Then $u\in W^{\ell+2,2}_{\loc}(\Omega,\vp; X)$ and if $V\subset\Omega$ is open and satisfies $\dist(V,\bd\Omega)>0$,
\begin{equation}\label{eqn:u in W^m+2 is bounded by f, u in W^m -- interior}
\| u \|_{W^{\ell+2,2}(V,\vp;X)} \leq C ( \| f \|_{W^\ell(\Omega,\vp; X)} + \|u\|_{L^2(\Omega,\vp)})
\end{equation}
where $C = C(\dist(V,\bd\Omega),C_{|\alpha|},\|a_{jk}\|_{C^{\ell+1}(\Omega)},\|b_j\|_{C^{\ell}(\Omega)}, \|b_j'\|_{C^{\ell+1}(\Omega)}, \|b\|_{C^\ell(\Omega)},n,\theta,\ell)$.
\end{thm}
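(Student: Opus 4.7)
The plan is to follow the classical interior elliptic regularity argument of \cite[Chapter 7]{Fol95}, making two adjustments for our setting: we work with the weighted derivatives $X_j = D_j - \frac{\p\vp}{\p x_j}$ in place of $D_j$, and, since both $\Omega$ and $V$ may be unbounded, we localize via a partition of unity of bounded multiplicity so that summing local estimates produces the global bound \eqref{eqn:u in W^m+2 is bounded by f, u in W^m -- interior}. The argument is by induction on $\ell$.

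For the base case $\ell=0$, set $\delta = \dist(V,\bd\Omega)$ and cover $V$ by a locally finite family of balls $B_j = B(x_j,\delta/8)$ with $x_j \in V$, carrying a subordinate partition of unity $\{\eta_j\}$ satisfying $\eta_j \in C^\infty_c(2B_j)$ and $|\nabla^s \eta_j| \leq C_s(\delta)$ uniformly in $j$; the enlarged balls $2B_j$ lie in $\Omega$ and the cover has multiplicity bounded by a constant depending only on $n$. For each ball and each coordinate direction $x_k$, the test function $v = \Delta_{-h}^k(\eta_j^2 \Delta_h^k u)$ is compactly supported in $\Omega$ for $|h|$ small, hence lies in $W^{1,2}_0(\Omega,\vp;X)$. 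Substituting into the weak identity ${\oldD}(v,u) = (v,f)_\vp$, shifting the difference quotients across (the smoothness of $\vp$ on the compact set $2B_j$ yielding controlled correction terms), and applying G\aa rding's inequality (Theorem \ref{thm: Garding's inequality}) yield, after absorbing,
\[
\|\eta_j \Delta_h^k u\|_{W^{1,2}(\Omega,\vp;X)}^2 \leq C\bigl(\|f\|_{L^2(2B_j,\vp)}^2 + \|u\|_{W^{1,2}(2B_j,\vp;X)}^2\bigr)
\]
uniformly in $h$. Letting $h \to 0$, combining over $k$, squaring, summing over $j$, and absorbing the remaining $W^{1,2}$ norm into a small multiple of $\|u\|_{W^{2,2}(V,\vp;X)}$ plus a large multiple of $\|u\|_{L^2(\Omega,\vp)}$ via an interpolation inequality gives the case $\ell=0$.

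For the inductive step, assume the estimate for $\ell-1$. Given $f \in W^{\ell,2}(\Omega,\vp;X)$ and a multi-index $\alpha$ with $|\alpha| = \ell$, formal differentiation of $Lu=f$ yields $L(X^\alpha u) = X^\alpha f - [L,X^\alpha] u$, interpreted weakly on $\Omega$. Since the $X_j$ mutually commute, the commutator $[L,X^\alpha]$ is a differential operator of order at most $\ell+1$ whose coefficients involve at most $\ell+1$ derivatives of $a_{jk}, b_j, b_j', b$ (controlled by the hypotheses on the coefficients of $L$) together with iterated $X$-derivatives of $\frac{\p\vp}{\p x_j}$; expanding the latter produces sums of products of derivatives of $\vp$ of orders up to $\ell+2$, each bounded by $1+|\nabla\vp|$ via hypothesis (HIV). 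Consequently $[L,X^\alpha]u \in L^2_{\loc}(\Omega,\vp)$ by the inductive hypothesis $u\in W^{\ell+1,2}_{\loc}(\Omega,\vp;X)$, and applying the base case to $X^\alpha u$ on a slightly larger neighborhood of $V$ (still having positive distance to $\bd\Omega$) produces $X^\alpha u \in W^{2,2}_{\loc}(\Omega,\vp;X)$ together with the claimed estimate, closing the induction. The main obstacle, as expected, is the commutator bookkeeping: derivatives of $\vp$ of arbitrarily high order appear and must be absorbed into the weighted norms, and hypothesis (HIV) is precisely what makes this possible by bounding each such factor by $1+|\nabla\vp|$, which is already controlled through the definition of $X_j$. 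Once this is handled, the interior estimate reduces cleanly to the bounded, unweighted argument in \cite{Fol95}.
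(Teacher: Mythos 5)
Your overall architecture (difference quotients for the base case, commuting $X^\alpha$ past $L$ for the inductive step) is a reasonable alternative to the paper, which instead uses smooth approximation $u_\ep\to u$ together with test functions $v_\ep = X_\ell^*(\zeta^2 X_\ell u_\ep)$ for $\ell=0$ and tests against $(X^\alpha)^*\tilde v$ for the induction — formally the same computation as your commutator identity. However, there are two genuine gaps in your base case.

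First, the parenthetical ``the smoothness of $\vp$ on the compact set $2B_j$ yielding controlled correction terms'' does not hold up. When $V$ is unbounded (which is allowed — e.g.\ $V=\{x:\dist(x,\bd\Omega)>1\}$), the balls $2B_j$ escape to infinity and on them $|\nabla^2\vp|$ is \emph{not} uniformly bounded by mere smoothness; it can grow like $|\nabla\vp|$. The commutator terms from shifting $\Delta_{\pm h}^k$ past $X_j$ are comparable to $(\p^2\vp/\p x_j\p x_k)\,\eta_j^2\Delta_h^k u$, and to get a $j$-independent constant you must invoke (HIV) together with the structural identity $\p\vp/\p x_j=-(X_j+X_j^*)$ (as in Corollary \ref{cor:adjoint dominated in L^2}), so that the factor $1+|\nabla\vp|$ is absorbed by $\|\nabla_X(\cdot)\|_{L^2(\cdot,\vp)}$. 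You correctly identify this mechanism for the inductive step, but the base case is exactly where it is first needed and is exactly where you gloss over it. Without it, the local estimates are not summable over an unbounded cover.

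Second, the closing step ``absorbing the remaining $W^{1,2}$ norm into a small multiple of $\|u\|_{W^{2,2}(V,\vp;X)}$ plus a large multiple of $\|u\|_{L^2(\Omega,\vp)}$ via an interpolation inequality'' is not available as stated. After summing the local estimates you have $\|u\|_{W^{1,2}(W,\vp;X)}$ on the right for a strictly larger set $W\supset V$, and the interpolation inequality would produce $\ep\|u\|_{W^{2,2}(W,\vp;X)}$ — a norm over $W$, not $V$ — which you cannot absorb into the left side. (Moreover, Proposition \ref{prop:interpolation of Sobolev norms (only integers)} requires (HVI), which this theorem does not assume.) The paper avoids this entirely by going back to the weak formulation with test function $\eta^2 u$ (a Caccioppoli-type energy estimate) to obtain $\|\nabla_X u\|_{L^2(W,\vp)}\leq C(\|f\|_{L^2}+\|u\|_{L^2})$ directly, with no recourse to a $W^{2,2}$ norm on the larger set. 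You should replace the interpolation absorption by this argument (or by a careful iteration over nested domains with scaled constants, which is more work to make rigorous).

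Your inductive step, once the sign typo in $L(X^\alpha u)=X^\alpha f+[L,X^\alpha]u$ is fixed, is essentially the paper's argument in dual form: the paper tests against $(X^\alpha)^*\tilde v$ and integrates by parts, arriving at the same commutator structure; your version differentiates the equation and then applies the base case. Either is fine, provided you note that $[X_j^*,X_k]=\p^2\vp/\p x_j\p x_k$ (so the commutator genuinely produces Hessian-of-$\vp$ terms, not zero), which is the content of the paper's explicit computation of $[X_j,(X^\alpha)^*]^*$.
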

Note that the inequality in Theorem \ref{thm: elliptic regularity, higher order, interior} is not an \emph{a priori} inequality. The meaning of (\ref{eqn:u in W^m+2 is bounded by f, u in W^m -- interior}) is that if the
right-hand side is finite, then $u\in W^{\ell+2,2}(V,\vp;X)$.

The case that $\Omega$ is bounded is not the only case for which we know the hypothesis that $u\in W^{1,2}(\Omega,\vp; X)$ is satisfied.
Indeed, we if combine Theorem \ref{thm:existence of weak solns -- strictly coercive}
and Theorem \ref{thm:weak solutions, info about kernel} with Theorem \ref{thm: elliptic regularity, higher order, interior} for $\ell=0$, we have the
following corollary.
\begin{cor} Let $L$, $V$, and $\Omega$ be as in Theorem \ref{thm: elliptic regularity, higher order, interior}. Let $\X$ be a closed subspace of $W^{1,2}(\Omega,\vp; X)$ that contains
$W^{1,2}_0(\Omega,\vp; X)$. Let $\oldD$ be the Dirichlet form corresponding to $L$ in $\X$. If any of the following conditions hold:
\begin{enumerate}\renewcommand{\labelenumi}{(\roman{enumi})}
\item $\oldD$ is strictly coercive,
\item $f \perp W = \{w\in\X: {\oldD}(w,v)=0 \text{ for all }v\in\X\}$ and $u$ is the weak solution that is orthogonal to $V$,
\end{enumerate}
then $u\in \X$ and hence in $W^{2,2}(V,\vp;X)$.
\end{cor}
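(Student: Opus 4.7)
The plan is simple assembly: in each of cases (i) and (ii), I will produce a weak solution $u\in\X$ satisfying $\oldD(v,u)=(v,f)_\vp$ for all $v\in\X$, and then invoke Theorem \ref{thm: elliptic regularity, higher order, interior} with $\ell=0$ to promote it to $W^{2,2}(V,\vp;X)$.

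For case (i), strict coercivity of $\oldD$ over $\X$ is precisely the hypothesis of Theorem \ref{thm:existence of weak solns -- strictly coercive}, which furnishes a bounded injective operator $A:L^2(\Omega,\vp)\to\X$ solving the $(\X,\oldD)$ BVP; setting $u=Af$ gives a weak solution in $\X$. For case (ii), coercivity of $\oldD$ places us within the scope of Theorem \ref{thm:weak solutions, info about kernel}; the condition $f\perp W$ is exactly the solvability criterion there, so a weak solution in $\X$ exists, and because uniqueness holds modulo $V$, the additional requirement that the solution be orthogonal to $V$ singles out the distinguished $u$.

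In either case $u\in\X\subset W^{1,2}(\Omega,\vp;X)$ with $\oldD(v,u)=(v,f)_\vp$ for all $v\in\X$. Because $W^{1,2}_0(\Omega,\vp;X)\subset\X$, restricting the test functions to $v\in W^{1,2}_0(\Omega,\vp;X)$ will show that $u$ is a weak solution of $Lu=f$ in precisely the sense demanded by Theorem \ref{thm: elliptic regularity, higher order, interior}. The coefficient and regularity hypotheses at level $\ell=0$ are part of the standing assumptions on $L$ inherited from that theorem, and $f\in L^2(\Omega,\vp)=W^{0,2}(\Omega,\vp;X)$, so Theorem \ref{thm: elliptic regularity, higher order, interior} applies and yields $u\in W^{2,2}_{\loc}(\Omega,\vp;X)$ together with the interior estimate on any $V$ with $\dist(V,\bd\Omega)>0$.

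There is no genuine obstacle here: the argument is essentially pure bookkeeping, matching the conclusions of the existence theorems to the input hypotheses of the interior regularity theorem. The only step worth spelling out explicitly is the narrowing of the test-function space from $\X$ down to $W^{1,2}_0(\Omega,\vp;X)$, after which Theorem \ref{thm: elliptic regularity, higher order, interior} applies without modification.
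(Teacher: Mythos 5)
Your proposal is correct and matches the paper's intended argument exactly: the text preceding the corollary explicitly describes it as obtained by combining Theorem \ref{thm:existence of weak solns -- strictly coercive} (for case (i)), Theorem \ref{thm:weak solutions, info about kernel} (for case (ii)), and Theorem \ref{thm: elliptic regularity, higher order, interior} with $\ell=0$. The only unstated step you also left implicit is that the coercivity needed to invoke Theorem \ref{thm:weak solutions, info about kernel} in case (ii) comes from G\aa rding's inequality (Theorem \ref{thm: Garding's inequality}), since $L$ is strongly elliptic by hypothesis.
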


We can also prove that elliptic regularity holds near the boundary for weak solutions of the partial differential equation $Lu=f$, in Sections \ref{sec:proof_sol'n regularity near boundary, higher order_0} and \ref{sec:proof_sol'n regularity near boundary, higher order_1}.
%
%
\begin{thm}\label{thm:sol'n regularity near boundary, higher order}Let $\Omega\subset\R^n$ satisfy {(HI)}-{(HVI)} with $m\geq 3$. Let $0\leq \ell\leq m-3$ and
the operator $L$ be defined by (\ref{eqn:L def'n})
where $a_{jk}, b_j' \in C^{\ell+1}(\bar\Omega)\cap W^{\ell+1,\infty}(\bar\Omega)$ and $b_j, b\in W^{\ell,\infty}(\Omega)$. Assume that $f\in W^{\ell,2}(\Omega,\vp; X)$
and $L$ is strongly elliptic.
Let $\X$ be a closed subspace of $W^{1,2}(\Omega,\vp; X)$ that contains $W^{1,2}_0(\Omega,\vp; X)$.
Suppose that  $u\in \X$ is a weak solution (i.e., ${\oldD}(v,u) = (v,f)_\vp$ for all $v\in \X$) of the elliptic PDE
\[
Lu =f \quad\text{in }\Omega.
\]
Then $u\in W^{\ell+2,2}(\Omega,\vp; X)$ and
\begin{equation}\label{eqn:u in W^m+2 is bounded by f, u in W^m, near bdy}
\| u \|_{W^{\ell+2,2}(\Omega,\vp; X)} \leq C ( \| f \|_{W^\ell(\Omega,\vp; X)} + \|u\|_{L^2(\Omega,\vp)})
\end{equation}
where $C = C(M_\ell,\|a_{jk}\|_{C^{\ell+1}(\Omega)},\|b_j\|_{C^{\ell+1}(\Omega)}, \|b_j'\|_{C^{\ell+1}(\Omega)}, \|b\|_{C^{\ell+1}(\Omega)},n,\theta, \|\rho\|_{C^{\ell+3}(\Omega)})$.
\end{thm}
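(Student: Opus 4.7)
The plan is to prove Theorem \ref{thm:sol'n regularity near boundary, higher order} by induction on $\ell$, combining the interior regularity result Theorem \ref{thm: elliptic regularity, higher order, interior} with boundary estimates obtained through the tangential difference-quotient method, adapted from the bounded, unweighted treatment in \cite{Fol95}. First I use the uniformly-$C^m$ cover $\{U_j\}$ and partition of unity $\{\chi_j\}$ provided by (HI) to write $u=\sum_j\chi_j u$. Patches supported at positive distance from $M$ are handled by the interior theorem, so the new content concerns boundary patches. The uniform $C^m$ control on the $\chi_j$ and the uniform $C^{m-1}$ control on the local frame $\oldL_1,\dots,\oldL_n$ guarantee that a uniform-in-$j$ estimate on a single boundary patch can be summed to give \eqref{eqn:u in W^m+2 is bounded by f, u in W^m, near bdy}, using Proposition \ref{prop:weighted and unweighted derivs have equiv norm} to pass freely between $X$- and $D$-norms as needed.

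For the base case $\ell=0$, work in a boundary patch with tangential frame $\oldL_1,\ldots,\oldL_{n-1}$ and outward normal $\oldL_n=\p/\p\nu$. For a tangential direction $t\in\Span\{\oldL_1,\ldots,\oldL_{n-1}\}$, test the weak formulation ${\oldD}(v,u)=(v,f)_\vp$ against
\[
v=-D^{-h}_t\!\left(\chi_j^2\, D^h_t u\right),
\]
where $D^h_t$ is a tangential finite-difference quotient. Tangentiality of $t$ and compact support of $\chi_j$ in the chart keep $v\in\X$. G\aa rding's inequality (Theorem \ref{thm: Garding's inequality}) applied to $\chi_j D^h_t u$, together with summation-by-parts for $D^h_t$ in the weighted inner product, then yields a bound of the form
\[
\|\chi_j D^h_t u\|_{W^{1,2}(\Omega,\vp;X)}^2 \leq C\bigl(\|f\|_{L^2(\Omega,\vp)}^2 + \|u\|_{W^{1,2}(\Omega,\vp;X)}^2\bigr),
\]
uniformly in $h$, once the commutators $[D^h_t,\chi_j]$, $[D^h_t,\vp]$, and $[D^h_t,a_{jk}]$ are absorbed using (HI)-(HV). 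Letting $h\to 0$ yields $T_k u\in W^{1,2}$ on the patch for every tangential $T_k$. Strong ellipticity now lets me isolate $X_n^2 u$ in the expression $Lu=f$, since the coefficient $a_{nn}$ satisfies $|a_{nn}|\geq\theta$; the remaining terms contain at most one $X_n$-derivative of $u$ plus tangential second derivatives, all of which are already in $L^2(\Omega,\vp)$, so $u\in W^{2,2}$ on the patch.

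The induction step $\ell-1\Rightarrow\ell$ proceeds analogously. Differentiating $Lu=f$ by a tangential $T$ gives $L(Tu)=Tf-[L,T]u$ with $[L,T]$ a second-order operator whose coefficients lie in $C^\ell(\bar\Omega)$; the inductive hypothesis places $Tf-[L,T]u$ in $W^{\ell-1,2}(\Omega,\vp;X)$, so a second application of the difference-quotient argument above (now using higher iterated tangential quotients) promotes all purely tangential derivatives of $u$ through order $\ell+2$ to $L^2(\Omega,\vp)$. Additional normal derivatives are recovered algebraically by differentiating the equation: at each stage $X_n^{k+1}u$ is expressed via $a_{nn}^{-1}$ in terms of $f$ and derivatives of $u$ of total order $\leq k+1$ with normal order $\leq k$, all of which have been controlled at the previous stage. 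The main obstacle, and the reason the proof is not merely quoted from \cite{Fol95}, is the bookkeeping in the absorption step: verifying that the coefficient and commutator errors do not degenerate as one sums over the possibly infinite boundary cover. This requires the uniform $C^m$-estimates of (HI), the pointwise bound (HIV) controlling $|\nabla^k\vp|$ by $1+|\nabla\vp|$, and (HVI), which prevents $\p\vp/\p\nu$ from saturating $|\nabla\vp|$ near infinity along $M$ and thus allows the boundary error terms produced by the tangential difference quotient to be absorbed into the G\aa rding coercivity rather than forcing a growth in the estimate.
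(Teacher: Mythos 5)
Your overall architecture — gain tangential regularity first, recover normal derivatives algebraically from the equation using $a_{nn}\geq\theta$, and induct on $\ell$ — matches the paper's proof. The crucial divergence is your reliance on unweighted tangential difference quotients $D^h_t$, and this is where the argument breaks.

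The step where you test against $v=-D^{-h}_t(\chi_j^2 D^h_t u)$ and claim the resulting bound is uniform in $j$ does not survive the weight. When you sum $D^{-h}_t$ by parts in the inner product $(\,\cdot\,,\,\cdot\,)_\vp=\int\cdot\,\overline{\cdot}\,e^{-\vp}$, or estimate $\|D^h_t u\|_{L^2(\Omega,\vp)}$, you inevitably produce the ratio $e^{-\vp(x+ht)}/e^{-\vp(x)}$. For the model weight $\vp(x)=t|x|^2$ this ratio is of size $e^{\pm c\,th\,|x|}$, so it is not bounded uniformly over $\Omega$: across a patch of unit diameter the weight varies by a factor that grows exponentially in $|\nabla\vp|$, hence exponentially as the patches march to infinity. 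The commutator $[D^h_t,\vp]u(x)=\big(D^h_t\vp\big)(x)\,u(x+ht)$ inherits the same pathology, and none of (HI), (HIV), (HVI) repair it: (HIV) controls $|\nabla^k\vp|$ pointwise by $1+|\nabla\vp|$, which is an infinitesimal statement, not a bound on the finite difference $\vp(x+ht)-\vp(x)$; (HVI) constrains only the normal component of $\nabla\vp$ on $M$; and (HI) is purely about $\rho$. So the putative bound
$\|\chi_j D^h_t u\|_{W^{1,2}(\Omega,\vp;X)}^2 \leq C(\|f\|_{L^2(\Omega,\vp)}^2 + \|u\|_{W^{1,2}(\Omega,\vp;X)}^2)$
with $C$ independent of $j$ is not available, and the sum over the cover does not close.

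The paper sidesteps exactly this by never introducing a finite translation. It works directly with the tangential first-order operators $T_k=\sum_{\ell}\tau_{k\ell}X_\ell$: local $W^{2,2}$ regularity on each bounded patch is taken from the classical theory (where the weight is irrelevant because it is locally bounded above and below), and one then tests against $T_k^*(\zeta^2 T_k u_\ep)$ for a smooth approximant $u_\ep$ (Proposition \ref{prop:density of C^infty_c}), which is legitimate since at most two derivatives of $u_\ep$ ever appear. The commutators that arise are $[X_j,T_k^*]$ and $[T_k,X_{j'}]$, which produce second derivatives of $\vp$ (and derivatives of $\tau_{k\ell}$), and those are controlled pointwise by $1+|\nabla\vp|$ via (HIV) and then absorbed via Corollary \ref{cor:adjoint dominated in L^2}. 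That is where the uniform-in-patch estimate actually comes from. If you want to salvage a difference-quotient formulation, you would have to use a weight-twisted quotient of the form $\big(e^{(\vp(x)-\vp(x+ht))/2}u(x+ht)-u(x)\big)/h$ so that the weighted $L^2$ norm is preserved, and redo the commutator bookkeeping; you would essentially be reconstructing the paper's $T_k$-based computation in the $h\to0$ limit. The induction step ($L(T^\alpha u)=T^\alpha f - [L,T^\alpha]u$) and the normal-derivative recovery in your sketch are otherwise consonant with the paper's argument, but they inherit the same uniform-in-$j$ difficulty wherever a bare difference quotient is used.
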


\subsection{Boundary values of $L$-harmonic functions.}
We conclude the paper with a study of the boundary values of $L$-harmonic functions.
The goal is to show that $L$-harmonic functions (i.e., functions $u$ satisfying $Lu=0$) have unique boundary values in $W^{s-1/2,2}(\bd\Omega,\vp;T)$
when $u\in W^{s,2}(\Omega,\vp; X)$ and $s\geq 0$.

We first establish a simple but easily applicable uniqueness condition.
Let $L$ be a strongly elliptic second order operator. We would like to understand conditions on $L$ so that if $Lu=0$ and $u|_M=0$, then $u=0$.
Theorem \ref{thm:existence of weak solns -- strictly coercive} present one condition, and we will show the following in Section \ref{sec:traces_harmonic_functions}.
\begin{lem}\label{lem:D dominating 1 norms makes V=0}Let $\Omega\subset\R^n$ be a domain that satisfies {(HII)}.
Let $L$ be a strongly elliptic operator that has a Dirichlet form $\oldD$ so that for all $u\in W^{1,2}_0(\Omega,\vp; X)$ there exists a constant $c$ satisfying
\begin{equation}\label{eqn:D dominates 1 norm}
\Rre {\oldD}(u,u)\geq c \|\nabla_X u\|_{L^2(\Omega,\vp)}.
\end{equation}
If $Lu=0$ and $u\in W^{1,2}_0(\Omega,\vp; X)$, then $u\equiv0$.
\end{lem}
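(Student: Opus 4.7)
The plan has three steps.

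First, I would show $\oldD(u,u)=0$. Since $u \in W^{1,2}_0(\Omega,\vp;X)$, pick $\psi_\ell \in C^\infty_c(\Omega)$ with $\psi_\ell \to u$ in $W^{1,2}(\Omega,\vp;X)$. The defining identity $\oldD(v,u) = (v, Lu)_\vp$ for $v \in C^\infty_c(\Omega)$, together with $Lu=0$ in the distributional sense, gives $\oldD(\psi_\ell,u) = 0$ for each $\ell$. Boundedness of the coefficients $a_{jk}, b_j, b_j', b$ makes $\oldD$ continuous on $W^{1,2}(\Omega,\vp;X)\times W^{1,2}(\Omega,\vp;X)$, so letting $\ell\to\infty$ yields $\oldD(u,u)=0$.

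Second, I would apply the hypothesis (\ref{eqn:D dominates 1 norm}) to $u$ to get $0 = \Rre\oldD(u,u) \geq c\,\|\nabla_X u\|_{L^2(\Omega,\vp)}$, so $\nabla_X u = 0$. Because $X_j = e^\vp D_j e^{-\vp}$, this means $D_j(e^{-\vp}u) = 0$ for every $j$, and hence $u = c_i e^\vp$ on each connected component $\Omega_i$ of $\Omega$, for some constants $c_i$.

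Third, I would use (HII) to derive a Poincar\'e-type inequality and conclude $c_i=0$. Starting from the operator identity $\sum_j X_j^* X_j = -\triangle + \triangle\vp + \nabla\vp\cdot\nabla$ and integrating by parts gives, for $\psi\in C^\infty_c(\Omega)$,
\[
\sum_j \|X_j\psi\|^2_{L^2(\Omega,\vp)} = \|\nabla \psi\|^2_{L^2(\Omega,\vp)} + \int_\Omega (\triangle\vp)|\psi|^2 e^{-\vp}\,dV.
\]
Writing $D_j\psi = X_j\psi + (D_j\vp)\psi$ and applying Cauchy--Schwarz with a weighted AM--GM using the $\theta\in(0,1)$ from (HII) produces
\[
(1-\theta)\int_\Omega \bigl(\theta|\nabla\vp|^2 + \triangle\vp\bigr)|\psi|^2 e^{-\vp}\,dV \leq \|\nabla_X \psi\|^2_{L^2(\Omega,\vp)}\qquad(\psi\in C^\infty_c(\Omega)).
\]
Setting $F:=\theta|\nabla\vp|^2+\triangle\vp$, (HII) makes $F_-$ bounded with bounded support, so Fatou on $F_+$ and dominated convergence on $F_-$ extend this inequality to $u\in W^{1,2}_0(\Omega,\vp;X)$. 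Because the indicator $\chi_{\Omega_i}$ of any open-closed component is locally constant on $\Omega$ (hence smooth) and therefore $\chi_{\Omega_i}u \in W^{1,2}_0(\Omega,\vp;X)$ with $\nabla_X(\chi_{\Omega_i}u)=0$, applying the extended inequality to $\chi_{\Omega_i}u$ and inserting $u=c_i e^\vp$ on $\Omega_i$ produces
\[
|c_i|^2 \int_{\Omega_i} F\, e^{\vp}\,dV \leq 0.
\]
Since $F\to+\infty$ at infinity in $\Omega_i$ by (HII), this must force $c_i=0$ on each component, and hence $u\equiv 0$.

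The main obstacle is this last deduction: $Fe^\vp$ can still be negative on a bounded subset of $\Omega_i$ where $F<0$, so the strict positivity of $\int_{\Omega_i}Fe^\vp\,dV$ is not a pointwise consequence of (HII). The natural finishing blow is either to exploit the freedom to take $\theta$ close to $1$ in (HII) (the hypothesis is stable under increasing $\theta$ within $(0,1)$) together with the divergence rewriting $Fe^\vp = (\theta-1)|\nabla\vp|^2 e^\vp + \nabla\cdot(\nabla\vp\,e^\vp)$ to extract a strictly positive boundary contribution, or to combine the integral bound with the vanishing trace of $u\in W^{1,2}_0(\Omega,\vp;X)$ on the Lipschitz boundary $\bd\Omega$, which forces $c_i e^\vp|_{\bd\Omega_i}=0$ and hence $c_i=0$ directly.
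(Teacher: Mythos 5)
Your first two steps are correct and essentially match the paper, though you give a more careful density argument for $\oldD(u,u)=0$ than the paper spells out. Where you diverge is in how to finish from $\nabla_X u = 0$. The paper invokes Corollary \ref{cor:adjoint dominated in L^2} (which is exactly where {(HII)} is used): $\|\nabla u\|_{L^2(\Omega,\vp)} \lesssim \|\nabla_X u\|_{L^2(\Omega,\vp)} = 0$ for $u\in W^{1,2}_0(\Omega,\vp;X)$, so $\nabla u = 0$, $u$ is constant on each component, and the vanishing trace forces $u\equiv 0$. You instead observe directly that $\nabla_X u = 0$ means $e^{-\vp}u$ is locally constant, so $u = c_i e^\vp$ on each component $\Omega_i$. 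That step is valid.

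The Poincar\'e route you then pursue is where the proposal runs into trouble, and you correctly identify the gap yourself: the integral $\int_{\Omega_i} F e^\vp\,dV$ with $F = \theta|\nabla\vp|^2 + \triangle\vp$ need not be positive, or even finite, so the inequality $(1-\theta)|c_i|^2\int_{\Omega_i} F e^\vp\,dV \le 0$ does not pin down $c_i$. Your proposed fix (a), taking $\theta$ close to $1$ and using the identity $Fe^\vp = (\theta-1)|\nabla\vp|^2e^\vp + \nabla\cdot(\nabla\vp\,e^\vp)$, does not rescue it: $\int_{\Omega_i}|\nabla\vp|^2 e^\vp$ is typically divergent (e.g., $\vp=t|x|^2$, $t>0$), and the divergence term generates boundary contributions at infinity that you have no control over. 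The entire Poincar\'e detour is unnecessary. Your fix (b) is the right one and is actually self-contained: since $u\in W^{1,2}_0(\Omega,\vp;X)$ has vanishing trace on the Lipschitz boundary $\bd\Omega$, and $u=c_ie^\vp$ on each component with $e^\vp>0$, the constants $c_i$ all vanish. That is exactly the finishing move the paper uses (applied to $u=c_i$ rather than $u=c_ie^\vp$), and it renders (HII) dispensable for your version of the argument. If you commit to fix (b) from the outset and delete the Poincar\'e discussion, you have a correct and somewhat leaner proof than the paper's; as written, though, the "main" route contains a genuine gap with the alternative finish left as an afterthought.
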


\begin{rem} If the operator $L$ is of the form $L = \sum_{j,k=1}^n X_j^* a_{jk} X_k + b$ where $b>0$, then $L$ satisfies (\ref{eqn:D dominates 1 norm}).
\end{rem}

With this restriction on $\oldD$, we can prove in Sections \ref{sec:proof_L times Tr is an isomorphism_2} and \ref{sec:proof_L times Tr is an isomorphism_1}:
\begin{thm}\label{thm:L times Tr is an isomorphism}
Let $\Omega\subset\R^n$ be a domain that satisfies {(HI)}-{(HVI)} for $m=2$.
Let $L$ be a strongly elliptic operator that has a Dirichlet form $\oldD$ which satisfies (\ref{eqn:D dominates 1 norm}). The map sending
\[
u \mapsto (Lu, \Tr u)
\]
is an isomorphism from
\[
W^{s,2}(\Omega,\vp; X) \to W^{s-2,2}(\Omega,\vp; X) \times W^{s-1/2,2}(\bd\Omega,\vp;T)
\]
for $1\leq s \leq m-1$.
\end{thm}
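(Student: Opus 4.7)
The plan is to show that $\mathcal{T}:u\mapsto(Lu,\Tr u)$ is a bounded linear bijection from $W^{s,2}(\Omega,\vp;X)$ onto $W^{s-2,2}(\Omega,\vp;X)\times W^{s-1/2,2}(\bd\Omega,\vp;T)$ and then invoke the Open Mapping Theorem to conclude it is an isomorphism. Boundedness of the trace component is Lemma~\ref{lem:traces exist for u in sobo spaces on M}, together with the Sobolev/Besov identification noted in the introduction. For the first component one introduces $W^{s-2,2}(\Omega,\vp;X)$ (for $s<2$) as the dual of $W^{2-s,2}_0(\Omega,\vp;X)$ under the weighted pairing $(\cdot,\cdot)_\vp$ and interprets $Lu$ as the functional $v\mapsto\oldD(v,u)$; the bound $\|Lu\|_{W^{s-2,2}}\leq C\|u\|_{W^{s,2}}$ then follows directly from the $C^1$/$L^\infty$ control on the coefficients $a_{jk},b_j,b_j',b$ built into the definition of $\oldD$.

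First I would dispose of injectivity: if $Lu=0$ and $\Tr u=0$, then Theorem~\ref{thm:Trace Theorem for integer m on M} places $u$ in $W^{1,2}_0(\Omega,\vp;X)$, and Lemma~\ref{lem:D dominating 1 norms makes V=0} forces $u\equiv 0$. For surjectivity I would use a standard extend-and-reduce strategy. Given data $(f,g)$, apply Theorem~\ref{thm:traces of normal derivatives} (with $\ell'=0$) to construct an extension $G\in W^{s,2}(\Omega,\vp;X)$ supported in a tubular neighborhood of $\bd\Omega$ with $\Tr G=g$ and $\|G\|_{W^{s,2}}\leq C\|g\|_{W^{s-1/2,2}}$. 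Setting $\tilde f:=f-LG\in W^{s-2,2}(\Omega,\vp;X)$ reduces the problem to finding $w\in W^{s,2}\cap W^{1,2}_0$ with $\oldD(v,w)=\langle\tilde f,v\rangle$ for every $v\in W^{1,2}_0$; then $u=w+G$ is the desired preimage.

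At the base level $s=1$ (the case forced by $m=2$), I would combine hypothesis~\eqref{eqn:D dominates 1 norm} with a weighted Poincar\'e inequality on $W^{1,2}_0(\Omega,\vp;X)$---available from (HII) through the compact embedding $W^{1,2}_0\hookrightarrow L^2(\Omega,\vp)$ that comes out of Proposition~\ref{prop:weighted and unweighted derivs have equiv norm}---to promote the hypothesis to strict coercivity on $W^{1,2}_0$. Theorem~\ref{thm:existence of weak solns -- strictly coercive} then produces $w$ uniquely in $W^{1,2}_0$ with the expected norm bound. For $s>1$ (reading the theorem beyond $m=2$), I would bootstrap via Theorem~\ref{thm:sol'n regularity near boundary, higher order} to promote $w$ from $W^{1,2}_0$ to $W^{s,2}$ at integer levels, and fill in fractional $s$ by real interpolation between consecutive integer endpoints, using the same interpolation functor that defines the Besov spaces in the statement.

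The hard part, I expect, is the rigorous handling of the negative-index and fractional-order spaces in the weighted setting. Already for $s=1$ one must verify that the distributional definition $\langle Lu,v\rangle:=\oldD(v,u)$ yields a bounded functional on $W^{1,2}_0(\Omega,\vp;X)$ for the correct duality pairing, and that the extension $G$ produced by Theorem~\ref{thm:traces of normal derivatives} has $LG$ controlled in this dual sense---a point which ultimately leans on the unweighted/weighted equivalence in Proposition~\ref{prop:weighted and unweighted derivs have equiv norm} and on hypothesis (HVI) for the trace side. For $s>1$, the additional subtlety is that Theorem~\ref{thm:sol'n regularity near boundary, higher order} is phrased only for $\ell\geq 0$, so accommodating $\tilde f\in W^{s-2,2}$ of negative order requires either a dual reformulation of that boundary estimate or interpolation with uniform constants, and one must confirm that the compact support of $G$ near $\bd\Omega$ together with hypothesis (HVI) at infinity interact correctly with the interpolation scale so that the constants do not blow up.
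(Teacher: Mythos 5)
Your proposal is correct and largely parallels the paper's strategy, but the route you take to invert $L$ on $W^{1,2}_0(\Omega,\vp;X)$ is genuinely different and worth contrasting. The paper (Section~\ref{sec:proof_L times Tr is an isomorphism_1}) does \emph{not} upgrade hypothesis~\eqref{eqn:D dominates 1 norm} to strict coercivity. Instead, Proposition~\ref{prop:W^1,2_0 to W^-1 isom} is proved by writing $L=(L+\lambda I)-\lambda I$ for a $\lambda$ making $L+\lambda I$ strictly coercive, inverting $L+\lambda I$ by Lax--Milgram, and then treating $-\lambda I$ as a compact perturbation (via $L^2\hookrightarrow W^{-1,2}$) so the Fredholm alternative reduces surjectivity to injectivity, which Lemma~\ref{lem:D dominating 1 norms makes V=0} supplies. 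You propose instead to derive a weighted Poincar\'e inequality on $W^{1,2}_0(\Omega,\vp;X)$ from compactness of the embedding into $L^2(\Omega,\vp)$ and the fact that $\ker\nabla_X\cap W^{1,2}_0=\{0\}$ (the computation in the proof of Lemma~\ref{lem:D dominating 1 norms makes V=0}), and then combine it with~\eqref{eqn:D dominates 1 norm} to obtain strict coercivity directly. That argument works, and it short-circuits the Fredholm machinery; the trade-off is that the paper's route does not require proving Poincar\'e and would survive in situations where the kernel-triviality argument is less clean. Your "extend-and-reduce" surjectivity (construct $G$ with $\Tr G=g$, solve the Dirichlet problem for $w$ with $Lw=f-LG$) is the same decomposition the paper builds into Lemma~\ref{lem:traces of L-harmonic functions for s = 1/2}, just presented in one step rather than factored into $u=u_1+u_2$.

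Two small corrections to tighten the write-up. First, the compact embedding $W^{1,2}_0(\Omega,\vp;X)\hookrightarrow L^2(\Omega,\vp)$ is Proposition~\ref{prop:compactness of W^1_0 into L^2}, not Proposition~\ref{prop:weighted and unweighted derivs have equiv norm} (the latter concerns the full $W^{k,2}$ spaces and the $X$-vs-$D$ equivalence). Second, you cite Theorem~\ref{thm:existence of weak solns -- strictly coercive} to solve $\oldD(v,w)=\langle\tilde f,v\rangle$, but that theorem is stated only for $\tilde f\in L^2(\Omega,\vp)$, whereas you need $\tilde f\in W^{-1,2}(\Omega,\vp;X)$. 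This is not a real gap---once you have strict coercivity on the Hilbert space $W^{1,2}_0$, the Lax--Milgram lemma (equivalently, the Riesz representation argument in the paper's own proof of Proposition~\ref{prop:W^1,2_0 to W^-1 isom}) produces a unique $w\in W^{1,2}_0$ for \emph{any} bounded functional---but you should invoke that directly rather than the $L^2$ version of the solvability theorem. Since the theorem as stated only has $m=2$, so $s=1$, the concerns you raise about fractional $s$, interpolation constants, and bootstrapping with Theorem~\ref{thm:sol'n regularity near boundary, higher order} do not in fact arise here.
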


With an additional restriction on $L$, we can prove that $L$-harmonic functions in $L^2(\Omega)$ have boundary values if $s\geq 0$ in Section \ref{sec:proof_traces of L-harmonic functions}.
\begin{thm}\label{thm:traces of L-harmonic functions}
Let $\Omega\subset\R^n$ be a domain that satisfies {(HI)}-{(HVI)} for $m\geq3$. Let $L$ be  of the form
\begin{equation}\label{eqn:L has S normal}
L = \sum_{j=1}^n\Big( X_j^* X_j + b_j X_j + X_j^*b_j'\Big) + b,
\end{equation}
If $f\in W^{s,2}(\Omega,\vp; X)$ for $s\geq 0$ and $Lf=0$, then $\Tr f$ is well-defined and an element of
$W^{s-1/2,2}(\bd\Omega,\vp;T)$.
\end{thm}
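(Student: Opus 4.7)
The statement naturally splits at $s = 1/2$. When $s > 1/2$, the trace theorem (Theorem~\ref{thm:trace theorem on M}) gives $\Tr f \in W^{s-1/2, 2}(\bd\Omega, \vp; T)$ directly from $f \in W^{s, 2}(\Omega, \vp; X)$, with no use of the PDE. The substantive case is $s \in [0, 1/2]$, where a generic $W^{s,2}$-function has no classical trace and we must use $Lf = 0$ to extract boundary values in a distributional sense. Interior regularity (Theorem~\ref{thm: elliptic regularity, higher order, interior}) ensures $f \in C^\infty(\Omega)$, but this says nothing about behavior at $\bd\Omega$.

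The plan is a Green-duality argument. The form~\eqref{eqn:L has S normal} of $L$, with principal part exactly $\sum_j X_j^* X_j$, is essential. Repeated use of the identity $(X_j^* w, v)_\vp = (w, X_j v)_\vp - \int_{\bd\Omega} w \bar v\, e^{-\vp} \nu_j\, dS$ shows that for smooth $u$ and any $v$ with $\Tr v = 0$,
\[
(Lu, v)_\vp - (u, L^* v)_\vp = \int_{\bd\Omega} u\, \overline{\p v/\p\nu}\, e^{-\vp}\, dS.
\]
Indeed, the lower-order terms $b_j X_j$ and $X_j^* b_j'$ in $L$ (and their $L^*$ counterparts) contribute only boundary integrals of the form $u\bar v$, which vanish under $\Tr v = 0$; from the principal part only $u\, \overline{\nabla_X v \cdot \nu}$ survives, and this reduces to $u\, \overline{\p v/\p\nu}$ on $\{v = 0\}$. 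Taking $u = f$ with $Lf = 0$ motivates the defining formula
\[
\langle \Tr f,\, \Tr(\p v/\p\nu)\rangle_{\bd\Omega, \vp} := -(f, L^* v)_\vp.
\]

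To realize this as an element of $W^{s-1/2, 2}(\bd\Omega, \vp; T)$ for $s \in [0, 1/2]$, we pair against the predual $W^{1/2-s, 2}(\bd\Omega, \vp; T)$. Given $\psi$ in the predual, a fractional extension result (obtained by real interpolation of Theorem~\ref{thm:traces of normal derivatives} between the integer pairs $(\ell, \ell') = (0, 1)$ and $(1, 1)$) produces $v \in W^{2-s, 2}(\Omega, \vp; X)$ with $\Tr v = 0$, $\Tr(\p v/\p\nu) = \psi$, and $\|v\|_{W^{2-s, 2}} \leq C\|\psi\|_{W^{1/2-s, 2}}$. Boundedness of $L^*\colon W^{2-s, 2} \to W^{-s, 2}$ together with the duality of $W^{s, 2}$ and $W^{-s, 2}$ gives
\[
|\langle \Tr f, \psi\rangle| \leq C\, \|f\|_{W^{s, 2}(\Omega, \vp; X)}\, \|\psi\|_{W^{1/2-s, 2}(\bd\Omega, \vp; T)},
\]
whence $\Tr f \in W^{s-1/2, 2}(\bd\Omega, \vp; T)$ with the desired norm bound.

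The main obstacle is well-definedness, i.e., independence of the chosen extension $v$. Two such extensions differ by $w \in W^{2-s, 2}(\Omega, \vp; X)$ with $\Tr w = \Tr(\p w/\p\nu) = 0$, and we need $(f, L^* w)_\vp = 0$. The natural route is to approximate $w$ in $W^{2-s, 2}$ by $w_\ell \in C^\infty_c(\Omega)$, so that the pairing collapses by the distributional meaning of $Lf = 0$. This reduces to showing that $W^{2-s, 2}$-functions with both traces vanishing belong to the $W^{2-s, 2}$-closure of $C^\infty_c(\Omega)$; at integer $s$ this follows from the paper's density results, but extending it to fractional $s \in (0, 1/2)$ by interpolating zero-trace subspaces along the Sobolev scale is the most delicate technical step. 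Consistency with Theorem~\ref{thm:trace theorem on M} for $s > 1/2$ is then immediate from applying the same Green identity to the a priori trace.
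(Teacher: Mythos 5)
Your proof follows the same Green-identity/duality strategy as the paper: define $\Tr f$ against a boundary test function $\psi$ by choosing an extension $v$ with $\Tr v = 0$ and $\Tr(\p v/\p\nu) = \psi$ and setting $\langle\Tr f,\psi\rangle := -(f, L^*v)_\vp$. The paper writes the same thing using the conormal operator $S$, which for the specific form of $L$ in \eqref{eqn:L has S normal} reduces to $-\p/\p\nu$ on $\{\Tr v = 0\}$, so your formulation agrees with theirs. Your Green identity for this $L$ is correct. Your well-definedness argument (approximate the difference $w$ of two extensions by $C^\infty_c(\Omega)$ in the appropriate Sobolev norm and use the distributional meaning of $Lf=0$) is a clean alternative to the paper's route, which instead approximates $f$ in $L^2$ by $W^{2,2}$ functions; either one works at $s=0$, where $W^{2,2}_0$ is by definition the $W^{2,2}$-closure of $C^\infty_c$.

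There is one concrete error. For $s\in(0,1/2]$ you propose to obtain the needed fractional extension operator, mapping $W^{1/2-s,2}(\bd\Omega,\vp;T)$ into $W^{2-s,2}(\Omega,\vp;X)$ with $\Tr v=0$ and $\Tr(\p v/\p\nu)=\psi$, by real interpolation of Theorem~\ref{thm:traces of normal derivatives} between $(\ell,\ell')=(0,1)$ and $(1,1)$. Those two endpoints send $B^{1/2;2,2}(M,\vp;T)\to W^{2,2}(\Omega,\vp;X)$ and $B^{3/2;2,2}(M,\vp;T)\to W^{3,2}(\Omega,\vp;X)$, so interpolating them only reaches boundary regularity $\tau\in[1/2,3/2]$, whereas your argument requires $\tau=1/2-s\in[0,1/2)$. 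That is extrapolation, not interpolation, and it cannot be extracted from the stated extension theorem; one would need an endpoint below $\ell=0$, which does not exist. You are right that this is where the delicacy lies, and it is only fair to note that the paper's own proof treats only $s=0$ explicitly before asserting the full theorem. Your $s=0$ and $s>1/2$ arguments are sound and match the paper's; the $s\in(0,1/2)$ bridge --- both the fractional extension and the identification of doubly-zero-trace functions in $W^{2-s,2}$ with the closure of $C^\infty_c$ --- is the unfinished part, in your proposal and in the paper alike.
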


%
%
\section{Facts for $W^{m,p}(\Omega,\vp; X)$, $1<p<\infty$}\label{sec:results on W^m,p}\label{sec:W^k,p spaces}

\subsection{The spaces $W^{-k,q}(\Omega,\vp; X)$ and $W^{k,p}(\Omega,\vp; X)^*$}
Let $1\leq p < \infty$ and $\frac 1p + \frac 1q =1$.
Fix $k\in\N$ and let $N(k)$ be the number of multiindices $\alpha$ where $|\alpha|\leq k$. As $k$ is fixed, we suppress the argument of $N$.
Let $\alpha^1,\dots,\alpha^N$ be an enumeration of such multiindices. For a vector $g$, we write $g = (g_1,\dots,g_N) = (g_\alpha)$ interchangeably.
For functions $g_1,\dots, g_N\in L^q(\Omega,\vp)$, there exists a
bounded linear functional $T_{g_1,\dots,g_n}$ on $W^{k,p}(\Omega,\vp; X)$ defined by
\[
T_{g_1,\dots,g_N} f  = \int_\Omega \Big( \sum_{j=1}^N X^{\alpha^j} f \overline{g_j} \Big)  \Big) e^{-\vp} dV.
\]
We can show that every functional on $W^{k,p}(\Omega,\vp; X)$ arises in this way.
\begin{prop} \label{prop:dual spaces}
For $\Omega\subset\R^n$,
let $1<p<\infty$ and $\frac 1p + \frac 1q =1$.
\begin{enumerate}\renewcommand{\labelenumi}{(\roman{enumi})}
\item The dual space $W^{-k,q}(\Omega,\vp; X) := W^{k,p}_0(\Omega,\vp; X)^*$ is the set
\[
W^{-k,q}(\Omega,\vp; X) = \Big\{ u\in \mathcal D'(\Omega) : u
= \sum_{|\alpha|\leq k} (-1)^{|\alpha|} D^{\alpha} g_{\alpha}, \ g_\alpha \in L^q(\Omega,\vp) \text{ for all }\alpha\Big\}.
\]
Moreover, the norm on $W^{-k,q}(\Omega,\vp; X)$ is given by
\begin{align*}
\| u \|_{W^{-k,q}(\Omega,\vp; X)} &:= \sup\{ |u(f)| : f\in W^{k,p}_0(\Omega,\vp; X), \| f \|_{W^{k,p}(\Omega,\vp; X)} =1 \} \\
&= \inf_{\mathcal F} \Big \{ \sum_{|\alpha|\leq k} \|g_\alpha\|_{L^q(\Omega,\vp)}^q \Big\}
\end{align*}
where $\mathcal F$ is the set of $N$-tuples $(g_1,\dots,g_N)\in L^q(\Omega,\vp)^{N}$ representing the functional $u$.

\item The dual space $W^{k,p}(\Omega,\vp; X)^*$ consists of $u\in \mathcal D'(\Omega)$ for which there exists a vector
$g = (g_\alpha) \in (L^q(\Omega,\vp))^N$ so that for all $f\in W^{k,p}(\Omega,\vp; X)$,
\[
u(f) = \sum_{|\alpha|\leq k} \big( X^\alpha f, g_\alpha\big)_\vp.
\]
Moreover, the norm on $W^{k,p}(\Omega,\vp; X)^*$
\[
\| u \|_{W^{k,p}(\Omega,\vp; X)^*} := \sup\{ |u(f)| : f\in W^{k,p}(\Omega,\vp; X), \| f \|_{W^{k,p}(\Omega,\vp; X)} =1 \}.
\]
\end{enumerate}
\end{prop}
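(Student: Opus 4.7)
The plan is to realize $W^{k,p}(\Omega,\vp;X)$ isometrically as a closed subspace of the direct sum $L^p(\Omega,\vp)^N$ and then read off both parts from Hahn--Banach together with the standard duality $(L^p)^* = L^q$. This is the usual strategy for identifying Sobolev duals; in our weighted setting with operators $X_j$ the only additional ingredient we need is the identity $X^\alpha = e^\vp D^\alpha e^{-\vp}$, invoked at the right moment.

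For part (ii), define $\iota: W^{k,p}(\Omega,\vp;X) \to L^p(\Omega,\vp)^N$ by $\iota(f) = (X^{\alpha^j} f)_{j=1}^N$. By the definition of the Sobolev norm, $\iota$ is an isometry onto its image, which is a closed subspace $V$. Given $u \in W^{k,p}(\Omega,\vp;X)^*$, transfer $u$ to $V$ via $\iota^{-1}$ and extend, by Hahn--Banach, to a norm-preserving functional $\tilde u$ on $L^p(\Omega,\vp)^N$. The Riesz representation theorem then produces $(g_\alpha) \in L^q(\Omega,\vp)^N$ so that $\tilde u(h) = \sum_\alpha (h_\alpha, g_\alpha)_\vp$; restricting back to $V$ gives $u(f) = \sum_\alpha (X^\alpha f, g_\alpha)_\vp$, as claimed. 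Boundedness of any functional of this form is immediate from H\"older, giving the reverse inclusion.

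For part (i), the same steps, starting from $u \in W^{k,p}_0(\Omega,\vp;X)^*$, yield representers $(g_\alpha) \in L^q(\Omega,\vp)^N$ with $u(f) = \sum_\alpha (X^\alpha f, g_\alpha)_\vp$. To convert this into the distributional form, I restrict to $f \in C^\infty_c(\Omega)$ and use $X^\alpha = e^\vp D^\alpha e^{-\vp}$ (obtained by iterating $X_j = e^\vp \partial_j e^{-\vp}$) to rewrite
\[
(X^\alpha f, g_\alpha)_\vp = \int_\Omega D^\alpha(e^{-\vp} f)\,\overline{g_\alpha}\, dV = (-1)^{|\alpha|}\int_\Omega f\,\overline{D^\alpha g_\alpha}\, e^{-\vp}\, dV,
\]
where $D^\alpha g_\alpha$ is a distributional derivative. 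Summing shows that on test functions $u$ is the weighted pairing against the distribution $\sum_\alpha (-1)^{|\alpha|} D^\alpha g_\alpha \in \mathcal D'(\Omega)$, and density of $C^\infty_c(\Omega)$ in $W^{k,p}_0(\Omega,\vp;X)$ propagates the identity to the full space. For the infimum characterization of the $W^{-k,q}$ norm, one direction comes from H\"older applied to any representation, the other from the norm-preservation built into Hahn--Banach (which supplies a representer with $\|(g_\alpha)\|_{L^q(\Omega,\vp)^N} = \|u\|$).

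The main obstacle I anticipate is purely bookkeeping around weighted versus unweighted pairings: one must make sure that, after integrating by parts, the distribution $\sum_\alpha (-1)^{|\alpha|} D^\alpha g_\alpha$ genuinely determines $u$ on $W^{k,p}_0$ and that the non-uniqueness in the choice of $(g_\alpha)$ -- parametrized by the annihilator of $\iota(W^{k,p}_0(\Omega,\vp;X))$ in $L^q(\Omega,\vp)^N$ -- affects neither the distributional representative nor the infimum norm.
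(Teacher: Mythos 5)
Your proposal is correct and is, in essence, the standard argument: isometrically embed $W^{k,p}(\Omega,\vp;X)$ (resp. its closed subspace $W^{k,p}_0$) into $L^p(\Omega,\vp)^N$ via $f\mapsto (X^{\alpha^j}f)_j$, invoke Hahn--Banach plus Riesz duality for the weighted $L^p$ spaces, and for part~(i) convert to the distributional form via $X^\alpha = e^\vp D^\alpha e^{-\vp}$ and density of $C^\infty_c(\Omega)$. The paper itself does not write out a proof but simply cites the corresponding sections of Adams--Fournier, which present exactly this argument in the unweighted case; your remarks on the annihilator of $\iota(W^{k,p}_0)$ and on the infimum characterization of the dual norm (via norm-preservation in Hahn--Banach in one direction and H\"older in the other) are the right observations to make the transfer to the weighted setting go through.
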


\begin{proof} The proof is standard. See, for example, \cite[Sections 3.9, 3.12, 3.13]{AdFo03}
\end{proof}

\subsection{Approximation by $W^{k,p}(\Omega,\vp; X)$}

\begin{prop}\label{prop:density of C^infty_c}
Let $1\leq p < \infty$ and assume that $\bd\Omega$ satisfies (HI) for some $m\geq 1$. Let $1\leq \ell \leq m$ be an integer.
Then $C^\infty_c(\R^n)$ is dense in both $W^{\ell,p}(\Omega,\vp; X)$ and
$W^{\ell,p}(\Omega,\vp; D)$ in the sense that
if $\ep>0$ and $f\in W^{\ell,p}(\Omega,\vp; X)$, then there exists $\xi \in C^\infty_c(\R^n)$ so that
\[
\| \xi - f \|_{W^{\ell,p}(\Omega,\vp; X)} \leq \ep
\]
where $C$ is independent of $f$, $\vp$, and $\ep$.  Similarly,
if $\ep>0$ and $f\in W^{\ell,p}(\Omega,\vp; D)$, then there exists $\psi \in C^\infty_c(\R^n)$ so that
\[
\| \psi - f \|_{W^{\ell,p}(\Omega,\vp; D)} \leq \ep
\]
where $C$ is independent of $f$, $\vp$, and $\ep$.
\end{prop}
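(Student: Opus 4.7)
The plan proceeds in three stages: truncate at infinity, exchange weighted for unweighted norms on the resulting bounded support, and apply a standard extension followed by mollification. Throughout I use the fact that $X_j = \partial_j - (\partial_j\vp)$ satisfies the ordinary Leibniz rule, $X_j(\chi f) = (\partial_j\chi)f + \chi\, X_j f$, which iterates to
\[
X^\alpha(\chi f) = \sum_{\beta\leq\alpha}\binom{\alpha}{\beta}(D^\beta\chi)(X^{\alpha-\beta}f)
\]
for any smooth cutoff $\chi$ and $|\alpha|\leq\ell$.

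For the truncation, let $\chi_R\in C^\infty_c(\R^n)$ satisfy $\chi_R\equiv 1$ on $B(0,R)$, $\mathrm{supp}\,\chi_R\subset B(0,2R)$, and $|D^\beta\chi_R|\leq C_\beta R^{-|\beta|}$. Then $\chi_R f\to f$ in $W^{\ell,p}(\Omega,\vp;X)$: the $\beta=0$ contribution $(\chi_R-1)X^\alpha f$ tends to zero in $L^p(\Omega,\vp)$ by dominated convergence, while each $\beta\neq 0$ contribution is supported in the annulus $R\leq |x|\leq 2R$ and dominated in $L^\infty$ by $C_\beta R^{-|\beta|}$, so the $L^p$-tail decay of $X^{\alpha-\beta}f$ forces it to zero. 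The same argument handles the $D$-space. Hence it suffices to approximate $f$ with $\mathrm{supp}\,f\subset\overline\Omega\cap\overline{B}(0,R_0)$.

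On such a bounded region, $\vp$ and all $D^\beta\vp$ with $|\beta|\leq\ell$ are uniformly bounded, so $e^{-\vp}$ is bounded above and below and a straightforward induction on $|\alpha|$ expresses $X^\alpha f$ as $D^\alpha f$ plus lower-order derivatives $D^\beta f$ with $|\beta|<|\alpha|$ and coefficients that are polynomials in $D^\gamma\vp$; the relation inverts in the same way. Consequently, for functions supported in a fixed bounded set, the norms $\|\cdot\|_{W^{\ell,p}(\Omega,\vp;X)}$ and $\|\cdot\|_{W^{\ell,p}(\Omega,\vp;D)}$ are each equivalent to the unweighted $\|\cdot\|_{W^{\ell,p}(\Omega)}$. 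It therefore suffices to produce $\xi\in C^\infty_c(\R^n)$ close to $f$ in the unweighted $W^{\ell,p}(\Omega)$ norm.

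Because $\Omega$ is uniformly $C^m$ by (HI), Stein's extension theorem (whose standard construction carries over to uniformly $C^m$ unbounded domains; cf.\ \cite{AdFo03}) yields a bounded operator $E:W^{\ell,p}(\Omega)\to W^{\ell,p}(\R^n)$. Let $\widetilde\chi_0\in C^\infty_c(\R^n)$ equal $1$ on $\overline B(0,R_0+1)$ and set $\tilde f = \widetilde\chi_0\cdot Ef\in W^{\ell,p}(\R^n)$, noting $\tilde f|_\Omega = f$ and $\tilde f$ has compact support. Friedrichs mollification produces $\xi_\varepsilon := \tilde f*\eta_\varepsilon\in C^\infty_c(\R^n)$ for small $\varepsilon$, with $\xi_\varepsilon\to\tilde f$ in $W^{\ell,p}(\R^n)$; restricting to $\Omega$ and invoking the norm equivalence gives $\xi_\varepsilon|_\Omega\to f$ in both weighted spaces simultaneously. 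The main obstacle is the appeal to a uniform Stein-type extension for an unbounded domain; a self-contained alternative uses the boundary partition of unity from (HI)—decompose $f=\sum_j\psi_j f$, flatten each boundary patch via a $C^m$ chart, translate that piece by $\varepsilon$ in the outward normal direction so that its effective domain becomes a neighborhood of $\mathrm{supp}(\psi_j f)$, and mollify at scale $\ll\varepsilon$; the uniform $C^{m-1}$ bounds on the chart coefficients supplied by (HI) ensure the constants are $j$-independent.
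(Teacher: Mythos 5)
Your proposal is correct and begins exactly as the paper does: truncate at infinity with a cutoff $\chi_R$ whose derivatives decay like $R^{-|\beta|}$, reduce to a function with bounded support, observe that the weighted derivatives $X^\alpha$ and the ordinary derivatives $D^\alpha$ are interchangeable modulo lower-order terms with coefficients built from $D^\gamma\vp$ (all bounded on a bounded set, so the weighted and unweighted $W^{\ell,p}$ norms are equivalent there), and finally extend and mollify. The place you diverge is the extension step. You invoke a Stein-type extension operator for the \emph{unbounded} uniformly $C^m$ domain $\Omega$, which is a real theorem but requires a uniform-in-$j$ construction over a boundary partition of unity --- the kind of machinery that this paper deliberately postpones and proves later (as Theorem~\ref{thm:simple extension operators exist}, which in fact uses the present proposition as an ingredient). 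The paper instead sidesteps unbounded-domain extension entirely: it constructs an auxiliary \emph{bounded} $C^m$ domain $\Omega'$ with $B(0,2R)\cap\Omega\subset\Omega'\subset B(0,3R)\cap\Omega$, notes that $g=\chi_R f$ lives in $W^{\ell,p}(\Omega')$ and vanishes near $\partial\Omega'\cap\Omega$, and applies the classical bounded-domain extension theorem \cite[Theorem 5.22]{AdFo03}. This arrangement guarantees the extension and its mollification both vanish on $\Omega\setminus\Omega'$, so the weighted $W^{\ell,p}(\Omega,\vp;X)$ error is literally the weighted $W^{\ell,p}(\Omega',\vp;X)$ error, which is controlled by the unweighted $W^{\ell,p}(\Omega')$ error with a fixed constant $C_{\Omega'}$. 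Your route works and your ``self-contained'' partition-of-unity sketch is essentially a proof of the extension operator you cite, but the paper's bounded-auxiliary-domain trick is the lighter tool: it only needs the classical bounded-domain extension theorem, requires no uniformity in any covering, and avoids a forward reference to machinery the paper has not yet built.
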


\begin{proof}
Let $\ep>0$  and $\chi_R$ be a smooth, nonnegative cut-off function so that $\chi_R \equiv 1$ on $B(0,R)$, $\chi_R \equiv 0$ off $B(0,2R)$,
and $|D^\alpha \chi_R| \leq C_{|\alpha|}/R^{|\alpha|}$ for $|\alpha|\geq 0$. For $R$ sufficiently large, it follows that
\[
\| (1-\chi_R)f \|_{W^{\ell,p}(\Omega,\vp; X)} \leq \ep.
\]
Let $g = \chi_R f$, and extend $g$ to be zero outside of $\Omega$.  It is enough to prove the result for $g$. Let $\Omega'$ be a $C^m$ domain satisfying
\[
  B(0,2R)\cap\Omega\subset\Omega'\subset B(0,3R)\cap\Omega.
\]
Since $\supp\chi_R \subset B(0,2R)$, $g = g 1_{\Omega'}$. Since $\Omega'$ is bounded, there exists
$C_{\Omega'}>0$ so that
\[
\|h\|_{W^{\ell,p}(\Omega',\vp;X)} \leq C_{\Omega'} \|h\|_{W^{\ell,p}(\Omega')}
\]
for any $h\in W^{\ell,p}(\Omega')$.  The function $\xi$ is constructed in the following manner. Extend $g$ to $\tilde g \in W^{\ell,p}(\R^n)$ following the technique of
\cite[Theorem 5.22]{AdFo03}. Since $g$ is identically zero in a neighborhood of $\partial\Omega'\cap\Omega$, this construction can be used to guarantee that $\tilde{g}$ is identically zero on $\Omega\backslash\Omega'$.  Form $\xi$ by mollifying $\tilde g$ in such a way that $\xi$ is also identically zero on $\Omega\backslash\Omega'$.  Since $\xi$ is constructed so that $\|g-\xi\|_{W^{\ell,p}(\Omega')}<\ep/{C_{\Omega'}}$, we have
\[
\|g-\xi\|_{W^{\ell,p}(\Omega,\vp;X)}
\leq \|g-\xi\|_{W^{\ell,p}(\Omega',\vp;X)} < \ep .
\]
\end{proof}

\subsection{Embeddings and compactness for $p=2$}
\begin{prop}\label{prop:compactness of W^1_0 into L^2}
Let $\Omega\subset\R^n$ satisfy (HII).
Then the embedding of $W^{1,2}_0(\Omega,\vp; X) \hookrightarrow L^2(\Omega,\vp)$ is compact.
\end{prop}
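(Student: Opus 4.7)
The plan is to establish a Morrey-Kohn-H\"ormander type weighted inequality forcing $W^{1,2}_0(\Omega,\vp;X)$-bounded functions to have vanishing $L^2(\Omega,\vp)$ mass outside large balls, and then combine this tail decay with classical Rellich-Kondrachov on bounded pieces via a diagonal extraction.

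The analytic heart of the proof is the inequality
\[
\int_\Omega\bigl(\theta|\nabla\vp|^2 + \triangle\vp\bigr)|u|^2 e^{-\vp}\,dV \;\leq\; \frac{1}{1-\theta}\sum_{j=1}^n \|X_j u\|_{L^2(\Omega,\vp)}^2
\]
for all $u\in C^\infty_c(\Omega)$, which I would prove from two integration-by-parts computations, both legal since $u$ has compact support in $\Omega$. Using $X_j^*=-\p_j$ in $L^2(\Omega,\vp)$ and the commutator identity $[X_j^*,X_j]=\p_j^2\vp$, one obtains
\[
\sum_{j=1}^n \|X_j u\|^2_{L^2(\Omega,\vp)} \;=\; \|\nabla u\|^2_{L^2(\Omega,\vp)} + \int_\Omega (\triangle\vp)|u|^2 e^{-\vp}\,dV.
\]
Rewriting $|\nabla\vp|^2 e^{-\vp} = -\nabla\vp\cdot\nabla e^{-\vp}$ and integrating by parts yields
\[
\int_\Omega |\nabla\vp|^2 |u|^2 e^{-\vp}\,dV \;=\; \int_\Omega \nabla(|u|^2)\cdot\nabla\vp\,e^{-\vp}\,dV + \int_\Omega (\triangle\vp)|u|^2 e^{-\vp}\,dV.
\]
Bounding the cross term by Cauchy-Schwarz and AM-GM with parameter $\theta$, and then substituting the first identity, yields the displayed inequality after absorption. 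By density of $C^\infty_c(\Omega)$ in $W^{1,2}_0(\Omega,\vp;X)$, it extends to every $u$ in the space.

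From here the argument is standard. Since $\theta|\nabla\vp|^2+\triangle\vp$ is continuous on $\Omega$ and tends to $+\infty$ at infinity by~(HII), its negative part on $\Omega$ is bounded and, for any $\delta>0$, there is $R_\delta$ with $\theta|\nabla\vp|^2+\triangle\vp\geq 1/\delta$ on $\Omega\cap\{|x|\geq R_\delta\}$. Combining with the displayed inequality yields the uniform tail bound
\[
\int_{\Omega\cap\{|x|\geq R_\delta\}}|u|^2 e^{-\vp}\,dV \;\leq\; C\delta\,\|u\|^2_{W^{1,2}(\Omega,\vp;X)}.
\]
On $\Omega_R=\Omega\cap B(0,R)$, taken with $R$ generic so that $\Omega_R$ has Lipschitz boundary, the weight $e^{-\vp}$ and the coefficients $\p_j\vp$ are bounded above and below, so the $W^{1,2}(\Omega_R,\vp;X)$ norm is equivalent to the unweighted $W^{1,2}(\Omega_R)$ norm and classical Rellich-Kondrachov gives the compact embedding $W^{1,2}(\Omega_R)\hookrightarrow L^2(\Omega_R)$. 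Extracting successively finer $L^2$-convergent subsequences on $\Omega_{R_m}$ with $R_m\to\infty$ and diagonalizing, while controlling the tail uniformly via the bound above, produces an $L^2(\Omega,\vp)$-Cauchy subsequence.

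The main obstacle is the first inequality: the two integration-by-parts identities must be combined with an AM-GM absorption whose threshold is precisely calibrated against the constant $\theta\in(0,1)$ of~(HII). This is exactly why~(HII), rather than, say, pointwise positivity of the Hessian of $\vp$, is the right hypothesis for this compactness result.
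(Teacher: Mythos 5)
Your proof is correct and takes essentially the same approach as the paper: you establish a coercivity estimate bounding $\int_\Omega(\theta|\nabla\vp|^2+\triangle\vp)|u|^2 e^{-\vp}\,dV$ by a constant times $\|\nabla_X u\|^2_{L^2(\Omega,\vp)}$ (the paper works with $\Psi=|\nabla\vp|^2+(1+\varepsilon)\triangle\vp$ and the commutator identities for $X_j$, $X_j^*$, which is the same integration-by-parts computation in slightly different packaging), and then combine the resulting uniform tail decay with classical Rellich--Kondrachov on bounded pieces and a diagonal extraction. No gaps.
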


\begin{proof}We start by making a number of preliminary calculations.
Note that the formal adjoint $X_j^* = -\frac{\p}{\p x_j}$. This means
\begin{equation}\label{eqn:X_j + X_j^*}
(X_j + X_j^*)f = -\frac{\p\vp}{\p x_j}f
\end{equation}
and
\begin{equation}\label{eqn:[X_j,X_j^*]}
[X_j,X_j^*]f = -\frac{\p^2\vp}{\p x_j^2}f.
\end{equation}
Note that for $f\in C^\infty_c(\Omega)$,
\begin{equation}\label{eqn:X_j + X_j^* commutator}
\big( [X_j,X_j^*]f,f\big)_\vp = \big(X_j X_j^* f,f \big)_\vp - \big(X_j^* X_j f, f\big)_\vp
= \| X_j^* f\|_{L^2(\Omega,\vp)}^2-\|X_j f\|_{L^2(\Omega,\vp)}^2.
\end{equation}
Next, we see that for $\ep>0$, a small constant/large constant argument yields
\[
\| (X_j + X_j^*)f \|_{L^2(\Omega,\vp)}^2
\leq \Big(1+\frac 1{2\ep}\Big)\|X_j f\|_{L^2(\Omega,\vp)}^2 + (1+\ep)\|X_j^* f\|_{L^2(\Omega,\vp)}^2.
\]
Now set
\[
\Psi(x)  = |\nabla\vp(x)|^2 + (1+\ep)\triangle\vp(x).
\]
Consequently,
\begin{align}
\big( \Psi f, f\big)_\vp &= \sum_{j=1}^n \bigg[ \|(X_j+X^*_j)f\|_{L^2(\Omega,\vp)}^2  - (1+\ep) ([X_j,X_j^*]f,f)_\vp \bigg]\nn\\
&\leq\Big(2+\ep+\frac 1{2\ep}\Big) \sum_{j=1}^n  \|X_j f\|_{L^2(\Omega,\vp)}^2 . \label{eqn:X_j^* vs. X_j in L^2}
\end{align}
Since $C^\infty_c(\Omega)$ is dense in $W^{1,2}_0(\Omega,\vp; X)$, this inequality holds for all $f\in W^{1,2}_0(\Omega,\vp; X)$.

Let $\{f_k\}\subset W^{1,2}_0(\Omega,\vp; X)$ be a bounded sequence and set $M = \max_k \| f_k \|_{W^{1,2}(\Omega,\vp; X)}^2$.
Set $I(R) = \inf_{x\in\Omega\setminus B(0,R)}{|\Psi(x)|}$.
Since $I(R)>0$ for $R$ sufficiently large,
\begin{align}
\| f_k - f_j \|_{L^2(\Omega,\vp)}^2
&\leq \int_{B(0,R)\cap\Omega} | f_k - f_j(x)|^2 e^{-\vp}dV + \int_{\Omega\setminus B(0,R)} \frac{\Psi(x)}{I(R)}
| f_k - f_j(x)|^2 e^{-\vp}dV \nn \\
&\leq C_{\vp,R} \| f_k - f_j\|_{L^2(B(0,R))}^2 + \frac{C \| f_k - f_j\|_{W^{1,2}(\Omega,\vp; X)}^2}{I(R)} \nn \\
&\leq C_{\vp,R} \| f_k - f_j\|_{L^2(B(0,R))}^2 + C\frac{M}{I(R)}. \label{eqn:f_k-f_j good L^2 bound}
\end{align}
Fix an increasing sequence $R_j\to\infty$, so that $R_j$ satisfies $M/I(R_j) \leq 1/j$.
We may inductively construct a sequence of subsequences $f_{k^m_j}$ so that
\begin{enumerate}\renewcommand{\labelenumi}{(\roman{enumi})}
\item $f_{k^{m+1}_j}$ is a subsequence of $f_{k^m_j}$,
\item $\lim_{j\to\infty} f_{k^m_j} = f_{k^m}$ in $L^2(B(0,R_m)\cap\Omega,\vp)$, and
\item  $f_{k^m}|_{B(0,R_k)} = f_{k^\ell}$ if $\ell\leq m$.
\end{enumerate}
It is now easy to see from \eqref{eqn:f_k-f_j good L^2 bound} that
$f_{k^j_j}$ is a Cauchy sequence in $L^2(\Omega,\vp)$ and hence converges in $L^2(\Omega,\vp)$. Thus, $W^{1,2}_0(\Omega,\vp; X)$ embeds
compactly in $L^2(\Omega,\vp)$.
\end{proof}

\begin{cor}\label{cor:compactness of L^2 into W^-1}
The embedding $L^2(\Omega,\vp) \hookrightarrow W^{-1,2}(\Omega,\vp; X)$ is compact.
\end{cor}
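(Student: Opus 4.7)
The plan is to identify the embedding in question with the Banach-space adjoint of the compact embedding just established in Proposition \ref{prop:compactness of W^1_0 into L^2}, and then invoke Schauder's theorem. By Proposition \ref{prop:dual spaces}(i), $W^{-1,2}(\Omega,\vp;X) = W^{1,2}_0(\Omega,\vp;X)^*$, and under the canonical conjugate-linear isomorphism $L^2(\Omega,\vp)\to L^2(\Omega,\vp)^*$ given by $f\mapsto (\,\cdot\,,f)_\vp$, the inclusion $L^2(\Omega,\vp)\hookrightarrow W^{-1,2}(\Omega,\vp;X)$ is precisely the adjoint $\iota^*$ of the inclusion $\iota:W^{1,2}_0(\Omega,\vp;X)\hookrightarrow L^2(\Omega,\vp)$. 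Since $\iota$ is compact by Proposition \ref{prop:compactness of W^1_0 into L^2}, Schauder's theorem gives that $\iota^*$ is compact, which is exactly the claim.

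If one prefers a more self-contained route that avoids appealing to Schauder, the same conclusion follows by a short direct $\eps/3$ argument. Given a bounded sequence $\{f_k\}\subset L^2(\Omega,\vp)$, pass to a weakly convergent subsequence $f_k\rightharpoonup f$; it suffices to show $\|f_k-f\|_{W^{-1,2}(\Omega,\vp;X)}\to 0$. By definition of the dual norm,
\[
\|f_k-f\|_{W^{-1,2}(\Omega,\vp;X)} = \sup_{\substack{g\in W^{1,2}_0(\Omega,\vp;X)\\ \|g\|_{W^{1,2}}\leq 1}} |(g,f_k-f)_\vp|.
\]
The unit ball $B$ of $W^{1,2}_0(\Omega,\vp;X)$ has relatively compact image in $L^2(\Omega,\vp)$ by Proposition \ref{prop:compactness of W^1_0 into L^2}, hence for any $\eps>0$ there is a finite $\eps$-net $\{g_1,\dots,g_N\}\subset B$ for the $L^2(\Omega,\vp)$-distance on that image. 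Weak convergence of $\{f_k\}$ in $L^2(\Omega,\vp)$ gives $(g_i,f_k-f)_\vp\to 0$ for each $i$, while uniform boundedness of $\|f_k-f\|_{L^2(\Omega,\vp)}$ controls the error on the $\eps$-net. Combining these yields $\sup_{g\in B}|(g,f_k-f)_\vp|\to 0$, as desired.

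Either presentation is essentially a one-step consequence of the proposition, so there is no real obstacle; the only mild point of care is the complex-conjugate identification of $L^2(\Omega,\vp)$ with its dual, which is handled by using the sesquilinear pairing $(\,\cdot\,,\,\cdot\,)_\vp$ consistently throughout. I would favor the Schauder-based write-up, as it is shorter and explicitly flags the duality structure already introduced in Proposition \ref{prop:dual spaces}.
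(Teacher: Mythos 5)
Your Schauder-theorem argument is correct and is exactly the intended route: the paper states this as an unproved corollary immediately after Proposition~\ref{prop:compactness of W^1_0 into L^2}, with the duality $W^{-1,2}(\Omega,\vp;X)=W^{1,2}_0(\Omega,\vp;X)^*$ from Proposition~\ref{prop:dual spaces} already in hand, so "compact map has compact adjoint" is clearly what the authors had in mind. Your alternative $\eps/3$ argument is also correct and serves as a nice self-contained fallback, but the Schauder version matches the paper's implied reasoning and is the cleaner write-up.
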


\begin{cor}\label{cor:adjoint dominated in L^2}
Let $\Omega\subset\R^n$ satisfy (HII). There exists a constant $C>0$ so that
\[
\| \nabla f \|_{L^2(\Omega,\vp)}^2
\leq C \| \nabla_X f\|_{L^2(\Omega,\vp)}^2
\]
for $f\in W^{1,2}_0(\Omega,\vp; X)$.
\end{cor}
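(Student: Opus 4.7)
My plan is to reduce the inequality to the already-proved quantitative bound \eqref{eqn:X_j^* vs. X_j in L^2} from the proof of Proposition~\ref{prop:compactness of W^1_0 into L^2}, using a commutator identity to re-express $\|\nabla f\|_{L^2(\Omega,\vp)}^2$. By density of $C_c^\infty(\Omega)$ in $W^{1,2}_0(\Omega,\vp;X)$ it suffices to take $f\in C_c^\infty(\Omega)$, and since $\vp$ is real-valued, integration by parts gives $X_j^* = -\partial/\partial x_j = -D_j$, so that $\|D_j f\|_{L^2(\Omega,\vp)}^2 = \|X_j^* f\|_{L^2(\Omega,\vp)}^2$. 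Combined with the commutator identity \eqref{eqn:X_j + X_j^* commutator} and summed over $j$, this yields the clean identity
\[
\|\nabla f\|_{L^2(\Omega,\vp)}^2 \;=\; \|\nabla_X f\|_{L^2(\Omega,\vp)}^2 \;-\; \int_\Omega \triangle\vp\,|f|^2 e^{-\vp}\,dV,
\]
so the corollary reduces to bounding $-\int_\Omega \triangle\vp\,|f|^2 e^{-\vp}\,dV$ by a multiple of $\|\nabla_X f\|_{L^2(\Omega,\vp)}^2$.

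Next, by (HII) the function $\theta|\nabla\vp|^2+\triangle\vp$ tends to $+\infty$ at infinity in $\Omega$ and so is bounded below on $\Omega$ by some constant $-M$; equivalently, $-\triangle\vp\le \theta|\nabla\vp|^2+M$ pointwise. Thus the real task is to control $\int_\Omega |\nabla\vp|^2\,|f|^2 e^{-\vp}\,dV$. For this I would pick $\ep>0$ with $(1+\ep)\theta<1$ and invoke \eqref{eqn:X_j^* vs. X_j in L^2}, namely $(\Psi f,f)_\vp \le C_\ep\|\nabla_X f\|_{L^2(\Omega,\vp)}^2$ with $\Psi=|\nabla\vp|^2+(1+\ep)\triangle\vp$. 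Substituting the pointwise bound $\triangle\vp\ge -\theta|\nabla\vp|^2-M$ into the left-hand side gives $\Psi\ge (1-(1+\ep)\theta)|\nabla\vp|^2-(1+\ep)M$, and rearranging produces
\[
\int_\Omega |\nabla\vp|^2|f|^2 e^{-\vp}\,dV \;\le\; C\bigl(\|\nabla_X f\|_{L^2(\Omega,\vp)}^2+\|f\|_{L^2(\Omega,\vp)}^2\bigr).
\]

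Combining the three displays yields the intermediate estimate $\|\nabla f\|^2 \le C\bigl(\|\nabla_X f\|^2+\|f\|^2\bigr)$, and the final step is to absorb the $\|f\|^2$ term. For this I would exploit the compact embedding $W^{1,2}_0(\Omega,\vp;X)\hookrightarrow L^2(\Omega,\vp)$ from Proposition~\ref{prop:compactness of W^1_0 into L^2} via a standard contradiction argument: if the sharper bound failed there would be $f_k\in W^{1,2}_0(\Omega,\vp;X)$, appropriately normalized, with $\|\nabla_X f_k\|\to 0$; passing to a subsequence converging in $L^2(\Omega,\vp)$ yields a limit $f$ with $\nabla_X f\equiv 0$, so that $f=ce^\vp$ locally, and membership in the $W^{1,2}_0$-closure of $C_c^\infty(\Omega)$ forces $c=0$, contradicting the normalization. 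I expect this last absorbing step to be the main technical obstacle, since matching constants so that the qualitative compactness actually upgrades to the estimate as stated requires some care; the earlier pieces are essentially algebra on top of \eqref{eqn:X_j + X_j^* commutator}--\eqref{eqn:X_j^* vs. X_j in L^2} and the pointwise consequence of (HII).
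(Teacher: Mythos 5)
Your route is genuinely different from the paper's --- and considerably longer. The paper's proof of Corollary~\ref{cor:adjoint dominated in L^2} is a one-line chain of estimates: it combines the commutator identity from \eqref{eqn:X_j + X_j^* commutator} with the bound \eqref{eqn:X_j^* vs. X_j in L^2} and passes directly to $\|\nabla_X f\|^2_{L^2(\Omega,\vp)} + \tfrac{1}{1+\ep}(\Psi f, f)_\vp \leq C\|\nabla_X f\|^2_{L^2(\Omega,\vp)}$, with no pointwise use of~\emph{(HII)}, no compactness, and no contradiction argument. (There does appear to be a sign typo in that display --- \eqref{eqn:X_j + X_j^* commutator} and $X_j^*=-D_j$ give $\|\nabla f\|^2 = \|\nabla_X f\|^2 + \sum_j([X_j,X_j^*]f,f)_\vp$, with a plus --- and your identity $\|\nabla f\|^2 = \|\nabla_X f\|^2 - \int_\Omega \triangle\vp\,|f|^2 e^{-\vp}\,dV$ is the corrected version; but the paper's intended argument is still a direct estimate.)

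More importantly, your final absorption step has a genuine gap. After reaching $\|\nabla f\|^2 \leq C\big(\|\nabla_X f\|^2 + \|f\|^2\big)$, you want the Poincar\'e-type inequality $\|f\|^2_{L^2(\Omega,\vp)} \lesssim \|\nabla_X f\|^2_{L^2(\Omega,\vp)}$ on $W^{1,2}_0(\Omega,\vp;X)$, and you propose to get it by compactness and contradiction. But that Poincar\'e bound is a strictly \emph{stronger} statement than the corollary, and the contradiction you sketch does not close. Passing to a weak limit $f\in W^{1,2}_0(\Omega,\vp;X)$ with $\nabla_X f\equiv 0$ gives $f e^{-\vp}$ locally constant, but the assertion that ``membership in the $W^{1,2}_0$-closure of $C^\infty_c(\Omega)$ forces $c=0$'' is left unjustified, and under \emph{(HII)} alone there is no trace theorem available to invoke. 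Note in particular that \emph{(HII)} permits $\vp(x) = -|x|^2$, for which $\int_\Omega e^\vp\,dV < \infty$ (so $e^\vp\in L^2(\Omega,\vp)$) and $\nabla_X\big(e^\vp\big)\equiv 0$; with such a weight the approximability of $c\,e^\vp$ by test functions in the $W^{1,2}(\Omega,\vp;X)$-norm is not obviously excluded, so the contradiction does not fire without further argument. As written, the last step does not deliver the corollary, and the earlier pieces only yield the weaker estimate with the extra $\|f\|^2$ term.
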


\begin{proof} By \eqref{eqn:X_j + X_j^* commutator} and \eqref{eqn:X_j^* vs. X_j in L^2},
\begin{align*}
\| \nabla f \|_{L^2(\Omega,\vp)}^2
= \| \nabla_X f\|_{L^2(\Omega,\vp)}^2 - \sum_{j=1}^n \big([X_j,X_j^*]f,f\big)_\vp
&\leq \| \nabla_X f\|_{L^2(\Omega,\vp)}^2 + \frac{1}{1+\ep}  \big(\Psi f,f\big)_\vp \\
&\leq C \| \nabla_X f\|_{L^2(\Omega,\vp)}^2.
\end{align*}
\end{proof}

\begin{prop}\label{prop:compactness of W^1_0(unweighted derivs) into L^2}
Let $\Omega\subset\R^n$ satisfy (HIII).
Then the embedding of $W^{1,2}_0(\Omega,\vp; D) \hookrightarrow L^2(\Omega,\vp)$ is compact.
\end{prop}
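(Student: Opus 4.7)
The plan is to mimic the argument for Proposition \ref{prop:compactness of W^1_0 into L^2} almost verbatim, with the operators $X_j$ replaced by $D_j$; the only real change is a sign flip in the commutator identity, and this is exactly what forces us to use (HIII) rather than (HII).

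First I would record the basic identities. With respect to the weighted inner product, $D_j^* = -D_j + \frac{\p\vp}{\p x_j} = -X_j$, so that
\[
(D_j+D_j^*)f = \frac{\p\vp}{\p x_j}f,\qquad [D_j,D_j^*]f = \frac{\p^2\vp}{\p x_j^2}f,
\]
where the second identity follows from $[D_j,D_j^*]=-[D_j,X_j]$. As in the proof of \eqref{eqn:X_j + X_j^* commutator}, for $f\in C^\infty_c(\Omega)$,
\[
\big([D_j,D_j^*]f,f\big)_\vp = \|D_j^*f\|_{L^2(\Omega,\vp)}^2 - \|D_jf\|_{L^2(\Omega,\vp)}^2.
\]
Combining these with a small-constant/large-constant estimate of $\|(D_j+D_j^*)f\|^2$ in terms of $\|D_jf\|^2$ and $\|D_j^*f\|^2$ yields, for any $\ep>0$ and
\[
\tilde\Psi(x) = |\nabla\vp(x)|^2 - (1+\ep)\triangle\vp(x),
\]
the inequality
\[
\big(\tilde\Psi f,f\big)_\vp \leq \Big(2+\ep+\tfrac{1}{2\ep}\Big)\sum_{j=1}^n \|D_jf\|_{L^2(\Omega,\vp)}^2
\]
for $f\in C^\infty_c(\Omega)$, and hence for all $f\in W^{1,2}_0(\Omega,\vp;D)$ by density. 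The sign flip (minus in $\tilde\Psi$ instead of plus) is the crucial change from the $X$-case.

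Next I would invoke (HIII). Given $\theta\in(0,1)$ as in (HIII), I pick $\ep>0$ small so that $\theta(1+\ep)\leq 1$; then
\[
\tilde\Psi \geq (1+\ep)\big(\theta|\nabla\vp|^2 - \triangle\vp\big),
\]
so $\tilde\Psi(x)\to\infty$ as $|x|\to\infty$ in $\Omega$. Setting $\tilde I(R)=\inf_{x\in\Omega\setminus B(0,R)}\tilde\Psi(x)$, the same estimate as in \eqref{eqn:f_k-f_j good L^2 bound} gives
\[
\|f_k-f_j\|_{L^2(\Omega,\vp)}^2 \leq C_{\vp,R}\|f_k-f_j\|_{L^2(B(0,R))}^2 + C\frac{M}{\tilde I(R)}
\]
for any bounded sequence $\{f_k\}\subset W^{1,2}_0(\Omega,\vp;D)$ with $M=\max_k \|f_k\|_{W^{1,2}(\Omega,\vp;D)}^2$. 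Since on any ball $B(0,R)$ the weights are bounded above and below and $W^{1,2}(B(0,R)\cap\Omega;D)$ coincides with the classical Sobolev space, the ordinary Rellich–Kondrachov theorem gives relative compactness in $L^2(B(0,R)\cap\Omega,\vp)$.

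Finally, I would construct, exactly as in the previous proof, an exhaustion $R_j\to\infty$ with $M/\tilde I(R_j)\leq 1/j$ and extract a diagonal subsequence that converges in $L^2(B(0,R_m)\cap\Omega,\vp)$ for every $m$; the displayed estimate then shows that this diagonal subsequence is Cauchy in $L^2(\Omega,\vp)$. I do not expect any serious obstacle: the only place the argument could fail is in confirming the sign of the commutator $[D_j,D_j^*]$, and a direct computation shows this is precisely what matches hypothesis (HIII).
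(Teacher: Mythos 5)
Your proof is correct and matches the paper exactly: the paper's own proof is the one-line remark that the argument of Proposition \ref{prop:compactness of W^1_0 into L^2} carries over with $\Theta(x)=|\nabla\vp(x)|^2-(1+\ep)\triangle\vp(x)$ replacing $\Psi(x)$, which is precisely the $\tilde\Psi$ you construct. Your sign checks on $D_j^*=-X_j$, $(D_j+D_j^*)f=\tfrac{\p\vp}{\p x_j}f$, and $[D_j,D_j^*]f=\tfrac{\p^2\vp}{\p x_j^2}f$, and your observation that (HIII) forces $\tilde\Psi\to\infty$, are all accurate.
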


\begin{proof}The proof follows the lines of the proof of Proposition \ref{prop:compactness of W^1_0 into L^2} with
\[
\Theta(x)  = |\nabla\vp(x)|^2 - (1+\ep)\triangle\vp(x).
\]
replacing $\Psi(x)$.
\end{proof}

\begin{cor}\label{cor:compactness of L^2 into W^-1 (unweighted)}
Let $\Omega\subset\R^n$ satisfy (HIII).
Then the embedding $L^2(\Omega,\vp) \hookrightarrow W^{-1,2}(\Omega,\vp; D)$ is compact.
\end{cor}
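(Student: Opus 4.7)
The plan is to deduce this from Proposition \ref{prop:compactness of W^1_0(unweighted derivs) into L^2} by a straightforward duality argument, in exact parallel with the way Corollary \ref{cor:compactness of L^2 into W^-1} is obtained from Proposition \ref{prop:compactness of W^1_0 into L^2}. The key ingredient is Schauder's theorem: the Banach space adjoint of a compact linear operator between Banach spaces is again compact.

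Concretely, I would let $\iota : W^{1,2}_0(\Omega,\vp;D) \hookrightarrow L^2(\Omega,\vp)$ denote the natural inclusion, which is compact under hypothesis (HIII) by Proposition \ref{prop:compactness of W^1_0(unweighted derivs) into L^2}. Its Banach space adjoint $\iota^*$ maps $L^2(\Omega,\vp)^*$ into $W^{1,2}_0(\Omega,\vp;D)^*$, and by Proposition \ref{prop:dual spaces}(i) the latter space is precisely $W^{-1,2}(\Omega,\vp;D)$. Identifying $L^2(\Omega,\vp)$ with its own dual via the sesquilinear pairing $(\cdot,\cdot)_\vp$, one checks that $\iota^*$ sends $f\in L^2(\Omega,\vp)$ to the functional $u\mapsto (f,u)_\vp$ on $W^{1,2}_0(\Omega,\vp;D)$; this is exactly the natural inclusion $L^2(\Omega,\vp)\hookrightarrow W^{-1,2}(\Omega,\vp;D)$. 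Schauder's theorem then yields the desired compactness.

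There is no real obstacle here; the only point that requires care is consistency of the identifications. Specifically, one should verify that the pairing used to identify $L^2(\Omega,\vp)$ with $L^2(\Omega,\vp)^*$ agrees with the pairing implicit in the representation of $W^{-1,2}(\Omega,\vp;D)$ in Proposition \ref{prop:dual spaces}, so that the compact operator produced by Schauder's theorem coincides with, and not merely is isomorphic to, the inclusion in the statement. Once this bookkeeping is done the argument is immediate and fits in a line or two.
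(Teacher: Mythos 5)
Your proposal is correct, and it follows the same (omitted) reasoning the paper implicitly intends: neither Corollary \ref{cor:compactness of L^2 into W^-1} nor the present corollary is proved in the text, since both are the routine Schauder-duality consequence of the corresponding compactness proposition (Proposition \ref{prop:compactness of W^1_0 into L^2} for the $X$-version, Proposition \ref{prop:compactness of W^1_0(unweighted derivs) into L^2} for the $D$-version). Two small bookkeeping remarks, neither of which affects correctness: first, Proposition \ref{prop:dual spaces} is stated only for the $X$-derivative spaces, so strictly speaking you are invoking its (unstated but obvious) $D$-analogue, namely $W^{-1,2}(\Omega,\vp;D):=W^{1,2}_0(\Omega,\vp;D)^*$; second, because the pairing $(\cdot,\cdot)_\vp$ is sesquilinear, the Riesz identification of $L^2(\Omega,\vp)$ with its dual is conjugate-linear rather than linear, but this is harmless since composing a compact operator with a conjugate-linear isometric isomorphism preserves compactness. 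You are also implicitly using that $C^\infty_c(\Omega)\subset W^{1,2}_0(\Omega,\vp;D)$ is dense in $L^2(\Omega,\vp)$ to see that $\iota^*$ really is injective, i.e.\ an embedding; that is immediate but worth noting.
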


\begin{cor}\label{cor:adjoint dominated in L^2 (unweighted)}
Let $\Omega\subset\R^n$ satisfy (HIII). Then there exists a constant $C>0$ so that
\[
\| \nabla_X f \|_{L^2(\Omega,\vp)}^2 \leq C\| \nabla f\|_{L^2(\Omega,\vp)}^2,
\]
for all $f\in W^{1,2}_0(\Omega,\vp; D)$.
\end{cor}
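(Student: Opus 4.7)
The plan is to mirror the proof of Corollary \ref{cor:adjoint dominated in L^2}, swapping the roles of the $D_j$ and the $X_j$ throughout and invoking (HIII) in place of (HII). The symmetry underlying this exchange is the identity $D_j^* = -X_j$, obtained as the formal adjoint of $X_j^* = -D_j$ already noted in the paper; in particular $\|D_j^* f\|_{L^2(\Omega,\vp)} = \|X_j f\|_{L^2(\Omega,\vp)}$, so $\sum_j \|D_j^* f\|^2_{L^2(\Omega,\vp)} = \|\nabla_X f\|^2_{L^2(\Omega,\vp)}$.

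First I would establish the $D$-analog of the commutator identities \eqref{eqn:X_j + X_j^*}--\eqref{eqn:X_j + X_j^* commutator}. A direct computation gives
\[
(D_j + D_j^*)\,f = \frac{\partial\vp}{\partial x_j}\,f, \qquad [D_j, D_j^*]\,f = \frac{\partial^2\vp}{\partial x_j^2}\,f,
\]
which is \emph{opposite} in sign to $[X_j,X_j^*]f = -\vp_{x_jx_j}f$. Pairing the commutator against $f$ in $L^2(\Omega,\vp)$, expanding as in \eqref{eqn:X_j + X_j^* commutator}, and summing in $j$ yields the key identity
\[
\|\nabla_X f\|^2_{L^2(\Omega,\vp)} - \|\nabla f\|^2_{L^2(\Omega,\vp)} = \sum_{j=1}^n \bigl([D_j,D_j^*]\,f,f\bigr)_\vp = (\triangle\vp\,f,f)_\vp.
\]

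Next, I would quote the bound produced inside the proof of Proposition \ref{prop:compactness of W^1_0(unweighted derivs) into L^2}: the small-constant/large-constant argument carried out there, which is precisely the $D$-analog of \eqref{eqn:X_j^* vs. X_j in L^2}, gives, for any $\ep>0$,
\[
(\Theta f, f)_\vp \leq \Bigl(2+\ep+\tfrac{1}{2\ep}\Bigr)\|\nabla f\|^2_{L^2(\Omega,\vp)}, \qquad \Theta = |\nabla\vp|^2 - (1+\ep)\triangle\vp,
\]
first for $f\in C^\infty_c(\Omega)$ and then for all $f\in W^{1,2}_0(\Omega,\vp;D)$ by density. Combining this with the identity of the previous step and using $(|\nabla\vp|^2 f, f)_\vp \geq 0$ in exactly the algebraic pattern of the proof of Corollary \ref{cor:adjoint dominated in L^2}, I expect
\[
\|\nabla_X f\|^2 = \|\nabla f\|^2 + \sum_{j=1}^n \bigl([D_j,D_j^*]\,f,f\bigr)_\vp \leq \|\nabla f\|^2 + \frac{1}{1+\ep}(\Theta f,f)_\vp \leq C\|\nabla f\|^2.
\]

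The only place requiring genuine care is the sign bookkeeping in Step~1, since $[D_j,D_j^*]$ and $[X_j,X_j^*]$ differ by a sign, and one must correspondingly verify that the passage from $\sum([D_j,D_j^*]f,f)_\vp$ to $\tfrac{1}{1+\ep}(\Theta f,f)_\vp$ goes in the correct direction with $\Theta$ playing the role of $\Psi$. Once the signs are straight, the remaining two steps go through verbatim, with $\|\nabla f\|$ replacing $\|\nabla_X f\|$ throughout the quoted inequality.
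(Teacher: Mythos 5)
You faithfully mirror the proof of Corollary \ref{cor:adjoint dominated in L^2}, and Steps 1 and 2 are correct: $D_j^*=-X_j$, $[D_j,D_j^*]=\p^2\vp/\p x_j^2$, hence $\sum_j([D_j,D_j^*]f,f)_\vp=(\triangle\vp\,f,f)_\vp=\|\nabla_X f\|^2_{L^2(\Omega,\vp)}-\|\nabla f\|^2_{L^2(\Omega,\vp)}$, and the small-constant/large-constant estimate inside the proof of Proposition \ref{prop:compactness of W^1_0(unweighted derivs) into L^2} does give $(\Theta f,f)_\vp\leq\big(2+\ep+\tfrac{1}{2\ep}\big)\|\nabla f\|^2_{L^2(\Omega,\vp)}$ with $\Theta=|\nabla\vp|^2-(1+\ep)\triangle\vp$.

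The gap is the middle inequality $\sum_j([D_j,D_j^*]f,f)_\vp\leq\tfrac{1}{1+\ep}(\Theta f,f)_\vp$. Writing both sides out, the left side equals $(\triangle\vp\,f,f)_\vp$ and $\tfrac{1}{1+\ep}\Theta=\tfrac{1}{1+\ep}|\nabla\vp|^2-\triangle\vp$, so the claimed inequality is equivalent to $2(\triangle\vp\,f,f)_\vp\leq\tfrac{1}{1+\ep}\big(|\nabla\vp|^2f,f\big)_\vp$, i.e.\ to the pointwise bound $\triangle\vp\leq\tfrac{1}{2(1+\ep)}|\nabla\vp|^2$ on $\Omega$. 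Hypothesis (HIII) does not supply this: it gives $\triangle\vp\leq\theta|\nabla\vp|^2$ only asymptotically, with a $\theta\in(0,1)$ that may be $\geq\tfrac12$ (so no admissible $\ep>0$ makes $\tfrac{1}{2(1+\ep)}\geq\theta$), and it gives no control at all on bounded subsets of $\Omega$. The sign tension you flag in your closing remark is therefore real and is not resolved by straightening the signs: the paper's one-line proof of Corollary \ref{cor:adjoint dominated in L^2} carries a minus sign in its first equality that is inconsistent with \eqref{eqn:X_j + X_j^* commutator}, and you are right to correct it to a plus, but once corrected the chain runs into exactly the pointwise obstruction above. A genuinely different passage from the commutator identity to the conclusion is needed.
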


\begin{remark}Proposition \ref{prop:density of C^infty_c} and Corollaries \ref{cor:adjoint dominated in L^2} and \ref{cor:adjoint dominated in L^2 (unweighted)} allow us to define a number of equivalent ways to measure the $W^{k,2}_0(\Omega,\vp; X)$ norm (which we will use later on $\Omega_\ep'$)
Let $\psi\in W^k_0(\Omega,\vp; X)$ and
\[
Y_j = \frac 12(X_j -X_j^*) = \frac{\p}{\p x_j} - \frac 12 \frac{\p \vp}{\p x_j} = X_j + \frac 12 \frac{\p \vp}{\p x_j}
\text{ and }
\nabla_Y \psi = (Y_1 \psi,\dots, Y_n \psi).
\]
Note that
\begin{multline*}
\| Y_j \psi\|_{L^2(\Omega,\vp)}^2 = \Big( \frac{\p \psi}{\p x_j} - \frac 12 \frac{\p \vp}{\p x_j}\psi, \frac{\p \psi}{\p x_j} - \frac 12 \frac{\p \vp}{\p x_j}\psi\Big)_\vp \\
= \Big\| \frac{\p \psi}{\p x_j}\Big\|_{L^2(\Omega,\vp)}^2 + \frac 14 \Big\| \frac{\p\vp}{\p x_j} \psi\Big\|_{L^2(\Omega,\vp)}^2
- \Rre \Big( \frac{\p \psi}{\p x_j}, \frac{\p \vp}{\p x_j} \psi\Big)_\vp
\end{multline*}
and
\begin{multline*}
\| Y_j \psi\|_{L^2(\Omega,\vp)}^2 = \Big(X_j \psi + \frac 12 \frac{\p \vp}{\p x_j}\psi, X_j \psi + \frac 12 \frac{\p \vp}{\p x_j}\psi \Big)_\vp\\
= \big\| X_j \psi\big\|_{L^2(\Omega,\vp)}^2 + \frac 14 \Big\| \frac{\p\vp}{\p x_j} \psi\Big\|_{L^2(\Omega,\vp)}^2
+ \Rre \Big( \frac{\p \psi}{\p x_j}, \frac{\p \vp}{\p x_j} \psi\Big)_\vp.
\end{multline*}
Thus,
\[
\| Y_j \psi \|_{L^2(\Omega,\vp)}^2 = \frac 12 \Big( \Big\| \frac{\p \psi}{\p x_j} \Big\|_{L^2(\Omega,\vp)}^2 + \big\| X_j \psi\big\|_{L^2(\Omega,\vp)}^2\Big)
+ \frac 14 \Big\| \frac{\p\vp}{\p x_j} \psi\Big\|_{L^2(\Omega,\vp)}^2,
\]
so
\[
\| \nabla_Y \psi\|_{L^2(\Omega,\vp)}^2 \geq \frac 12 \Big( \| \nabla \psi\|_{L^2(\Omega,\vp)}^2 + \| \nabla_X \psi\|_{L^2(\Omega,\vp)}^2 \Big).
\]
It then follows that
\[
\| \nabla_Y \psi\|_{L^2(\Omega,\vp)}^2\sim \| \nabla \psi\|_{L^2(\Omega,\vp)}^2 \sim \| \nabla_X \psi\|_{L^2(\Omega,\vp)}^2.
\]
and consequently \emph{(HIV)} shows that for any $\ell$ so that $1\leq\ell\leq m$
\begin{equation}\label{eqn:Y,X,D equiv}
\| \psi\|_{W^{\ell,2}(\Omega,\vp; D)} \sim \| \psi\|_{W^{\ell,2}(\Omega,\vp; X)}^2 \sim \sum_{|\alpha|\leq \ell} \| Y^\alpha \psi\|_{L^2(\Omega,\vp)}^2 
\end{equation}
where the constants in $\sim$ depend on $\ell$, $n$, and $\vp$.
The reason that we introduced $Y_j$ is that $Y_j = e^{\frac 12\vp}\frac{\p}{\p x_j} e^{-\frac 12 \vp}$, so
\[
\| Y_j \psi \|_{L^2(\Omega,\vp)}^2 =  \int_{\Omega} \Big|e^{\frac 12\vp}\frac{\p}{\p x_j}\big( e^{-\frac 12 \vp} \psi\big) \Big|^2 e^{-\vp}\, dx
= \Big\| \frac{\p}{\p x_j} \big( e^{-\frac 12 \vp} \psi\big)  \Big\|_{L^2(\Omega)}.
\]
\end{remark}

%
%
\section{Sobolev spaces on $M$}\label{sec:traces on hypersurfaces}

As above with Proposition \ref{prop:dual spaces}, standard arguments yield
\begin{prop}
Let $1<p<\infty$ and $\frac 1p + \frac 1q =1$. Fix a nonnegative integer $k\leq m$ and let $N=N(k)$ be as in Proposition \ref{prop:dual spaces}.
The dual space to $W^{k,p}(M,\vp;T)$  consists of $u\in \mathcal D'(M)$ for which there exists a vector
$g = (g_\alpha) \in (L^q(M,\vp))^N$ so that for all $f\in W^{k,p}(M,\vp;T)$,
\[
u(f) = \sum_{|\alpha|\leq k} \big( T^\alpha f, g_\alpha\big)_\vp
= \sum_{|\alpha|\leq k} (-1)^{|\alpha|}\big( f, (T^\alpha)^*g_\alpha\big)_\vp
\]
where $T^\alpha$ is a tangential operator of order $|\alpha|$.
Moreover, the norm on $W^{-k,q}(M,\vp;T) := W^{k,p}(M,\vp;T)^*$
\begin{align*}
\| u \|_{W^{-k,q}(M,\vp;T)} :=& \sup\{ |u(f)| : f\in W^{k,p}(M,\vp;T), \| f \|_{W^{k,p}(M,\vp;T)} =1 \}\\
 =& \inf_{\mathcal F} \Big \{ \sum_{|\alpha|\leq k} \|g_\alpha\|_{L^q(M,\vp)}^q \Big\}
\end{align*}
where $\mathcal F$ is the set of $N$-tuples $(g_1,\dots,g_N)\in L^q(M,\vp)^{N}$ representing the functional $u$.
\end{prop}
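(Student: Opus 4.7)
The strategy mirrors the one behind Proposition \ref{prop:dual spaces} on $\Omega$: realize $W^{k,p}(M,\vp;T)$ as a closed subspace of a direct sum of $L^p(M,\vp)$-factors via the differentiation map, then invoke Hahn--Banach and the known duality $(L^p)^{\ast}\cong L^q$. Specifically, enumerate the multiindices $\alpha^1,\dots,\alpha^N$ with $|\alpha^j|\le k$ (with $N=N(k)$) and fix, on each chart $U$ of the cover from the preliminaries, tangential operators $T^{\alpha}_U$. Using the partition of unity $\{\chi_j\}$ the norm of $W^{k,p}(M,\vp;T)$ is equivalent to
\[
\|f\|^p \;\sim\; \sum_{j}\sum_{|\alpha|\le k}\|T_{U_j}^{\alpha}(\chi_j f)\|_{L^p(M,\vp)}^p,
\]
so the map
\[
J:W^{k,p}(M,\vp;T)\to \ell^p\big(L^p(M,\vp)^N\big),\qquad Jf=\big(T_{U_j}^{\alpha^i}(\chi_j f)\big)_{j,i},
\]
is an isometric embedding of $W^{k,p}(M,\vp;T)$ onto a closed subspace $V$.

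Given $u\in W^{k,p}(M,\vp;T)^*$, set $\widetilde u(Jf):=u(f)$ on $V$ and extend (Hahn--Banach) to a functional $U$ on $\ell^p(L^p(M,\vp)^N)$ with $\|U\|=\|u\|$. Since $1<p<\infty$, the dual of $\ell^p(L^p(M,\vp)^N)$ is $\ell^q(L^q(M,\vp)^N)$, so there exist $g_{j,i}\in L^q(M,\vp)$ with
\[
u(f)=U(Jf)=\sum_{j}\sum_{|\alpha|\le k}\bigl(T_{U_j}^{\alpha}(\chi_j f),\,g_{j,\alpha}\bigr)_{\!\vp}.
\]
Collapsing the partition sum by setting $g_\alpha=\sum_{j}\chi_j\,g_{j,\alpha}$ (and absorbing the commutators $[T^{\alpha},\chi_j]$ into lower-order $g_{\beta}$'s for $|\beta|<|\alpha|$, a standard finite recursion) produces the global representation
\[
u(f)=\sum_{|\alpha|\le k}\bigl(T^{\alpha}f,\,g_\alpha\bigr)_{\!\vp},
\]
with $g_\alpha\in L^q(M,\vp)$. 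The alternative form $u(f)=\sum(-1)^{|\alpha|}(f,(T^{\alpha})^*g_\alpha)_\vp$ is then just the definition of the formal adjoint of $T^{\alpha}$ with respect to $(\cdot,\cdot)_\vp$ on the closed manifold $M$; since $M$ has no boundary there are no boundary terms, and $(T^{\alpha})^*g_\alpha$ is understood in $\mathcal{D}'(M)$.

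Finally, the two norm formulas: the supremum expression is the definition of $\|u\|_{W^{-k,q}(M,\vp;T)}$. For the infimum, any representing vector $(g_\alpha)$ yields $|u(f)|\le (\sum\|g_\alpha\|_{L^q}^q)^{1/q}\|f\|_{W^{k,p}}$ by H\"older, hence an upper bound on $\|u\|$; conversely, the Hahn--Banach extension above was norm-preserving, and the Riesz representation in $\ell^q(L^q(M,\vp)^N)$ realizes this norm exactly, so the infimum is attained (equality).

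\textbf{Anticipated difficulty.} The single nontrivial point, compared to Proposition \ref{prop:dual spaces}, is that the tangential operators $T_j$ are defined only locally, so the pair $(T^{\alpha}f,g_\alpha)_{\!\vp}$ does not \emph{a priori} make invariant sense. The partition-of-unity trick above circumvents this, but one must check that the commutators $[T^{\alpha}_{U_j},\chi_j]$ are tangential operators of order strictly less than $|\alpha|$ (true because $\chi_j\in C^m$ and is a scalar multiplier) so that they can be absorbed into the lower-order $g_\beta$'s without losing the $L^q$-representation or the duality bound. Once this bookkeeping is in place, the remainder of the argument is identical to the Euclidean case and we simply refer to \cite[Sections 3.9, 3.12, 3.13]{AdFo03}.
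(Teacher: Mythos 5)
Your overall architecture --- isometric embedding of $W^{k,p}(M,\vp;T)$ into a product of $L^p$-spaces via the partition of unity, Hahn--Banach extension, and $L^p$--$L^q$ duality --- is the same standard argument the paper references (it simply points back to the proof of Proposition \ref{prop:dual spaces} and to \cite[Sections 3.9, 3.12, 3.13]{AdFo03}), so the approach matches.

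The one step that needs more care is the ``collapsing.'' The operators $T^{\alpha}_{U_j}$ are genuinely chart-dependent, so after setting $g_\alpha=\sum_j\chi_j g_{j,\alpha}$ the resulting pairing is against a $j$-dependent family and is not yet of the form $\sum_{|\alpha|\le k}(T^\alpha f, g_\alpha)_\vp$ for a single system of tangential operators. To make the bookkeeping honest you must either (a) fix a global spanning family of tangential first-order operators on $M$ --- for a hypersurface the tangential projections of $X_1,\dots,X_n$ do the job, have coefficients controlled by $\|\rho\|_{C^m(\Omega_\ep')}$ by (HI), and lead to $N$-tuples indexed by multiindices in $n$ variables --- and then rewrite each $T^{\alpha}_{U_j}$ in terms of that fixed family with bounded $C^{m-1}$ coefficients, pushing the resulting commutators into lower-order $g_\beta$'s as you describe; or (b) simply interpret the representation in the proposition as being over the doubly-indexed $(j,\alpha)$ family, since the formula as the paper writes it is already somewhat schematic. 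Relatedly, the norm identity ``$\|u\|=\inf$'' is exact only when the embedding $f\mapsto(T^{\alpha}_{U_j}(\chi_j f))_{j,\alpha}$ is isometric; the partition-of-unity comparison in the preliminaries is only an equivalence, and the commutator-absorption in your collapsing step will also degrade exact equality to two-sided bounds. So the Hahn--Banach norm preservation does not literally carry through to the collapsed $(g_\alpha)$; what survives is the norm equivalence, which is what the duality theorem actually requires. These are fixable bookkeeping issues rather than a wrong strategy, and your identification of the local-versus-global difficulty is exactly the right thing to flag.
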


\subsection{Approximations and embeddings for $W^{k,p}(M,\vp;T)$}\label{subsec:approx and embed on M}

When considering results on the boundary, we will generally need both (HII) and (HIII).  Adding these, it is helpful to observe that we have
\begin{equation}
\label{eq:combined_hypothesis}
  \lim_{\atopp{|x|\rightarrow\infty}{x\in\Omega}}|\nabla\varphi|=\infty.
\end{equation}
Conversely, (HIV) and \eqref{eq:combined_hypothesis} imply both (HII) and (HIII), since (HIV) with $k=2$ implies
\begin{equation}
\label{eq:laplace_estimate}
  |\Delta\varphi|\leq n|\nabla^2\varphi|\leq nC_2(1+|\nabla\varphi|).
\end{equation}

By classical results, we know that
$C^m_c(M)$ is dense in $W^{k,2}(M,\vp;T)$.

Let $B = (\oldb_{j\ell})$ be the matrix with bounded $C^{m-1}$ coefficients so that
\[
{\oldL}_j = \sum_{\ell=1}^n \oldb_{j\ell}\frac{\p}{\p x_\ell}.
\]
Since $\Ta_j = {\oldL}_j - {\oldL}_j\vp$, $T_j = (B \nabla_X)_j$.
Then $\Ta_\ell = \sum_{\ell'=1}^n {\oldb}_{\ell\ell'}X_{\ell'}$ implies
\[
\Ta_{\ell}^* = \sum_{\ell'=1}^n \Big( {\oldb}_{\ell\ell'}X_{\ell'}^* - \frac{\p {\oldb}_{\ell\ell'}}{\p x_{\ell'}}\Big).
\]

Using the formula for $\Ta_{\ell}^*$ and (\ref{eqn:X_j + X_j^*}), we observe that
\[
\Ta_j + \Ta_j^* = -\sum_{\ell=1}^n \Big( {\oldb}_{j\ell} \frac{\p \vp}{\p x_\ell} + \frac{\p {\oldb}_{j\ell}}{\p x_\ell}\Big)
= -(B\nabla\vp)_j - \sum_{\ell=1}^n  \frac{\p {\oldb}_{j\ell}}{\p x_\ell}.
\]
If $H\vp$ is the Hessian of $\vp$, then from (\ref{eqn:[X_j,X_j^*]}) it follows that
\begin{align*}
[\Ta_j,\Ta_j^*] &= \sum_{\ell,\ell'=1}^n \bigg( -{\oldb}_{j\ell}{\oldb}_{j\ell'} \frac{\p^2\vp}{\p x_\ell \p x_{\ell'}}
+ {\oldb}_{j\ell}\frac{\p {\oldb}_{j\ell'}}{\p x_\ell} X_{\ell'}^* + {\oldb}_{j\ell'} \frac{\p {\oldb}_{j\ell}}{\p x_{\ell'}} X_\ell - {\oldb}_{j\ell}\frac{\p^2 {\oldb}_{j\ell'}}{\p x_\ell \p x_{\ell'}}\bigg) \\
&= \sum_{\ell,\ell'=1}^n \bigg( -{\oldb}_{j\ell}{\oldb}_{j\ell'} \frac{\p^2\vp}{\p x_\ell \p x_{\ell'}}
- {\oldb}_{j\ell}\frac{\p {\oldb}_{j\ell'}}{\p x_\ell} \frac{\p\vp}{\p x_{\ell'}} - {\oldb}_{j\ell}\frac{\p^2 {\oldb}_{j\ell'}}{\p x_\ell \p x_{\ell'}} \bigg)\\
&= -\big(B (H\vp) B^T\big)_{jj} - \sum_{\ell,\ell'=1}^n\Big[  {\oldb}_{j\ell}\frac{\p {\oldb}_{j\ell'}}{\p x_\ell} \frac{\p\vp}{\p x_{\ell'}} + {\oldb}_{j\ell}\frac{\p^2 {\oldb}_{j\ell'}}{\p x_\ell \p x_{\ell'}}\Big].
\end{align*}

The key to the proof of Proposition \ref{prop:compactness of W^1_0 into L^2} was the construction of $\Psi$, an unbounded function so that
$(\Psi f, f)_{\vp}$ could be written in terms of inner products involving $\|\nabla_X f\|_{L^2(\Omega,\vp)}$ and $([X_j,X_j^*]f,f)_\vp$. Adapting the heuristic of
Proposition \ref{prop:compactness of W^1_0 into L^2}, we compute
\begin{multline*}
\sum_{j=1}^{n-1}\Big[ \big\|\big(T_j^* + T_j\big)f \big\|_{L^2(M,\vp)}^2 - (1+\ep)\big([T_j,T_j^*]f,f\big)_{M,\vp}\Big] \\
\hspace{-1.5in}= \sum_{j=1}^{n-1} \Bigg[ \Big\| -\big(B\nabla\vp\big)_j f - \sum_{\ell=1}^n \frac{\p {\oldb}_{j\ell'}}{\p x_\ell}f \Big\|_{L^2(M,\vp)}^2 \\
+ (1+\ep)\Big( \big(B(H\vp)B^T\big)_{jj}f + \sum_{\ell,\ell'=1}^n\Big[  {\oldb}_{j\ell}\frac{\p {\oldb}_{j\ell'}}{\p x_\ell} \frac{\p\vp}{\p x_{\ell'}} + {\oldb}_{j\ell}\frac{\p^2 {\oldb}_{j\ell'}}{\p x_\ell \p x_{\ell'}}\Big]  f, f \Big)_{M,\vp}\Bigg].
\end{multline*}
Therefore, the analog of $\Psi$ in Proposition \ref{prop:compactness of W^1_0 into L^2} is
\begin{multline}\label{eqn:Psi on M}
\Psi_M(x) = \sum_{j=1}^{n-1} \Bigg[\Big| \big( B\nabla\vp\big)_j + \sum_{\ell=1}^n \frac{\p {\oldb}_{j\ell}}{\p x_\ell}\Big|^2 \\
+ (1+\ep)\bigg([ \Tr\big(B(H\vp)B^T\big) + \sum_{\ell,\ell'=1}^n\Big[  {\oldb}_{j\ell}\frac{\p {\oldb}_{j\ell'}}{\p x_\ell} \frac{\p\vp}{\p x_{\ell'}} + {\oldb}_{j\ell}\frac{\p^2 {\oldb}_{j\ell'}}{\p x_\ell \p x_{\ell'}}\Big] \bigg) \Bigg].
\end{multline}
The matrix $B$ plays a critical role here. We observe that
\[
B \nabla = \begin{pmatrix} \tnabla_\oldL\vspace{.1in} \\  \p/\p\nu \end{pmatrix}
\]
where $\frac{\p}{\p\nu} = {\oldL}_n$ is the unit outward pointing normal.  Now, (HVI) and \eqref{eq:combined_hypothesis} tell us
\[
  \lim_{\atopp{|x|\rightarrow\infty}{x\in M}}|\tnabla_\oldL\varphi|=\infty,
\]
as well.  Using (HIV) and (HVI) to bound $H\varphi$, we have
\[
  \Psi_M(x)\geq|\tnabla_\oldL\varphi|^2-O(|\tnabla_\oldL\varphi|+1).
\]
Hence, we have the following analogue of (HII):
\begin{enumerate}
\item[BI.] There exists $\ep>0$ so that $\Psi_M$ defined by (\ref{eqn:Psi on M}) satisfies
\[
\lim_{\atopp{|x|\to\infty}{x\in M}} \Psi_M(x) = \infty.
\]
\end{enumerate}

\begin{prop}\label{prop:compactness of W^1 into L^2 on M}
Let $\Omega\subset\R^n$ satisfy \emph{(HI)}-\emph{(HV)} and $ \bd\Omega$ satisfy \emph{(BI)}. Then
the embedding  $W^{1,2}(M,\vp;T) \hookrightarrow L^2(M,\vp)$ is compact.
\end{prop}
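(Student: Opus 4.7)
The plan is to adapt the proof of Proposition \ref{prop:compactness of W^1_0 into L^2} to $M$, using the auxiliary function $\Psi_M$ defined in (\ref{eqn:Psi on M}) as the substitute for $\Psi$ and invoking hypothesis \emph{(BI)} in place of \emph{(HII)}. Since $C^m_c(M)$ is dense in $W^{1,2}(M,\vp;T)$ by the classical results cited above, it suffices to derive the estimates for $f\in C^m_c(M)$ and extend them by approximation.

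The first step is to establish the boundary analog of (\ref{eqn:X_j^* vs. X_j in L^2}),
\[
(\Psi_M f, f)_{M,\vp} \leq C\sum_{j=1}^{n-1}\|T_j f\|_{L^2(M,\vp)}^2 \leq C\|f\|_{W^{1,2}(M,\vp;T)}^2.
\]
The identity preceding (\ref{eqn:Psi on M}) already expresses $(\Psi_M f,f)_{M,\vp}$ as
\[
\sum_{j=1}^{n-1}\bigl[\|(T_j+T_j^*)f\|_{L^2(M,\vp)}^2 - (1+\ep)([T_j,T_j^*]f,f)_{M,\vp}\bigr].
\]
Because each $T_j$ with $1\leq j\leq n-1$ is tangential, integration by parts on the closed hypersurface $M$ is free of boundary terms and yields $([T_j,T_j^*]f,f)_{M,\vp} = \|T_j^* f\|_{L^2(M,\vp)}^2 - \|T_j f\|_{L^2(M,\vp)}^2$. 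Combining this with the small/large constant inequality
\[
\|(T_j+T_j^*)f\|_{L^2(M,\vp)}^2 \leq \bigl(1+\tfrac{1}{2\ep}\bigr)\|T_j f\|_{L^2(M,\vp)}^2 + (1+\ep)\|T_j^*f\|_{L^2(M,\vp)}^2
\]
causes the $\|T_j^*f\|^2$ terms to cancel, exactly mirroring the passage from (\ref{eqn:X_j + X_j^* commutator}) to (\ref{eqn:X_j^* vs. X_j in L^2}).

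Given a bounded sequence $\{f_k\}\subset W^{1,2}(M,\vp;T)$ with common bound $C_0$, set $I(R) = \inf_{x\in M\setminus B(0,R)}\Psi_M(x)$, which tends to $\infty$ by \emph{(BI)}. Splitting
\[
\|f_k-f_\ell\|_{L^2(M,\vp)}^2 \leq C_{\vp,R}\|f_k-f_\ell\|_{L^2(M\cap B(0,R))}^2 + \frac{1}{I(R)}\bigl(\Psi_M(f_k-f_\ell),f_k-f_\ell\bigr)_{M,\vp},
\]
the tail is dominated by $CC_0/I(R)$ via the preceding estimate. On the relatively compact piece $M\cap B(0,R)$, only finitely many coordinate patches $U_j$ are relevant (by \emph{(HI)} and the uniform geometry of the cover), so the restricted $W^{1,2}(M,\vp;T)$ norm is equivalent to a standard unweighted $W^{1,2}$ norm, and the first term is handled by classical Rellich--Kondrachov. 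A diagonal subsequence argument along an increasing sequence $R_m\to\infty$ with $C_0/I(R_m)\to 0$ then produces an $L^2(M,\vp)$-Cauchy subsequence.

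The main obstacle is not the compactness machinery, which transcribes directly from Proposition \ref{prop:compactness of W^1_0 into L^2}, but rather verifying that \emph{(BI)} genuinely holds under \emph{(HI)}--\emph{(HV)} and the tangential condition \emph{(HVI)}. The authors already carry out this verification in the discussion surrounding (\ref{eq:combined_hypothesis}) and \eqref{eq:laplace_estimate}, showing $\Psi_M(x)\geq |\tnabla_\oldL\vp|^2 - O(|\tnabla_\oldL\vp|+1)$ and then using \emph{(HVI)} to force $|\tnabla_\oldL\vp|\to\infty$; this is precisely the role of \emph{(HVI)}, and without it the tangential gradient of $\vp$ could remain bounded along a sequence tending to infinity on $M$, destroying the coercivity needed for the tail estimate.
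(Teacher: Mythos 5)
Your proposal matches the paper's intended argument precisely: the paper proves this proposition simply by pointing back to Proposition \ref{prop:compactness of W^1_0 into L^2} ("The proof follows the argument of Proposition \ref{prop:compactness of W^1_0 into L^2}"), and you have faithfully carried out that transcription, replacing $\Psi$ with $\Psi_M$, $X_j$ with $T_j$, and (HII) with (BI), and correctly isolating the role of (HVI) in guaranteeing that $\Psi_M\to\infty$ along $M$. Your closing remark about (BI) being the genuine obstacle is more of an aside than a proof step, since (BI) is a stated hypothesis of the proposition, but it does not affect the correctness of the argument.
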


\begin{proof}The proof follows the argument of Proposition \ref{prop:compactness of W^1_0 into L^2}.
\end{proof}
As earlier, we have the following corollary.

\begin{cor}\label{cor:adjoint dominated in L^2 on M}
Under the assumptions and  notation of  Proposition \ref{prop:compactness of W^1 into L^2 on M},
\[
\| \tnabla_{\Ta^*} f \|_{L^2(M,\vp)}^2
\leq C \| \tnabla_\Ta f\|_{L^2(M,\vp)}^2
\]
for some constant $C$ independent of $f$.
\end{cor}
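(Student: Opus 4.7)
The plan is to transcribe the argument of Corollary \ref{cor:adjoint dominated in L^2} to the boundary setting, with the tangential operators $T_j$ taking the place of $X_j$ and $\Psi_M$ from \eqref{eqn:Psi on M} taking the place of $\Psi$. The heavy lifting is essentially already done: the derivation preceding Proposition \ref{prop:compactness of W^1 into L^2 on M} establishes the boundary analog of \eqref{eqn:X_j^* vs. X_j in L^2},
\[
(\Psi_M f, f)_{M,\vp} \leq \Bigl(2 + \ep + \tfrac{1}{2\ep}\Bigr) \|\tnabla_\Ta f\|_{L^2(M,\vp)}^2,
\]
and my task is simply to assemble the pieces in the right order.

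First, I would record the boundary commutator identity. Because each $T_j$ is tangential, no boundary terms arise when integrating by parts on $M$, and expanding the commutator inner product as in \eqref{eqn:X_j + X_j^* commutator} gives
\[
([T_j, T_j^*]f, f)_{M,\vp} = \|T_j^* f\|_{L^2(M,\vp)}^2 - \|T_j f\|_{L^2(M,\vp)}^2.
\]
Summing over $1 \leq j \leq n-1$ yields
\[
\|\tnabla_{\Ta^*} f\|_{L^2(M,\vp)}^2 - \|\tnabla_\Ta f\|_{L^2(M,\vp)}^2 = \sum_{j=1}^{n-1} ([T_j, T_j^*]f, f)_{M,\vp}.
\]

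Second, I would control the right-hand side by combining the defining identity of $\Psi_M$,
\[
(1+\ep)\sum_{j=1}^{n-1}([T_j, T_j^*]f, f)_{M,\vp} = \sum_{j=1}^{n-1} \|(T_j+T_j^*)f\|_{L^2(M,\vp)}^2 - (\Psi_M f, f)_{M,\vp},
\]
with the auxiliary estimate displayed above, exactly as in the proof of Corollary \ref{cor:adjoint dominated in L^2}. The small/large constant inequality $\|(T_j+T_j^*)f\|^2 \leq (1+\tfrac{1}{2\ep})\|T_j f\|^2 + (1+\ep)\|T_j^* f\|^2$ lets me absorb the remaining $\|\tnabla_{\Ta^*} f\|^2$ contribution on one side, leaving the desired inequality $\|\tnabla_{\Ta^*} f\|_{L^2(M,\vp)}^2 \leq C \|\tnabla_\Ta f\|_{L^2(M,\vp)}^2$.

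The main obstacle, which is already handled inside the proof of Proposition \ref{prop:compactness of W^1 into L^2 on M}, is verifying that $\Psi_M$ genuinely enjoys the coercivity needed to feed into this scheme. Compared with the $\Omega$ case, both $T_j+T_j^*$ and $[T_j,T_j^*]$ acquire extra lower-order terms from derivatives of the frame coefficients $\oldb_{j\ell}$, and the Hessian contribution $(B(H\vp)B^T)_{jj}$ has to be controlled; \emph{(HI)} supplies uniform $C^{m-1}$ bounds on $\oldb_{j\ell}$, \emph{(HIV)} controls $H\vp$ by $|\nabla\vp|$, and \emph{(HVI)} transfers that growth to $|\tnabla_\oldL\vp|$, which is exactly what \emph{(BI)} requires. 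Once these ingredients are in place, the remainder of the argument is a direct transcription of the interior proof.
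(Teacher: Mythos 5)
Your high-level strategy (transcribe the proof of Corollary \ref{cor:adjoint dominated in L^2} to the boundary, with $T_j$, $\Psi_M$ in place of $X_j$, $\Psi$) is what the paper intends — its proof of Corollary \ref{cor:adjoint dominated in L^2 on M} is simply the phrase ``As earlier.'' But the specific chain you propose is circular. Start from your (correct) identity $\|\tnabla_{\Ta^*}f\|^2_{L^2(M,\vp)} - \|\tnabla_\Ta f\|^2_{L^2(M,\vp)} = \sum_j([T_j,T_j^*]f,f)_{M,\vp}$ and substitute the defining identity $(1+\ep)\sum_j([T_j,T_j^*]f,f)_{M,\vp}=\sum_j\|(T_j+T_j^*)f\|^2 - (\Psi_M f,f)_{M,\vp}$ together with the small/large constant inequality using the \emph{same} parameter $\ep$. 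If you carry out the algebra, the $\|\tnabla_{\Ta^*}f\|^2$ terms cancel exactly on both sides, and what remains is precisely the auxiliary estimate $(\Psi_M f,f)_{M,\vp}\leq(2+\ep+\tfrac{1}{2\ep})\|\tnabla_\Ta f\|^2$ that you already had. Nothing is absorbed, and no bound on $\|\tnabla_{\Ta^*}f\|^2$ emerges; the three ingredients you combine are not independent, since the $\Psi_M$-identity plus the small/large inequality with the same $\ep$ are exactly the derivation of the auxiliary estimate.

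To make the absorption genuine you must break the symmetry: apply the small/large constant step with a strictly smaller parameter $\delta<\ep$, which yields $(\ep-\delta)\|\tnabla_{\Ta^*}f\|^2 \leq (2+\ep+\tfrac{1}{2\delta})\|\tnabla_\Ta f\|^2 - (\Psi_M f,f)_{M,\vp}$. This closes the argument wherever $\Psi_M\geq 0$, but (BI) only guarantees $\Psi_M\to\infty$, so on the compact set where $\Psi_M$ may be negative you pick up an extra $C\|f\|^2_{L^2(M,\vp)}$ term and need to say how you dispose of it (equivalently, bound $[T_j,T_j^*]$ directly as a multiplication operator of size $O(1+|\tnabla_\oldL\vp|)$ via (HIV) and (HVI), reducing again to the auxiliary estimate plus a lower-order term). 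One further caution worth flagging: the first displayed equality in the paper's proof of Corollary \ref{cor:adjoint dominated in L^2} carries a sign slip — the commutator identity gives $\|\nabla f\|^2 = \|\nabla_X f\|^2 + \sum_j([X_j,X_j^*]f,f)_\vp$, with a plus — so a verbatim transcription of that chain does not obviously close; you have the sign right, but that is precisely why your naive substitution collapses to a tautology rather than an estimate.
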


A similar argument shows the following Rellich identity. Set
\begin{multline}\label{eqn:Theta on M}
\Theta_M = \sum_{j=1}^{n-1}\Bigg[ \Big| \big( B\nabla\vp\big)_\ell - \sum_{\ell=1}^n \frac{\p {\oldb}_{j\ell}}{\p x_\ell}\Big|^2 \\
- (1+\ep)\bigg( \Tr\big(B(H\vp)B^T\big) + \sum_{\ell,\ell'=1}^n\Big[ {\oldb}_{j\ell} \frac{\p {\oldb}_{j\ell'}}{\p x_\ell} \frac{\p \vp}{\p x_{\ell'}} +{\oldb}_{j\ell}\frac{\p^2 {\oldb}_{j\ell'}}{\p x_\ell \p x_{\ell'}}\Big] \bigg)\Bigg].
\end{multline}
As before, (HII)-(HIV) and (HVI) can be used to prove an analogue to (HIII):
\begin{enumerate}
\item[BII.] There exists $\ep>0$ so that $\Theta_M$ satisfies
\[
\lim_{\atopp{|x|\to\infty}{x\in M}} \Theta_M(x) = \infty.
\]
\end{enumerate}

\begin{prop}\label{prop:compactness of W^1 into L^2 on M, T^* version}
Let $\Omega\subset\R^n$ satisfy \emph{(HI)}-\emph{(HV)} and $ \bd\Omega$ satisfy \emph{(BII)}. Then
the embedding  $W^{1,2}(M,\vp;L) \hookrightarrow L^2(M,\vp)$ is compact.
\end{prop}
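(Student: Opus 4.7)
The plan is to mirror the proof of Proposition \ref{prop:compactness of W^1 into L^2 on M} with $L_j$ replacing $T_j$, $\Theta_M$ replacing $\Psi_M$, and \emph{(BII)} replacing \emph{(BI)}. The sign changes required will parallel those already seen in passing from Proposition \ref{prop:compactness of W^1_0 into L^2} (which uses $\Psi = |\nabla\vp|^2 + (1+\ep)\triangle\vp$) to Proposition \ref{prop:compactness of W^1_0(unweighted derivs) into L^2} (which uses $\Theta = |\nabla\vp|^2 - (1+\ep)\triangle\vp$).

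The first step is to establish, for $f$ smooth with support in a coordinate patch $U$ on $M$, the identity
\[
\sum_{j=1}^{n-1}\Big[\big\|(L_j + L_j^*)f\big\|_{L^2(M,\vp)}^2 - (1+\ep)\big([L_j, L_j^*]f, f\big)_{M,\vp}\Big] = (\Theta_M f, f)_{M,\vp}.
\]
Since $L_j = T_j + L_j(\vp)$ and $\vp$ is real, $L_j^* = T_j^* + L_j(\vp)$, so one computes
\[
L_j + L_j^* = (B\nabla\vp)_j - \sum_{\ell=1}^n \frac{\p b_{j\ell}}{\p x_\ell}, \qquad [L_j,L_j^*] = [T_j,T_j^*] + 2 L_j(L_j(\vp))
\]
and then expands $L_j(L_j(\vp))$ in terms of $B$, $\nabla\vp$, and $H\vp$. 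The result collapses into precisely the expression \eqref{eqn:Theta on M}; the two sign flips relative to $\Psi_M$ come from the fact that the $L_j(\vp)$ shift in $L_j^* = T_j^* + L_j(\vp)$ changes the sign of the $(B\nabla\vp)_j$ contribution to $L_j + L_j^*$ and adds the $2L_j(L_j(\vp))$ contribution to the commutator. A partition of unity subordinate to the cover $\{U_j\}$ of $M$ extends the identity from local patches to all of $M$.

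The second step is a small constant/large constant argument identical in form to \eqref{eqn:X_j^* vs. X_j in L^2}. Combining
\[
\|(L_j+L_j^*)f\|_{L^2(M,\vp)}^2 \leq \Big(1+\tfrac{1}{2\ep}\Big)\|L_j f\|_{L^2(M,\vp)}^2 + (1+\ep)\|L_j^* f\|_{L^2(M,\vp)}^2
\]
with $\|L_j^* f\|_{L^2(M,\vp)}^2 = \|L_j f\|_{L^2(M,\vp)}^2 + ([L_j, L_j^*]f, f)_{M,\vp}$ yields
\[
(\Theta_M f, f)_{M,\vp} \leq C\,\|\tnabla_\oldL f\|_{L^2(M,\vp)}^2,
\]
so the unbounded multiplication operator $\Theta_M$ is controlled by the $W^{1,2}(M,\vp;\oldL)$ norm (on the dense subspace of $C^m_c(M)$ functions, and hence on $W^{1,2}(M,\vp;\oldL)$ by density).

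For the third and final step, given a bounded sequence $\{f_k\}\subset W^{1,2}(M,\vp;\oldL)$ with bound $M$, set $I(R) = \inf_{x\in M\setminus B(0,R)}\Theta_M(x)$. By \emph{(BII)}, $I(R)\to\infty$ as $R\to\infty$. The splitting
\[
\|f_k - f_j\|_{L^2(M,\vp)}^2 \leq C_{\vp,R}\|f_k - f_j\|_{L^2(M\cap B(0,R))}^2 + \frac{C\,\|f_k - f_j\|_{W^{1,2}(M,\vp;\oldL)}^2}{I(R)}
\]
is combined with classical Rellich--Kondrachov on the compact piece $M\cap B(0,R)$ (where the weighted and unweighted norms are equivalent). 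A diagonal subsequence extraction along an increasing sequence $R_m\to\infty$ with $M/I(R_m)\leq 1/m$ produces a Cauchy subsequence in $L^2(M,\vp)$, exactly as in \eqref{eqn:f_k-f_j good L^2 bound}.

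The main obstacle is the bookkeeping in step one, namely verifying that the computations of $L_j^*$ and $[L_j, L_j^*]$ on $M$ produce exactly the expression \eqref{eqn:Theta on M}, rather than some variant differing by curvature-type lower order terms. Once this is in hand, steps two and three are direct transcriptions of the proofs of Propositions \ref{prop:compactness of W^1_0(unweighted derivs) into L^2} and \ref{prop:compactness of W^1 into L^2 on M}.
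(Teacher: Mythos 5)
Your proposal is correct and is exactly what the paper intends: no written proof is supplied for this proposition, but the paper introduces $\Theta_M$ and \emph{(BII)} immediately beforehand precisely so that the argument of Propositions \ref{prop:compactness of W^1_0 into L^2}, \ref{prop:compactness of W^1_0(unweighted derivs) into L^2}, and \ref{prop:compactness of W^1 into L^2 on M} can be transcribed with $\oldL_j$, $\Theta_M$, and \emph{(BII)} in place of $T_j$, $\Psi_M$, and \emph{(BI)}. The bookkeeping concern you flag in step one is real but harmless: working out $[\oldL_j,\oldL_j^*]=[T_j,T_j^*]+2\oldL_j(\oldL_j(\vp))$ yields the term $\oldb_{j\ell}\,\p^2\oldb_{j\ell'}/\p x_\ell\p x_{\ell'}$ with the opposite sign to what appears in \eqref{eqn:Theta on M} (most likely a typo in the paper), but since that term is bounded under \emph{(HI)} and \emph{(HIV)}, the discrepancy affects neither the validity of \emph{(BII)} nor the key inequality $(\Theta_M f, f)_{M,\vp}\leq C\,\|\tnabla_\oldL f\|_{L^2(M,\vp)}^2$, so your argument closes.
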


\begin{cor}\label{cor:adjoint dominated in L^2 on M, T^* version}
Under the assumptions and  notation of  Proposition \ref{prop:compactness of W^1_0 into L^2},
\[
\|\tnabla_\Ta f \|_{L^2(M,\vp)}^2
\leq C \|  f\|_{W^{1,2}(M,\vp;L)}^2
\]
for some constant $C$ independent of $f$.
\end{cor}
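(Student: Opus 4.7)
The plan is to adapt the proof of Corollary~\ref{cor:adjoint dominated in L^2 (unweighted)} to the hypersurface setting, following the analogy established by $\Theta_M$ replacing $\Theta$ and the tangential frame $\oldL_j$ replacing $D_j$. By density of $C^m_c(M)$ in $W^{1,2}(M,\vp;\oldL)$, it suffices to prove the inequality for $f\in C^\infty_c(M)$, where integration by parts on the boundaryless manifold $M$ is justified. Writing $\Ta_j=\oldL_j-\oldL_j(\vp)$ and using that $\sum_{j=1}^{n-1}|\oldL_j(\vp)|^2=|\tnabla_\oldL\vp|^2$, we have
\[
\|\tnabla_\Ta f\|_{L^2(M,\vp)}^2 \leq 2\|\tnabla_\oldL f\|_{L^2(M,\vp)}^2 + 2\||\tnabla_\oldL\vp|f\|_{L^2(M,\vp)}^2,
\]
so the corollary reduces to the Rellich-type estimate $\||\tnabla_\oldL\vp|f\|_{L^2(M,\vp)}^2\leq C\|f\|_{W^{1,2}(M,\vp;\oldL)}^2$.

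For the Rellich estimate, I would integrate by parts on $M$ using $(\oldL_j\vp)e^{-\vp}=-\oldL_j(e^{-\vp})$ together with $\oldL_j^\sharp=-\oldL_j-\mathrm{div}_M\oldL_j$ in unweighted $L^2(M)$. This produces three kinds of terms on the right-hand side: a Hessian term $-\int_M(\oldL_j^2\vp)|f|^2 e^{-\vp}\,dS$, a cross term $-2\Rre\int_M(\oldL_j\vp)\bar f(\oldL_j f)e^{-\vp}\,dS$, and a divergence term $-\int_M(\oldL_j\vp)(\mathrm{div}_M\oldL_j)|f|^2 e^{-\vp}\,dS$. By (HIV), $|\oldL_j^2\vp|\leq C|\nabla^2\vp|\leq C(1+|\nabla\vp|)$, and $\mathrm{div}_M\oldL_j$ is bounded uniformly by the $C^{m-1}$ control on the frame, so the Hessian and divergence terms are both bounded by $C\int_M(1+|\nabla\vp|)|f|^2 e^{-\vp}\,dS$; Cauchy--Schwarz bounds the cross term by $\delta\||\tnabla_\oldL\vp|f\|^2+C_\delta\|\tnabla_\oldL f\|^2$.

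To absorb the remaining $\int(1+|\nabla\vp|)|f|^2 e^{-\vp}\,dS$ contributions, invoke (HVI): it guarantees $|\partial\vp/\partial\nu|\leq c|\nabla\vp|$ for some $c<1$ outside a large ball, hence $|\nabla\vp|\leq C|\tnabla_\oldL\vp|+C_R$ uniformly on $M$. Thus $\int|\nabla\vp||f|^2 e^{-\vp}\,dS\leq \||\nabla\vp|f\|\,\|f\|\leq \delta'\||\tnabla_\oldL\vp|f\|^2+C_{\delta'}\|f\|^2$, and choosing $\delta,\delta'$ small enough allows us to absorb the $\||\tnabla_\oldL\vp|f\|^2$ terms into the left-hand side, producing $\||\tnabla_\oldL\vp|f\|^2\leq C\|f\|_{W^{1,2}(M,\vp;\oldL)}^2$. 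The main technical obstacle is exactly this absorption step: (HVI) is essential to convert the tangential control of $\vp$ given by the Rellich identity into control of the full gradient $|\nabla\vp|$, while (HIV) is required to dominate the Hessian corrections that appear after integration by parts on $M$.
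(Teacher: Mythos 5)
Your proposal is correct, but it takes a genuinely different route from the paper's implicit argument. The paper sets up the quantity $\Theta_M$ in \eqref{eqn:Theta on M} precisely so that this corollary (like the earlier Corollary \ref{cor:adjoint dominated in L^2 (unweighted)} on $\Omega$) follows from the commutator identity $\|T_j f\|^2 = \|T_j^* f\|^2 - ([T_j,T_j^*]f,f)_{M,\vp}$, the computation $(\Theta_M f,f)_{M,\vp} = \sum_j\big[\|(T_j+T_j^*)f\|^2 + (1+\eps)([T_j,T_j^*]f,f)\big]$, the lower bound on $\Theta_M$ that \emph{(BII)} supplies, and finally the zeroth-order relation $T_j^* = -\oldL_j - \operatorname{div}_M\oldL_j$ to convert $\|\tnabla_{T^*}f\|$ into $\|f\|_{W^{1,2}(M,\vp;\oldL)}$. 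You instead split $T_j = \oldL_j - \oldL_j(\vp)$ at the outset and prove the multiplier estimate $\||\tnabla_\oldL\vp|f\|^2 \leq C\|f\|_{W^{1,2}(M,\vp;\oldL)}^2$ by a direct Rellich-type integration by parts on $M$; this bypasses the $\Theta_M$ bookkeeping entirely and is arguably more transparent, though it ultimately invokes exactly the same hypotheses — \emph{(HIV)} to dominate $\oldL_j^2\vp$ and \emph{(HVI)} (together with \eqref{eq:combined_hypothesis}) to trade the full gradient $|\nabla\vp|$ appearing in that bound for the tangential gradient $|\tnabla_\oldL\vp|$, which is what can be absorbed. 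What the paper's framing buys is uniformity of presentation (one $\Psi/\Theta$ template applied four times), while your route makes the actual cancellation more visible.

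Two cosmetic points worth fixing. First, after substituting $(\oldL_j\vp)e^{-\vp}=-\oldL_j(e^{-\vp})$ and integrating by parts, the Hessian, cross, and divergence terms come out with $+$ signs, not $-$ signs; this is immaterial for the absolute-value estimates but should be corrected. Second, $\oldL_j^2\vp = \sum_{\ell,\ell'}\oldb_{j\ell}\oldb_{j\ell'}\partial^2_{\ell\ell'}\vp + \sum_{\ell,\ell'}\oldb_{j\ell}(\partial_\ell\oldb_{j\ell'})\partial_{\ell'}\vp$, so the intermediate claim $|\oldL_j^2\vp|\leq C|\nabla^2\vp|$ is not quite right — there is also a $|\nabla\vp|$ contribution — but your final bound $C(1+|\nabla\vp|)$ still holds by \emph{(HIV)}. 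Finally, the integration by parts is carried out in local frames $\oldL_1,\dots,\oldL_{n-1}$, which exist only on the cover $\{U_j\}$; to make the argument global one should insert the partition of unity $\{\chi_j\}$ guaranteed by \emph{(HI)} and use the uniform $C^{m-1}$ control on the frame coefficients to keep the constants frame-independent. The quantity $|\tnabla_\oldL\vp|$ is frame-invariant, so this patching is routine, but it deserves a sentence.
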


Our final comment on the consequences of  (HIV) and (HVI) is the following:
\begin{enumerate}
\item[BIII.] There exist constants $C_k$ so that
\[
|(\tnabla_\oldL)^k \vp| \leq C_k (1+|\tnabla_\oldL \vp|)
\]
for all $x\in M$ and $1\leq k \leq m$.
\end{enumerate}

Since we have shown that (BI)-(BIII) follow from (HI)-(HVI), we will suppress the individual boundary hypotheses and assume only (HVI) in the following.

\begin{cor}\label{cor:compactness of W^k into W^k-1}
Suppose that $\Omega\subset\R^n$ satisfies \emph{(HI)}-\emph{(HVI)}. Then if $0<k\leq m$,
\[
\| f \|_{W^{k}(M,\vp;T)} \sim \| f \|_{W^{k}(M,\vp;\oldL)}.
\]
\end{cor}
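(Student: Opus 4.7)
The plan is to prove this by following the template of the remark after Corollary \ref{cor:adjoint dominated in L^2 (unweighted)}, which established the analogous equivalence (\ref{eqn:Y,X,D equiv}) on $\Omega$, but with the tangential operators $T_j, {\oldL}_j$ replacing $X_j, D_j$ and with hypothesis (BIII) replacing (HIV). Since $C^m_c(M)$ is dense in both $W^{k,2}(M,\vp;T)$ and $W^{k,2}(M,\vp;{\oldL})$, it suffices to prove the estimate for $f \in C^m_c(M)$, and by means of the partition of unity from Section 1.3 we localize to a patch $U$ on which ${\oldL}_1,\ldots,{\oldL}_{n-1}$ form an orthonormal tangential basis with coefficients $\oldb_{j\ell}$ uniformly bounded in $C^{m-1}$.

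For the base case $k=1$, introduce the tangential analog $\tilde{Y}_j := \tfrac{1}{2}({\oldL}_j - {\oldL}_j^*)$, where ${\oldL}_j^*$ is the adjoint in $L^2(M,\vp)$. A calculation parallel to the one done for $T_j + T_j^*$ in subsection \ref{subsec:approx and embed on M} yields ${\oldL}_j + {\oldL}_j^* = -{\oldL}_j(\vp) - \sum_\ell \partial\oldb_{j\ell}/\partial x_\ell$, so that $\tilde{Y}_j = {\oldL}_j + \tfrac{1}{2}{\oldL}_j(\vp) + (\text{bounded}) = T_j + \tfrac{1}{2}{\oldL}_j(\vp) + (\text{bounded})$. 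The polarization identity of the Remark then gives
\[
\|\tilde{Y}_j f\|^2_{L^2(M,\vp)} = \tfrac{1}{2}\bigl(\|{\oldL}_j f\|^2_{L^2(M,\vp)} + \|T_j f\|^2_{L^2(M,\vp)}\bigr) + \tfrac{1}{4}\|{\oldL}_j(\vp) f\|^2_{L^2(M,\vp)} + E(f),
\]
with $|E(f)| \leq C\|f\|^2_{L^2(M,\vp)}$ uniformly in the patch. Combined with the identity ${\oldL}_j(\vp)f = {\oldL}_j f - T_j f$, which yields $\|{\oldL}_j(\vp)f\|_{L^2(M,\vp)} \leq \|{\oldL}_j f\|_{L^2(M,\vp)} + \|T_j f\|_{L^2(M,\vp)}$, the three quantities $\|{\oldL}_j f\|$, $\|T_j f\|$, and $\|\tilde{Y}_j f\|$ are mutually equivalent (up to adding $\|f\|_{L^2(M,\vp)}$), proving the $k=1$ case after summing over $j$ and reassembling via the partition of unity.

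For the inductive step, assume the equivalence holds up to order $k-1$. Expand $T^\alpha f = {\oldL}^\alpha f + R^\alpha f$ by the product rule, where $R^\alpha f$ is a sum of terms of the form $({\oldL}^{\beta_1}\vp) \cdots ({\oldL}^{\beta_p}\vp)\, {\oldL}^\gamma f$ with $p \geq 1$, each $|\beta_i| \geq 1$, and $|\beta_1|+\cdots+|\beta_p|+|\gamma| \leq k$, $|\gamma| < k$. Hypothesis (BIII) gives $|{\oldL}^{\beta_i}\vp| \leq C(1 + |\tnabla_{\oldL} \vp|)$ for each $i$, reducing the error to products of first-order derivatives of $\vp$ times a derivative of $f$ of order $< k$. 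Peeling off factors using ${\oldL}_j(\vp)g = {\oldL}_j g - T_j g$ and absorbing one derivative of $f$ at a time, each error term's $L^2(M,\vp)$ norm is dominated by $\|f\|_{W^{k,2}(M,\vp;{\oldL})} + \|f\|_{W^{k,2}(M,\vp;T)}$ plus lower-order terms controlled by the inductive hypothesis. The reverse inequality is symmetric upon expanding ${\oldL}^\alpha f$ via ${\oldL}_j = T_j + {\oldL}_j(\vp)$.

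The main obstacle is the combinatorial bookkeeping when expanding $T^\alpha$: each reordering of an ${\oldL}_j$ past a multiplication by ${\oldL}^\beta \vp$, as well as each commutator $[{\oldL}_j,{\oldL}_k] = \sum c^\ell_{jk}{\oldL}_\ell$, produces additional $\vp$-derivative factors whose combined weight must stay inside the scope of (BIII). The uniform $C^{m-1}$ bounds on $\oldb_{j\ell}$ and on the cutoffs $\chi_j$ coming from (HI), together with (BIII) itself, ensure that all of these constants are uniform in the patch, so the local estimates sum coherently over the partition of unity to yield a global equivalence.
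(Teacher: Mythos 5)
Your $k=1$ base case does not close. The polarization identity you write down follows the template of the Remark preceding Section~\ref{sec:traces on hypersurfaces}, but that identity, correctly computed, carries a \emph{minus} sign on the $\tfrac14\|{\oldL}_j(\vp)f\|^2$ term: since $\tilde Y_j$ differs from $\tfrac12({\oldL}_j+T_j)$ by a bounded multiplier, the identity is simply a rewriting of $\|{\oldL}_j f+T_jf\|^2\leq 2(\|{\oldL}_jf\|^2+\|T_jf\|^2)$ and contains no new information. Even taking your (plus-signed) version at face value, the logic is a non sequitur: you obtain lower bounds for $\|\tilde Y_jf\|^2$ in terms of each of $\|{\oldL}_jf\|^2$ and $\|T_jf\|^2$, and an upper bound for $\|\tilde Y_j f\|$ in terms of their sum, but nowhere a one-sided estimate $\|{\oldL}_jf\|\lesssim\|T_jf\|+\|f\|$ or $\|T_jf\|\lesssim\|{\oldL}_jf\|+\|f\|$. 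Those are the nontrivial assertions of the corollary, and they cannot be extracted from algebra and the triangle inequality alone, because ${\oldL}_j$ and $T_j$ differ by multiplication by the unbounded function ${\oldL}_j(\vp)$.

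The paper's proof of the $k=1$ case is genuinely different and rests on the two one-sided estimates Corollary~\ref{cor:adjoint dominated in L^2 on M} ($\|\tnabla_{T^*}f\|\lesssim\|\tnabla_T f\|$) and Corollary~\ref{cor:adjoint dominated in L^2 on M, T^* version} ($\|\tnabla_T f\|\lesssim\|f\|_{W^{1,2}(M,\vp;{\oldL})}$). These, together with the algebraic fact $T_j^*=-{\oldL}_j-\sum_\ell\partial\oldb_{j\ell}/\partial x_\ell$ (so that $\|T_j^*f\|=\|{\oldL}_jf\|+O(\|f\|)$), give the two-sided comparison. The essential input to those corollaries is the Rellich machinery built on the boundary hypotheses (BI) and (BII), derived from (HII)--(HIV) and (HVI) — exactly the growth conditions on $\vp$ that your argument never invokes. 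Without them, the corollary is false; the growth of $\nabla\vp$ at infinity is what allows the unbounded zeroth-order factor to be absorbed. Your inductive step (expanding $T^\alpha$ via $T_j={\oldL}_j-{\oldL}_j(\vp)$ and controlling the $\vp$-factors via (BIII), or the paper's equivalent observation that $[T_j,T_j^*]$ is bounded by $1+|\tnabla_{\oldL}\vp|$) is in the right spirit, but it, too, leans on the $k=1$ equivalence to absorb the $|\tnabla_{\oldL}\vp|^p$ weights against lower-order derivatives of $f$, so the gap in the base case propagates through the induction.
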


\begin{proof} The proof goes by induction. The $k=1$ case is the content of Corollary \ref{cor:adjoint dominated in L^2 on M} and
Corollary \ref{cor:adjoint dominated in L^2 on M, T^* version}. The higher $k$ follow from the $k=1$ case, the inductive hypothesis  and the fact that
$[T_j,T_j^*]$ is a function bounded by a multiple of $(1+|\nabla_\T\vp|)$.
\end{proof}

\begin{prop} \label{prop:interpolation of Sobolev norms (only integers) on M}
Let $\Omega\subset\R^n$ satisfy \emph{(HI)}-\emph{(HVI)}.
Then  there exists $K,K'>0$ depending on $n,m$ so that for any $\delta>0$,
$u\in W^{m,2}( M,\vp;T)$, and $0< j < m$,
\begin{align}
\sum_{|\alpha| =j} \| \Ta^\alpha u \|_{L^2( M,\vp)}^2 &\leq K \Big( \delta \sum_{|\beta| =m} \| \Ta^\beta u \|_{L^2( M,\vp)}^2 + \delta^{-j/(m-j)} \| u\|_{L^2( M,\vp)}\Big)
\label{eqn:interpolation of |alpha|=j, on M}\\
\| u\|_{W^{j,2}( M,\vp;T)} &\leq K' \big( \delta \| u\|_{W^{m,2}( M,\vp;T)} + \delta^{-j/(m-j)} \| u\|_{L^2( M,\vp;T)}\big) \label{eqn:interpolation of |alpha| |eq j, on M}\\
\| u\|_{W^{j,2}( M,\vp;T)} &\leq 2K' \| u\|_{W^{m,2}( M,\vp;T)}^{j/m} \| u\|_{L^2( M,\vp;T)}^{(m-j)/m} \label{eqn:j norm convex of m norm and 0 norm, on M}
\end{align}
\end{prop}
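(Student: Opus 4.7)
The plan is to establish the three inequalities in sequence: first prove a one-step inequality $a_j^2 \leq K(\delta a_{j+1}^2 + \delta^{-1} a_{j-1}^2)$, where $a_j := \big(\sum_{|\alpha|=j}\|T^\alpha u\|^2_{L^2(M,\vp)}\big)^{1/2}$, by integration by parts on $M$; then iterate to obtain the log-convex bound $a_j \leq C a_0^{1-j/m} a_m^{j/m}$; finally, convert this via Young's inequality into the three forms (\ref{eqn:interpolation of |alpha|=j, on M}), (\ref{eqn:interpolation of |alpha| |eq j, on M}), and (\ref{eqn:j norm convex of m norm and 0 norm, on M}).

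For the base case $j=1$, $m=2$, start from the identity $\|\tnabla_\Ta u\|_{L^2(M,\vp)}^2 = (u, \Box u)_{M,\vp}$, where $\Box := \sum_{j=1}^{n-1} T_j^* T_j$; no boundary terms arise since $M$ is boundaryless. Cauchy--Schwarz yields $\|\tnabla_\Ta u\|^2 \leq \|u\|_{L^2(M,\vp)}\|\Box u\|_{L^2(M,\vp)}$. To estimate $\|\Box u\|$, decompose $T_j^* T_j = -T_j^2 + g_j T_j$ with $g_j := T_j + T_j^* = -(B\nabla\vp)_j - \sum_\ell \partial \oldb_{j\ell}/\partial x_\ell$ (from the formula derived in Section \ref{subsec:approx and embed on M}); the multiplier $g_j$ satisfies $|g_j| \leq C(1 + |\nabla_T \vp|)$ for $1\leq j\leq n-1$. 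The $\|T_j^2 u\|$ contribution is bounded directly by $\|u\|_{W^{2,2}(M,\vp;T)}$. For $\|g_j T_j u\|$, apply the multiplicative estimate $\||\nabla_T\vp|f\|_{L^2(M,\vp)}^2 \leq C\|f\|_{W^{1,2}(M,\vp;T)}^2$ --- which follows from the construction of $\Psi_M$ in the proof of Proposition \ref{prop:compactness of W^1 into L^2 on M} together with (HVI) --- to $f = T_j u$, and bound $\|T_k T_j u\|$ via the identity $[T_k,T_j]u = T_{[L_k,L_j]}u$ (again a first-order $T$-type operator with bounded coefficients). This gives $\|\Box u\| \leq C\|u\|_{W^{2,2}(M,\vp;T)}$, hence $\|\tnabla_\Ta u\|^2 \leq C\|u\|\,\|u\|_{W^{2,2}(M,\vp;T)}$. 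Expanding $\|u\|_{W^{2,2}}^2 = \|u\|^2 + \|\tnabla_\Ta u\|^2 + \sum_{|\alpha|=2}\|T^\alpha u\|^2$ and absorbing by a small-constant/large-constant argument yields (\ref{eqn:interpolation of |alpha|=j, on M}) in this special case.

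Applying the base case to each $T^\alpha u$ with $|\alpha| = j-1$ and summing (using the commutator identity to rewrite $T^2 T^\alpha u$ as a combination of $T^\beta u$ with $|\beta| = j+1$ modulo lower-order tangential terms that get absorbed) gives the one-step estimate $a_j^2 \leq K(\delta a_{j+1}^2 + \delta^{-1} a_{j-1}^2)$ for each $1 \leq j < m$. A routine induction/log-convexity argument --- identical to the bounded unweighted case in \cite[Theorem 5.2]{AdFo03} --- upgrades this to $a_j \leq C a_0^{1-j/m} a_m^{j/m}$, after which Young's inequality with conjugate exponents $m/j$ and $m/(m-j)$ produces (\ref{eqn:interpolation of |alpha|=j, on M}) for general $j,m$. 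Summing (\ref{eqn:interpolation of |alpha|=j, on M}) over $0 \leq j' \leq j$ yields (\ref{eqn:interpolation of |alpha| |eq j, on M}), and (\ref{eqn:j norm convex of m norm and 0 norm, on M}) is a direct restatement of the log-convex form. The main obstacle is the base case: the multiplier $g_j$ grows like $|\nabla_T \vp|$ on the unbounded hypersurface $M$, and the argument closes only because (HVI) (via (BI)) ensures $|\nabla_T\vp|$ is comparable to $|\nabla\vp|$ at infinity on $M$, which is precisely what makes the multiplicative estimate $\||\nabla_T \vp| f\|^2 \lesssim \|f\|_{W^{1,2}(M,\vp;T)}^2$ available and large enough to absorb the growing multiplier.
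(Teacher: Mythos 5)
Your proof is correct and takes essentially the same approach as the paper: the base case $m=2$, $j=1$ is established by the identity $\|T_j u\|_{L^2(M,\vp)}^2=(T_j^*T_ju,u)_\vp$ together with Cauchy--Schwarz, with $\|T_j^*T_ju\|$ controlled by the adjoint-comparison machinery (the paper simply cites Corollary \ref{cor:adjoint dominated in L^2 on M}, whereas you unpack the same multiplier estimate from $\Psi_M$ and \emph{(HVI)} explicitly), and both arguments then appeal to the iteration of \cite[Theorem 5.2]{AdFo03} to pass to general $j,m$. The extra detail you supply on the one-step inequality $a_j^2\leq K(\delta a_{j+1}^2+\delta^{-1}a_{j-1}^2)$ and the multiplicative bound $\||\tnabla_\oldL\vp|f\|^2\lesssim\|f\|_{W^{1,2}(M,\vp;T)}^2$ is a correct elaboration of what the paper leaves implicit.
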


\begin{proof} Note that \eqref{eqn:interpolation of |alpha| |eq j, on M} follows
from repeated applications of \eqref{eqn:interpolation of |alpha|=j, on M}. Equation \eqref{eqn:j norm convex of m norm and 0 norm, on M}
follows from \eqref{eqn:interpolation of |alpha| |eq j, on M}
by choosing $\ep$ so that the two terms on the right-hand side are equal.

We first prove the result for
$m=2$, $j=1$. In this case,
\begin{multline*}
\| T_j u \|_{L^2(M,\vp)}^2 = ( T_j u, T_j u)_\vp = (T_j^* T_j u, u)_\vp \\
\leq \|T_j^* T_j u \|_{L^2(M,\vp)} \|u\|_{L^2(M,\vp)}
\leq \ep \|T_j^* T_j u \|_{L^2(M,\vp)}^2 + \frac1{4\ep} \|u\|_{L^2(M,\vp)}^2.
\end{multline*}
However, since $\lim_{\atopp{|x|\to\infty}{x\in M}} \big(\theta|\nabla\vp|^2 + \triangle \vp \big) = \infty$, it follows by Corollary
\ref{cor:adjoint dominated in L^2 on M} that
\[
\|T_j^* T_j u \|_{L^2(M,\vp)}^2 \leq C \| T_j u\|_{W^{1,2}(M,\vp;T)}^2.
\]
This proves the result for the case $m=2$, $j=1$. We can follow the argument of  \cite[Theorem 5.2]{AdFo03} to finish proof.
\end{proof}

\subsection{Approximation}
We can also prove a boundary version of the $L^2$ analog to \cite[Theorem 5.33]{AdFo03}, the Approximation Theorem for $\R^n$.

\begin{prop}\label{prop:approximation theorem on M}
Let $\Omega\subset\R^n$ satisfy \emph{(HI)}-\emph{(HVI)}.
There exists a constant $C = C(m,n)$ so that for $0<k\leq m$,
$v\in W^{k,2}( M,\vp;T)$,  and $0<\delta\leq 1$, there exists $v_\delta\in C^m( M)$ so that:
\[
\| v - v_\delta \|_{L^2( M,\vp)} \leq C\delta^k \sum_{|\alpha|=k} \| T^\alpha u \|_{L^2( M,\vp)}
\]
and
\[
\| v_\delta \|_{W^{j,2}( M,\vp;T)}
\leq C \begin{cases} \| v \|_{W^{k,2}( M,\vp;T)} & \text{if }j\leq k-1 \\ \delta^{k-j} \| v \|_{W^{k,2}( M,\vp;T)} &\text{if }k\leq j \leq m.
\end{cases}
\]
\end{prop}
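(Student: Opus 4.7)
The plan is to reduce the weighted approximation problem on $M$ to the classical unweighted Approximation Theorem on $\R^{n-1}$ (Adams--Fournier, Theorem 5.33) via the substitution $w = e^{-\vp/2}v$, in the spirit of the remark at the end of Section~3. The first step is to introduce the auxiliary tangential operator $\widetilde T_j = \oldL_j - \tfrac{1}{2}\oldL_j\vp$, which satisfies the conjugation identity $\widetilde T^\alpha = e^{\vp/2}\oldL^\alpha e^{-\vp/2}$ and hence
\[
\|\widetilde T^\alpha v\|_{L^2(M,\vp)} = \|\oldL^\alpha(e^{-\vp/2}v)\|_{L^2(M)}.
\]
The argument used to prove Corollary~\ref{cor:compactness of W^k into W^k-1} adapts (via (BIII)) to show that the three norms $\|\cdot\|_{W^{k,2}(M,\vp;T)}$, $\|\cdot\|_{W^{k,2}(M,\vp;\oldL)}$, and $\|\cdot\|_{W^{k,2}(M,\vp;\widetilde T)}$ are mutually equivalent; it therefore suffices to prove the required estimates with $T$ replaced by $\widetilde T$.

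Next I localize using the partition of unity $\{\chi_j\}$ and the corresponding equivalent-norm decomposition on $M$. For each $j$, set $v_j = v\chi_j$, $w_j = e^{-\vp/2}v_j$, and pull $w_j$ back to a compactly supported function $\hat w_j \in W^{k,2}(\R^{n-1})$ via the local coordinates in which $\oldL_1^{U_j},\dots,\oldL_{n-1}^{U_j}$ become the coordinate derivatives. The uniform $C^{m-1}$ bounds on the frame coefficients $\oldb_{j\ell}$ make this pullback bi-bounded on the relevant Sobolev spaces with constants independent of $j$. Apply Adams--Fournier to each $\hat w_j$ with mollifier $\rho_\delta$ to obtain $\hat w_{j,\delta} \in C^\infty_c(\R^{n-1})$ satisfying the usual bounds, push $\hat w_{j,\delta}$ forward to $w_{j,\delta}$ on $M$, set $v_{j,\delta} = e^{\vp/2}w_{j,\delta} \in C^m(M)$, and define $v_\delta = \sum_j v_{j,\delta}$ (a locally finite sum). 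Summing the pointwise identity $\widetilde T^\alpha v_{j,\delta} = e^{\vp/2}\oldL^\alpha w_{j,\delta}$ squared in $L^2(M,\vp)$ and invoking the equivalent-norm formula converts the chartwise Adams--Fournier estimates into the desired global estimates in the $\widetilde T$ norm, which the preceding paragraph converts into estimates in the $T$ norm.

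The main technical obstacle is maintaining uniform constants across the infinite chart cover when $\vp$ itself is unbounded. The substitution $w = e^{-\vp/2}v$ is precisely what defuses this issue: mollification takes place entirely in the unweighted picture, where there is no weight to interact poorly with a large $\nabla\vp$. The weight reenters only through the norm equivalences among $T$, $\oldL$, and $\widetilde T$, which require the $\vp$-derivative estimates built into (HIV) and (HVI) (packaged as (BIII)). The only remaining uniformity requirement is on the chart maps themselves, and it is furnished by the uniform $C^{m-1}$ bounds on the frame coefficients $\oldb_{j\ell}$ guaranteed by (HI).
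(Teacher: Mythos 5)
Your proposal takes essentially the same approach as the paper: both introduce the operator $\oldL_j - \tfrac12\oldL_j(\vp)$ (the paper's $Y^b_j$, your $\widetilde T_j$) to convert weighted norms to unweighted ones via conjugation, establish the equivalence with the $T$-norm using Corollary~\ref{cor:compactness of W^k into W^k-1}, localize with the uniform partition of unity $\{\chi_j\}$, apply the classical Approximation Theorem chart-by-chart, and sum using comparability of the charts and local finiteness. Your version spells out the pullback to $\R^{n-1}$ more explicitly, but the underlying argument is the same.
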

Proposition \ref{prop:approximation theorem on M}  means that $ M$ has the approximation property.

\begin{proof} In this proof, we work locally and use the boundary  operators
$Y^b_j = {\oldL}_j - \frac12 {\oldL}_j(\vp)$. It follows from Corollary \ref{cor:compactness of W^k into W^k-1} that
\[
\|ve^{-\frac 12\vp} \|_{W^{k,2}(M)} = \sum_{|\alpha|\leq k} \| (Y^b)^\alpha v \|_{L^2(M,\vp)}
\sim \| v \|_{W^{k,2}(M,\vp;T)}.
\]
for $0\leq k \leq m$.  Then
\[
\|ve^{-\frac 12\vp} \|_{W^{k,2}(M)} \sim \sum_{j=1}^\infty\|v_je^{-\frac 12\vp} \|_{W^{k,2}(M)}
\]
where $v_j = \chi_{U_j} v$ and $\{\chi_{U_j}\}$ is a partition of unity subordinate to $\{U_j\}$.
By the classical theory, there exists $\psi_{\delta,j} \in C^{m+1}_c(M\cap U_j)$ so that
\[
\| v_je^{-\frac 12\vp} - \psi_{\delta,j}e^{-\frac 12\vp} \|_{L^2( M,\vp)} \leq C\delta^k \sum_{|\alpha|=k} \| \oldL^\alpha ( v_j e^{-\frac 12\vp} )\|_{L^2( M,\vp)}
\]
and
\[
 \| \psi_{\delta,j}e^{-\frac 12\vp} \|_{W^{\ell,2}(M)}
\leq C \begin{cases} \| v_j e^{-\frac 12\vp} \|_{W^{k,2}( M\cap U_j)} & \text{if }\ell\leq k-1 \\ \delta^{k-j} \| \psi \|_{W^{k,2}(M\cap U_j,\vp;X)} &\text{if }k\leq \ell \leq m.
\end{cases}
\]
Since the $M\cap U_j$ are of comparable surface area, the constant $C$ arising from the classical Approximation Theorem can be taken independent of $j$. Thus, the result follows by
summing in $j$ and observing that the decomposition $v = \sum_{j=1}^\infty v_j$ is locally finite.
\end{proof}

%
%
\section{Weighted Besov spaces on $\Omega$ and $M$}\label{sec:weighted Besov spaces on Omega}
We start with the following proposition. We initially prove a boundary version because we need to strengthen Proposition \ref{prop:density of C^infty_c} before we can prove an analog for $\Omega$.
This is an $L^2$ adaptation of the Approximation Theorem, \cite[Theorem 7.31]{AdFo03}.

\begin{prop}\label{prop:W^k in H on M}
Let $\Omega\subset\R^n$ satisfy \emph{(HI)}-\emph{(HVI)}. If $0<k<m$, then
\[
W^{k,2}(M,\vp;T) \in \H\big(k/m; L^2(M,\vp),W^{m,2}(M,\vp;T)\big)
\]
\end{prop}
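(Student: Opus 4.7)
The plan is to verify directly that $W^{k,2}(M,\vp;T)$ satisfies the three defining conditions to belong to the class $\H\!\left(k/m;\,L^2(M,\vp),\,W^{m,2}(M,\vp;T)\right)$. Recall that membership in this class requires (i) continuous inclusions
\[
L^2(M,\vp)\cap W^{m,2}(M,\vp;T)\hookrightarrow W^{k,2}(M,\vp;T)\hookrightarrow L^2(M,\vp)+W^{m,2}(M,\vp;T),
\]
and (ii) the convexity estimate
\[
\|u\|_{W^{k,2}(M,\vp;T)}\leq C\,\|u\|_{L^2(M,\vp)}^{1-k/m}\,\|u\|_{W^{m,2}(M,\vp;T)}^{k/m}
\]
for all $u\in L^2(M,\vp)\cap W^{m,2}(M,\vp;T)=W^{m,2}(M,\vp;T)$.

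The continuous inclusions are automatic. Since $0<k<m$, the $W^{m,2}$-norm on $M$ dominates the $W^{k,2}$-norm term by term, so $W^{m,2}(M,\vp;T)\hookrightarrow W^{k,2}(M,\vp;T)$; and $W^{k,2}(M,\vp;T)\hookrightarrow L^2(M,\vp)\hookrightarrow L^2(M,\vp)+W^{m,2}(M,\vp;T)$ is equally immediate from the definition of the graph norms.

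The convexity estimate is exactly the content of inequality \eqref{eqn:j norm convex of m norm and 0 norm, on M} in Proposition \ref{prop:interpolation of Sobolev norms (only integers) on M}, applied with $j=k$: for every $u\in W^{m,2}(M,\vp;T)$,
\[
\|u\|_{W^{k,2}(M,\vp;T)}\leq 2K'\,\|u\|_{W^{m,2}(M,\vp;T)}^{k/m}\,\|u\|_{L^2(M,\vp)}^{(m-k)/m}.
\]
The constant $K'$ depends only on $n$ and $m$, so this is the desired inequality with $\theta=k/m$. Combining (i) and (ii) yields $W^{k,2}(M,\vp;T)\in\H(k/m;L^2(M,\vp),W^{m,2}(M,\vp;T))$.

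There is no substantive obstacle: the proof is essentially a pointer to the interpolation inequality already established in Proposition \ref{prop:interpolation of Sobolev norms (only integers) on M}. The only minor point worth noting explicitly is that the boundary hypotheses (HI)--(HVI) were already invoked to derive that proposition (via Corollary \ref{cor:compactness of W^k into W^k-1} and the Rellich-type identities of Section \ref{sec:traces on hypersurfaces}), so no additional geometric input is needed here; the statement of Proposition \ref{prop:W^k in H on M} just repackages \eqref{eqn:j norm convex of m norm and 0 norm, on M} in the language of intermediate spaces, which is the form required for the interpolation arguments in the subsequent Besov space theory.
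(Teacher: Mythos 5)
Your argument addresses only the $J$-side of Definition \ref{defn:funny h interpolation space}. Membership in $\H(\theta;X_0,X_1)$ requires two inequalities:
\[
C_1\,K(t;u)\leq t^{\theta}\|u\|_X\leq C_2\,J(t;u)\qquad\text{for all }u\in X,\ t>0.
\]
You correctly observe that the right-hand inequality, after optimizing over $t$, is equivalent to the multiplicative convexity estimate and therefore follows from \eqref{eqn:j norm convex of m norm and 0 norm, on M}. But the left-hand inequality, the $K$-estimate, is a separate requirement that does not follow from continuous inclusions: it asserts that for every $u\in W^{k,2}(M,\vp;T)$ and every $t>0$ there is a decomposition $u=u_0+u_1$ with $u_0\in L^2(M,\vp)$, $u_1\in W^{m,2}(M,\vp;T)$, and
\[
\|u_0\|_{L^2(M,\vp)}+t\,\|u_1\|_{W^{m,2}(M,\vp;T)}\leq C\,t^{k/m}\|u\|_{W^{k,2}(M,\vp;T)}.
\]
This decomposition is supplied by the Approximation Theorem, Proposition \ref{prop:approximation theorem on M}, which produces $v_\delta\in C^m(M)$ with $\|u-v_\delta\|_{L^2(M,\vp)}\leq C\delta^k\|u\|_{W^{k,2}(M,\vp;T)}$ and $\|v_\delta\|_{W^{m,2}(M,\vp;T)}\leq C\delta^{k-m}\|u\|_{W^{k,2}(M,\vp;T)}$; setting $u_1=v_\delta$, $u_0=u-v_\delta$ and choosing $\delta=t^{1/m}$ yields the $K$-estimate. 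The paper's proof, in pointing to \cite[Theorem 7.31]{AdFo03}, invokes both Proposition \ref{prop:interpolation of Sobolev norms (only integers) on M} (for the $J$-side) and Proposition \ref{prop:approximation theorem on M} (for the $K$-side) precisely because both are needed. Your proposal drops the latter, and the remark that ``the continuous inclusions are automatic'' does not fill that gap: the inclusions only make $W^{k,2}(M,\vp;T)$ an intermediate space, which is weaker than membership in $\H(k/m;\cdot,\cdot)$.
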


\begin{proof} The argument is the same as \cite[Theorem 7.31]{AdFo03} with Proposition \ref{prop:interpolation of Sobolev norms (only integers) on M}
filling in for \cite[Theorem 5.2]{AdFo03} and Proposition \ref{prop:approximation theorem on M} with $W^{m,2}(M,\vp;T)$ replacing the Approximation Theorem
in \cite{AdFo03}.
\end{proof}
The importance of Proposition \ref{prop:approximation theorem on M} is that the Reiteration Theorem (see Theorem
\ref{thm:reiteration theorem}) holds for interpolation spaces generated from
the weighted $L^2$-Sobolev spaces.

\subsection{Real interpolation of boundary Sobolev spaces}
\label{sec:real_interpolation}
We are now ready to define our weighted Besov spaces.
\begin{defn}\label{defn:Besov spaces on M}
Let $0<s<\infty$, $1\leq p < \infty$, $1\leq q \leq \infty$ and $m$ be the smallest integer larger than $s$. We define the Besov space $B^{s;p,q}(M,\vp;T)$
to be the intermediate spaces between $L^p(M,\vp)$ and $W^{m,p}(M,\vp;T)$ corresponding to $\theta = s/m$, i.e.,
\[
B^{s;p,q}(M,\vp;T) = \big( L^p(M),W^{m,p}(M,\vp;T)\big)_{s/m,q;J}.
\]
We define the Besov space $B^{s;p,q}(M,\vp;\oldL)$
to be the intermediate spaces between $L^p(M,\vp)$ and $W^{m,p}(M,\vp;\oldL)$ corresponding to $\theta = s/m$, i.e.,
\[
B^{s;p,q}(M,\vp;\oldL) = \big( L^p(M,\vp),W^{m,p}(M,\vp;\oldL)\big)_{s/m,q;J}.
\]
\end{defn}
We will focus on the case $p=2$ since we only proved an $L^2$ Approximation Theorem. By Theorem \ref{thm:J method produces intermediate spaces},
$B^{s;2,q}(M,\vp;T)$ is a Banach space with interpolation norm
\[
\| u \|_{B^{s;2,q}(M,\vp;T)} = \big \| u ; \big(L^2(M,\vp), W^{m,2}(M,\vp;T)\big)_{s/m,q;J} \big\|.
\]
Also, $B^{s;2,q}(M,\vp;T)$ inherits density and approximation properties from $W^{m,2}(M,\vp;T)$. For example,
$\{ \psi\in C^\infty(M): \| \psi \|_{W^{m,2}(M,\vp;T)}<\infty\}$ is dense in $B^{s;2,q}(M,\vp;T)$.

Let $\Omega$ satisfy \emph{(HI)}-\emph{(HVI)}. Proposition \ref{prop:W^k in H on M} and the Reiteration Theorem  imply that
if $0\leq k < s < m$ and $s=(1-\theta)k + \theta m$, then
\[
B^{s;2,q}(M,\vp;T) = \big(W^{k,2}(M,\vp;T), W^{m,2}(M,\vp;T)\big)_{\theta,q;J}.
\]
More generally, if $0\leq k < s < m$ and
$s = (1-\theta)s_1+\theta s_2$ and $1 \leq q_1,q_2\leq\infty$, then
\begin{equation}\label{eqn:interpolation, q's change on M}
B^{s;2,q}(M,\vp;T) = \big( B^{s_1;2,q_1}(M,\vp;T), B^{s_2;2,q_2}(M,\vp;T)\big)_{\theta,q;J}.
\end{equation}

The following corollary is an immediate consequence of Proposition \ref{prop:W^k in H on M} and Lemma \ref{lem:embedding of J,K,H intermediate spaces}.
\begin{cor}\label{cor:sobo spaces are intermediate spaces}
\[
B^{m;2,1}(M,\vp;T) \hookrightarrow W^{m,2}(M,\vp;T) \hookrightarrow B^{m;2,\infty}(M,\vp;T).
\]
\end{cor}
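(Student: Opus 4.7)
The plan is to realize $W^{m,2}(M,\vp;T)$ as a space of class $\H(\theta; L^2(M,\vp), W^{m',2}(M,\vp;T))$ for a suitable top index $m' > m$ provided by (HI), and then invoke the general fact that any such space is continuously embedded between the extremal $J$-interpolation spaces associated to the same endpoints and parameter $\theta$.

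Concretely, I first apply Proposition \ref{prop:W^k in H on M} with the role of its "$k$" played by $m$ and the role of its "$m$" played by an integer $m' > m$ accommodated by the boundary regularity hypothesis (HI). This yields
\[
W^{m,2}(M,\vp;T) \in \H\big(m/m';\, L^2(M,\vp),\, W^{m',2}(M,\vp;T)\big).
\]
Next I appeal to Lemma \ref{lem:embedding of J,K,H intermediate spaces}, which records the standard abstract principle that a Banach space of class $\H(\theta; A_0,A_1)$ sits continuously between the smallest and largest intermediate spaces of exponent $\theta$, namely
\[
(A_0, A_1)_{\theta,1;J} \hookrightarrow W^{m,2}(M,\vp;T) \hookrightarrow (A_0, A_1)_{\theta,\infty;J},
\]
with $A_0 = L^2(M,\vp)$, $A_1 = W^{m',2}(M,\vp;T)$, and $\theta = m/m'$.

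Finally, I identify the outer spaces with the Besov spaces named in the corollary. By Definition \ref{defn:Besov spaces on M} together with the Reiteration-type identity \eqref{eqn:interpolation, q's change on M} (taking $s_1=0$, $s_2=m'$, $s=m$, $q_1=q_2=2$), we have
\[
\big(L^2(M,\vp), W^{m',2}(M,\vp;T)\big)_{m/m',q;J} \;=\; B^{m;2,q}(M,\vp;T)
\]
up to equivalence of norms for $q=1$ and $q=\infty$, so the displayed embedding above becomes exactly
\[
B^{m;2,1}(M,\vp;T) \hookrightarrow W^{m,2}(M,\vp;T) \hookrightarrow B^{m;2,\infty}(M,\vp;T).
\]

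Essentially no step requires real work; the only subtlety is the bookkeeping one already noted: since Definition \ref{defn:Besov spaces on M} and Proposition \ref{prop:W^k in H on M} both require a top Sobolev index \emph{strictly} greater than the index being interpolated, the proof must momentarily introduce an auxiliary $m' > m$ and then use the Reiteration Theorem to verify that the resulting Besov space does not depend on the particular choice of $m'$. This is the only place where one has to be careful, and it is precisely the role played by \eqref{eqn:interpolation, q's change on M}.
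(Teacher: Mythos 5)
Your proposal follows essentially the same route as the paper, which dismisses the corollary as "an immediate consequence of Proposition \ref{prop:W^k in H on M} and Lemma \ref{lem:embedding of J,K,H intermediate spaces}"; your write-up is simply a fleshed-out version of that one-liner. Two small remarks are worth making. First, a bookkeeping slip: the right-hand side of Lemma \ref{lem:embedding of J,K,H intermediate spaces} is the $K$-interpolation space $(X_0,X_1)_{\theta,\infty;K}$, not the $J$-interpolation space you wrote, so you need one more citation — Theorem \ref{thm:comparison of J and K} — to pass to $(A_0,A_1)_{\theta,\infty;J}$ and match Definition \ref{defn:Besov spaces on M}, which is framed entirely in terms of the $J$-method. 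Second, the detour through \eqref{eqn:interpolation, q's change on M} to show independence of the auxiliary top index $m'$ is unnecessary and in fact slightly outside that identity's stated applicability (it would require $B^{0;2,2}$ and $B^{m';2,2}$ with $m'$ exceeding the boundary regularity used to build the ambient scale). The clean route is to commit to $m'=m+1$, the unique choice forced by Definition \ref{defn:Besov spaces on M}: Proposition \ref{prop:W^k in H on M} with top index $m+1$ gives $W^{m,2}(M,\vp;T)\in\H\big(m/(m+1);L^2(M,\vp),W^{m+1,2}(M,\vp;T)\big)$, Lemma \ref{lem:embedding of J,K,H intermediate spaces} sandwiches it between the $(\theta,1;J)$ and $(\theta,\infty;K)$ spaces, and Theorem \ref{thm:comparison of J and K} identifies the latter with $B^{m;2,\infty}(M,\vp;T)$ — no reiteration required.
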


\subsection{Proof of Lemma \ref{lem:traces exist for u in sobo spaces on M}}
\label{sec:proof_traces exist for u in sobo spaces on M}
\begin{proof}[Proof of Lemma \ref{lem:traces exist for u in sobo spaces on M}] We follow the outline of \cite[Lemma 7.40]{AdFo03}.
We may apply the Reiteration Theorem
to obtain
\[
B := B^{k-\frac 12;2,2}(M,\vp;T) = \big( W^{k-1,2}(M,\vp;T), W^{k,2}(M,\vp;T)\big)_{\theta,2;J}
\]
where $\theta = 1 - \frac 12 = \frac 12$.
From Theorem \ref{thm:discrete version J method}, we can apply the discrete version of the $J$-method and obtain that
$u\in B$ if and only if there exist $u_i \in  W^{k-1,2}(M,\vp;T) \cap W^{k,2}(M,\vp;T) = W^{k,2}(M,\vp;T)$ for
$i\in\Z$ so that
\[
\sum_{i\in\Z} u_i = u
\]
in $W^{k-1,2}(M,\vp;T) + W^{k,2}(M,\vp;T) = W^{k-1,2}(M,\vp;T)$ and such that
\[
\big\{ 2^{-i/2} \| u_i \|_{W^{k-1,2}(M,\vp;T)} \big\},\ \big\{ 2^{i/2} \| u_i \|_{W^{k,2}(M,\vp;T)} \big\} \in \ell^2.
\]

Let $\tilde\pi:\Omega_\ep'\to M$ be the map that sends $x\in \Omega_\ep'$ to the unique point $\tilde\pi(x)\in M$ obtained by flowing along ${\oldL}_n$. That is,  there exists $t = t_x$ such that
$x = e^{t{\oldL}_n}(\tilde\pi(x))$. The constant $\ep>0$ is small enough so that each point $x\in\Omega_\ep'$ can be uniquely represented by $x =(\tilde\pi(x),t_x)$. In this way, if
$U\in C^\infty_c(\Omega_\ep')$ and $x\in\Omega_\ep$, then
\[
U(x) = \int_{-\infty}^{t_x} \frac{d}{dt} U\big( e^{t{\oldL}_n}(\tilde\pi(x))\big)\, dt = \int_{-\infty}^{t_x} {\oldL}_n U\big( e^{t{\oldL}_n}(\tilde\pi(x))\big)\, dt.
\]

Let $\tilde\psi \in C^\infty_c(\R)$ be so that
\begin{enumerate}\renewcommand{\labelenumi}{(\roman{enumi})}
\item $\tilde\psi(t) =1$ on $[-1,1]$,
\item $\tilde\psi(t) = 0$ if $|t|\geq 2$,
\item $0 \leq \tilde\psi(t) \leq 1$ for all $t\in\R$,
\item and there exists $c_j\geq 0$ so that $|\tilde\psi^{(j)}(t)| \leq c_j$ for all $j\geq 1$ and $t\in\R$.
\end{enumerate}
Define $\tilde\psi_i(t) = \tilde\psi(t/2^i)$ and $\psi_i = \tilde\psi_{i+1}-\tilde\psi_i$. Then $\psi_i$ vanishes outside $(2^i,2^{i+2})\cup (-2^{i+2},-2^i)$ (and at the endpoints
in particular). Also, $\| \psi_i \|_{L^\infty(\R)} =1$ and $\| \psi_i' \|_{L^\infty(\R)} \leq 2^{-i}c_1$.

Let $U\in C^\infty_c(\Omega_\ep')$.
Define $U_i(x)$ by
\begin{multline*}
U_i(x) = U_i(\tilde\pi(x),t_x) = e^{\frac{\vp(x)}2}\int_{-\infty}^{t_x} \psi_i(t) {\oldL}_n\big( U e^{-\frac {\vp}2}\big)\Big|_{e^{t{\oldL}_n}(\tilde\pi(x))} \, dt \\
= e^{\frac{\vp(x)}2}\int_{-\infty}^{t_x} \psi_i(t) \frac{d}{dt} \big( U e^{-\frac {\vp}2}\big)\Big|_{e^{t{\oldL}_n}(\tilde\pi(x))} \, dt.
\end{multline*}
Next, for $x\in M$, define $u_i$ by $u_i(x) = U_i(x)$. Then
\[
u_i(x)  = e^{\frac {\vp(x)}2}\int_{-\infty}^{0} \psi_i(t) \frac{d}{dt} \big( U e^{-\frac {\vp}2}\big)\Big|_{e^{t{\oldL}_n}(\tilde\pi(x))} \, dt.
\]
By the support condition on $\psi_i$ and the Fundamental Theorem of Calculus,
\begin{equation}\label{eqn:u_i defn in integer trace lemma on M}
u_i(x) e^{-\frac {\vp(x)}2} = \int_{-2^{i+2}}^{-2^i} \psi_i(t)  \frac{d}{dt} \big( U e^{-\frac {\vp}2}\big)\Big|_{e^{t{\oldL}_n}(\tilde\pi(x))} dt
= -\int_{-2^{i+2}}^{-2^i} \psi_i'(t)   \big( U e^{-\frac {\vp}2}\big)\Big|_{e^{t{\oldL}_n}(\tilde\pi(x))}dt.
\end{equation}
Since $U$ has compact support, $U_i$ and consequently $u_i$ vanish for all $i\in\Z$ when $|x|$ is sufficiently large. Therefore, the support of $\Tr U$
is a compact set on which $\sum_{i\in\Z}u_i$ converges uniformly to $u = \Tr U$.  Also, if $|\alpha|\leq k-1$, then
\[
\oldL^\alpha_\T \big(u_i(x) e^{-\frac {\vp(x)}2}\big) = \int_{-2^{i+2}}^{-2^i} \psi_i(t) \oldL^\alpha_\T\Big|_x {\oldL}_n\big( U e^{-\frac {\vp}2}\big)\Big|_{e^{t{\oldL}_n}(x)}\, dt
\]
Recall that
\[
{\oldL}_j\Big|_x = \sum_{\ell=1}^n {\oldb}_{j\ell}(x) \frac{\p}{\p x_j} .
\]
On $\Omega_\ep'$, $\rho$ is bounded in the $C^{k+1}$ norm, so ${\oldb}_{j\ell}$ is bounded in the $C^k$ norm. Consequently, by Cauchy-Schwarz,
\[
\big|{\oldL}_\T^\alpha \big(u_i(x) e^{-\frac {\vp(x)}2}\big)\big|
\leq  (2^{i+2})^{1/2}C_{\|({\oldb}_{j\ell})\|_{C^k(\Omega_\ep)}} \bigg( \int_{-2^{i+2}}^{-2^i} \Big|\nabla^{|\alpha|+1} \big( Ue^{-\frac {\vp}2}\big) \big|_{e^{t{\oldL}_n(x)}} \Big|^2 \, dt\bigg)^{1/2}.
\]
For a fixed $t>0$, $t \in [2^i, 2^{i+2})$ for exactly two (adjacent) $i$. Set $Y_j f= e^{\frac{\vp}2} {\oldL}_j(f e^{-\frac{\vp}2}) = \frac12(T_j + {\oldL}_j)$.  By
Corollary \ref{cor:compactness of W^k into W^k-1}, $\| f \|_{W^{j,2}(M,\vp;T)} \sim \sum_{|\alpha|\leq j}\|Y_\T^\alpha f\|_{M,\vp}$. Moreover, the paths
$e^{t{\oldL}_n}$ foliate $\Omega_\ep$, so multiplying by $2^{-i/2}$, squaring, summing over $i$, and integrating yields
\begin{align*}
\sum_{i\in\Z} 2^{-i} &\| u_i \|_{W^{k-1,2}(M,\vp;T)}^2
\leq C\sum_{\atopp{i\in\Z}{|\alpha|\leq k-1}}  2^{-i} \| Y_\T^\alpha u_i  \|_{L^2(M,\vp)}^2
= C \sum_{\atopp{i\in\Z}{|\alpha|\leq k-1}} 2^{-i} \|{\oldL}_\T^\alpha \big(u_i e^{-\frac{\vp(x)}2}\big) \|_{L^2(M)}^2 \\&
=  C\sum_{|\alpha|\leq k-1} \int_{\Omega_\ep} \big| \nabla^{|\alpha|+1} \big(U(x)e^{-\frac{\vp(x)}2}\big) \big|^2\, dx
\leq C \| U \|_{W^{k,2}(\Omega_\ep,\vp;X)}
\end{align*}
where the last inequality follows from (\ref{eqn:Y,X,D equiv}).

Using the second equality in \eqref{eqn:u_i defn in integer trace lemma on M} and Cauchy-Schwarz, we have
\begin{align*}
\big| {\oldL}_\T^\alpha \big(u_i(x) e^{-\frac{\vp(x)}2}\big) \big|
&\leq 2^{-i} 2^{(i+2)/2} C \bigg( \int_{-2^{i+2}}^{-2^i} \big| {\oldL}_\T^\alpha\Big|_x \big(U\big(e^{t{\oldL}_n}(x)\big) e^{-\frac{\vp(e^{t{\oldL}_n}(x))}2}\big)\big|^2\, dt\bigg)^{1/2} \\
&\leq 2^{-i} 2^{(i+2)/2} C_{\|({\oldb}_{jk})\|_{C^k(\Omega_\ep')}} \bigg( \int_{-2^{i+2}}^{-2^i} \Big|\nabla^{|\alpha|} \big( Ue^{-\frac {\vp}2}\big) \big|_{e^{t{\oldL}_n(x)}} \Big|^2 \, dt\bigg)^{1/2}.
\end{align*}
Therefore,
\begin{align*}
\sum_{i\in\Z} 2^{i} \| u_i \|_{W^{k,2}(M,\vp;T)}^2
&\leq C\sum_{\atopp{i\in\Z}{|\alpha|\leq k}}  2^{i} \| Y_\T^\alpha u_i  \|_{L^2(M,\vp)}^2
= C \sum_{i\in\Z} 2^i \|{\oldL}_\T^\alpha \big(u_i e^{-\frac{\vp(x)}2}\big) \|_{L^2(M,\vp)}^2 \\
&= C_{\|({\oldb}_{jk})\|_{C^{k+1}(\Omega_\ep')}}\sum_{|\alpha|\leq k} \int_{\Omega_\ep} \big| \nabla^{|\alpha|}  \big( U(x)e^{-\frac {\vp(x)}2}\big)\big|^2 \, dx
\leq C \| U \|_{W^{k,2}(\Omega_\ep,\vp;X)},
\end{align*}
where the final inequality follows from (\ref{eqn:Y,X,D equiv}).

Together, these inequalities show that $\| u\|_{B^{k-1/2;2,2}(M,\vp;T)} \leq C \| U \|_{W^{m,2}(\Omega_\ep',\vp;X)}$ when
$U\in C^\infty_c(\Omega_\ep')$. Since $C^\infty_c(\Omega_\ep')$ is dense in $W^{k,2}_0(\Omega_\ep',\vp;X)$, the proof is complete.
\end{proof}

\subsection{Proof of Theorem \ref{thm:traces of normal derivatives}}
\label{sec:proof_traces of normal derivatives}
\begin{proof}[Proof of Theorem \ref{thm:traces of normal derivatives}] Let
$\ell'' = \ell+\ell'+1$. Set $B = B^{\ell + \frac 12;2,2}(M,\vp;T)$. By definition,
\[
B = \Big( L^2(M,\vp), W^{\ell'',2}(M,\vp;T)\Big)_\theta,\quad
\text{where } \theta = \frac{\ell+\frac 12}{\ell''}.
\]
From the discrete $J$-method (Theorem \ref{thm:discrete version J method}), $u\in L^2(M,\vp)$ belongs
to $B$ if and only if there exist $\{u_j\}_{j\in\Z}\subset W^{\ell'',2}(M,\vp;T)$ so that
$u = \sum_{j\in\Z}u_j$ where the sum converges in $L^2(M,\vp)$ and
$\{2^{-j\theta}J(2^j;u_j)\}\in\ell^2$. The latter condition means that there exists $K>0$ so that
\[
\sum_{j\in\Z} 2^{-\frac{2\ell+1}{\ell''}} \|u_j\|_{L^2(M,\vp)}^2 \leq K^2 \| u\|_B^2
\quad\text{and}\quad
\sum_{j\in\Z} 2^{\frac{2\ell'+1}{\ell''}} \| u_j\|_{W^{\ell'',2}(M,\vp;T)}^2 \leq K^2 \| u\|_B^2.
\]

Let $\tilde\psi\in C^\infty_c(\R)$ be the bump function from the proof of Lemma \ref{lem:traces exist for u in sobo spaces on M}.
Set
\[
\psi_j(t) = \tilde\psi(t/\delta^j)
\]
for $j\in\Z$ and $\delta>0$ to be decided later. Set $\eta(t) = \tilde\psi(2t/\ep)$. It follows that
$|\psi^{(k)}_j(t)| \leq c_k \delta^{-jk}$. Also, for $k\geq 1$,
\[
\supp \psi_j^{(k)} \subset [-2\delta^j,-\delta^j]\cup [\delta^j,2\delta^j].
\]

For $y\in\Omega_\ep'$, there exists a unique $x\in M$ and $t\in[-\ep,\ep]$ so that $y = e^{t{\oldL}_n}(x)$.
Set $\tilde\pi(y)=x$. Since $\|\rho\|_{C^{m}(\Omega_\ep')}<\infty$, it follows that the projection
$\|\tilde\pi\|_{C^{m-1}(\Omega_\ep')} <\infty$. Set
\[
U_j(y) = \frac{1}{\ell'!}e^{\frac 12\vp(y)} \eta\big(\rho(y)\big) \psi_j\big(\rho(y)) \big(\rho(y)\big)^{\ell'}
u_j\big(\tilde\pi(y)\big) e^{-\frac 12\vp(\tilde\pi(y))}.
\]
Since $\frac{\p}{\p \nu} = {\oldL}_n$, it is immediate that
\[
\Tr U_j = \Tr \frac{\p U_j}{\p\nu} = \cdots = \Tr \frac{\p^{\ell'-1}U_j}{\p\nu^{\ell'-1}} =0
\]
and
\[
\Tr \frac{\p^{\ell'} U_j}{\p\nu^{\ell'}} = u_j
\]
for all $j\in\Z$.

Thus, we only need to show that $U\in W^{\ell''}(\Omega_\ep',\vp;X)$ and is supported in $\Omega_\ep'$.
Since $\supp U_j\subset \Omega_\ep'$ for all $j$, it follows that $U$ is supported in
$\Omega_\ep'$. Note that if $f$ is a smooth function on $M$, then there exist functions
$c_{\alpha_1,\alpha_2}$, $1\leq \alpha_1,\alpha_2 \leq n-1$, that are bounded in $C^{m-2}(\Omega_\ep')$ so that
\[
{\oldL}_{\alpha_1}(f\circ \tilde\pi)(y) = \sum_{\alpha_2}^{n-1}c_{\alpha_1,\alpha_2} {\oldL}_{\alpha_2}f\big(\tilde\pi(y)).
\]
Also, by construction, ${\oldL}_n (f\circ\tilde\pi)(y)=0$. Since $U_j$ has  support in $\Omega_\ep'$,
(\ref{eqn:Y,X,D equiv}) shows that we may use the $Y_k$ operators (instead of the $X_k$'s) for differentiation.
Let $\gamma = (\gamma_1,\dots,\gamma_{\ell''})$ be a multiindex of length $\ell''$.
Set
\[
\gamma_T = \big|\{\alpha\in\gamma: \gamma_\alpha \text{ is tangential}\}\big|
\text{ and }
\gamma_N = \big|\{\alpha\in\gamma: \gamma_\alpha=n\}\big|.
\]
Set
\[
f_j(t) = \eta(t)\psi_j(t) t^{\ell'}
\quad\text{and}\quad
g_j(x) = u_j(x) e^{-\frac 12\vp(x)}
\]
for $x\in M$.
Observe that
\[
|f_j^{(\alpha_1)}(t)| \leq c_{\alpha_1} \delta^{j(\ell'-\alpha_1)}.
\]
The function $\eta$ does not affect the estimates -- derivatives of $\eta$ are supported where $|s|\in [\ep,2\ep]$
and the support of derivatives of $\eta$ and derivatives of $\psi_j$ cause $\eta\sim \delta^j$ (or else the
particular combination $\psi_j' \eta'$ is identically zero). By (\ref{eqn:Y,X,D equiv}) and the fact that $U$ is  supported in $\Omega_\ep'$, it is enough
to bound $\sum_{j\in\Z} \| Y^\gamma U_j(y)\|_{L^2(\Omega_\ep',\vp)}$ to show that
$U\in W^{\ell'',2}(\Omega_\ep',\vp;X)$.

Since $\rho$ is bounded in the $C^m$ norm and $c_{\alpha_1,\alpha_2}$ are bounded in the $C^{m-2}$ norm, there exists functions
$\sigma^\gamma_{\alpha,\beta}$ that are bounded on $\Omega_\ep'$ so that
\begin{align*}
Y^\gamma\big(U_j(y)\big) e^{-\frac 12\vp(y)}
= \oldL^\gamma\big(U_j(y)e^{-\frac12 \vp(y)}\big)
&= \oldL^\gamma\Big( f_j\big(\rho(y)\big) g_j\big(\tilde\pi(y)\big)\Big)  \\
&= \sum_{\alpha=0}^{\gamma_N} \sum_{|\beta|\leq \gamma_T} \sigma^\gamma_{\alpha,\beta}
f_j^{(\alpha)}\big(\rho(y)\big) \oldL^\beta g_j\big(\tilde\pi(y)\big).
\end{align*}
Thus,
\begin{align*}
\| Y^\gamma U_j(y)\|_{L^2(\Omega_\ep',\vp)}
&\leq C \sum_{\alpha=1}^{\gamma_N} \sum_{|\beta|\leq \gamma_T}
\int_{\Omega_\ep'} \big| f^{(\alpha)}_j\big(\rho(y)\big) \big|^2 \big| Y^\beta u_j\big(\tilde\pi(y)\big) \big|^2
e^{-\vp(\tilde\pi(y))}\, dy \\
&= C \sum_{\alpha=1}^{\gamma_N} \sum_{|\beta|\leq \gamma_T} \int_{\bd\Omega} \int_{-2\delta^j}^{2\delta^j}
\big| f^{(\alpha)}_j(t) \big|^2 \big| Y^\beta u_j(x) \big|^2 e^{-\vp(x)}\, dt\, d\sigma(x) \\
&\leq C \sum_{\alpha=1}^{\gamma_N} \sum_{|\beta|\leq \gamma_T} \int_{\bd\Omega}
\delta^{j(2\ell'-2\alpha+1)} \big| Y^\beta u_j(x)\big|^2 e^{-\vp(x)}\, d\sigma(x) \\
&\leq C \sum_{\alpha=1}^{\gamma_N}\delta^{j(2\ell'-2\alpha+1)}  \| u_j \|_{W^{\gamma_T,2}(\bd\Omega,\vp;T)}\\
&\leq C \big( \delta^{j(2\ell'+1)} + \delta^{j(2\ell'-2\gamma_N+1)}\big)
\| u_j \|_{W^{\gamma_T,2}(\bd\Omega,\vp;T)}.
\end{align*}
where $C$ is independent of $j$. Set $\delta = 2^{\frac{1}{\ell''}}$. This means
\begin{align}
\| Y^\gamma U(y)\|_{L^2(\Omega_\ep',\vp)}
&\leq \sum_{j\in\Z} \| Y^\gamma U(y)\|_{L^2(\Omega_\ep',\vp)} \nn\\
&\leq C \sum_{j\in\Z}\big( \delta^{j(2\ell'+1)} + \delta^{j(2\ell'-2\gamma_N+1)}\big)
\| u_j \|_{W^{\gamma_T,2}(\bd\Omega,\vp;T)}. \label{eqn:Y^gamma U in terms of u_j}
\end{align}
To check that the sum on the right hand side of \eqref{eqn:Y^gamma U in terms of u_j} is finite, observe that
\begin{align*}
\sum_{j\in\Z}\delta^{j(2\ell'+1)} \| u_j \|_{W^{\gamma_T,2}(\bd\Omega,\vp;T)}^2
&= \sum_{j\in\Z}2^{j\frac{2\ell'+1}{\ell''}} \| u_j \|_{W^{\gamma_T,2}(\bd\Omega,\vp;T)}^2\\
&\leq \sum_{j\in\Z}2^{j\frac{2\ell'+1}{\ell''}} \| u_j \|_{W^{\ell'',2}(\bd\Omega,\vp;T)}^2
\leq K \|u\|_B^2.
\end{align*}
To bound the remaining term in \eqref{eqn:Y^gamma U in terms of u_j}, we use
(\ref{eqn:interpolation of |alpha| |eq j, on M}) to bound
\[
\delta^{j(2\ell'-2\gamma_N+1)}\| u_j \|_{W^{\gamma_T,2}(\bd\Omega,\vp;T)}^2
\leq K' \delta^{j(2\ell'-2\gamma_N+1)}\big( \ep^2 \| u_j \|_{W^{\ell'',2}(\bd\Omega,\vp;T)}^2
+ \ep^{-2\frac{\gamma_T}{\ell''-\gamma_T}} \|u_j\|_{L^2(\bd\Omega,\vp)}^2\big).
\]
We require $\delta^{j(2\ell'-2\gamma_N+1)}\ep^2 = \delta^{j(2\ell'+1)}$. This means $\ep^2 = \delta^{2\gamma_Nj}$.
Since $\gamma_T + \gamma_N = \ell''$, it follows that
$\frac{\gamma_T}{\ell''-\gamma_T} = \frac{\ell''-\gamma_N}{\gamma_N}$ and
\[
\delta^{j(2\ell'-2\gamma_N+1)}\delta^{-2\gamma_N j \frac{\ell''-\gamma_N}{\gamma_N}}
= \delta^{-j(2\ell+1)}
\]
since $\ell+\ell'+1=\ell''$. Thus, since $\delta = 2^{\frac{1}{\ell''}}$,
\[
\delta^{j(2\ell'-2\gamma_N+1)}\| u_j \|_{W^{\gamma_T,2}(\bd\Omega,\vp;T)}^2
\leq K'\sum_{j\in\Z}\big( 2^{{\frac{2\ell'-1}{\ell'}}} \|u_j\|_{W^{\ell'',2}(\bd\Omega,\vp;T)}
+ 2^{-{\frac{2\ell-1}{\ell'}}} \|u_j\|_{L^2(\bd\Omega,\vp)}\big).
\]
Thus, $U\in W^{\ell'',2}(\Omega,\vp; X)$ and the proof is complete.
\end{proof}
The proof of Theorem \ref{thm:Trace Theorem for integer m on M} is now complete.
\begin{remark}\label{rem:certain normal derivs of extension are 0 on M}
If (for example) $\ell'=0$, then the formula for $U_j$ is
\[
U_j(y) =  e^{\frac 12\vp(y)} \eta\big(\rho(y)\big) \psi_j\big(\rho(y)\big) u_j\big(\tilde\pi(y)\big) e^{-\frac 12\vp(\tilde\pi(y))}
\]
Since ${\oldL}_n(\tilde\pi(y))=0$, we can compute
\[
(Y_n U_j(y)) e^{-\frac 12\vp(y)} = {\oldL}_n\big(U_j(y) e^{-\frac 12\vp(y)}\big)
= {\oldL}_n\rho(y) (\eta\psi_j)'\big(\rho(y)\big) u_j\big(\tilde\pi(y)\big) e^{-\frac 12\vp(\tilde\pi(y))}
\]
It follows from the support conditions on $\eta$ and $\psi$ that $\supp (\eta\psi_j)' \cap [(-\delta^j,\delta^j)\cap(-\ep,\ep)] = \emptyset$. Therefore
\[
\Tr (Y_n U_j)=0
\]
for all $j$. Similarly, $\Tr(Y_n^k U_j)=0$ for all $k$ for which $Y_n^k$ is defined. A similar result also holds if $\ell'>0$ since $\Tr\rho=0$.
\end{remark}

Now that Theorem \ref{thm:traces of normal derivatives} is proven, we can apply our trace and extension theorems to prove that a simple $(k,2)$-extension operator exists.
\begin{proof}[Proof of Theorem \ref{thm:simple extension operators exist}]
Let $1\leq k \leq m-1$ and $f\in W^{k,2}(\Omega,\vp;X)$.
We begin by constructing functions $u_1,\dots, u_k$ recursively. By Theorem \ref{thm:Trace Theorem for integer m on M},
$\Tr f \in B^{k-\frac 12;2,2}(M,\vp;T)$ and
\[
\| \Tr f \|_{B^{k-\frac 12;2,2}(M,\vp;T)} \leq K \| f\|_{W^{k,2}(\Omega,\vp; X)}.
\]
Next, by Theorem \ref{thm:traces of normal derivatives}, there exists a function $u_1\in W^{k,2}(\Omega_\ep',\vp;X)$
supported in $\Omega_\ep'$ and so that
\[
\Tr u_1 = \Tr f
\quad\text{and}\quad
\| u_1\|_{W^{k,2}(\Omega,\vp; X)} \leq K \| \Tr f \|_{B^{k-\frac 12;2,2}(M,\vp;T)}.
\]
Thus,
\[
\| u_1\|_{W^{k,2}(\Omega,\vp; X)} \leq K \| f\|_{W^{k,2}(\Omega,\vp; X)}.
\]
It now follows that $(f-u_1)\in W^{k,2}(\Omega,\vp; X)\cap W^1_0(\Omega,\vp; X)$.

Next, $T_n(f-u_1)\in W^{k-1,2}(\Omega,\vp; X)$ so by Theorem \ref{thm:Trace Theorem for integer m on M},
$\Tr(T_n(f-u_1)) \in B^{k-3/2;2,2}(M,\vp;T)$ and
\[
\| \Tr(T_n(f-u_1))\|_{B^{k-\frac 12;2,2}(M,\vp;T)} \leq K \| T_n(f-u_1) \|_{W^{k-1,2}(\Omega,\vp; X)}.
\]
By Theorem \ref{thm:traces of normal derivatives}, there exists a function $u_2\in W^{k,2}(\Omega_\ep',\vp; X)$
supported in $\Omega_\ep'$ and so that $\Tr u_2=0$
and $\Tr(\frac{\p u_2}{\p \nu}) = \Tr(T_n u_2) = \Tr( T_n(f-u_1))$ and
\[
\| u_2 \|_{W^{k,2}(\Omega,\vp; X)} \leq K \| \Tr(T_n(f-u_1))\|_{B^{k-\frac 32;2,2}(M,\vp;T)}.
\]
Thus, $(f-u_1-u_2)\in W^{k,2}(\Omega,\vp; X)\cap W^2_0(\Omega,\vp; X)$ and
\[
\| f - u_1-u_2 \|_{W^{k,2}(\Omega,\vp; X)} \leq K \| f \|_{W^{k,2}(\Omega,\vp; X)}.
\]

Iterating this process, we can show that there exist functions
\[
  u_1,\dots,u_k\in W^{k,2}(\Omega_\ep',\vp;X)
\]
supported in $\Omega_\ep'$ and so that
$(f-u_1-\cdots -u_j)\in W^{k,2}(\Omega,\vp; X)\cap W^j_0(\Omega,\vp; X)$ and
\[
\| f - u_1-\cdots -u_j \|_{W^{k,2}(\Omega,\vp; X)} \leq K \| f \|_{W^{k,2}(\Omega,\vp; X)}.
\]
Thus, $f-(u_1+\cdots+u_k) \in W^{k,2}_0(\Omega,\vp; X)$.

Next, we can write
\[
f = (u_1+\cdots+u_k) + (f-u_1-\cdots-u_k).
\]
The function $(f-u_1-\cdots-u_k)$ can be extended by 0 to produce a function in $W^{k,2}(\R^n,\vp;X)$ and $(u_1+\cdots+u_k)\in W^{k,2}(\R^n,\vp;X)$. Thus, we define
\[
Ef(x) = \begin{cases} f(x) & x\in \bar\Omega \\ u_1(x)+\cdots +u_k(x) & x\in \bar\Omega^c. \end{cases}
\]
\end{proof}

\subsection{Approximation of functions in $W^{k,2}(\Omega,\vp; X)$}
Using the Trace Theorem \ref{thm:traces of normal derivatives}, we are now in a position to improve Proposition \ref{prop:density of C^infty_c} for $p=2$ and relax the condition that
$f\in W^{\ell,2}_0(\Omega,\vp; X)$.
\begin{prop}\label{eqn: density of C^infty_c -- improved}
Assume that $\bd\Omega$ satisfies (HI)-(HVI) for some $m\geq 2$. Let $1\leq \ell \leq m-1$ be an integer.
Then $C^\infty_c(\R^n)$ is dense $W^{\ell,2}(\Omega,\vp; X)$  in the sense that
if $\ep>0$, then there exist $\psi\in C^\infty_c(\R^n)$ so that
\[
\| \psi - f \|_{W^{\ell,2}(\Omega,\vp; X)} \leq \ep
\]
and
\[
\| \psi\|_{W^{\ell,2}(\R^n,\vp;X)} \leq C \| f\|_{W^{\ell,2}(\Omega,\vp; X)}
\]
where $C$ is independent of $f$, $\vp$, and $\ep$.
\end{prop}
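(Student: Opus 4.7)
The plan is to reduce to the existing density result Proposition \ref{prop:density of C^infty_c} by first extending $f$ across $\bd\Omega$ in a norm-controlled fashion. First I would apply the simple $(\ell,2)$-extension operator from Theorem \ref{thm:simple extension operators exist} to $f$ to obtain $F:=Ef\in W^{\ell,2}(\R^n,\vp;X)$ satisfying $F=f$ a.e.\ on $\Omega$ and $\|F\|_{W^{\ell,2}(\R^n,\vp;X)}\leq C_E\|f\|_{W^{\ell,2}(\Omega,\vp;X)}$, where $C_E=C_E(\ell)$ is the constant from that theorem. This uses precisely the hypotheses (HI)--(HVI) together with the requirement $1\leq\ell\leq m-1$.

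Next I would approximate $F$ on $\R^n$ by $C^\infty_c$ functions. Given any $\eta>0$, the truncate-and-mollify procedure used in the proof of Proposition \ref{prop:density of C^infty_c} applies equally well with $\Omega$ replaced by $\R^n$ (the only place boundary regularity was used was to extend the truncated function past $\bd\Omega$, which is unnecessary here), producing $\psi\in C^\infty_c(\R^n)$ with
\[
\|F-\psi\|_{W^{\ell,2}(\R^n,\vp;X)}<\eta.
\]
Restricting to $\Omega$ gives $\|\psi-f\|_{W^{\ell,2}(\Omega,\vp;X)}\leq\|\psi-F\|_{W^{\ell,2}(\R^n,\vp;X)}<\eta$, so setting $\eta=\eps$ secures the first conclusion.

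For the norm bound I would use the triangle inequality,
\[
\|\psi\|_{W^{\ell,2}(\R^n,\vp;X)}\leq\eta+\|F\|_{W^{\ell,2}(\R^n,\vp;X)}\leq\eta+C_E\|f\|_{W^{\ell,2}(\Omega,\vp;X)},
\]
and then eliminate the dependence on $\eta$ by a case split: if $f=0$ a.e.\ I would take $\psi=0$, and otherwise choose $\eta=\min(\eps,\|f\|_{W^{\ell,2}(\Omega,\vp;X)})$, giving $\|\psi\|_{W^{\ell,2}(\R^n,\vp;X)}\leq(1+C_E)\|f\|_{W^{\ell,2}(\Omega,\vp;X)}$.

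There is no real obstacle here: the heavy lifting (both trace and extension) has already been carried out in Theorem \ref{thm:simple extension operators exist}, which in turn rests on Theorem \ref{thm:Trace Theorem for integer m on M} and Theorem \ref{thm:traces of normal derivatives}. All that is required in addition is a straightforward triangle-inequality bookkeeping and the observation that Proposition \ref{prop:density of C^infty_c} applies on $\R^n$ to produce an approximant whose $W^{\ell,2}(\R^n,\vp;X)$-norm is comparable to that of the function being approximated.
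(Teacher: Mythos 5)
Your proof is correct and takes a genuinely cleaner route than the paper's. The paper constructs the same near-boundary functions $u_1,\dots,u_\ell$ as in the proof of Theorem~\ref{thm:simple extension operators exist}, writes $f=(u_1+\cdots+u_\ell)+(f-u_1-\cdots-u_\ell)$, approximates the second summand (which lies in $W^{\ell,2}_0(\Omega,\vp;X)$) by some $\xi\in C^\infty_c(\Omega)$ with error controlled by $\min\{\ep,\ep\|f\|_{W^{\ell,2}(\Omega,\vp;X)}\}$, and sets $\psi=u_1+\cdots+u_\ell+\xi$. As written this only exhibits $\psi\in W^{\ell,2}(\Omega_\ep'\cup\Omega,\vp;X)$ with compact support, since the $u_j$ coming from Theorem~\ref{thm:traces of normal derivatives} are a priori only Sobolev functions, not smooth; a final mollification step is implicitly suppressed. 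Your version sidesteps this: by first passing to $F=Ef\in W^{\ell,2}(\R^n,\vp;X)$ and then running the truncate-and-mollify argument from Proposition~\ref{prop:density of C^infty_c} directly on $\R^n$ — where the boundary-extension step of that proof is vacuous — you land unambiguously in $C^\infty_c(\R^n)$. Both proofs lean on the same heavy machinery (the trace/extension package culminating in Theorem~\ref{thm:simple extension operators exist}), so the two arguments are comparable in depth; what your packaging buys is transparency about where the smoothness of $\psi$ comes from, and your explicit $\eta=\min(\eps,\|f\|)$ case split handles the independence of $C$ from $\eps$ more cleanly than the paper's $\min\{\ep,\ep\|f\|\}$ normalization. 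One small point worth making explicit if you want to match the paper's additional remark that $\supp\psi\subset\Omega_\ep'\cup\Omega$: the extension $Ef$ is supported in $\Omega_\ep'\cup\bar\Omega$ by construction, so by taking the mollification radius small enough one recovers that support property as well.
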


The function $\psi$ that we construct will actually satisfy $\supp \psi \subset \Omega_\ep'\cup\Omega$.

\begin{proof}The proof is a consequence of Theorem \ref{thm:simple extension operators exist}.
Let $\ep>0$. Constructing the functions $u_1,\dots, u_\ell$ as in the proof of Theorem \ref{thm:simple extension operators exist}.
Then for any $1\leq j \leq \ell$,
$(f-u_1-\cdots -u_j)\in W^{\ell,2}(\Omega,\vp; X)\cap W^j_0(\Omega,\vp; X)$ and
\[
\| f - u_1-\cdots -u_j \|_{W^{\ell,2}(\Omega,\vp; X)} \leq K \| f \|_{W^{\ell,2}(\Omega,\vp; X)}.
\]
Thus, $f-(u_1+\cdots+u_\ell) \in W^{\ell,2}_0(\Omega,\vp; X)$ so there exists $\xi\in C^\infty_c(\Omega)$ so that
\[
\| f - (u_1+\cdots+u_\ell+\xi) \|_{W^{\ell,2}(\Omega,\vp; X)} < \min\{\ep,\ep\| f\|_{W^{\ell,2}(\Omega,\vp; X)}\}.
\]
We can now take $\psi = u_1+\cdots + u_j +\xi$.
\end{proof}

\begin{prop} \label{prop:interpolation of Sobolev norms (only integers)}
Let $\Omega\subset\R^n$ have a $C^{m}$ boundary and satisfy (HI)-(HVI).  There exist $K,K'>0$ depending on $n,m$ so that
for each $\ep>0$, $u\in W^{m,2}(\Omega,\vp; X)$, and $0< j < m$,
\begin{align}
\sum_{|\alpha| =j} \| X^\alpha u \|_{L^2(\Omega,\vp)}^2 &\leq K \Big( \ep \sum_{|\beta| =m} \| X^\beta u \|_{L^2(\Omega,\vp)}^2 + \ep^{-j/(m-j)} \| u\|_{L^2(\Omega,\vp)}^2\Big)
\label{eqn:interpolation of |alpha|=j}\\
\| u\|_{W^{j,2}(\Omega,\vp; X)} &\leq K' \big( \ep \| u\|_{W^{m,2}(\Omega,\vp; X)} + \ep^{-j/(m-j)} \| u\|_{L^2(\Omega,\vp)}\big) \label{eqn:interpolation of |alpha| |eq j}\\
\| u\|_{W^{j,2}(\Omega,\vp; X)} &\leq 2K' \| u\|_{W^{m,2}(\Omega,\vp; X)}^{j/m} \| u\|_{L^2(\Omega,\vp)}^{(m-j)/m} \label{eqn:j norm convex of m norm and 0 norm}
\end{align}
\end{prop}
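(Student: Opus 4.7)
As in the proof of Proposition~\ref{prop:interpolation of Sobolev norms (only integers) on M}, \eqref{eqn:interpolation of |alpha| |eq j} follows from repeated applications of \eqref{eqn:interpolation of |alpha|=j}, and \eqref{eqn:j norm convex of m norm and 0 norm} follows from \eqref{eqn:interpolation of |alpha| |eq j} by choosing $\ep$ so that the two right-hand terms are equal. Hence it suffices to prove \eqref{eqn:interpolation of |alpha|=j}, and by the inductive scheme of \cite[Theorem~5.2]{AdFo03} the full statement reduces to the base case $m=2$, $j=1$.

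For the base case, the strategy mirrors the $M$-proof, but one must account for the boundary contribution that arises from integration by parts on $\Omega$. Reducing to $u\in C^\infty(\bar\Omega)\cap W^{2,2}(\Omega,\vp;X)$ via Proposition~\ref{eqn: density of C^infty_c -- improved}, and integrating by parts using $X_k^*=-D_k$, yields
\[
\|X_k u\|_{L^2(\Omega,\vp)}^2 = (u, X_k^* X_k u)_\vp + \int_{\bd\Omega} u\,\overline{X_k u}\,\nu_k\, e^{-\vp}\, d\sigma .
\]
The interior pairing is controlled as in the $M$-case via Cauchy--Schwarz with parameter $\ep$, giving $\ep\|X_k^* X_k u\|_{L^2(\Omega,\vp)}^2 + (4\ep)^{-1}\|u\|_{L^2(\Omega,\vp)}^2$. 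The requisite bound $\|X_k^* X_k u\|_{L^2(\Omega,\vp)} \leq C\|u\|_{W^{2,2}(\Omega,\vp;X)}$ is obtained by writing $X_k^* X_k u=-X_k^2 u-(D_k\vp)\,X_k u$ and dominating the unbounded coefficient $D_k\vp$ through an adaptation of the Rellich-type identity from Proposition~\ref{prop:compactness of W^1_0 into L^2} applied to $X_k u$ (which is not in $W^{1,2}_0$); hypothesis~(HIV) keeps the derivatives of $\vp$ appearing in that identity within scope, and the resulting boundary corrections are absorbed using Lemma~\ref{lem:traces exist for u in sobo spaces on M} together with~(HVI).

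The boundary integral in the display is the principal obstacle. I would bound it by Cauchy--Schwarz on $M$,
\[
\Bigl|\int_{\bd\Omega} u\,\overline{X_k u}\,\nu_k\, e^{-\vp}\, d\sigma\Bigr| \leq \|u\|_{L^2(M,\vp)}\,\|X_k u\|_{L^2(M,\vp)} ,
\]
and then invoke Lemma~\ref{lem:traces exist for u in sobo spaces on M} together with the obvious inclusion $B^{1/2;2,2}(M,\vp;T)\hookrightarrow L^2(M,\vp)$ (Definition~\ref{defn:Besov spaces on M}) to obtain $\|u\|_{L^2(M,\vp)}\leq C\|u\|_{W^{1,2}(\Omega,\vp;X)}$ and $\|X_k u\|_{L^2(M,\vp)}\leq C\|u\|_{W^{2,2}(\Omega,\vp;X)}$. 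Young's inequality then dominates the boundary term by $\delta\|u\|_{W^{2,2}(\Omega,\vp;X)}^2 + C_\delta\|u\|_{W^{1,2}(\Omega,\vp;X)}^2$, and the $\sum_k\|X_k u\|_{L^2(\Omega,\vp)}^2$ portion of $\|u\|_{W^{1,2}(\Omega,\vp;X)}^2$ is absorbed into the left-hand side for $\delta$ small enough. The inductive step from $(m-1,j-1)$ to $(m,j)$ proceeds by iterating the same integration-by-parts/trace/Young machinery; the recurring difficulty is the careful bookkeeping of the unbounded weight-derivative coefficients (controlled at each stage by (HIV)) and of the successive boundary contributions (controlled by Lemma~\ref{lem:traces exist for u in sobo spaces on M} and (HVI)).
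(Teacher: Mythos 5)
The paper proves this proposition by a one-line reduction: apply the simple $(k,2)$-extension operator from Theorem~\ref{thm:simple extension operators exist} to transfer $u$ to $W^{m,2}(\R^n,\vp;X)$, and then run the argument of Proposition~\ref{prop:interpolation of Sobolev norms (only integers) on M} on $\R^n$, which has no boundary. Your proposal instead integrates by parts on $\Omega$ directly and tries to control the resulting boundary integrals; this is a genuinely different route, and I do not believe it closes.

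The concrete obstruction is in your treatment of the boundary term $\int_{\bd\Omega} u\,\overline{X_k u}\,\nu_k e^{-\vp}\,d\sigma$. After Cauchy--Schwarz, Lemma~\ref{lem:traces exist for u in sobo spaces on M}, and Young's inequality, you arrive at an upper bound of the form $\delta\|u\|_{W^{2,2}(\Omega,\vp;X)}^2 + C\delta^{-1}\|u\|_{W^{1,2}(\Omega,\vp;X)}^2$ with a \emph{fixed} trace constant $C$. But the left-hand side of the inequality you are trying to establish is precisely $\sum_k\|X_k u\|_{L^2(\Omega,\vp)}^2$, and the coefficient $C\delta^{-1}$ multiplying $\|u\|_{W^{1,2}}^2$ exceeds $1$ exactly when $\delta < C$. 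You cannot therefore absorb the first-order part of $C\delta^{-1}\|u\|_{W^{1,2}}^2$ into the left side while also keeping $\delta$ small; choosing $\delta\geq C$ to permit absorption destroys the small-$\ep$ structure on the top-order term, so the claimed inequality with arbitrary $\ep>0$ (and the correct $\ep^{-j/(m-j)}$ scaling) is not obtained. Interpolating the two trace estimates more finely (e.g., with collar-depth parameters) does not rescue the argument, because after optimization the $\|u\|_{L^2}$ term comes out with a power worse than $\ep^{-1}$.

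A second, subsidiary gap: your bound $\|X_k^* X_k u\|_{L^2(\Omega,\vp)}\leq C\|u\|_{W^{2,2}(\Omega,\vp;X)}$ rests on applying the Rellich-type identity of Proposition~\ref{prop:compactness of W^1_0 into L^2} to $v=X_k u\notin W^{1,2}_0$. That identity is derived only for compactly supported functions; for general $v$ the same computation produces additional boundary integrals carrying the unbounded weight $|\nabla\vp|$ (e.g.\ $\int_{\bd\Omega}\tfrac{\p\vp}{\p\nu}|v|^2 e^{-\vp}\,d\sigma$). Neither Lemma~\ref{lem:traces exist for u in sobo spaces on M} (which gives an \emph{unweighted} trace bound) nor (HVI) (which compares normal and full gradients of $\vp$ but does not bound $|\nabla\vp|$) is shown to control these weighted boundary integrals. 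This is a second place where your argument asserts but does not supply the needed estimate. The extension-to-$\R^n$ route taken in the paper sidesteps both difficulties at once, which is why it is the natural proof here.
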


\begin{proof}  The proof of Proposition \ref{prop:interpolation of Sobolev norms (only integers)} is the same as the proof of Proposition \ref{prop:interpolation of Sobolev norms (only integers) on M} after using Theorem \ref{thm:simple extension operators exist} to reduce the problem to $\R^n$.
\end{proof}

We can also prove an $L^2$ analog to \cite[Theorem 5.33]{AdFo03}, the Approximation Theorem for $\R^n$.

\begin{prop}\label{prop:approximation theorem}
Let $\Omega\subset \R^n$ satisfy (HI)-(HVI) for some $m\in\N$.
If $0 <k \leq m$ then there exists a constant $C = C(m,n)$  so that for
$v\in W^{k,2}(\Omega,\vp; X)$  and $0<\ep\leq 1$, there exists $v_\ep \in C^\infty_c(\Omega)$ so that:
\[
\| v - v_\ep \|_{L^2(\Omega,\vp)} \leq C\ep^k \sum_{|\alpha|=k} \| X^\alpha v \|_{L^2(\Omega,\vp)}
\]
and
\[
\| v_\ep \|_{W^{j,2}(\Omega,\vp; X)}
\leq C \begin{cases} \| v \|_{W^{k,2}(\Omega,\vp; X)} & \text{if }j\leq k-1 \\ \ep^{k-j} \| v \|_{W^{k,2}(\Omega,\vp; X)} &\text{if }k\leq j \leq m.
\end{cases}
\]
\end{prop}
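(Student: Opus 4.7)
The strategy will be to reduce to the classical Approximation Theorem \cite[Theorem 5.33]{AdFo03} on $\R^n$ via the global extension operator of Theorem \ref{thm:simple extension operators exist}, much as Proposition \ref{prop:approximation theorem on M} was reduced to the classical result on $M$. The key technical bridge is the unweighting identity $Y_j = e^{\vp/2}\frac{\p}{\p x_j}e^{-\vp/2}$ together with the equivalence of norms \eqref{eqn:Y,X,D equiv}, which together convert weighted $W^{k,2}(\cdot,\vp;X)$ estimates into ordinary Sobolev estimates on the companion function $w = v\,e^{-\vp/2}$.

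First, I would use Theorem \ref{thm:simple extension operators exist} to extend $v \in W^{k,2}(\Omega,\vp;X)$ to $Ev \in W^{k,2}(\R^n,\vp;X)$ with $\|Ev\|_{W^{k,2}(\R^n,\vp;X)} \leq C\|v\|_{W^{k,2}(\Omega,\vp;X)}$. Setting $w = (Ev)\,e^{-\vp/2}$, the identity $\| Y^\alpha f \|_{L^2(\R^n,\vp)} = \|\p^\alpha (f e^{-\vp/2})\|_{L^2(\R^n)}$ combined with \eqref{eqn:Y,X,D equiv} yields
\[
\|w\|_{W^{k,2}(\R^n)} \leq C\,\|Ev\|_{W^{k,2}(\R^n,\vp;X)} \leq C\,\|v\|_{W^{k,2}(\Omega,\vp;X)}.
\]
Now the classical approximation theorem on $\R^n$ (mollification of $w$ against a standard radial mollifier $\phi_\ep$) produces $w_\ep \in C^\infty(\R^n)$ satisfying $\|w - w_\ep\|_{L^2(\R^n)} \leq C\ep^k \sum_{|\alpha|=k}\|\p^\alpha w\|_{L^2(\R^n)}$, together with the analogous interpolation estimates $\|w_\ep\|_{W^{j,2}(\R^n)} \leq C \|w\|_{W^{k,2}(\R^n)}$ for $j\leq k-1$ and $\|w_\ep\|_{W^{j,2}(\R^n)} \leq C \ep^{k-j}\|w\|_{W^{k,2}(\R^n)}$ for $k\leq j \leq m$.

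I would then define $v_\ep = e^{\vp/2}w_\ep$, possibly multiplied by an appropriate cutoff built from $\chi_R$ of Proposition \ref{prop:density of C^infty_c} to localize support, and read off the required bounds by reversing the unweighting computation. The $L^2$ approximation estimate is immediate:
\[
\|v - v_\ep\|_{L^2(\Omega,\vp)} = \|w - w_\ep\|_{L^2(\Omega)} \leq C\ep^k\sum_{|\alpha|=k}\|\p^\alpha w\|_{L^2(\R^n)} \leq C\ep^k \sum_{|\alpha|=k}\|X^\alpha v\|_{L^2(\Omega,\vp)},
\]
using \eqref{eqn:Y,X,D equiv} and the boundedness of $E$ at the last step. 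The higher-order bounds on $\|v_\ep\|_{W^{j,2}(\Omega,\vp;X)}$ follow identically once \eqref{eqn:Y,X,D equiv} is invoked to compare $X$-derivatives of $v_\ep$ with ordinary derivatives of $w_\ep$.

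The main obstacle is genuinely administrative rather than conceptual: to end with $v_\ep \in C^\infty_c(\Omega)$ one must arrange both the smoothing radius and the cutoff so that $v_\ep$ is supported strictly inside $\Omega$ while not spoiling the $L^2$ approximation near $\bd\Omega$. I would handle this by first truncating $v$ with $\chi_R$ (at negligible cost, again by (HII)), choosing the mollification radius smaller than the distance from $\supp v_\ep$ to $\bd\Omega$ at the cost of working on a slightly shrunken subdomain, and patching the near-boundary correction using Proposition \ref{eqn: density of C^infty_c -- improved}. The constants depend on $n,m$ only via the constants in the extension operator and in \eqref{eqn:Y,X,D equiv}, both of which are controlled by hypotheses (HI)--(HVI).
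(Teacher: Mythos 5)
Your strategy---extend $v$ to $W^{k,2}(\R^n,\vp;X)$ via Theorem \ref{thm:simple extension operators exist}, unweight through $w=(Ev)e^{-\vp/2}$, mollify on $\R^n$, and re-weight---is a genuinely different route from the paper's. The paper's proof of Proposition \ref{prop:approximation theorem} is a one-line reference to the argument for Proposition \ref{prop:approximation theorem on M}, which is local: a locally finite partition of unity $\{\chi_{U_j}\}$ subordinate to the uniform cover from (HI), the local unweighting operators $Y^b_j=\oldL_j-\tfrac12\oldL_j(\vp)$, the classical Approximation Theorem applied on each patch, and then a sum. Your global extension approach is tidier in that it invokes the classical theorem once on $\R^n$, but it requires Theorem \ref{thm:simple extension operators exist} and hypothesis (HV) to carry \eqref{eqn:Y,X,D equiv} over to $\R^n$; the paper's local argument avoids the extension operator and relies only on the boundary cover. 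Modulo the point below, either route reduces the weighted statement to the classical one, so this is a legitimate alternative.

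The genuine gap is your claim that the requirement $v_\ep\in C^\infty_c(\Omega)$ is ``genuinely administrative.'' It is not, and your proposed fix does not work. Mollifying the extension yields a function whose support crosses $\bd\Omega$, and a cutoff in a boundary layer of width $\delta$ introduces an $L^2$ error that is not $O(\ep^k)$ together with $j$-th order terms of size $\delta^{-j}$; citing Proposition \ref{eqn: density of C^infty_c -- improved} does not help since that result concerns $C^\infty_c(\R^n)$ rather than $C^\infty_c(\Omega)$ and carries no quantitative $\ep$-scaling. In fact the requirement $v_\ep\in C^\infty_c(\Omega)$ cannot be met at all as stated: taking $j=k$ in the second estimate shows $\{v_\ep\}\subset C^\infty_c(\Omega)\subset W^{k,2}_0(\Omega,\vp;X)$ is bounded in $W^{k,2}(\Omega,\vp;X)$ while $v_\ep\to v$ in $L^2(\Omega,\vp)$, so by weak compactness $v\in W^{k,2}_0(\Omega,\vp;X)$, and since $v$ was arbitrary this would give $W^{k,2}_0=W^{k,2}$, contradicting Theorem \ref{thm:Trace Theorem for integer m on M}. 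The correct space for the approximants (as in \cite[Theorem 5.33]{AdFo03}) is $C^\infty(\Omega)\cap W^{m,2}(\Omega,\vp;X)$; with that amendment and without the boundary-layer cutoff, your extension-based argument would be sound, but as written the near-boundary handling is not a valid step.
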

\begin{proof} The proof uses the same argument as the proof of Proposition \ref{prop:approximation theorem on M}.
\end{proof}
Proposition \ref{prop:approximation theorem}  means that $\Omega$ has the approximation property in the sense of \cite{AdFo03}. As a consequence of our improved approximation results, following the proof
of Proposition \ref{prop:W^k in H on M}, we can prove
\begin{prop}\label{prop:W^k in H}
If $\Omega\subset\R^n$ satisfies (HI)-(HVI), then
\[
W^{k,2}(\Omega,\vp; X) \in \H\big(k/m; L^2(\Omega,\vp),W^{m,2}(\Omega,\vp; X)\big).
\]
\end{prop}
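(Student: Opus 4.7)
The plan is to mirror exactly the strategy used in Proposition \ref{prop:W^k in H on M}, with the roles of the boundary results played by their interior analogues that have just been established. Recall that the class $\H(k/m; L^2(\Omega,\vp), W^{m,2}(\Omega,\vp;X))$ consists of intermediate spaces satisfying the two-sided interpolation inequalities that Adams--Fournier verify in \cite[Theorem 7.31]{AdFo03}: on the one hand a convexity estimate
\[
  \|u\|_{W^{k,2}(\Omega,\vp;X)} \leq C\, \|u\|_{L^2(\Omega,\vp)}^{1-k/m}\, \|u\|_{W^{m,2}(\Omega,\vp;X)}^{k/m}
\]
for $u$ in the intersection, and on the other hand an approximation-based inequality controlling $K$-functionals or $J$-functionals by the $W^{k,2}$-norm.

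First, I would obtain the convexity inequality directly from Proposition \ref{prop:interpolation of Sobolev norms (only integers)}: equation \eqref{eqn:j norm convex of m norm and 0 norm} is precisely the multiplicative interpolation estimate needed for one half of $\H(k/m;\cdot,\cdot)$. This is the same step where the boundary proof invoked the corresponding inequality \eqref{eqn:j norm convex of m norm and 0 norm, on M}, and no further input is required.

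Second, to obtain the other half — that $\|u\|_{W^{k,2}(\Omega,\vp;X)}$ bounds the $K$-functional $K(t; u)$ appropriately — I would use the Approximation Theorem for $\Omega$, Proposition \ref{prop:approximation theorem}. Given $u \in W^{k,2}(\Omega,\vp;X)$ and $\ep \in (0,1]$, the theorem produces $v_\ep \in C^\infty_c(\Omega) \subset W^{m,2}(\Omega,\vp;X)$ with
\[
  \|u - v_\ep\|_{L^2(\Omega,\vp)} \leq C\,\ep^k\, \|u\|_{W^{k,2}(\Omega,\vp;X)}, \qquad \|v_\ep\|_{W^{m,2}(\Omega,\vp;X)} \leq C\,\ep^{k-m}\,\|u\|_{W^{k,2}(\Omega,\vp;X)}.
\]
Setting $t = \ep^m$ and splitting $u = (u - v_\ep) + v_\ep$ gives $K(t;u) \leq C\, t^{k/m}\, \|u\|_{W^{k,2}(\Omega,\vp;X)}$, which is the required bound for $u$ to belong to the class of type $\H$ with exponent $k/m$. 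This is the direct analogue of how Proposition \ref{prop:approximation theorem on M} was used in the boundary argument.

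Finally, combining the two estimates and applying the Adams--Fournier framework verbatim yields membership in $\H(k/m; L^2(\Omega,\vp), W^{m,2}(\Omega,\vp;X))$. The only genuine work has been done upstream: extending the Approximation Theorem and the interpolation inequalities to unbounded $\Omega$ under (HI)--(HVI). Once those are in hand, the proof of the proposition is a routine transcription of \cite[Theorem 7.31]{AdFo03}, and I do not foresee any additional obstacle; in particular the weight $\vp$ plays no role beyond what is already absorbed into the definition of the norms and the cited propositions.
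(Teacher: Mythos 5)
Your proposal is correct and follows the same route the paper takes: the authors explicitly defer to the proof of Proposition \ref{prop:W^k in H on M}, which in turn is the argument of \cite[Theorem 7.31]{AdFo03} with Proposition \ref{prop:interpolation of Sobolev norms (only integers)} replacing the unweighted interpolation inequality and Proposition \ref{prop:approximation theorem} replacing the Approximation Theorem. You have correctly identified that the convexity estimate \eqref{eqn:j norm convex of m norm and 0 norm} gives the $J$-side bound $t^{k/m}\|u\|_{W^{k,2}}\leq C J(t;u)$ while the $K$-side bound comes from splitting $u=(u-v_\ep)+v_\ep$ with $v_\ep$ from the Approximation Theorem and taking $t=\ep^m$, which is exactly the content of the cited Adams--Fournier argument.
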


\subsection{Besov spaces on $\Omega$ and Additional Trace Results}
\label{sec:Besov_spaces_and_Additional_Trace_Results}
We are now ready to define weighted Besov spaces on $\Omega$.
\begin{defn}\label{defn:Besov spaces}
Let $0<s<\infty$, $1\leq p < \infty$, $1\leq q \leq \infty$ and $\ell$ be the smallest integer larger than $s$. We define the Besov space $B^{s;p,q}(\Omega,\vp; X)$
to be the intermediate space between $L^p(\Omega)$ and $W^{\ell,p}(\Omega,\vp; X)$ corresponding to $\theta = s/\ell$, i.e.,
\[
B^{s;p,q}(\Omega,\vp; X) = \big( L^p(\Omega,\vp),W^{\ell,p}(\Omega,\vp; X)\big)_{s/\ell,q;J}.
\]
\end{defn}
We will focus on the case $p=2$ since we only proved an $L^2$ Approximation Theorem. By Theorem \ref{thm:J method produces intermediate spaces},
$B^{s;p,q}(\Omega,\vp; X)$ is a Banach space with interpolation norm
\[
\| u \|_{B^{s;p,q}(\Omega,\vp; X)} = \big \| u ; \big(L^p(\Omega,\vp), W^{\ell,p}(\Omega,\vp; X)\big)_{s/\ell,q;J} \big\|.
\]
Also, $B^{s;p,q}(\Omega,\vp; X)$ inherits density and approximation properties from $W^{\ell,p}(\Omega,\vp; X)$. For example,
$\{ \psi\in C^\infty(\Omega): \| \psi \|_{W^{\ell,p}(\Omega,\vp; X)}<\infty\}$ is dense in $B^{s;p,q}(\Omega,\vp; X)$.

For $\Omega$ that satisfies \emph{(HI)}-\emph{(HVI)}, Proposition \ref{prop:approximation theorem} and the Reiteration Theorem (Theorem \ref{thm:reiteration theorem}),
if $0\leq k < s < \ell$ and $s=(1-\theta)k + \theta \ell$, then
\[
B^{s;2,q}(\Omega,\vp; X) = \big(W^{k,2}(\Omega,\vp; X), W^{\ell,2}(\Omega,\vp; X)\big)_{\theta,q;J}.
\]
More generally, if $0\leq k < s < \ell$,
$s = (1-\theta)s_1+\theta s_2$, and $1 \leq q_1,q_2\leq\infty$, then
\begin{equation}\label{eqn:interpolation, q's change}
B^{s;2,q}(\Omega,\vp; X) = \big( B^{s_1;2,q_1}(\Omega,\vp; X), B^{s_2;2,q_2}(\Omega,\vp; X)\big)_{\theta,q;J}.
\end{equation}

We are now in a position to prove the following Trace Lemma.
\begin{lem}\label{lem:Besov functions have L^2 traces on M}
The trace operator $\Tr$  embeds
$B^{1/2;2,1}(\Omega_\ep'\cap\Omega,\vp;X)$ into $L^2(M)$.
\end{lem}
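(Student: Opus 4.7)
The plan is to combine the discrete $J$-method description of $B^{1/2;2,1}$ with a weighted version of the classical ``$W^{1,2}\to L^2(\text{boundary})$'' trace inequality, interpolating between a trivial action at the $L^2$-endpoint and the bona fide trace at the $W^{1,2}$-endpoint.

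First I would establish the key bilinear trace estimate: for every $v\in W^{1,2}(\Omega_\ep'\cap\Omega,\vp;X)$,
\begin{equation}
\|\Tr v\|_{L^2(M,\vp)}^2 \le C\,\|v\|_{L^2(\Omega_\ep'\cap\Omega,\vp)}\,\|v\|_{W^{1,2}(\Omega_\ep'\cap\Omega,\vp;X)}. \tag{$\ast$}
\end{equation}
To prove $(\ast)$, set $w=ve^{-\vp/2}$. By the equivalence (\ref{eqn:Y,X,D equiv}) the substitution converts the weighted norms on the right of $(\ast)$ into the unweighted $L^2$ and $W^{1,2}$ norms of $w$ on the tubular neighborhood, while the left hand side is exactly $\|\Tr w\|_{L^2(M)}^2$. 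Flowing along ${\oldL}_n$ as in Section \ref{sec:traces on hypersurfaces}, for $x'\in M$ and $t\in(-\ep,0)$ the fundamental theorem of calculus gives
\[
|w(x')|^2\le |w(e^{t{\oldL}_n}x')|^2 + 2\int_t^0 \big|w\cdot {\oldL}_n w\big|(e^{s{\oldL}_n}x')\,ds;
\]
integrating in $x'\in M$, averaging over $t\in(-\ep,0)$, and applying Cauchy--Schwarz yields the classical trace inequality for $w$, hence $(\ast)$.

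Next I would apply the discrete $J$-method (Theorem \ref{thm:discrete version J method}) to decompose any $u\in B^{1/2;2,1}(\Omega_\ep'\cap\Omega,\vp;X)$ as $u=\sum_{i\in\Z}u_i$ with $u_i\in W^{1,2}(\Omega_\ep'\cap\Omega,\vp;X)$ and
\[
\sum_{i\in\Z} 2^{-i/2}\,J(2^i;u_i)\le C\,\|u\|_{B^{1/2;2,1}(\Omega_\ep'\cap\Omega,\vp;X)},
\]
where $J(t;v)=\max(\|v\|_{L^2(\Omega_\ep'\cap\Omega,\vp)},\,t\|v\|_{W^{1,2}(\Omega_\ep'\cap\Omega,\vp;X)})$. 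Since $J(t;v)^2\ge t\,\|v\|_{L^2}\|v\|_{W^{1,2}}$, applying $(\ast)$ to each $u_i$ produces
\[
\|\Tr u_i\|_{L^2(M,\vp)}\le C\big(\|u_i\|_{L^2}\|u_i\|_{W^{1,2}}\big)^{1/2}\le C\,2^{-i/2}\,J(2^i;u_i).
\]
Summing in $i$ gives $\sum_i\|\Tr u_i\|_{L^2(M,\vp)}\le C\|u\|_{B^{1/2;2,1}}$, so $\Tr u:=\sum_i \Tr u_i$ converges absolutely in $L^2(M,\vp)$, agrees with the $W^{1,2}$-trace on the dense subspace, and the embedding is established.

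The main obstacle is the bilinear estimate $(\ast)$: one must check that $v\mapsto ve^{-\vp/2}$ combined with (\ref{eqn:Y,X,D equiv}) translates the weighted $W^{1,2}$-norm uniformly to the unweighted $W^{1,2}$-norm of $w$ on the tubular neighborhood, with constants controlled by \emph{(HI)} and \emph{(HIV)} and by the local frame provided by the partition of unity of Section \ref{sec:traces on hypersurfaces}. Once $(\ast)$ holds, the interpolation step is essentially formal, as the $2^{-i/2}$ gain from the $J$-functional is exactly matched by the $\theta=\tfrac12$ summability built into the $B^{1/2;2,1}$ norm, providing $\ell^1$-convergence of the trace pieces.
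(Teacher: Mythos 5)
Your proof is correct and rests on the same two ingredients as the paper's: the discrete $J$-method decomposition of $B^{1/2;2,1}$ and a trace inequality obtained by the fundamental theorem of calculus along the normal flow $e^{s{\oldL}_n}$, combined with Cauchy--Schwarz and the substitution $w = v e^{-\vp/2}$ together with (\ref{eqn:Y,X,D equiv}). The only real difference is where the dyadic scaling comes from. You extract a single, scale-free bilinear estimate $(\ast)$ by averaging $t$ over the full interval $(-\ep,0)$, and then recover the factors $2^{\pm i/2}$ from the algebraic fact $J(2^i;u_i)^2\geq 2^i\|u_i\|_{L^2}\|u_i\|_{W^{1,2}}$. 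The paper instead performs the averaging separately for each piece $U_i$ over the dyadic interval $[2^i,2^{i+1}]$, which produces the bounds $\|a_i\|_{L^2(M)}\lesssim 2^{i/2}\|U_i\|_{W^{1,2}}$ and $\|b_i\|_{L^2(M)}\lesssim 2^{-i/2}\|U_i\|_{L^2}$ directly, so the geometric mean is never written down explicitly. The two routes are mathematically equivalent; yours isolates a reusable bilinear trace estimate (making the interpolation step essentially formal), while the paper's is more hands-on at the level of each $U_i$. One small point: when you derive $(\ast)$, the averaging over $(-\ep,0)$ produces the term $\frac{1}{\ep}\|w\|_{L^2}^2$ in addition to the genuinely bilinear term, so you should note that this is absorbed via $\|w\|_{L^2}\leq\|w\|_{W^{1,2}}$; and the paper's caveat that one may reduce to finitely many smooth $U_i$ (needed to justify the manipulations and the density of the $W^{1,2}$-trace) should be stated explicitly rather than left to the final sentence.
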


\begin{proof} Let $U$ be an element in $B =  B^{1/2;2,1}(\Omega_\ep'\cap\Omega,\vp;X)$. Without loss of generality, we may assume that $\|U\|_B \leq 1$.
By the discrete $J$-interpolation method, there exist functions $U_i$, $i\in\Z$, so that $U = \sum_{i\in\Z} U_i$ and
\[
\sum_{i\in\Z} 2^{-i/2} \|U_i\|_{L^2(\Omega_\ep'\cap\Omega,\vp)}\leq C
\qquad\text{and}\qquad
\sum_{i\in\Z} 2^{i/2} \| U_i\|_{W^{1,2}(\Omega_\ep'\cap\Omega,\vp;X)}\leq C
\]
for some constant $C$. As in the proof of Lemma \ref{lem:traces exist for u in sobo spaces on M}, we may assume that the functions $U_i$ are smooth and
at most finitely many are not identically zero. For any of these functions, we have, for $2^i\leq t \leq 2^{i+1}$ and $x\in M$,
\begin{align*}
\Big| e^{-\frac{\vp(x)}2}U_i(x)\Big|
&\leq \int_0^t \Big| \frac{d}{ds} e^{-\frac{\vp(e^{s{\oldL}_n}(x))}2}U_i\big(e^{s{\oldL}_n}(x)\big)\Big|\, ds +  \Big| e^{-\frac{\vp(e^{t{\oldL}_n}(x))}2}U_i\big(e^{t{\oldL}_n}(x)\big)\Big|  \\
&= \int_0^t \Big| {\oldL}_n\big( e^{-\frac{\vp(e^{s{\oldL}_n}(x))}2}U_i\big(e^{s{\oldL}_n}(x)\big)\big)\Big|\, ds + \Big| e^{-\frac{\vp(e^{t{\oldL}_n}(x))}2}U_i\big(e^{t{\oldL}_n}(x)\big)\Big| \\
&\leq \int_0^{2^{i+1}}\Big| {\oldL}_n\big( e^{-\frac{\vp(e^{s{\oldL}_n}(x))}2}U_i\big(e^{s{\oldL}_n}(x)\big)\big)\Big|\, ds + \Big| e^{-\frac{\vp(e^{t{\oldL}_n}(x))}2}U_i\big(e^{t{\oldL}_n}(x)\big)\Big|.
\end{align*}
Averaging $t$ over $[2^i,2^{i+1}]$, we now have the estimate
\[
\Big| e^{-\frac{\vp(x)}2}U_i(x)\Big|
\leq \int_0^{2^{i+1}} \Big| {\oldL}_n\big( e^{-\frac{\vp(e^{s{\oldL}_n}(x))}2}U_i\big(e^{s{\oldL}_n}(x)\big)\big)\Big|\, ds
+ \frac{1}{2^i} \int_{2^i}^{2^{i+1}} \Big| e^{-\frac{\vp(e^{t{\oldL}_n}(x))}2}U_i\big(e^{t{\oldL}_n}(x)\big)\Big|\, dt.
\]
By Cauchy-Schwarz,
\begin{multline*}
\big| e^{-\frac{\vp(x)}2}U_i(x)\big|
\leq 2^{(i+1)/2} \bigg(\int_0^{2^{i+1}}\big| Y_n U_i\big(e^{t{\oldL}_n}(x)\big)\big|^2 e^{-\vp(e^{t{\oldL}_n}(x))} \, dt\bigg)^{1/2} \\
+ 2^{-i/2}\bigg(\int_{2^i}^{2^{i+1}} | U_i\big(e^{t{\oldL}_n}(x)\big)|^2 e^{-\vp(e^{t{\oldL}_n}(x))}\, dt\bigg)^{1/2} := a_i(x) + b_i(x).
\end{multline*}
Observe that $\|a_i\|_{L^2(M)} \leq C 2^{i/2} \| U_i \|_{W^{1,2}(\Omega_\ep'\cap\Omega,\vp;X)}$ and
$\|b_i\|_{L^2(M)} \leq 2^{-i/2}\| U_i \|_{L^2(\Omega_\ep'\cap\Omega,\vp)}$.
Summing in $i$, we have
\[
\| U \|_{L^2(M,\vp)}
\leq \sum_{i\in\Z} \big\| e^{-\frac{\vp}2}U_i\big\|_{L^2(M)}
\leq C \Big( \sum_{i\in\Z} 2^{i/2} \| U_i\|_{W^{1,2}(\Omega_\ep'\cap\Omega,\vp;X)} + 2^{-i/2}\| U_i \|_{L^2(\Omega_\ep'\cap\Omega,\vp)}\Big) \leq C.
\]
\end{proof}

As a consequence of Theorem \ref{thm:Trace Theorem for integer m on M}
and Lemma \ref{lem:Besov functions have L^2 traces on M}, we can now prove our Trace Theorem for $L^2$ Besov spaces, Theorem \ref{thm:trace theorem on M}. Theorem \ref{thm:trace theorem on M}
is an $L^2$-analog  of \cite[Theorem 7.43]{AdFo03}.

\begin{proof}[Proof of Theorem \ref{thm:trace theorem on M}]
The proof of Theorem \ref{thm:trace theorem on M} follows from (\ref{eqn:interpolation, q's change on M}), Theorem \ref{thm:Trace Theorem for integer m on M}, Lemma
\ref{lem:Besov functions have L^2 traces on M} and the Exact Interpolation Theorem.
\end{proof}

We conclude our discussion of Sobolev space results with an extension of Proposition \ref{prop:compactness of W^1_0 into L^2} and Corollary
\ref{cor:adjoint dominated in L^2}, namely the proof of Proposition \ref{prop:weighted and unweighted derivs have equiv norm}.
\begin{proof}[Proof of Proposition \ref{prop:weighted and unweighted derivs have equiv norm}]
We will first show that $\| v\|_{W^{1,2}(\Omega,\vp;D)} \leq C \| v \|_{W^{1,2}(\Omega,\vp;X)}$ for some $C$ independent of $v$.
By Theorem \ref{thm:Trace Theorem for integer m on M}, $\Tr v \in  B^{1/2;2,2}(M,\vp;T)$, and there exists $v'\in W^{1,2}(\Omega_\ep',\vp;X)$ with
support in $\Omega_\ep'$ so that
\[
\Tr v' = \Tr v
\]
and
\[
\| v' \|_{W^{1,2}(\Omega,\vp;X)} \sim \| \Tr v'\|_{B^{1/2;2,2}(M,\vp,T)} \leq C \| v\|_{W^{1,2}(\Omega,\vp;X)} .
\]
Writing $v = (v-v')+v'$, we see that $v-v'\in W^{1,2}_0(\Omega,\vp;X)$. Since $W^{1,2}_0(\Omega,\vp;X)=W^{1,2}_0(\Omega,\vp;D)$ by  Corollary \ref{cor:adjoint dominated in L^2}
and Corollary \ref{cor:adjoint dominated in L^2 (unweighted)}, we estimate
\[
\| v- v'\|_{W^{1,2}(\Omega,\vp;D)} \leq C \| v- v'\|_{W^{1,2}(\Omega,\vp;X)} \leq C\| v\|_{W^{1,2}(\Omega,\vp;X)}.
\]
Thus, we need only to prove the result for $v'$. However, since $v'$ has compact support in $\Omega_\ep'$, we can  use  Corollary \ref{cor:adjoint dominated in L^2} to bound
\[
\|v'\|_{W^{1,2}(\Omega,\vp;D)} \leq \|v'\|_{W^{1,2}(\Omega_\ep',\vp;D)} \leq \|v'\|_{W^{1,2}(\Omega_\ep',\vp;X)}
\leq \| \Tr v'\|_{B^{1/2;2,2}(M,\vp,T)} \leq C \| v\|_{W^{1,2}(\Omega,\vp;X)},
\]
and the result is proved for $k=1$.

To show that $W^{k,2}(\Omega,\vp;X) = W^{k,2}(\Omega,\vp;D)$ for $k\geq 2$, we induct. $k=1$ is the base case. If we assume the norms are equivalent up to order $k-1$, then
let $|\alpha|=k$ and $|\beta|=k-1$ so that $X^\alpha = X^\beta X_j$ for some $j$. Then
\begin{align*}
\| D^\alpha v\|_{L^2(\Omega,\vp)} &= \|D^\beta D_j v\|_{L^2(\Omega,\vp)} \leq C \sum_{|\gamma|=k-1} \| X^\gamma D_j v\|_{L^2(\Omega,\vp)} \\
&\leq C \sum_{|\gamma|=k-1}\Big( \|X^\gamma X_j v\|_{L^2(\Omega,\vp)} + \Big\|X^\gamma \frac{\p \vp}{\p x_j} v\Big\|_{L^2(\Omega,\vp)} \Big).
\end{align*}
The first term is bounded by $\|v\|_{W^{k,2}(\Omega,\vp;X)}$. The second term can be estimated as follows:
\begin{align*}
\Big\|X^\gamma \frac{\p \vp}{\p x_j} v\Big\|_{L^2(\Omega,\vp)}
&\leq \sum_{\gamma_1+\gamma_2=\gamma} C\Big\|\Big(D^{\gamma_1} \frac{\p \vp}{\p x_j}\Big) X^{\gamma_2} v \Big\|_{L^2(\Omega,\vp)}
\end{align*}
By \emph{(HIV)}, the derivatives of $\vp$ are controlled by $|\nabla \vp|$ which in turn is controlled by $X_\ell+X_\ell^*$. Thus, the second term is also controlled by
$\|v\|_{W^{k,2}(\Omega,\vp;X)}$.

The proof that $W^{1,2}_0(\Omega,\vp;X)$ embeds compactly in $L^2(\Omega,\vp;X)$ is
contained in Proposition \ref{prop:compactness of W^1_0 into L^2} and its corollaries. We will show that this implies that $W^{k,2}_0(\Omega,\vp;X)$ embeds compactly in
$W^{k-1,2}_0(\Omega,\vp;X)$ by induction.

Assume that for some $2\leq k\leq m-1$, the result holds for $1\leq j \leq k-1$. Let $\{\psi_\ell\}$ be a bounded sequence of functions in $W^{k,2}_0(\Omega,\vp;X)$. Then $\{\psi_\ell\}$ is a bounded sequence of
functions in $W^{k-1,2}_0(\Omega,\vp;X)$ and hence there exists a subsequence (renamed to be $\psi_\ell$) converging in $W^{k-2,2}_0(\Omega,\vp;X)$. Moreover,
$\{\nabla_X \psi_\ell\}$ is a sequence of bounded vectors whose components are in $W^{k-1,2}_0(\Omega,\vp;X)$, so there exists a further subsequence, renamed $\psi_\ell$,
so that $\{\nabla_X \psi_\ell\}$ is Cauchy in $W^{k-2,2}_0(\Omega,\vp;X)$. It now follows that $\{\psi_\ell\}$ is Cauchy in $W^{k-1,2}_0(\Omega,\vp;X)$.

Next, let $\{f_\ell\}$ be a bounded sequence of functions in $W^{k,2}(\Omega,\vp;X)$.
Using the simple $(k,2)$-extension operator $E$ from Theorem \ref{thm:simple extension operators exist}, we extend $f_\ell$ to $Ef_\ell\in W^{k,2}_0(\R^n,\vp;X)$. By the previous
paragraph with $\R^n$ playing the role of $\Omega$, there exists a subsequence $Ef_{\ell_j}$ that converges in $W^{k-1,2}_0(\R^n,\vp;X)$. Since $Ef_{\ell_j}\big|_\Omega = f_{\ell_j}$, it follows
that $f_{\ell_j}$ converges in $W^{k-1,2}(\Omega,\vp;X)$.
\end{proof}

%
%
\section{Interior estimates -- the proof of Theorem \ref{thm: elliptic regularity, higher order, interior}}\label{sec:interior est}

\subsection{The $\ell=0$ case}
\begin{proof}[Proof of Theorem \ref{thm: elliptic regularity, higher order, interior} for $\ell=0$]  We follow the outline of \cite[\S6.3, Theorem 1]{Eva10}. Choose $W\subset\Omega$ so that $V\subset W$,
$\dist(V,\bd W) >0$, and $\dist(W,\bd\Omega)>0$. We first assume that $V$ and $W$ are bounded.
Let $\zeta\in C^\infty(\Omega)$ be a smooth cutoff so that $\zeta\big|_V =1$ and $\supp\zeta\subset W$.
Since $L$ is elliptic, by the classical theory $u\in W^{2,2}_{\loc}(\Omega,\vp; X)$.

Since $u$ is a weak solution of $Lu=f$, we have ${\oldD}(v,u) = (v,f)_\vp$ for all $v\in W^{1,2}_0(\Omega,\vp; X)$. Thus
\begin{equation}\label{eqn:weak solution, second order part isolated}
\sum_{j,k=1}^n \int_{\Omega} X_j v \overline{a_{jk}} \overline{X_k u} e^{-\vp}\, dx
= \int_{\Omega} v \overline{g} e^{-\vp}\, dx
\end{equation}
where
\begin{equation}\label{eqn:g defn}
g = f - \sum_{j=1}^n\big( b_j X_j u + b_j' X_j^*u + (X_j^* b_j')u\big) - bu.
\end{equation}
We would like to use \eqref{eqn:weak solution, second order part isolated} substituting $v = X_\ell^*(\zeta^2 X_\ell u)$. This is problematic as
$u\in W^{2,2}(W,\vp;X)$ and not thrice-differentiable. Instead, we let $u_\ep\in C^\infty_c(\Omega)$ be so that $u_\ep\to u$ in $W^{2,2}(W,\vp;X)$. We set
$v_\ep = X_\ell^*(\zeta^2 X_\ell u_\ep)$.
In this case, the left-hand side of
(\ref{eqn:weak solution, second order part isolated}) becomes
\[
A_\ep = \sum_{j,k=1}^n \big( X_j(X_\ell^* \zeta^2 X_\ell u_\ep), a_{jk} X_k u\big)_\vp,
\]
and the right-hand side becomes
\[
B_\ep  = \int_{\Omega} v_\ep \bar g e^{-\vp}\, dx
= \Big( X_\ell^* \zeta^2 X_\ell u_\ep, f - \sum_{j=1}^n\big( b_j X_j u + b_j' X_j^* u - \frac{\p b_j'}{\p x_j} u\big) - bu \Big)_\vp.
\]
Equation \eqref{eqn:weak solution, second order part isolated} now says that $A_\ep = B_\ep$.

Observe that
\begin{align}
A_\ep &= \sum_{j,k=1}^n \big( X_\ell^* X_j\zeta^2 X_\ell u_\ep, a_{jk} X_k u\big)_\vp +\sum_{j,k=1}^n ([X_j,X_\ell^*]\zeta^2 X_\ell u, a_{jk} X_k u\big)_\vp\nn\\
&= \sum_{j,k=1}^n \big(  X_j\zeta^2 X_\ell u_\ep, a_{jk} X_\ell X_k u\big)_\vp +\sum_{j,k=1}^n \Big[ ([X_j,X_\ell^*]\zeta^2 X_\ell u, a_{jk} X_k u\big)_\vp\\
&\qquad+ \Big(  X_j\zeta^2 X_\ell u_\ep, \frac{\p a_{jk}}{\p x_\ell} X_k u\Big)_\vp\Big]. \label{eqn: A_ep but can take limits}
\end{align}
Since no more than two derivatives  of $u_\ep$ are taken in $A_\ep$, we can let $\ep\to0$ and observe that $A=B$ where
\[
A = \sum_{j,k=1}^n \big(  X_j\zeta^2 X_\ell u, a_{jk} X_\ell X_k u\big)_\vp +\sum_{j,k=1}^n \Big[ ([X_j,X_\ell^*]\zeta^2 X_\ell u, a_{jk} X_k u\big)_\vp
+ \Big(  X_j\zeta^2 X_\ell u, \frac{\p a_{jk}}{\p x_\ell} X_k u\Big)_\vp\Big] 
\]
and
\begin{equation}\label{eqn:B, post limit}
B = \Big( X_\ell^* \zeta^2 X_\ell u, f - \sum_{j=1}^n\big( b_j X_j u + b_j' X_j^* u - \frac{\p b_j'}{\p x_j} u\big) - bu \Big)_\vp.
\end{equation}
We continue our investigation of $A$.
Observe that
\[
[X_j,X_\ell^*] = - \frac{\p\vp}{\p x_j\p x_\ell}
\quad\text{and}\quad
[X_\ell,  X_k]=\big([X_\ell,X_k]^*\big)^* = [X_k^*,X_\ell^*]^*=0.
\]
We have
\begin{align}
A &= \sum_{j,k=1}^n \big(  X_j\zeta^2 X_\ell u, a_{jk} X_k X_\ell  u\big)_\vp +\sum_{j,k=1}^n \Big[ ([X_j,X_\ell^*]\zeta^2 X_\ell u, a_{jk} X_k u\big)_\vp \nn\\
&+ \Big(  X_j\zeta^2 X_\ell u, \frac{\p a_{jk}}{\p x_\ell} X_k u\Big)_\vp + \big(  X_j\zeta^2 X_\ell u, a_{jk} \underbrace{[X_\ell, X_k]}_{=0}  u\big)_\vp \Big] \nn\\
&= \sum_{j,k=1}^n \big( \zeta^2 X_j X_\ell u, a_{jk} X_k X_\ell  u\big)_\vp +\sum_{j,k=1}^n \Big[ ([X_j,X_\ell^*]\zeta^2 X_\ell u, a_{jk} X_k u\big)_\vp \nn\\
&+ \Big(  X_j\zeta^2 X_\ell u, \frac{\p a_{jk}}{\p x_\ell} X_k u\Big)_\vp 
+ \big(  2\zeta \frac{\p\zeta}{\p x_j} X_\ell u, a_{jk} X_k X_\ell  u\big)_\vp \Big]. \label{eqn:A, post IBP}
\end{align}
The strong ellipticity condition implies that
\begin{equation}\label{eqn:A strong ellipticity}
 \sum_{j,k=1}^n \big( \zeta^2 X_j X_\ell u, a_{jk} X_k X_\ell  u\big)_\vp \geq \theta \| \zeta\nabla_X X_\ell u \|_{L^2(W,\vp)}.
\end{equation}
The remaining terms we bound as follows:
\[
\Big| \Big(  X_j\zeta^2 X_\ell u, \frac{\p a_{jk}}{\p x_\ell} X_k u\Big)_\vp + \big(  2\zeta \frac{\p\zeta}{\p x_j} X_\ell u, a_{jk} X_k X_\ell  u\big)_\vp\Big|\\
\leq  C_1 \| \zeta \nabla_X X_\ell u \|_{L^2(W,\vp)} \| \nabla_X u \|_{L^2(W,\vp)},
\]
where $C_1$ depends on $\|a_{jk}\|_{C^1(\Omega)}$ and $\|\zeta\|_{C^1(\Omega)}$. In particular, $C_1$ does not depend on $|\supp\zeta|$.
Next, using \emph{(HIV)}, Corollary \ref{cor:adjoint dominated in L^2} and the fact that
$\frac{\p\vp}{\p x_j} = -X_j^* - X_j$, we have
\begin{align*}
\Big| ([X_j,X_\ell^*]\zeta^2 X_\ell u, a_{jk} X_k u\big)_\vp \Big|
&\leq C_2 \big( (1+|\nabla\vp|) \zeta^2 |X_\ell u|, |a_{jk}| |X_k u|\big)_\vp \\
&\leq C_2' \| \zeta\nabla_X u \|_{L^2(W,\vp)}\big(\| \zeta\nabla_X u \|_{L^2(W,\vp)} + \| \zeta\nabla_X X_\ell u \|_{L^2(W,\vp)}\big),
\end{align*}
where $C_2'$ depends on $C_2$ and $\| a_{jk}\|_{L^\infty(\Omega)}$.
Thus, using (\ref{eqn:A strong ellipticity}) and the bounds on the error terms, we can bound (with $C = n^2(C_1+C_2')$)
\begin{align}
|A| &\geq \theta \| \zeta\nabla_X X_\ell u \|_{L^2(W,\vp)}^2
- C(\| \zeta\nabla_X X_\ell u \|_{L^2(W,\vp)}\| \zeta\nabla_X u \|_{L^2(W,\vp)} +  \|\nabla_X u \|_{L^2(W,\vp)}^2 \big) \nn\\
&\geq  \frac\theta2 \| \zeta\nabla_X X_\ell u \|_{L^2(W,\vp)}^2
- C_3 \|\nabla_X u \|_{L^2(W,\vp)}^2,  \label{eqn:good est for A}
\end{align}
where $C_3 = C_3(\|a_{jk}\|_{C^1(\Omega)},\|\zeta\|_{C^1(\Omega)},n,\theta)$.
We can bound $B$ with Cauchy-Schwarz and the small constant/large constant inequality. In particular, we can use Corollary \ref{cor:adjoint dominated in L^2}
to show that for some constant
$C_4>0$ where $C_4 = C_4(\|b_j\|_{L^\infty(\Omega)}, \|b_j'\|_{C^1(\Omega)}, \|b\|_{L^\infty(\Omega)},\|\zeta\|_{C^1(\Omega)},n)$, we have the estimate
\begin{align}
|B| &\leq  C_4 \|\zeta\nabla_X X_\ell u\|_{L^2(W,\vp)}\Big[ \|f\|_{L^2(W,\vp)} + \| u\|_{W^{1,2}(W,\vp;X)} \Big] \nn\\
&\leq \frac{\theta}4 \|\zeta\nabla_X X_\ell u\|_{L^2(W,\vp)}^2 + C_5\big(  \|f\|_{L^2(W,\vp)}^2 + \| u\|_{W^{1,2}(W,\vp;X)}^2\big),
 \label{eqn:B estimate}
\end{align}
where $C_5 = C_5(\|b_j\|_{L^\infty(\Omega)}, \|b_j'\|_{C^1(\Omega)}, \|b\|_{L^\infty(\Omega)},n,\theta)$.

Combining (\ref{eqn:good est for A}) and (\ref{eqn:B estimate}), we have shown that
\begin{equation}\label{eqn:W^2 est in terms of W^1 and L^2}
\| u \|_{W^{2,2}(V,\vp;X)}^2 \leq C_6\big(\| f\|^2_{L^2(W,\vp)} + \| u \|_{W^{1,2}(W,\vp;X)}^2 \big),
\end{equation}
where $C_6 = C_6(\|a_{jk}\|_{C^1(\Omega)},\|\zeta\|_{C^1(\Omega)},\|b_j\|_{L^\infty(\Omega)}, \|b_j'\|_{C^1(\Omega)}, \|b\|_{L^\infty(\Omega)},n,\theta)$.
We can improve the estimate \eqref{eqn:W^2 est in terms of W^1 and L^2} and replace $\| u \|_{W^{1,2}(W,\vp;X)}^2$ with $\| u \|_{L^2(W,\vp)}^2$.
Let $\eta\in C^\infty_c(\Omega)$ be a cutoff so that $\eta|_W =1$. Using (\ref{eqn:weak solution, second order part isolated}) and strong ellipticity, we estimate
\begin{align*}
\| \eta \nabla_X u \|_{L^2(\Omega,\vp)}^2
&\leq \frac 1{\theta} \sum_{j,k=1}^n \big(\eta^2 X_j u,a_{jk} X_k u\big)_\vp = \frac{1}{\theta} \big| \big(\eta^2 u, g\big)_\vp\big| \\
&\leq C_7 \|u\|_{L^2(\Omega,\vp)}\big( \|f\|_{L^2(\Omega,\vp)} + \|\eta\nabla_X u\|_{L^2(\Omega,\vp)} + \|u\|_{L^2(\Omega,\vp)}\big),
\end{align*}
where $C_7 = C_7(\|b_j\|_{L^\infty(\Omega)}, \|b_j'\|_{C^1(\Omega)}, \|b\|_{L^\infty(\Omega)},n,\theta)$.
Using a small constant/large constant argument, we have
\[
\| \nabla_X u \|_{L^2(W,\vp)} \leq \| \eta\nabla_X u\|_{L^2(\Omega,\vp)} \leq C_8\big(\|f\|_{L^2(\Omega,\vp)} + \|u\|_{L^2(\Omega,\vp)}\big),
\]
where $C_8 = C_8(\|b_j\|_{L^\infty(\Omega)}, \|b_j'\|_{C^1(\Omega)}, \|b\|_{L^\infty(\Omega)},n,\theta)$. Thus, we can refine (\ref{eqn:W^2 est in terms of W^1 and L^2}) by
\[
\| u \|_{W^{2,2}(V,\vp;X)}^2 \leq C\big(\| f\|_{L^2(\Omega,\vp)} + \| u \|_{L^2(\Omega,\vp)}^2 \big),
\]
where
$C = C(\dist(V,\bd\Omega),\|a_{jk}\|_{C^1(\Omega)},\|b_j\|_{L^\infty(\Omega)}, \|b_j'\|_{C^1(\Omega)}, \|b\|_{L^\infty(\Omega)},n,\theta)$. $\zeta$ and $\eta$ have disappeared from
$C$ as $\|\zeta\|_{C^1(\Omega)}$ depends only on $\dist(V,\bd\Omega)$. Thus, we can relax the boundedness condition on $V$ and let $V$ be as in the statement of the theorem.
\end{proof}

\subsection{$\ell\geq 1$ case}
\begin{proof}[Proof of Theorem \ref{thm: elliptic regularity, higher order, interior}, $\ell\geq 1$]
As with the $\ell=0$ case, that $u\in W^{\ell+2,2}_{\loc}(\Omega,\vp; X)$ follows from the classical theory.
We will establish \eqref{eqn:u in W^m+2 is bounded by f, u in W^m -- interior} by induction. The $\ell=0$ case has already been established.

Let $V\subset W\subset\Omega$ so that  $\dist(V,\bd W)>0$ and $\dist(W,\bd\Omega)>0$.

Assume  that \eqref{eqn:u in W^m+2 is bounded by f, u in W^m -- interior} holds for a nonnegative integer $\ell$,
for $a_{jk},  b_j'\in C^{\ell+2}(\Omega)\cap W^{\ell+2,\infty}(\Omega)$, for $b_j,b \in C^{\ell+1}(\Omega)\cap W^{\ell+1,\infty}(\Omega)$, and for $f\in W^{\ell+1,2}(\Omega,\vp; X)$.  Assume further 
that $u\in W^{1,2}(\Omega,\vp; X)$ is a weak solution of $Lu=f$ in $\Omega$. By the induction hypothesis, we have the estimate
\[
\| u \|_{W^{\ell+2}(W,\vp;X)} \leq C\big( \|f\|_{W^\ell(\Omega,\vp; X)} + \|u\|_{L^2(\Omega,\vp)}\big),
\]
where $C = C(\dist(W,\bd\Omega),\|a_{jk}\|_{C^{\ell+1}(\Omega)},\|b_j\|_{C^{\ell}(\Omega)}, \|b_j'\|_{C^{\ell+1}(\Omega)}, \|b\|_{C^\ell(\Omega)},n,\theta,\ell)$.
Let $\alpha$ be a multiindex of length $|\alpha|=\ell+1$. Let $\tilde v\in C^\infty_c(W)$ and set
\[
v = (X^\alpha)^*\tilde v
\quad\text{and}\quad
\tilde u = X^\alpha u.
\]
Since ${\oldD}(v,u)=(v,f)_\vp$, we plug in $v = (X^\alpha)^*\tilde v$ and compute
\begin{align*}
{\oldD}(v,u) &= \sum_{j,k=1}^n\big(X_j (X^\alpha)^*\tilde v, a_{jk} X_k u\big)_\vp + \sum_{j=1}^n\Big[ \big((X^\alpha)^*\tilde v,b_j X_j u\big)_\vp
+ \big(X_j (X^\alpha)^*\tilde v, b_j' u\big)_\vp\Big]\nn\\
& + \big((X^\alpha)^*\tilde v,bu\big)_\vp.
\end{align*}
Integrating by parts gives us
\begin{align*}
&{\oldD}(v,u) = \sum_{j,k=1}^n \big(X_j \tilde v, a_{jk} X_k X^\alpha u\big)_\vp + \sum_{j=1}^n\Big( \big( \tilde v,b_j X_j X^\alpha u\big)_\vp
+ \big( \tilde v, X_j^*( b_j' X^\alpha) u\big)_\vp\Big) + \big(\tilde v,bX^\alpha u\big)_\vp \nn\\
&+ \sum_{j,k=1}^n\bigg(\big(\tilde v, [X_j, (X^\alpha)^*]^* a_{jk} X_k u\big)_\vp
+ \sum_{\atopp{\beta\subset\alpha}{\beta\neq\alpha}} \big( \tilde v, (D^{\alpha-\beta}a_{jk})\, X_kX^\beta u \big)_\vp\bigg)\nn\\
&+ \sum_{j=1}^n \big(\tilde v, [X_j, (X^\alpha)^*]^* b_j' u\big)_\vp \nn\\
&+ \sum_{\atopp{\beta\subset\alpha}{\beta\neq\alpha}}\Bigg( \big(\tilde v, (D^{\alpha-\beta} b)\, X^\beta u \big)_\vp
+ \sum_{j=1}^n\bigg( \big(\tilde v, (D^{\alpha-\beta}b_j)\, X^\beta X_j u \big)_\vp + \big( \tilde v, X_j^*\big( (D^{\alpha-\beta} b_j')\, X^\beta u\big)\big)_\vp\bigg)\Bigg),
\end{align*}
so
\begin{equation}
{\oldD}(v,u) = {\oldD}(\tilde v,\tilde u) + \big(\tilde v, g\big)_\vp, \label{eqn:integration by parts to control higher order term -- interior ellipticity}
\end{equation}
where
\begin{align*}
g &= \sum_{j,k=1}^n\bigg(  [X_j, (X^\alpha)^*]^* a_{jk} X_k u + \sum_{\atopp{\beta\subset\alpha}{\beta\neq\alpha}} (D^{\alpha-\beta}a_{jk})\, X_kX^\beta u\bigg)
+ \sum_{j=1}^n [X_j, (X^\alpha)^*]^* b_j' u \\
&+ \sum_{\atopp{\beta\subset\alpha}{\beta\neq\alpha}}\Bigg( (D^{\alpha-\beta} b)\, X^\beta u
+ \sum_{j=1}^n\bigg(  (D^{\alpha-\beta}b_j)\, X^\beta X_j u +  X_j^*\big( (D^{\alpha-\beta} b_j')\, X^\beta u\big) \bigg)\Bigg).
\end{align*}

To compute $[X_j, (X^\alpha)^*]^*$, observe that
\begin{align*}
X_j (X^\alpha)^* &= X_j X_{\alpha_1}^*\cdots X_{\alpha_{\ell+1}}^*
= X_{\alpha_1}^* X_j X_{\alpha_2}^*\cdots X_{\alpha_{\ell+1}}^* + [X_j, X_{\alpha_1}^*]X_{\alpha_2}^*\cdots X_{\alpha_{\ell+1}}^* \\
&= \cdots = (X^\alpha)^*X_j + [X_j, X_{\alpha_1}^*] X_{\alpha_2}^*\cdots X_{\alpha_{\ell+1}}^* + \cdots + X_{\alpha_1}^*\cdots X_{\alpha_\ell}^*[X_j,X_{\alpha_{\ell+1}}^*] \\
&= (X^\alpha)^*X_j - \frac{\p\vp}{\p x_j \p x_{\alpha_1}} X_{\alpha_2}^*\cdots X_{\alpha_{\ell+1}}^* - \cdots - X_{\alpha_1}^*\cdots X_{\alpha_\ell}^*\frac{\p\vp}{\p x_j \p x_{\alpha_{\ell+1}}} \\
&= (X^\alpha)^*X_j  + \sum_{\atopp{\gamma\subset\alpha}{\gamma\neq\alpha}} c_{\alpha \gamma} (X^\gamma)^* D^{\alpha-\gamma} \frac{\p\vp}{\p x_j}
\end{align*}
for some constant $c_{\alpha\gamma}$
Thus,
\[
 [X_j, (X^\alpha)^*]^* = \sum_{\atopp{\gamma\subset\alpha}{\gamma\neq\alpha}} c_{\alpha \gamma} \Big( D^{\alpha-\gamma} \frac{\p\vp}{\p x_j} \Big)X^\gamma
\]
and we can rewrite
\begin{align*}
g &= \sum_{\atopp{\gamma\subset\alpha}{\gamma\neq\alpha}}c_{\alpha\gamma}  \bigg[
\sum_{j,k=1}^n   \Big( D^{\alpha-\gamma} \frac{\p\vp}{\p x_j} \Big) X^\gamma \big(a_{jk} X_k u\big)  + \sum_{j=1}^n  \Big( D^{\alpha-\gamma} \frac{\p\vp}{\p x_j} \Big)X^\gamma\big( b_j' u\big)\bigg] \\
& + \sum_{\atopp{\beta\subset\alpha}{\beta\neq\alpha}}\Bigg[ D^{\alpha-\beta} b\, X^\beta u +\sum_{j,k=1}^n D^{\alpha-\beta}a_{jk} X_kX^\beta u
+ \sum_{j=1}^n\bigg[  D^{\alpha-\beta}b_j\, X^\beta X_j u +  X_j^*\big( D^{\alpha-\beta} b_j'\, X^\beta u\big) \bigg]\Bigg] \\
&= \sum_{\atopp{\gamma\subset\alpha}{\gamma\neq\alpha}} \sum_{J\subset \gamma}c_{\alpha\gamma} \bigg[
\sum_{j,k=1}^n  \Big( D^{\alpha-\gamma} \frac{\p\vp}{\p x_j} \Big)D^{\gamma-J}a_{jk}\, X^JX_k u
+ \sum_{j=1}^n  \Big( D^{\alpha-\gamma} \frac{\p\vp}{\p x_j} \Big) D^{\gamma-J} b_j'\, X^J u\bigg] \\
& + \sum_{\atopp{\beta\subset\alpha}{\beta\neq\alpha}}\Bigg[ D^{\alpha-\beta} b\, X^\beta u +\sum_{j,k=1}^n D^{\alpha-\beta}a_{jk} X_kX^\beta u
+ \sum_{j=1}^n\bigg[  D^{\alpha-\beta}b_j\, X^\beta X_j u +  X_j^*\big( D^{\alpha-\beta} b_j'\, X^\beta u\big) \bigg]\Bigg] .
\end{align*}

Thus, if $\tilde f = X^\alpha f -  g$, then we can express ${\oldD}(v,u)=(v,f)_\vp = (\tilde v, X^\alpha f)_\vp$
as ${\oldD}(\tilde v,\tilde u) =(\tilde v,\tilde f)_\vp$. Since $\tilde v\in C^\infty_c(\Omega)$ is arbitrary, $\tilde u$ is a weak solution
for $L\tilde u = \tilde f$. Since
\[
\| \tilde f\|_{L^2(W,\vp)} \leq C'\big( \| f \|_{W^{\ell+1,2}(\Omega,\vp; X)} + \|g\|_{L^2(\Omega,\vp)}\big)
\]
it follows from the induction hypothesis that $\tilde f \in L^2(W)$ with
\[
\| \tilde f\|_{L^2(W,\vp)} \leq C\big( \| f \|_{W^{\ell+1,2}(\Omega,\vp; X)} + \|u\|_{L^2(\Omega,\vp)}\big)
\]

As a consequence of the $\ell=0$ case of Theorem \ref{thm: elliptic regularity, higher order, interior},
\[
\| \tilde u \|_{W^{2,2}(V,\vp;X)} \leq C( \| \tilde f\|_{L^2(W,\vp)} + \|\tilde u\|_{L^2(W,\vp)})
\leq C ( \| f \|_{W^{\ell+1,2}(\Omega,\vp; X)} + \| u\|_{L^2(\Omega,\vp)}).
\]
Since this inequality holds for any $\alpha$ of length $(\ell+1)$, it follows that $u\in W^{\ell+3,2}(V,\vp;X)$ and
\[
\| u \|_{W^{\ell+3}(W,\vp;X)} \leq C\big( \|f\|_{W^{\ell+1}(\Omega,\vp; X)} + \|u\|_{L^2(\Omega,\vp)}\big).
\]
\end{proof}

%
%
\section{Elliptic regularity at the boundary}\label{sec:elliptic regularity -- boundary}
The standard technique to prove elliptic regularity at the boundary is to work locally, rotate, and flatten the domain. Working on a weighted $L^2$ space complicates the matter and we instead
work with tangential and normal derivatives.

\subsection{Tangential Operators}

Recall that a first order differential operator $\Ta$ is a tangential operator if the first order component of $\Ta$ annihilates $\rho$.
By the hypotheses on $\Omega$, there exists $\ep>0$ so that on $\Omega_\ep'$, 
there exist first order differential operators $\Ta_1,\dots, \Ta_n$ so that
\[
\Ta_j = \sum_{\ell=1}^n {\oldb}_{j\ell} X_\ell,
\]
where $({\oldb}_{j\ell})$ is an orthogonal matrix, the components ${\oldb}_{j\ell}$
are bounded in $C^{m-1}(\Omega_\ep')$,
$\Ta_j$ is tangential for $1\leq j \leq n-1$, the first order part of $\Ta_n$ is the unit outward normal to the level curve of $\rho$ and
\begin{equation}\label{eqn:T_j's span}
\sum_{j=1}^n |\Ta_j f|^2 = |\nabla_X f|^2.
\end{equation}
From (\ref{eqn:T_j's span}), it is clear that if $k\leq m$, then
\begin{equation}\label{eqn:m norm of u equiv to tangential norms, normal norms, interior norms}
\| u \|_{W^{k,2}(\Omega,\vp; X)} \sim \sum_{|\alpha|\leq k}\|\Ta^\alpha u\|_{L^2(\Omega_\ep,\vp)} + \| u \|_{W^{k,2}(\Omega\setminus\overline{\Omega_\ep},\vp;X)}.
\end{equation}

By assumption $|d\rho|=1$ on $\bd\Omega$ so that if $\nu_\ep$ is the unit (outward) normal to $\{x\in \C^n : \rho(x)=-\ep\}$, then  $\nu_0^i = \frac{\p\rho(x)}{\p x_i}$. As an immediate
consequence of the Divergence Theorem,
\begin{equation}\label{eqn:divergence thm and rho}
\int_{\Omega} \frac{\p f}{\p x_j}\, dx = \int_{\bd\Omega} f \frac{\p\rho}{\p x_j}\, d\sigma
\end{equation}
where $d\sigma$ is the surface area measure on $\bd\Omega$.

\subsection{Proof of Theorem \ref{thm:sol'n regularity near boundary, higher order}, $\ell=0$ case}
\label{sec:proof_sol'n regularity near boundary, higher order_0}
We are now ready to prove the regularity of solutions of $Lu=f$ near $\bd\Omega$.
\begin{proof}[Proof of Theorem \ref{thm:sol'n regularity near boundary, higher order}, $\ell=0$ case]
Given Theorem \ref{thm: elliptic regularity, higher order, interior}, it is enough to show that $u\in W^{2,2}(\Omega_\ep,\vp;X)$. Let $V,W\subset\Omega_\ep'$
be smooth, bounded domains
so that $V\subset W$ and $\dist(V,\bd W) >0$. Let $\zeta\in C^\infty(\R^n)$ be a smooth cutoff so that $\zeta|_V =1$ and $\supp\zeta\subset W$.
Since $L$ is elliptic and $W$ is bounded, the classical theory yields $u\in W^{2,2}(\Omega_\ep \cap W,\vp;X)$.
By \emph{(HI)}, it is enough to work locally, i.e., we can assume that $\supp u$ is small enough that $T_1,\dots, T_n$ are well-defined on $\supp u$.

The function $u$ is a weak solution of $Lu=f$, so we have ${\oldD}(v,u) = (v,f)_\vp$ for all $v\in \X$. Thus $u$ satisfies the free boundary condition for $\X$ and equations
(\ref{eqn:weak solution, second order part isolated})  and \eqref{eqn:g defn} hold.
As in the proof of Theorem \ref{thm: elliptic regularity, higher order, interior},
we would like to use \eqref{eqn:weak solution, second order part isolated} substituting $v = X_{k}^*(\zeta^2 X_{k} u)$. This is problematic as
$u\in W^{2,2}(W,\vp;X)$ and not thrice-differentiable. Instead, we use Proposition \ref{prop:density of C^infty_c} which constructs
$u_\ep\in C^\infty_c(\R^n)$ so that $u_\ep\to u$ in $W^{2,2}(W\cap\Omega_\ep,\vp;X)$. Let $1\leq {k}\leq n-1$. Then set
$v_\ep = \Ta_{k}^*(\zeta^2 \Ta_{k} u_\ep)$.
In this case, the left-hand side of
(\ref{eqn:weak solution, second order part isolated}) becomes
\[
A_\ep := \sum_{j,{j'}=1}^n \big( X_j\Ta_{k}^*( \zeta^2 \Ta_{k} u_\ep), a_{j{j'}} X_{j'} u_\ep\big)_\vp,
\]
and the right-hand side becomes
\[
B_\ep  := \int_{\Omega} v_\ep \bar g e^{-\vp}\, dx
= \Big( \Ta_{k}^*( \zeta^2 \Ta_{k} u_\ep), f - \sum_{j=1}^n\big( b_j X_j u + b_j' X_j^* u - \frac{\p b_j'}{\p x_j} u\big) - bu \Big)_\vp.
\]
Equation \eqref{eqn:weak solution, second order part isolated} now says that $A_\ep = B_\ep$. Since $\Ta_{k}$ is tangential, $\Ta_{k}^*$ is also tangential and we compute
\begin{align}
A_\ep =& \sum_{j,{j'}=1}^n \Big( \Ta_{k}^* X_j(\zeta^2 \Ta_{k} u_\ep), a_{j{j'}} X_{j'} u\Big)_\vp +\sum_{j,{j'}=1}^n ([X_j,\Ta_{k}^*](\zeta^2 \Ta_{k} u_\ep), a_{j{j'}} X_{j'} u\big)_\vp\nn\\
=& \sum_{j,{j'}=1}^n \Big(  X_j(\zeta^2 \Ta_{k} u_\ep), a_{j{j'}} \Ta_{k} X_{j'} u\Big)_\vp \nn\\
&+\sum_{j,{j'}=1}^n \Big[ \Big( [X_j,\Ta_{k}^*](\zeta^2 \Ta_{k} u_\ep), a_{j{j'}} X_{j'} u\Big)_\vp
+ \sum_{{k}'=1}^n \Big(  X_j(\zeta^2 \Ta_{k} u_\ep), {\oldb}_{{k} {k}'} \frac{\p a_{j{j'}}}{\p x_{{k}'}} X_{j'} u\Big)_\vp\Big]. \label{eqn: A_ep bdy but can take limits}
\end{align}
Since no more than two derivatives  of $u_\ep$ are taken in $A_\ep$, we can let $\ep\to0$ and observe that $A=B$ where
\begin{multline*}
A = \sum_{j,{j'}=1}^n\Bigg[ \Big(  X_j(\zeta^2 \Ta_{k} u), a_{j{j'}} \Ta_{k} X_{j'} u\Big)_\vp \\
+\sum_{j,{j'}=1}^n \Big[ \Big( [X_j,\Ta_{k}^*](\zeta^2 \Ta_{k} u), a_{j{j'}} X_{j'} u\Big)_\vp
+ \sum_{{k}'=1}^n \Big(  X_j(\zeta^2 \Ta_{k} u), {\oldb}_{{k} {k}'} \frac{\p a_{j{j'}}}{\p x_{{k}'}} X_{j'} u\Big)_\vp\Big] \Bigg]
\end{multline*}
and
\begin{equation}\label{eqn:B bdy, post limit}
B = \Big( \Ta_{k}^*( \zeta^2 \Ta_{k} u), f - \sum_{j=1}^n\big( b_j X_j u+ b_j' X_j^* u - \frac{\p b_j'}{\p x_j} u\big) - bu \Big)_\vp.
\end{equation}
We continue our investigation of $A$. Observe that $\Ta_{k}^* = \sum_{{k}'=1}^n \big({\oldb}_{{k} {k}'}X_{{k}'}^* - \frac{\p {\oldb}_{{k} {k}'}}{\p x_{{k}'}}\big)$, so
\[
[X_j,\Ta_{k}^*] =  \sum_{{k}'=1}^n -{\oldb}_{{k}{k}'}\frac{\p^2\vp}{\p x_j\p x_{{k}'}} + \frac{\p {\oldb}_{{k} {k}'}}{\p x_j} X_{{k}'} - \frac{\p^2 {\oldb}_{{k}{k}'}}{\p x_{{k}'}\p x_j}
\]
and
\[
[\Ta_{k}, X_{j'}] = -\sum_{{k}'=1}^n \frac{\p {\oldb}_{{k}{k}'}}{\p x_{j'}} X_{{k}'}.
\]
We have
\begin{align}
A &= \sum_{j,{j'}=1}^n \Big(  X_j(\zeta^2 \Ta_{k} u), a_{j{j'}} X_{j'} \Ta_{k}  u\Big)_\vp +\sum_{j,{j'}=1}^n \Big[ \Big([X_j,\Ta_{k}^*](\zeta^2 \Ta_{k} u), a_{j{j'}} X_{j'} u\Big)_\vp \nn\\
&+ \Big(  X_j(\zeta^2 \Ta_{k} u), \frac{\p a_{j{j'}}}{\p x_{k}} X_{j'} u\Big)_\vp + \Big(  X_j(\zeta^2 \Ta_{k} u), a_{j{j'}} [\Ta_{k}, X_{j'}] u\Big)_\vp \Big] \nn\\
&= \sum_{j,{j'}=1}^n \big( \zeta^2 X_j \Ta_{k} u, a_{j{j'}} X_{j'} \Ta_{k}  u\big)_\vp +\sum_{j,{j'}=1}^n \Big[ \Big([X_j,\Ta_{k}^*](\zeta^2 \Ta_{k} u), a_{j{j'}} X_{j'} u\Big)_\vp \nn\\
&+ \Big(  X_j(\zeta^2 \Ta_{k} u), \frac{\p a_{j{j'}}}{\p x_{k}} X_{j'} u\Big)_\vp + \Big(  X_j(\zeta^2 \Ta_{k} u), a_{j{j'}} [\Ta_{k}, X_{j'}] u\Big)_\vp
+ \Big(  2\zeta \frac{\p\zeta}{\p x_j} \Ta_{k} u, a_{j{j'}} X_{j'} \Ta_{k}  u\Big)_\vp \Big]. \label{eqn:A bdy, post IBP}
\end{align}
The strong ellipticity condition implies that
\begin{equation}\label{eqn:A strong ellipticity, near bdy}
 \sum_{j,{j'}=1}^n \big( \zeta^2 X_j \Ta_{k} u, a_{j{j'}} X_{j'} \Ta_{k}  u\big)_\vp \geq \theta \| \zeta\nabla_X \Ta_{k} u \|_{L^2(W\cap\Omega_\ep,\vp)}^2.
\end{equation}
As in the proof of Theorem \ref{thm: elliptic regularity, higher order, interior},  the remaining terms of \eqref{eqn:A bdy, post IBP} are bounded by
\[
 C_2 \big(\| \zeta \nabla_X \Ta_{k} u \|_{L^2(W\cap\Omega_\ep,\vp)} \| \nabla_X u \|_{L^2(W\cap\Omega_\ep,\vp)} + \|\nabla_X u\|_{L^2(W\cap\Omega_\ep,\vp)}^2\big)
\]
where $C_2$ depends on $\|a_{j{j'}}\|_{C^1(\Omega)}$, $\|\rho\|_{C^3(\Omega)}$ and $\|\zeta\|_{C^1(\Omega)}$. In particular, $C$ does not depend on the size $\supp\zeta$.
Thus, using (\ref{eqn:A strong ellipticity, near bdy}) and the bounds on the error terms, we can bound
\begin{align}
|A| &\geq \theta \| \zeta\nabla_X \Ta_{k} u \|_{L^2(W\cap\Omega_\ep,\vp)}^2\nn\\
&- C_2(\| \zeta\nabla_X \Ta_{k} u \|_{L^2(W\cap\Omega_\ep,\vp)}\| \zeta\nabla_X u \|_{L^2(W\cap\Omega_\ep,\vp)} +  \|\nabla_X u \|_{L^2(W\cap\Omega_\ep,\vp)}^2 \big) \nn\\
&\geq  \frac\theta2 \| \zeta\nabla_X \Ta_{k} u \|_{L^2(W\cap\Omega_\ep,\vp)}^2
- C_3 \|\nabla_X u \|_{L^2(W\cap\Omega_\ep,\vp)}^2  \label{eqn:good est for A, near bdy}
\end{align}
where $C_3 = C_3(\|a_{j{j'}}\|_{C^1(\Omega)},\|\zeta\|_{C^1(\Omega)},n,\theta, \|\rho\|_{C^3(\Omega)})$.
We can bound $B$ with Cauchy-Schwarz and the small constant/large constant inequality. In particular, for a constant
\[
  C_4 = C_4(\|b_j\|_{L^\infty(\Omega)}, \|b_j'\|_{C^1(\Omega)}, \|b\|_{L^\infty(\Omega)},n,\|\rho\|_{C^2(\Omega)}),
\]
we have the estimate
\begin{align}
|B| &\leq  C_4 \|\zeta\nabla_X \Ta_{k} u\|_{L^2(W\cap\Omega_\ep,\vp)}\Big[ \|f\|_{L^2(W\cap\Omega_\ep,\vp)} + \| u\|_{W^{1,2}(W\cap\Omega_\ep,\vp;X)} \Big] \nn\\
&\leq \frac{\theta}4 \|\zeta\nabla_X \Ta_{k} u\|_{L^2(W\cap\Omega_\ep,\vp)}^2 + C_5\big(  \|f\|_{L^2(W\cap\Omega_\ep,\vp)}^2 + \| u\|_{W^{1,2}(W\cap\Omega_\ep,\vp;X)}^2\big)
 \label{eqn:B estimate, near bdy}
\end{align}
where $C_5 = C_5(\|b_j\|_{L^\infty(\Omega)}, \|b_j'\|_{C^1(\Omega)}, \|b\|_{L^\infty(\Omega)},n,\theta,\|\rho\|_{C^2(\Omega)})$.
Combining (\ref{eqn:good est for A, near bdy}) and (\ref{eqn:B estimate, near bdy}), it follows that
\begin{equation}\label{eqn:W^2 est in terms of W^1 and L^2, near bdy}
\| \tnabla_\Ta u\|_{W^{1,2}(V\cap\Omega_\ep,\vp;X)}^2 \leq C_6\big(\| f\|_{L^2(W\cap\Omega_\ep,\vp)}^2 + \| u \|_{W^{1,2}(W\cap\Omega_\ep,\vp;X)}^2 \big)
\end{equation}
where $C_6 = C_6(\|a_{j{j'}}\|_{C^1(\Omega)},\|\zeta\|_{C^1(\Omega)},\|b_j\|_{L^\infty(\Omega)}, \|b_j'\|_{C^1(\Omega)}, \|b\|_{L^\infty(\Omega)},n,\theta,\|\rho\|_{C^3(\Omega)})$.

We can improve the estimate \eqref{eqn:W^2 est in terms of W^1 and L^2, near bdy} and replace $\| u \|_{W^{1,2}(W,\vp;X)}^2$ with $\| u \|_{L^2(W,\vp)}^2$.
Let $\eta\in C^\infty_c(\Omega_\ep')$ be a cutoff so that $\eta|_{W\cap\Omega_\ep} =1$. Using (\ref{eqn:weak solution, second order part isolated}) and strong ellipticity, we estimate
\begin{align*}
\| \eta \nabla_X u \|_{L^2(\Omega,\vp)}^2
&\leq \frac 1{\theta} \sum_{j,{j'}=1}^n \big(\eta^2 X_j u,a_{j{j'}} X_{j'} u\big)_\vp = \frac{1}{\theta} \big| \big(\eta^2 u, g\big)_\vp\big| \\
&\leq C_7 \|u\|_{L^2(\Omega,\vp)}\big( \|f\|_{L^2(\Omega,\vp)} + \|\eta\nabla_X u\|_{L^2(\Omega,\vp)} + \|u\|_{L^2(\Omega,\vp)}\big),
\end{align*}
where $C_7 = C_7(\|b_j\|_{L^\infty(\Omega)}, \|b_j'\|_{C^1(\Omega)}, \|b\|_{L^\infty(\Omega)},n,\theta)$.

Using a small constant/large constant argument, we have
\[
\| \nabla_X u \|_{L^2(W\cap\Omega_\ep,\vp)} \leq \| \eta\nabla_X u\|_{L^2(\Omega,\vp)} \leq C_8\big(\|f\|_{L^2(\Omega,\vp)} + \|u\|_{L^2(\Omega,\vp)}\big)
\]
where $C_8 = C_8(\|b_j\|_{L^\infty(\Omega)}, \|b_j'\|_{C^1(\Omega)}, \|b\|_{L^\infty(\Omega)},n)$. Thus, we can refine (\ref{eqn:W^2 est in terms of W^1 and L^2, near bdy}) by
\[
\| \tnabla_\Ta u \|_{W^{1,2}(V,\Omega)}^2 \leq C\big(\| f\|_{L^2(\Omega,\vp)} + \| u \|_{L^2(\Omega,\vp)}^2 \big)
\]
where
$C = C(\|a_{j{j'}}\|_{C^1(\Omega)},\|b_j\|_{L^\infty(\Omega)}, \|b_j'\|_{C^1(\Omega)}, \|b\|_{L^\infty(\Omega)},n,\theta,\|\rho\|_{C^3(\Omega)})$. $\zeta$ and $\eta$ have disappeared from
$C$ as the bound depended on $\|\zeta\|_{C^1(\Omega)}$ but that bound depends on $\dist(V,W)$.
Thus, we can relax the boundedness condition on $V$ and let $V$ be as in the statement of the theorem.

It remains to bound $\| \Ta_n^2 u\|_{L^2(\Omega,\vp)}$. To do this, we recall that $\Ta_j = \sum_{j,{j'}=1}^n {\oldb}_{j{j'}} X_{j'}$ where $({\oldb}_{j{j'}})$ is an orthogonal matrix. Therefore, if
$({\oldb}_{j{j'}})^{-1} =  ({\oldb}^{j{j'}})$, then $X_{j'} = \sum_{{k}'=1}^n {\oldb}^{{j'}{k}'} \Ta_{{k}'}$. Therefore,
\[
\sum_{j,{j'}=1}^n (X_j^*)a_{j{j'}}X_{j'} = \sum_{j,{j'},{k},{k}'=1}^n \big(\Ta_{k}^* {\oldb}^{j{k}}a_{j{j'}}{\oldb}^{{j'}{k}'} \Ta_{{k}'}\big).
\]
Let $a_{{k}{k}'}^\T = \sum_{j,{j'}=1}^n\oldb^{j{k}}a_{j{j'}}{\oldb}^{{j'}{k}'}$. Since $({\oldb}^{j{k}})$ is an orthogonal matrix and the smallest eigenvalue of $a_{j{j'}}$ is $\theta$, it follows that
\[
\sum_{{k},{k}'=1}^n a_{{k}{k}'}^\T \xi_{k}\xi_{{k}'} \geq \theta|\xi|^2.
\]
Therefore, if $\xi = (0,...0,1)$, then we see that $a_{nn}^\T \geq \theta$. Consequently, using the fact that $Lu=f$, we see
\[
|\Ta_n^* \Ta_n u|
\leq C_9\bigg( |\nabla_X \tnabla_\Ta u| + |\nabla_X u|+ \sum_{j=1}^n \big( |b_j X_j u| +| X_j^*(b_j' u)|\big) + |bu| + |f| \bigg)
\]
where $C_9 = C_9(\|\rho\|_{C^2(\Omega)},\theta)$. Since the right-hand side is bounded
in $L^2(\Omega,\vp)$, $\Ta_n^* \Ta_n u\in L^2(\Omega,\vp)$. Finally, since
$\|\nabla_X^* \Ta_n u\|_{L^2(\Omega,\vp)}\geq C \|\Ta_n^2 u\|_{L^2(\Omega,\vp)}$, the proof of Theorem \ref{thm:sol'n regularity near boundary, higher order}
for the case $\ell=0$ is complete.
\end{proof}

\subsection{The $\ell\geq 1$ case}
\label{sec:proof_sol'n regularity near boundary, higher order_1}

Before proving the higher order case, we perform a quick computation regarding tangential and nontangential operators.
\begin{lem}\label{lem:commuting tan and nontan ops} Let $X$ be a first order differential operator with coefficients bounded by $\|\rho\|_{C^k(\Omega_\ep')}$ for $k\geq 1$ and let $\Ta_{\alpha_1},\dots, \Ta_{\alpha_\ell}$ be tangential
operators with coefficients bounded by $\|\rho\|_{C^1(\Omega_\ep')}$. If $\Ta^\alpha = \Ta_{\alpha_1}\cdots \Ta_{\alpha_\ell}$, then
\begin{enumerate}\renewcommand{\labelenumi}{\roman{enumi}.}
\item
\[
X \Ta^\alpha = \sum_{\beta\subset\alpha} \Ta^\beta X_\beta
\]
for first order operators $X_\beta$ with coefficients bounded by $\|\rho\|_{C^{k+\ell-|\beta|}(\Omega_\ep')}$.
\item With $X_\beta$ as in \emph{i.},
\[
[X,\Ta^\alpha] = \sum_{\beta\subsetneq\alpha} \Ta^\beta X_\beta
\]
\end{enumerate}
\end{lem}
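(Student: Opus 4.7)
The plan is to prove part (i) by induction on $\ell$, and then deduce part (ii) by inspecting which term in the expansion contains $X$ itself.

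For the base case $\ell=1$, I would write $X\Ta_{\alpha_1} = \Ta_{\alpha_1}X + [X,\Ta_{\alpha_1}]$. The first summand realizes the $\beta=\alpha$ case with $X_\beta = X$, whose coefficients are bounded by $\|\rho\|_{C^k}$ (matching the required $\|\rho\|_{C^{k+1-1}}$). The commutator is the $\beta=\emptyset$ case; since both operators are first order, their commutator is again first order, and its coefficients are linear combinations of a coefficient of one operator multiplied by a first partial derivative of a coefficient of the other, so they are bounded by $\|\rho\|_{C^{k+1}}$ as required.

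For the inductive step, I would factor $\Ta^\alpha = \Ta_{\alpha_1}\cdots\Ta_{\alpha_\ell}\cdot\Ta_{\alpha_{\ell+1}}$, apply the inductive hypothesis to the first $\ell$ factors, and then commute each resulting $X_\beta$ past the remaining $\Ta_{\alpha_{\ell+1}}$:
\[
X\Ta^\alpha = \sum_{\beta\subset(\alpha_1,\dots,\alpha_\ell)} \Ta^\beta\Ta_{\alpha_{\ell+1}}X_\beta + \sum_{\beta\subset(\alpha_1,\dots,\alpha_\ell)}\Ta^\beta[X_\beta,\Ta_{\alpha_{\ell+1}}].
\]
In the first sum, $\Ta^\beta\Ta_{\alpha_{\ell+1}} = \Ta^{\beta'}$ with $\beta'=\beta\cup\{\alpha_{\ell+1}\}$, and the inductive bound $\|\rho\|_{C^{k+\ell-|\beta|}}$ rewrites as $\|\rho\|_{C^{k+(\ell+1)-|\beta'|}}$. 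In the second sum, applying the base case to each $[X_\beta,\Ta_{\alpha_{\ell+1}}]$ produces a first-order operator whose coefficients are bounded by $\|\rho\|_{C^{k+\ell-|\beta|+1}} = \|\rho\|_{C^{k+(\ell+1)-|\beta|}}$, which is the $\beta'=\beta$ contribution. Collecting terms over subsets $\beta'\subset\alpha$ completes the induction.

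For part (ii), the key observation is that the inductive construction propagates $X$ unchanged along the \emph{purely tangential} branch: at every stage the summand indexed by the full subset has $X_\beta = X$ literally. Hence the $\beta=\alpha$ term in the expansion from (i) is exactly $\Ta^\alpha X$, and subtracting this from $X\Ta^\alpha$ yields the desired formula $[X,\Ta^\alpha] = \sum_{\beta\subsetneq\alpha}\Ta^\beta X_\beta$. The main obstacle is really just the careful bookkeeping of coefficient regularity through the induction — one must verify that each commutation step transfers exactly one derivative onto the coefficients, so that the count $k+\ell-|\beta|$ increments correctly as $|\beta|$ decreases by one.
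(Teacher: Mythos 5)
Your proof is correct and uses essentially the same inductive argument as the paper: peel off a single tangential factor, apply the inductive hypothesis, and track how each commutation transfers one derivative onto the coefficients. The only difference is cosmetic --- you peel $\Ta_{\alpha_{\ell+1}}$ off the right and then commute each intermediate $X_\beta$ past it, while the paper commutes $X$ past $\Ta_{\alpha_1}$ on the left and applies the inductive hypothesis to the two resulting $(\ell-1)$-fold products; both organizations yield the same bookkeeping, and your observation for part (ii) that the $\beta=\alpha$ term is exactly $\Ta^\alpha X$ is the correct way to obtain that the paper leaves implicit.
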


\begin{proof}The proof is by induction on $\ell$.  When $\ell=1$ this is self-evident for any $k\geq 1$, since $X \Ta=[X,\Ta]+\Ta X$.  Observe that for $2\leq j\leq\ell$,
\[
X \Ta_{\alpha_1}\cdots\Ta_{\alpha_j} = \Ta_{\alpha_1} X \Ta_{\alpha_2}\cdots \Ta_{\alpha_j} + [X,\Ta_{\alpha_1}]  \Ta_{\alpha_2}\cdots \Ta_{\alpha_j}.
\]
If $X$ has coefficients bounded by $\|\rho\|_{C^{k}(\Omega_\ep')}$, then the commutator $[X,\Ta_{\alpha_{1}}]$ is a first order differential operator with coefficients bounded by $\|\rho\|_{C^{k+1}(\Omega_\ep')}$.  If we apply the induction hypothesis with $\ell=j-1$ to both terms, then \emph{(i)} is proved.

The proof of \emph{(ii)} follows from proof of \emph{(i)}.
\end{proof}

\begin{proof}[Proof of Theorem \ref{thm:sol'n regularity near boundary, higher order}, $\ell\geq 1$] This proof is loosely based on the proof of \cite[Theorem 7.29]{Fol95}.
By Theorem \ref{thm: elliptic regularity, higher order, interior} and the classical theory, we know that if $f\in W^{\ell,2}(\Omega,\vp; X)$ and $Lu=f$, then
$u\in W^{\ell+2,2}_{\loc}(\Omega,\vp; X)$. As with the $\ell=0$ case, we can restrict ourselves to $\Omega_\ep$
for $\ep>0$ suitably small. Let $V,W\subset\Omega_\ep'$ be bounded subsets and satisfy $V\subset W$, $\dist(V,\bd W)>0$ and $\dist(W,\bd\Omega_\ep')>0$.
Choose $\zeta\in C^\infty_c(W)$ with $\zeta|_V=1$.

We first induct on the number of tangential derivatives. The base case is already done. The induction hypothesis is that if $\ell\geq 1$ and
$|\beta|\leq \ell$, then there exists a constant $C$ that does not depend on $V$, the size of the support of $\zeta$, or $W$ so that
\begin{equation}\label{eqn:good tangential est}
\| T^\beta u \|_{W^{1,2}(W\cap\Omega_\ep,\vp;X)} \leq C \big( \| f \|_{W^{|\beta|-1,2}(W\cap\Omega_\ep,\vp;X)} + \|u\|_{L^2(\Omega_\ep,\vp)}\big).
\end{equation}

Let $\alpha$ be a multiindex of length $\ell+1$. Let $v\in W^{1,2}(\Omega_\ep,\vp;X)$. We start by showing
\begin{equation}\label{eqn:D(v,T^alpha bound)}
|{\oldD}(v,\Ta^\alpha\zeta u)| \leq C_1 \| v\|_{W^{1,2}(W\cap\Omega_\ep,\vp;X)}\big( \| f\|_{W^{\ell,2}(W\cap\Omega_\ep,\vp;X)} + \| u \|_{L^2(W\cap\Omega_\ep,\vp)}\big)
\end{equation}
where $C_1 = C_1(\|a_{jk}\|_{C^{\ell+1}(\Omega)},\|b_j\|_{C^{\ell+1}(\Omega)}, \|b_j'\|_{C^{\ell+1}(\Omega)}, \|b\|_{C^{\ell+1}(\Omega)},n,\|\zeta\|_{C^{\ell+1}(\Omega_\ep')},\|\rho\|_{C^{\ell+2}(\Omega_\ep')})$.

By Proposition \ref{prop:density of C^infty_c},
there exist $v_\delta\in C^\infty_c(\R^n)$ so that $v_\delta\to v$ in $W^{1,2}(\Omega_\ep,\vp;X)$. Therefore, since $\oldD$ involves at most first order derivatives,
\[
\lim_{\delta\to 0} {\oldD}(v_\delta,\Ta^\alpha\zeta u)  = {\oldD}(v,\Ta^\alpha\zeta u).
\]
We compute
\begin{multline}\label{eqn: D(v,T zeta u)}
{\oldD}(v_\delta,\Ta^\alpha\zeta u)
= \sum_{j,k=1}^n \big(X_j v_\delta, a_{jk} X_k \Ta^\alpha(\zeta u)\big)_\vp\\ + \sum_{j=1}^n\Big[ \big(v_\delta, b_j X_j \Ta^\alpha(\zeta u)\big)_\vp + \big( X_j v_\delta, b_j' \Ta^\alpha(\zeta u)\big)_\vp\Big]
+ \big(v_\delta, b\Ta^\alpha(\zeta u)\big)_\vp.
\end{multline}
We  examine each term separately
\begin{align}
&\big( X_j v_\delta, a_{jk} X_k T^\alpha(\zeta u)\big)_\vp
=  \big( (T^\alpha)^*X_j v_\delta, a_{jk} X_k(\zeta u) \big)_\vp + \big(X_j v_\delta,  [a_{jk}X_k,T^\alpha] (\zeta u)\big)_\vp \nn \\
&= \big( \zeta (T^\alpha)^*X_j v_\delta, a_{jk} X_ku \big)_\vp + \Big( X_j (T^\alpha)^*v_\delta, \frac{\p\zeta}{\p x_k} a_{jk} u \Big)_\vp+ \big(X_j v_\delta,  [a_{jk}X_k,T^\alpha] (\zeta u)\big)_\vp \nn \\
&=  \Big(  X_j \big(\zeta(T^\alpha)^*v_\delta\big), a_{jk} X_k u \Big)_\vp + \big( [\zeta (T^\alpha)^*,X_j] v_\delta, a_{jk} X_ku \big)_\vp
+ \Big( X_j (T^\alpha)^*v_\delta, \frac{\p\zeta}{\p x_k} a_{jk} u \Big)_\vp  \label{eqn: X_j v, X_k (T^alpha zeta u) IBP} \\
&\hspace{.45cm}+ \big(X_j v_\delta,  [a_{jk}X_k,T^\alpha] (\zeta u)\big)_\vp. \nn
\end{align}
Next,
\begin{align}
\big(v_\delta, b_j X_j T^\alpha(\zeta u)\big)_\vp
&= \big((T^\alpha)^* v_\delta, b_j  X_j (\zeta u)\big)_\vp + \big(v_\delta,  [b_j X_j, T^\alpha](\zeta u)\big)_\vp \nn\\
&=  \big(\zeta(T^\alpha)^* v_\delta, b_j  X_j  u\big)_\vp + \Big((T^\alpha)^* v_\delta, b_j  \frac{\p\zeta}{\p x_j} u\Big)_\vp+ \big(v_\delta,  [b_j X_j, T^\alpha](\zeta u)\big)_\vp.
\label{eqn:(v, b_j X_j T^alpha zeta u ) IBP}
\end{align}
Also,
\begin{align}
\big( X_j v_\delta, b_j'T^\alpha(\zeta u)\big)_\vp &= \big(\zeta (T^\alpha)^* X_j v_\delta, b_j' u\big)_\vp + \big(X_j v_\delta, [b_j',T^\alpha](\zeta u)\big)_\vp \nn\\
&= \Big( X_j \big(\zeta(T^\alpha)^* v_\delta),b_j'u\Big)_\vp + \big([\zeta(T^\alpha)^*,X_j] v_\delta,b_j'u\big)_\vp + \big(X_j v_\delta, [b_j',T^\alpha](\zeta u)\big)_\vp
\label{eqn: ( X_j v, b_j' T^alpha zeta u) IBP}
\end{align}
Plugging  \eqref{eqn: X_j v, X_k (T^alpha zeta u) IBP},  \eqref{eqn:(v, b_j X_j T^alpha zeta u ) IBP} and \eqref{eqn: ( X_j v, b_j' T^alpha zeta u) IBP} into \eqref{eqn: D(v,T zeta u)}, we see
\begin{align*}
{\oldD}(v_\delta,\Ta^\alpha\zeta u)
&= {\oldD}(\zeta (\Ta^\alpha)^* v_\delta, u) + E
= \big( \zeta (\Ta^\alpha)^* v_\delta, f\big)_\vp + E \nn \\
&= \big( \Ta_{\alpha_1}^* v_\delta, \Ta_{\alpha_2}\cdots \Ta_{\alpha_{\ell+1}} ( \zeta f)\big)_\vp  + E 
\end{align*}
where
\begin{align*}
E &= \sum_{j,k=1}^n\bigg[ \big( [\zeta (T^\alpha)^*,X_j] v_\delta, a_{jk} X_ku \big)_\vp
+ \Big( X_j (T^\alpha)^*v_\delta, \frac{\p\zeta}{\p x_k} a_{jk} u \Big)_\vp + \big(X_j v_\delta,  [a_{jk}X_k,T^\alpha] (\zeta u)\big)_\vp \bigg] \\
&+\sum_{j=1}^n \bigg[  \Big((T^\alpha)^* v_\delta, b_j  \frac{\p\zeta}{\p x_j} u\Big)_\vp+ \big(v_\delta,  [b_j X_j, T^\alpha](\zeta u)\big)_\vp
+  \big([\zeta(T^\alpha)^*,X_j] v_\delta,b_j'u\big)_\vp\bigg]\\
& + \sum_{j=1}^n\big(X_j v_\delta, [b_j',T^\alpha](\zeta u)\big)_\vp
+\big(v_\delta,[b,T^\alpha](\zeta u)\big)_\vp.
\end{align*}
Since $[\zeta,(\Ta^\alpha)^*]$ is tangential and $[\zeta,(\Ta^\alpha)^*]^* = [\Ta^\alpha,\zeta]$ is a differential operator of order $\ell$, by the induction hypothesis
\begin{equation}
|{\oldD}(v_\delta,\Ta^\alpha\zeta u)|  \leq C_2 \| v_\delta \|_{W^{1,2}(W\cap\Omega_\ep,\vp;X)} \| f \|_{W^{\ell,2}(W\cap\Omega_\ep,\vp;X)}  + |E|
\label{eqn:D(v, T u) est, post IBP}
\end{equation}
where $C_2 = C_2(\|\zeta\|_{C^\ell(\Omega_\ep')},\|\rho\|_{C^{\ell}(\Omega_\ep')})$.
We turn our attention to $E$. Using integration by parts, $[(T^\alpha)^*,X]^*=[X,T^\alpha]$ (formally), and Lemma \ref{lem:commuting tan and nontan ops} (with $k=1$, since the result is not improved when the coefficients of $X$ are smooth), it follows from the induction hypothesis that
\begin{align}
|E| &\leq C_3 \| v_\delta \|_{W^{1,2}(W\cap\Omega_\ep,\vp;X)} \sum_{|\beta|\leq\ell}\| \Ta^\beta u\|_{W^{1,2}(W\cap\Omega_\ep,\vp;X)}\nn \\
&\leq C_4 \| v_\delta \|_{W^{1,2}(W\cap\Omega_\ep,\vp;X)}\big( \| f\|_{W^{\ell-1,2}(W\cap\Omega_\ep,\vp;X)} +  \| u \|_{L^2(W\cap\Omega_\ep,\vp)} \big)
\label{eqn: E IBP}
\end{align}
where
\[
  C_4 = C_4(\|a_{jk}\|_{C^{\ell+1}(\Omega)},\|b_j\|_{C^{\ell+1}(\Omega)}, \|b_j'\|_{C^{\ell+1}(\Omega)}, \|b\|_{C^{\ell+1}(\Omega)},n,\|\zeta\|_{C^{\ell+1}(\Omega_\ep')},\|\rho\|_{C^{\ell+3}(\Omega_\eps')}).
\]
By plugging (\ref{eqn: E IBP}) into \eqref{eqn:D(v, T u) est, post IBP} and letting $\delta\to 0$, we observe that \eqref{eqn:D(v,T^alpha bound)} has been verified.

Since $\Ta^\alpha(\zeta u)\in\X$, we can set $v = \Ta^\alpha(\zeta u)$ in \eqref{eqn:D(v,T^alpha bound)} and use the coercive estimate \eqref{eqn:coercive estimate} to obtain
\begin{align*}
\| \Ta^\alpha (\zeta u) \|_{W^{1,2}(\Omega_\ep,\vp;X)}^2
&\leq C_5\big( |{\oldD}( \Ta^\alpha (\zeta u),\Ta^\alpha (\zeta u))| + \| \Ta^\alpha (\zeta u) \|_{L^2(\Omega_\ep,\vp)}^2\big) \\
&\leq C_6\| \Ta^\alpha (\zeta u)\|_{W^{1,2}(\Omega_\ep,\vp;X)} \big( \| f \|_{W^{\ell,2}(\Omega_\ep,\vp;X)}+ \| \Ta^\alpha (\zeta u) \|_{L^2(\Omega_\ep,\vp)}\big)\\&+
\| \Ta^\alpha (\zeta u) \|_{L^2(\Omega_\ep,\vp)}^2
\end{align*}
where
\[
  C_6 = C_6(\|a_{jk}\|_{C^{\ell+1}(\Omega)},\|b_j\|_{C^{\ell+1}(\Omega)}, \|b_j'\|_{C^{\ell+1}(\Omega)}, \|b\|_{C^{\ell+1}(\Omega)},n,\theta,
\|\zeta\|_{C^\ell(\Omega_\ep')},\|\rho\|_{C^{\ell+3}(\Omega_\eps')}).
\]
Applying a small constant/large constant argument and the induction hypothesis (\ref{eqn:good tangential est}) for $|\beta|\leq \ell$, we can finish the
proof of (\ref{eqn:good tangential est}) for $|\alpha|=\ell+1$.

We now need to lift the restriction that $\Ta^\alpha$ is tangential.
Without loss of generality, we may assume that $|\alpha|=\ell+2$ and $\Ta^\alpha = \Ta^\beta \Ta_n^\gamma$ where $\Ta^\beta$ is tangential. We will show that
there exists a constant $C_7$ so that
\begin{equation}\label{eqn:T^gamma u ok, up to m+2}
\| \Ta^\alpha u \|_{L^2(V\cap\Omega_\ep,\vp)} \leq C_7\big( \| f \|_{W^{\ell,2}(W\cap \Omega_\ep,\vp;X)} +  \| u \|_{L^2(W\cap \Omega_\ep,\vp)} \big)
\end{equation}
where
\[
  C_7 = C_7(\|a_{jk}\|_{C^{\ell+1}(\Omega)},\|b_j\|_{C^{\ell+1}(\Omega)}, \|b_j'\|_{C^{\ell+1}(\Omega)}, \|b\|_{C^{\ell+1}(\Omega)},n,\|\zeta\|_{C^{\ell+1}(\Omega_\ep')},\|\rho\|_{C^{\ell+3}(\Omega_\ep')}).
\]
The $\gamma=0$ case follows from \eqref{eqn:good tangential est}. Similarly, since the commutator $[\Ta_j,\Ta_n]$ is a first-order operator, we can write the $\gamma=1$ case
as
\[
\Ta^\alpha = \Ta_n \Ta^\beta + \text{lower order tangential terms}
\]
and the estimate again follows from \eqref{eqn:good tangential est}. We prove the $\gamma\geq 2$ case with an induction argument. Assume now that
\eqref{eqn:T^gamma u ok, up to m+2} holds for $\gamma = 0,\dots, J-1$ with $J\geq 2$. Assume that $|\gamma|=J$.
Redefine $\beta$ so that $T^\alpha = \Ta^\beta \Ta_n^2$. Note that $\Ta^\beta$ contains at most $(J-1)$ occurrences of $\Ta_n$.
Since $u\in W^{\ell+2}_{\loc}(\Omega)$ and $Lu=f$ in $\Omega$, we have $\Ta^\beta Lu = \Ta^\beta f$ a.e.\ in $\Omega$. We can write
\begin{multline*}
\Ta^\beta f = \Ta^\beta Lu\\
 = a_{nn}\Ta^\alpha u + \text{terms involving $\Ta_n$ at most $J-1$ times and of order at most $\ell+2$}.
\end{multline*}
Since $a_{nn}\geq\theta>0$, by the induction hypothesis and \eqref{eqn:T^gamma u ok, up to m+2}, it follows that
\[
\| \Ta^\gamma u \|_{L^2(V\cap\Omega_\ep,\vp)} \leq C_8\big( \| f\|_{W^{\ell,2}(W\cap\Omega_\ep,\vp;X)} + \| u \|_{L^2(W\cap\Omega_\ep,\vp)}\big).
\]
where $C_8 = C_8(\|a_{jk}\|_{C^{\ell+1}(\Omega)},\|b_j\|_{C^{\ell+1}(\Omega)}, \|b_j'\|_{C^{\ell+1}(\Omega)}, \|b\|_{C^{\ell+1}(\Omega)},n,\theta, \|\rho\|_{C^{\ell+3}(\Omega_\ep')})$.

Since the constant $C_8$ does not depend on the size of $V$, the estimate holds for all $V$ and hence
\[
\| u \|_{W^{\ell+2,2}(\Omega_\ep,\vp;X)} \leq C_8\big( \| f\|_{W^{\ell,2}(\Omega_\ep,\vp;X)} + \| u \|_{W^{1,2}(\Omega_\ep,\vp;X)}\big).
\]
\end{proof}

%
%
\section{Traces of $L$-harmonic functions}
\label{sec:traces_harmonic_functions}
In this section, we wish to show that $L$-harmonic functions (i.e., functions $u$ so that $Lu=0$) have unique boundary values in $W^{s-1/2,2}(\bd\Omega,\vp;T)$
when $u\in W^{s,2}(\Omega,\vp; X)$ and $s\geq 0$. 

We first establish a simple but easily applicable uniqueness condition by proving Lemma \ref{lem:D dominating 1 norms makes V=0}.
\begin{proof}[Proof Lemma \ref{lem:D dominating 1 norms makes V=0}]Since $Lu=0$ and $u\in W^{1,2}_0(\Omega,\vp; X)$, it follows that
\[
\Rre {\oldD}(u,u) = \Rre(u,Lu)_\vp = 0.
\]
Since $\Rre {\oldD}(u,u)\geq c \|\nabla_X u\|_{L^2(\Omega,\vp)}$, it follows that $\nabla_X u=0$. By Corollary \ref{cor:adjoint dominated in L^2},
$\|\nabla u\|_{L^2(\Omega,\vp)} \les \|\nabla_X u\|_{L^2(\Omega,\vp)} =0$. Therefore, $\nabla u=0$ and $u$ is constant (on each component of $\Omega$). Since $u|_M=0$, $u\equiv 0$.
\end{proof}

\subsection{The $s\geq 2$ case in Theorem \ref{thm:L times Tr is an isomorphism}}
\label{sec:proof_L times Tr is an isomorphism_2}
\begin{lem}\label{lem:traces of L-harmonic functions for s geq 3/2}
Let $\Omega\subset\R^n$ be a domain that satisfies {(HI)}-{(HVI)} with $m\geq 3$.
Let $L$ be a strongly elliptic operator that has a Dirichlet form $\oldD$ that satisfies \eqref{eqn:D dominates 1 norm} for all  $u\in W^{1,2}_0(\Omega,\vp; X)$. Let
$2\leq k \leq m-1$ be an integer. Then there is a one-to-one correspondence between $B^{k-1/2;2,2}(M,\vp;T)$ and $W^{k,2}(\Omega,\vp; X) \cap \ker L$ with norm equivalence.
 \end{lem}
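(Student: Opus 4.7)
The plan is to show that the trace operator $\Tr\colon W^{k,2}(\Omega,\vp;X)\cap\ker L \to B^{k-\frac 12;2,2}(M,\vp;T)$ is a bounded bijection with bounded inverse. Boundedness of $\Tr$ on the whole of $W^{k,2}(\Omega,\vp;X)$ is already given by Lemma \ref{lem:traces exist for u in sobo spaces on M}. Injectivity is easy: if $u\in W^{k,2}(\Omega,\vp;X)\cap\ker L$ satisfies $\Tr u=0$, then the iterative construction in the proof of Theorem \ref{thm:simple extension operators exist} shows $u\in W^{1,2}_0(\Omega,\vp;X)$, and then Lemma \ref{lem:D dominating 1 norms makes V=0} forces $u\equiv 0$.

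For surjectivity, given $g\in B^{k-\frac 12;2,2}(M,\vp;T)$, I would first invoke Theorem \ref{thm:traces of normal derivatives} with $\ell=k-1$ and $\ell'=0$ to produce an extension $U\in W^{k,2}(\Omega,\vp;X)$ supported in $\Omega_\ep'$ with $\Tr U=g$ and $\|U\|_{W^{k,2}(\Omega,\vp;X)}\le C\|g\|_{B}$. Because $k\ge 2$, $U\in W^{2,2}(\Omega,\vp;X)$ and hence $LU\in L^2(\Omega,\vp)$. I then seek $v\in W^{1,2}_0(\Omega,\vp;X)$ solving
\[
{\oldD}(\phi,v) = -{\oldD}(\phi,U) \qquad\text{for all }\phi\in W^{1,2}_0(\Omega,\vp;X),
\]
so that $u:=U+v$ satisfies $\Tr u = g$ and $Lu=0$ weakly.

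To justify the existence of $v$, combine \eqref{eqn:D dominates 1 norm} with the trivial identity $\|u\|_{W^{1,2}(\Omega,\vp;X)}^2 = \|\nabla_X u\|_{L^2(\Omega,\vp)}^2 + \|u\|_{L^2(\Omega,\vp)}^2$ to conclude that ${\oldD}$ is coercive over $\X=W^{1,2}_0(\Omega,\vp;X)$ in the sense of Definition \ref{defn: coercive estimate}. Lemma \ref{lem:D dominating 1 norms makes V=0} then says the kernel $V$ appearing in Theorem \ref{thm:weak solutions, info about kernel} is trivial, so $\dim W=\dim V=0$ as well, and that theorem yields a unique $v\in W^{1,2}_0(\Omega,\vp;X)$ with the required property and $\|v\|_{W^{1,2}(\Omega,\vp;X)}\le C\|U\|_{W^{1,2}(\Omega,\vp;X)}\le C'\|g\|_{B}$.

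To upgrade the regularity of $v$ to $W^{k,2}$, I would apply Theorem \ref{thm:sol'n regularity near boundary, higher order} with $\X=W^{1,2}_0(\Omega,\vp;X)$ and $\ell=k-2$ to the equation $Lv=-LU\in W^{k-2,2}(\Omega,\vp;X)$; the index constraint $k-2\le m-3$ is exactly the hypothesis $k\le m-1$. This gives $v\in W^{k,2}(\Omega,\vp;X)$ with
\[
\|v\|_{W^{k,2}(\Omega,\vp;X)} \le C\bigl(\|LU\|_{W^{k-2,2}(\Omega,\vp;X)}+\|v\|_{L^2(\Omega,\vp)}\bigr)\le C'\|g\|_{B},
\]
so $u=U+v\in W^{k,2}(\Omega,\vp;X)\cap\ker L$ and $\|u\|_{W^{k,2}}\le C''\|g\|_{B}$, completing the norm equivalence.

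The main obstacle is the solvability step in $W^{1,2}_0$: one must verify that the hypothesis \eqref{eqn:D dominates 1 norm} really gives coercivity in the sense needed, and that Theorem \ref{thm:weak solutions, info about kernel} can absorb the right-hand functional $\phi\mapsto -{\oldD}(\phi,U)$ (which is handled by taking the $L^2$ datum $-LU$). Everything else is then bookkeeping: the forward trace estimate from Lemma \ref{lem:traces exist for u in sobo spaces on M} and the backward elliptic regularity estimate from Theorem \ref{thm:sol'n regularity near boundary, higher order} line up to give the desired norm equivalence.
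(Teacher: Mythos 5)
Your proposal takes essentially the same route as the paper: extend the boundary datum via Theorem \ref{thm:traces of normal derivatives}, solve a Dirichlet problem in $W^{1,2}_0(\Omega,\vp;X)$ via Theorem \ref{thm:weak solutions, info about kernel} (its kernel being trivial by Lemma \ref{lem:D dominating 1 norms makes V=0}), upgrade the corrector's regularity with Theorem \ref{thm:sol'n regularity near boundary, higher order}, and subtract. One loose end worth tightening: you assert that Theorem \ref{thm:weak solutions, info about kernel} delivers the operator bound $\|v\|_{W^{1,2}(\Omega,\vp;X)}\le C\|U\|_{W^{1,2}(\Omega,\vp;X)}$, but that theorem as stated only gives existence and uniqueness, not continuity of the solution map; the bound requires an extra step such as the Closed Graph Theorem, or, as the paper does directly at the $W^{k,2}$ level, the Bounded Inverse Theorem applied to the bijection $L\colon W^{k,2}(\Omega,\vp;X)\cap W^{1,2}_0(\Omega,\vp;X)\to W^{k-2,2}(\Omega,\vp;X)$.
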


\begin{proof} Assume that $U\in W^{k,2}(\Omega,\vp; X)$ and $LU=0$. Since $U\in W^{k,2}(\Omega,\vp; X)$, Theorem \ref{thm:Trace Theorem for integer m on M}
implies that $\Tr U \in B^{k-1/2;2,2}(M,\vp;T)$. Since $L$ satisfies the hypotheses of Lemma \ref{lem:D dominating 1 norms makes V=0}, $U$ is the unique function
in $W^{k,2}(\Omega,\vp; X) \cap \ker L$ with boundary value $\Tr U$.

Now assume that $u\in B^{k-1/2;2,2}(M,\vp;T)$. By  Theorem \ref{thm:Trace Theorem for integer m on M}, there exists a function $\tilde U \in W^{k,2}(\Omega,\vp; X)$
with boundary value $u$ and
\[
\| \tilde U \|_{W^{k,2}(\Omega,\vp; X)} \leq C \|  u \|_{ B^{k-1/2;2,2}(M,\vp;T)}.
\]
Since $ k\geq 2$, $L\tilde U \in W^{k-2,2}(\Omega,\vp; X)$. By Theorem \ref{thm:weak solutions, info about kernel}, there exists
$U_0\in W^{1,2}_0(\Omega,\vp; X)$ so that ${\oldD}(v,U_0) = (v,L\tilde U)_\vp$ for all $v\in W^{1,2}_0(\Omega,\vp; X)$. Since $L$ satisfies \eqref{eqn:D dominates 1 norm},
$U_0$ is unique. By Theorem \ref{thm:sol'n regularity near boundary, higher order}, $U_0\in W^{k,2}(\Omega,\vp; X)$.
Moreover, the mapping
\[
  L:W^{k,2}(\Omega,\vp; X)\cap W^{1,2}_0(\Omega,\vp; X) \to W^{k-2,2}(\Omega,\vp; X)
\]
is a bijective linear mapping, so the Open Mapping Theorem (or, more directly,
its corollary the Bounded Inverse Theorem) prove that its inverse is continuous, i.e.,
\[
\| U_0 \|_{W^{k,2}(\Omega,\vp; X)} \leq C \| L\tilde U \|_{W^{k-2,2}(\Omega,\vp; X)}
\leq C \| \tilde U \|_{W^{k,2}(\Omega,\vp; X)} \leq C \| u \|_{ B^{k-1/2;2,2}(M,\vp;T)}.
\]
Let $U = \tilde U - U_0$. Then $LU = 0$ and $\Tr U = \Tr \tilde U = u$ and
\[
\| U  \|_{W^{k,2}(\Omega,\vp; X)} \leq C\| u \|_{ B^{k-1/2;2,2}(M,\vp;T)}.
\]
\end{proof}

\subsection{The case $s=1$ in Theorem \ref{thm:L times Tr is an isomorphism}}
\label{sec:proof_L times Tr is an isomorphism_1}
We use the arguments in \cite{Tay96} for the following.
\begin{thm}\label{thm:regularity at the ground level}
Let $L$ be a strongly elliptic operator and $S$ be a first order operator with bounded coefficients. Set
\[
Au = Lu + Su.
\]
There exists a constant $C>0$ so that for all $u\in W^{1,2}_0(\Omega,\vp; X)$,
\[
\| u \|_{W^{1,2}(\Omega,\vp; X)}^2 \leq C \| Au \|_{W^{-1,2}(\Omega,\vp; X)}^2 + C \|u\|_{L^2(\Omega,\vp)}^2.
\]
\end{thm}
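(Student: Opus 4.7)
The plan is to derive the estimate from Gårding's inequality applied to the Dirichlet form $\oldD$ of $L$, together with a small constant/large constant absorption argument. The role of $S$ is minor: its first-order/bounded coefficient nature places $Su$ in $L^2$ with $\|Su\|_{L^2(\Omega,\vp)} \lesssim \|u\|_{W^{1,2}(\Omega,\vp;X)}$, so it can be estimated by lower-order terms.

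First, since $L$ is strongly elliptic with bounded coefficients, Theorem \ref{thm: Garding's inequality} applied to its Dirichlet form $\oldD$ gives constants $C_0 > 0$ and $\lambda \geq 0$ with
\[
\Rre\, \oldD(u,u) \geq C_0 \|u\|_{W^{1,2}(\Omega,\vp;X)}^2 - \lambda \|u\|_{L^2(\Omega,\vp)}^2 \qquad \text{for all } u \in W^{1,2}_0(\Omega,\vp;X).
\]
Next, I would identify the quantity $\oldD(u,u)$ with the duality pairing against $Lu \in W^{-1,2}(\Omega,\vp;X)$. For $u,v \in C^\infty_c(\Omega)$, $\oldD(v,u) = (v, Lu)_\vp$ by definition of the Dirichlet form; since $\oldD$ is continuous on $W^{1,2}(\Omega,\vp;X) \times W^{1,2}(\Omega,\vp;X)$ and $C^\infty_c(\Omega)$ is dense in $W^{1,2}_0(\Omega,\vp;X)$, the map $v \mapsto \oldD(v,u)$ extends to a bounded functional on $W^{1,2}_0(\Omega,\vp;X)$, which we identify with $Lu \in W^{-1,2}(\Omega,\vp;X)$. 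Hence $\oldD(u,u) = \langle Lu, u\rangle$ with $|\langle Lu, u\rangle| \leq \|Lu\|_{W^{-1,2}(\Omega,\vp;X)} \|u\|_{W^{1,2}(\Omega,\vp;X)}$.

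The decomposition $Lu = Au - Su$ then gives
\[
\Rre\, \oldD(u,u) = \Rre\, \langle Au,u\rangle - \Rre\, (Su, u)_\vp,
\]
where the second term is an ordinary $L^2$ inner product since $Su \in L^2(\Omega,\vp)$. By Cauchy--Schwarz and the boundedness of the coefficients of $S$,
\[
|(Su,u)_\vp| \leq \|Su\|_{L^2(\Omega,\vp)} \|u\|_{L^2(\Omega,\vp)} \leq C_1 \|u\|_{W^{1,2}(\Omega,\vp;X)} \|u\|_{L^2(\Omega,\vp)},
\]
while $|\langle Au, u\rangle| \leq \|Au\|_{W^{-1,2}(\Omega,\vp;X)} \|u\|_{W^{1,2}(\Omega,\vp;X)}$. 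Combining with Gårding,
\[
C_0 \|u\|_{W^{1,2}(\Omega,\vp;X)}^2 \leq \|Au\|_{W^{-1,2}(\Omega,\vp;X)} \|u\|_{W^{1,2}(\Omega,\vp;X)} + C_1 \|u\|_{W^{1,2}(\Omega,\vp;X)} \|u\|_{L^2(\Omega,\vp)} + \lambda \|u\|_{L^2(\Omega,\vp)}^2.
\]

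Finally, I apply the small constant/large constant inequality to each cross term: both $\|Au\|_{W^{-1,2}}\|u\|_{W^{1,2}}$ and $C_1 \|u\|_{W^{1,2}}\|u\|_{L^2}$ can be split as $\tfrac{C_0}{4}\|u\|_{W^{1,2}}^2 + C'(\cdot)$ with the remainder in terms of $\|Au\|_{W^{-1,2}}^2$ or $\|u\|_{L^2}^2$. Absorbing $\tfrac{C_0}{2}\|u\|_{W^{1,2}}^2$ into the left side yields the estimate. The whole argument is essentially routine; the only subtlety worth verifying carefully is the identification $\oldD(u,u) = \langle Lu, u\rangle$ via density of $C^\infty_c(\Omega)$ in $W^{1,2}_0(\Omega,\vp;X)$, together with the boundedness of $\oldD$ on the whole of $W^{1,2}(\Omega,\vp;X)$, which follows from the assumed boundedness of the coefficients.
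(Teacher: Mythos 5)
Your proof is correct and follows essentially the same route as the paper: G\aa rding's inequality for $L$, absorption of the first-order term $S$ by Cauchy--Schwarz and a small constant/large constant argument, and the duality pairing $|\langle Au,u\rangle| \leq \|Au\|_{W^{-1,2}(\Omega,\vp;X)}\|u\|_{W^{1,2}(\Omega,\vp;X)}$. The only cosmetic difference is that you write $Lu = Au - Su$ and identify $\oldD(u,u) = \langle Lu,u\rangle$ explicitly, whereas the paper works directly with $\Rre(u,Au)_\vp$ and effectively derives a G\aa rding-type lower bound for $A$ itself; the two computations are algebraically equivalent.
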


\begin{proof}
Observe that for any $\eps>0$
\[
|(u,Su)_\vp| \leq C \|u\|_{L^2(\Omega,\vp)} \| u\|_{W^{1,2}(\Omega,\vp; X)} \leq \frac C2 \Big( \eps\| u \|_{W^{1,2}(\Omega,\vp; X)}^2 + \frac 1\eps \|u\|_{L^2(\Omega,\vp)}^2\Big).
\]
Therefore,
\[
\Rre (u,Au)_\vp \geq \frac\theta2 \| u \|_{W^{1,2}(\Omega,\vp; X)}^2 - C' \|u\|_{L^2(\Omega,\vp)}^2
\text{ for all }u\in W^{1,2}_0(\Omega,\vp; X),
\]
so
\[
\| u \|_{W^{1,2}(\Omega,\vp; X)}^2 \leq C\Rre(u,Au)_\vp + C' \|u\|_{L^2(\Omega,\vp)}^2.
\]
Also,
\[
\Rre(u,Au)_\vp \leq C \| Au\|_{W^{-1,2}(\Omega,\vp; X)} \| u \|_{W^{1,2}(\Omega,\vp; X)}
\leq \frac{C\eps}{2}  \| u \|_{W^{1,2}(\Omega,\vp; X)}^2 + \frac{C}{2\eps} \| Au\|_{W^{-1,2}(\Omega,\vp; X)}^2.
\]
Putting our inequalities together and choosing $\ep>0$ small enough so that we can absorb the $C\ep  \| u \|_{W^{1,2}(\Omega,\vp; X)}^2$ term, we see that
\[
\| u \|_{W^{1,2}(\Omega,\vp; X)}^2 \leq C \| Au \|_{W^{-1,2}(\Omega,\vp; X)}^2 + C \|u\|_{L^2(\Omega,\vp)}^2.
\]
\end{proof}

We next show  that
$L: W^{1,2}_0(\Omega,\vp; X)\to W^{-1,2}(\Omega,\vp; X)$ is continuous, injective and has a bounded inverse.

We first assume that $L$ gives rise to a strictly elliptic Dirichlet form over $W^{1,2}_0(\Omega,\vp; X)$. Then
\[
\Rre (v,Lu)_\vp = \Rre {\oldD}(v,u) \geq C \| v\|_{W^{1,2}(\Omega,\vp; X)}\| u\|_{W^{1,2}(\Omega,\vp; X)}.
\]
Consequently, $L:W^{1,2}_0(\Omega,\vp; X) \to W^{-1,2}(\Omega,\vp; X)$ and
\[
\| Lu \|_{W^{-1,2}(\Omega,\vp; X)} \geq C \| u \|_{W^{1,2}(\Omega,\vp; X)}.
\]
Therefore, $L:W^{1,2}_0(\Omega,\vp; X) \to W^{-1,2}(\Omega,\vp; X)$  has closed range. If $L$ is not surjective, there exists
a nonzero $v^*\in W^{-1,2}(\Omega,\vp; X)$
so that $v^* \perp \Ran(L)$. By the Riesz Representation Theorem, we can therefore choose $v\in W^{1,2}_0(\Omega,\vp; X)$ so that $v^*(v)\neq 0$ and $w^*(v)=0$ for $w^*\in\Ran(L)$.
In this case
\[
0 = (v, Lu)_\vp \text{ for all }u\in W^{1,2}_0(\Omega,\vp; X).
\]
Setting $u=v$ forces $v=0$ (and hence $v^*=0$ as well).
Therefore, $L$ is surjective. We also know that $L$ is injective as a consequence of Lemma \ref{lem:D dominating 1 norms makes V=0}.
Consequently, the inverse to $L$ exists, call it $G$. Then $G:W^{-1,2}(\Omega,\vp; X)\to W^{1,2}_0(\Omega,\vp; X)$. As $L^2(\Omega,\vp)\hookrightarrow
W^{-1,2}(\Omega,\vp; X)$ compactly, $G:L^2(\Omega,\vp)\to W^{1,2}_0(\Omega,\vp; X)$ compactly.

We now investigate the equation
\[
Au =f
\]
where $f\in W^{-1,2}(\Omega,\vp; X)$, $u\in W^{1,2}_0(\Omega,\vp; X)$, and
$A = L+S$ as in Theorem \ref{thm:regularity at the ground level}. We continue to assume that $L$ has a strictly elliptic Dirichlet form over
$W^{1,2}_0(\Omega,\vp;X)$. If $u\in W^{1,2}_0(\Omega,\vp; X)$, then there exists $v\in W^{-1,2}(\Omega,\vp; X)$ so that
\[
u = Gv
\]
If $Au=f$, then
\[
f = AGv = (L+S)Gv = (I+SG)v.
\]
We know that $SG:W^{-1,2}(\Omega,\vp; X)\to L^2(\Omega,\vp)$ and $L^2(\Omega,\vp)\hookrightarrow W^{-1,2}(\Omega,\vp; X)$ is compact. Therefore
$I+SG : W^{-1,2}(\Omega,\vp; X)\to W^{-1,2}(\Omega,\vp; X)$ is a compact perturbation of the identity. The Fredholm alternative implies that the map
$I+SG$ is therefore surjective if and only if it is injective. Lemma \ref{lem:D dominating 1 norms makes V=0} supplies a condition that guarantees injectivity.

Since the difference between a strongly elliptic operator and a strongly elliptic operator that gives rise to a strictly elliptic Dirichlet form is the addition of a multiple of the identity,
the case of relevance is $S = \lambda I$
for some $\lambda\in\R$.  If $Lu=v\neq 0$, then
\[
  (L+\lambda I)u=(L+\lambda I)Gv=(I+\lambda G)v\neq 0
\]
since $I+\lambda G$ is injective.  We have  therefore proved the following.
\begin{prop} \label{prop:W^1,2_0 to W^-1 isom}
Let $L$ be a strongly elliptic operator that has a Dirichlet form that satisfies (\ref{eqn:D dominates 1 norm}). Then the map
\[
L: W^{1,2}_0(\Omega,\vp; X) \to W^{-1,2}(\Omega,\vp; X)
\]
is an isomorphism with norm equivalence.
\end{prop}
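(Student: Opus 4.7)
The plan is to prove the isomorphism by establishing continuity, injectivity, and surjectivity separately, then invoking the Bounded Inverse Theorem for the norm equivalence. Continuity is immediate from the definition of the Dirichlet form and the boundedness of its coefficients: for $u \in W^{1,2}_0(\Omega,\vp;X)$ and $v \in W^{1,2}_0(\Omega,\vp;X)$, the estimate $|(v,Lu)_\vp| = |\oldD(v,u)| \leq C\|v\|_{W^{1,2}(\Omega,\vp;X)}\|u\|_{W^{1,2}(\Omega,\vp;X)}$ gives $\|Lu\|_{W^{-1,2}(\Omega,\vp;X)} \leq C\|u\|_{W^{1,2}(\Omega,\vp;X)}$. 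Injectivity is handed to us by Lemma \ref{lem:D dominating 1 norms makes V=0}: the hypothesis (\ref{eqn:D dominates 1 norm}) ensures $Lu=0$ with $u \in W^{1,2}_0(\Omega,\vp;X)$ forces $u \equiv 0$.

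For surjectivity, I would first handle the special case in which $\oldD$ is strictly coercive over $W^{1,2}_0(\Omega,\vp;X)$. Here one has $\Rre \oldD(u,u) \geq c\|u\|^2_{W^{1,2}(\Omega,\vp;X)}$, so $L$ has closed range in $W^{-1,2}(\Omega,\vp;X)$; if the range were proper, the Hahn--Banach / Riesz representation argument produces a nonzero $v \in W^{1,2}_0(\Omega,\vp;X)$ with $(v,Lu)_\vp = 0$ for all $u$, and setting $u = v$ yields a contradiction via strict coercivity. Hence $L$ is an isomorphism in this case, with inverse $G:W^{-1,2}(\Omega,\vp;X) \to W^{1,2}_0(\Omega,\vp;X)$.

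To reduce the general strongly elliptic case to the strictly coercive one, I would write $L = L_0 + \lambda I$ for sufficiently large $\lambda > 0$, where $L_0 := L - \lambda I$ satisfies a strict G\aa rding inequality by Theorem \ref{thm: Garding's inequality} (combined with (\ref{eqn:D dominates 1 norm}) allowing us to choose $\lambda$). Let $G_0$ denote the inverse of $L_0$ given by the previous paragraph. Then solving $Lu=f$ with $u = G_0 v$ becomes the equation $(I + \lambda G_0)v = f$ in $W^{-1,2}(\Omega,\vp;X)$. Since $L^2(\Omega,\vp) \hookrightarrow W^{-1,2}(\Omega,\vp;X)$ compactly by Corollary \ref{cor:compactness of L^2 into W^-1} and $\lambda G_0 : W^{-1,2} \to W^{1,2}_0 \hookrightarrow L^2 \hookrightarrow W^{-1,2}$ factors through this compact embedding, $\lambda G_0$ is compact on $W^{-1,2}(\Omega,\vp;X)$. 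The Fredholm alternative then reduces surjectivity of $I + \lambda G_0$ to its injectivity, and injectivity follows because any $v$ with $(I + \lambda G_0)v = 0$ corresponds to $u = G_0 v \in W^{1,2}_0(\Omega,\vp;X)$ satisfying $Lu = 0$, which forces $u = 0$ and hence $v = L_0 u = 0$.

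The main obstacle is a bookkeeping one rather than a conceptual one: one must verify that the reduction to the strictly coercive case genuinely works with the hypothesis (\ref{eqn:D dominates 1 norm}) rather than with a full strict coercivity assumption --- that is, checking that shifting $L$ by $\lambda I$ does not disturb the injectivity argument. This is precisely where Lemma \ref{lem:D dominating 1 norms makes V=0} must be invoked at the level of $L$ itself (not at the level of $L_0$), since $L_0$ need not satisfy (\ref{eqn:D dominates 1 norm}). The norm equivalence $\|u\|_{W^{1,2}(\Omega,\vp;X)} \sim \|Lu\|_{W^{-1,2}(\Omega,\vp;X)}$ then follows from the Bounded Inverse Theorem, or directly from Theorem \ref{thm:regularity at the ground level} combined with the compact embedding $W^{1,2}_0(\Omega,\vp;X) \hookrightarrow L^2(\Omega,\vp)$ and Lemma \ref{lem:D dominating 1 norms makes V=0} via a standard contradiction argument to absorb the lower-order $L^2$ term.
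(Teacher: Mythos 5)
Your overall strategy — prove the isomorphism in the strictly coercive case via closed range plus a Riesz-orthogonality argument, then pass to the general strongly elliptic case by writing $L$ as a strictly coercive operator plus a multiple of the identity, and run the Fredholm alternative through the compact factorization $W^{-1,2}\to W^{1,2}_0\hookrightarrow L^2\hookrightarrow W^{-1,2}$, with injectivity of the perturbed map supplied by Lemma~\ref{lem:D dominating 1 norms makes V=0} applied to $L$ itself — is exactly what the paper does, modulo presentation.

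There is, however, a sign error in the reduction step. You set $L_0:=L-\lambda I$ with $\lambda>0$ large and assert that $L_0$ is strictly coercive. But the Dirichlet form of $L_0$ is $\oldD_0(v,u)=\oldD(v,u)-\lambda(v,u)_\vp$, so $\Rre\oldD_0(u,u)=\Rre\oldD(u,u)-\lambda\|u\|_{L^2(\Omega,\vp)}^2$; subtracting a positive multiple of $\|u\|_{L^2}^2$ makes the form \emph{less} coercive, not more. G\aa rding's inequality (Theorem~\ref{thm: Garding's inequality}) gives $\Rre\oldD(u,u)\geq C\|u\|^2_{W^{1,2}(\Omega,\vp;X)}-\mu\|u\|^2_{L^2(\Omega,\vp)}$ for some $\mu\geq 0$, and the paper's own remark after Definition~\ref{defn: coercive estimate} that $\oldD'=\oldD+\lambda(\cdot,\cdot)_\vp$ is strictly coercive confirms the fix: you should take $L_0:=L+\lambda I$ with $\lambda\geq\mu$ (equivalently, allow $\lambda$ of either sign as the paper does when writing $S=\lambda I$, $\lambda\in\mathbb{R}$). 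Then $L=L_0-\lambda I$, the equation becomes $(I-\lambda G_0)v=f$, and the rest of your Fredholm argument goes through unchanged. A secondary point: the choice of $\lambda$ is governed entirely by G\aa rding's inequality, not by \eqref{eqn:D dominates 1 norm}; the latter enters only — as you correctly say later — to give injectivity of $L$ (hence of $I-\lambda G_0$) via Lemma~\ref{lem:D dominating 1 norms makes V=0}.
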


With regard to the the norm equivalence, it follows immediately that $\| Lu \|_{W^{-1,2}(\Omega,\vp; X)} \leq \| u \|_{W^{1,2}(\Omega,\vp; X)}$. The reverse inequality follows from
the Bounded Inverse Theorem.  We are now in a position to improve Lemma \ref{lem:traces of L-harmonic functions for s geq 3/2}.
\begin{lem}\label{lem:traces of L-harmonic functions for s = 1/2}
Let $\Omega\subset\R^n$ be a domain that satisfies {(HI)}-{(HVI)} for $m=2$.
Let $L$ be a strongly elliptic operator that has a Dirichlet form $\oldD$ which satisfies (\ref{eqn:D dominates 1 norm}).
There is a one-to-one correspondence between $B^{1/2;2,2}(M,\vp;T)$ and $W^{1,2}(\Omega,\vp; X) \cap \ker L$ with norm equivalence.
\end{lem}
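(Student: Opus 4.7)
The plan is to mirror the proof of Lemma \ref{lem:traces of L-harmonic functions for s geq 3/2}, but at the bottom of the regularity scale where boundary elliptic regularity is no longer directly available (Theorem \ref{thm:sol'n regularity near boundary, higher order} requires $m\geq 3$). The replacement tool will be Proposition \ref{prop:W^1,2_0 to W^-1 isom}, which gives exactly the negative-order isomorphism we need.

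First I would handle the easy direction. Given $U\in W^{1,2}(\Omega,\vp; X)\cap \ker L$, Lemma \ref{lem:traces exist for u in sobo spaces on M} with $k=1$ produces $\Tr U\in B^{1/2;2,2}(M,\vp;T)$ together with the bound $\|\Tr U\|_{B^{1/2;2,2}(M,\vp;T)}\leq C\|U\|_{W^{1,2}(\Omega,\vp; X)}$. Uniqueness of $U$ given its trace is immediate from Lemma \ref{lem:D dominating 1 norms makes V=0}, since the difference of two candidates lies in $W^{1,2}_0(\Omega,\vp; X)\cap \ker L=\{0\}$.

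For the surjectivity side, I would start with $u\in B^{1/2;2,2}(M,\vp;T)$ and apply Theorem \ref{thm:traces of normal derivatives} with $\ell=\ell'=0$ to select an extension $\tilde U\in W^{1,2}(\Omega_\ep',\vp;X)$, supported in $\Omega_\ep'$, with $\Tr\tilde U=u$ and $\|\tilde U\|_{W^{1,2}(\Omega,\vp; X)}\leq C\|u\|_{B^{1/2;2,2}(M,\vp;T)}$. The key subtlety compared to the $k\geq 2$ case is that $L\tilde U$ only lives in $W^{-1,2}(\Omega,\vp; X)$: I would interpret it as the bounded linear functional $v\mapsto \oldD(v,\tilde U)$ on $W^{1,2}_0(\Omega,\vp; X)$, whose norm is controlled by $\|\tilde U\|_{W^{1,2}(\Omega,\vp; X)}$ thanks to the boundedness of the coefficients of $\oldD$. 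Proposition \ref{prop:W^1,2_0 to W^-1 isom} then furnishes a unique $U_0\in W^{1,2}_0(\Omega,\vp; X)$ with $LU_0=L\tilde U$ in the sense $\oldD(v,U_0)=\oldD(v,\tilde U)$ for all $v\in W^{1,2}_0(\Omega,\vp; X)$, together with $\|U_0\|_{W^{1,2}(\Omega,\vp; X)}\leq C\|u\|_{B^{1/2;2,2}(M,\vp;T)}$.

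Setting $U:=\tilde U-U_0$, we have $\oldD(v,U)=0$ for every $v\in W^{1,2}_0(\Omega,\vp; X)$, so $LU=0$ in the weak sense. Moreover $\Tr U_0=0$: by definition $U_0$ is the $W^{1,2}$-limit of a sequence in $C^\infty_c(\Omega)$, and the trace map on $W^{1,2}(\Omega,\vp;X)$ is continuous by Lemma \ref{lem:traces exist for u in sobo spaces on M} applied to $U_0\big|_{\Omega_\ep'\cap\Omega}$. Hence $\Tr U=\Tr\tilde U=u$, and combining the two estimates yields the norm equivalence. The main obstacle to anticipate is precisely the absence of boundary elliptic regularity at this low smoothness level, so that one cannot push $L\tilde U$ into $L^2$ and solve the Dirichlet problem there; the entire construction rests on the isomorphism property of $L$ on the negative scale, which in turn uses the dominance hypothesis \eqref{eqn:D dominates 1 norm} both for injectivity (via Lemma \ref{lem:D dominating 1 norms makes V=0}) and, combined with the Fredholm alternative, for surjectivity.
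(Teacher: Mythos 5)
Your proposal is correct and follows essentially the same route as the paper's proof: extend $u$ to $\tilde U\in W^{1,2}(\Omega,\vp;X)$ via the extension half of Theorem \ref{thm:Trace Theorem for integer m on M}, observe $L\tilde U\in W^{-1,2}(\Omega,\vp;X)$, solve $LU_0 = L\tilde U$ with $U_0\in W^{1,2}_0(\Omega,\vp;X)$ using Proposition \ref{prop:W^1,2_0 to W^-1 isom}, and set $U=\tilde U - U_0$. Your write-up is somewhat more explicit than the paper's (e.g., interpreting $L\tilde U$ as the functional $v\mapsto\oldD(v,\tilde U)$ and verifying $\Tr U_0=0$ via density of $C^\infty_c(\Omega)$ and continuity of the trace), but these are elaborations of the same argument rather than a different method.
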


\begin{proof} We already know that $\Tr :  W^{1,2}(\Omega,\vp; X) \to B^{1/2;2,2}(M,\vp;T)$ is continuous. Now let $f\in  B^{1/2;2,2}(M,\vp;T)$. By
Theorem \ref{thm:Trace Theorem for integer m on M}, there exists $F\in W^{1,2}(\Omega,\vp; X)$ so that $\Tr F = f$ and
\[
\| F \|_{W^{1,2}(\Omega,\vp; X)} \leq C \| f \|_{B^{1/2;2,2}(\Omega,\vp; X)}.
\]
Solving $Lu=0$ in $\Omega$ and $\Tr u = f$ is equivalent to finding $v\in W^{1,2}_0(\Omega,\vp; X)$ where $Lv = -LF$ because we could then set $u=F+v$ and it would follow
from Proposition \ref{prop:W^1,2_0 to W^-1 isom} that
\begin{align*}
\| u \|_{W^{1,2}(\Omega,\vp; X)} \leq \| F \|_{W^{1,2}(\Omega,\vp; X)} + \| v \|_{W^{1,2}(\Omega,\vp; X)}
&\leq C\Big( \| f \|_{B^{1/2:2,2}(\Omega,\vp; X)} + \| Lv \|_{W^{-1,2}(\Omega,\vp; X)}\Big) \\
&\leq C\| f \|_{B^{1/2:2,2}(\Omega,\vp; X)}.
\end{align*}
However, $-LF \in W^{-1,2}(\Omega,\vp; X)$ so such a $v$ exists by Proposition \ref{prop:W^1,2_0 to W^-1 isom}.
\end{proof}
Combining our results, we can prove Theorem \ref{thm:L times Tr is an isomorphism}.
\begin{proof}[Proof of Theorem \ref{thm:L times Tr is an isomorphism}]
 Let $f\in W^{s-2,2}(\Omega,\vp; X)$ and $g\in W^{s-1/2,2}(\bd\Omega,\vp;T)$. By Theorem \ref{thm:weak solutions, info about kernel} and
Theorem \ref{thm:sol'n regularity near boundary, higher order}, there
exists a unique $u_1\in W^{s,2}(\Omega,\vp; X)\cap W^{1,2}_0(\Omega,\vp; X)$ so that $Lu_1=f$. If
$G:W^{s-2,2}(\Omega,\vp; X) \to W^{s,2}(\Omega,\vp; X) \cap W^{1,2}_0(\Omega,\vp; X)$ is the inverse to
$L$, then $G$ is continuous, i.e., there exists a constant $C$ so that $\| G f\|_{W^{s,2}(\bd\Omega,\vp;T)} \leq C \| f \|_{W^{s-2,2}(\Omega,\vp; X)}$. Plugging in $f = Lu_1$, we see that
$\| u_1\|_{W^{s,2}(\Omega,\vp; X)}\les \| f\|_{W^{s-2}(\Omega,\vp; X)}$.
Also, by
Lemma \ref{lem:traces of L-harmonic functions for s = 1/2} and Lemma \ref{lem:traces of L-harmonic functions for s geq 3/2}, there exists a unique $u_2\in W^{s,2}(\Omega,\vp; X)$ so
that $Lu_2=0$ and $\Tr u_2=g$. Also, $u_2$ satisfies  $\| u_2\|_{W^{s,2}(\Omega,\vp; X)}\les \| g\|_{W^{s-1/2}(\bd\Omega,\vp;T)}$.
Thus, $u = u_1+u_2$ is the unique function in $W^{s,2}(\Omega,\vp; X)$ so that
\[
\begin{cases} Lu =f &\text{in }\Omega \\
\Tr u =g &\text{on }\bd\Omega
\end{cases}
\]
and
\[
\| u \|_{W^{s,2}(\Omega,\vp; X)} \leq C \Big( \| f \|_{W^{s-2}(\Omega,\vp; X)} + \| g \|_{W^{s-1/2}(\bd\Omega,\vp;T)}\Big)
\]
for a constant $C$ independent of $u$, $f$, and $g$.

In the reverse direction, let $u\in W^{s,2}(\Omega,\vp; X)$. There exists a unique $u_1$ so that $u_1\in W^{s,2}(\Omega,\vp; X)\cap W^{1,2}_0(\Omega,\vp; X)$,
$Lu = Lu_1$, and
\[
\| u_1 \|_{W^{s,2}(\Omega,\vp; X)} \les \| Lu_1 \|_{W^{s-2,2}(\Omega,\vp; X)} \leq \| u \|_{W^{s,2}(\Omega,\vp; X)}.
\]
If $u_2 = u-u_1$, then $u = u_1+u_2$, $Lu_2=0$, and we have already established that
$\| u_2 \|_{W^{s,2}(\Omega,\vp; X)} \sim \| \Tr u_2 \|_{W^{s-1/2}(\Omega,\vp; X)}$. Thus, we have a unique decomposition $u = u_1+u_2$ and
\begin{multline*}
\| u \|_{W^{s,2}(\Omega,\vp; X)} \leq \| u_1 \|_{W^{s,2}(\Omega,\vp; X)}  + \| u_2 \|_{W^{s,2}(\Omega,\vp; X)}
\les \| Lu_1 \|_{W^{s-2,2}(\Omega,\vp; X)} + \| \Tr u_2 \|_{W^{s-1/2}(\Omega,\vp; X)} \\
\les  \| u \|_{W^{s,2}(\Omega,\vp; X)} + \| u_2 \|_{W^{s,2}(\Omega,\vp; X)} \les \| u \|_{W^{s,2}(\Omega,\vp; X)}
\end{multline*}
where the last inequality uses the fact that $u_2 = u-u_1$.
\end{proof}

\subsection{Proof of Theorem \ref{thm:traces of L-harmonic functions}}
\label{sec:proof_traces of L-harmonic functions}
In this subsection, we prove that functions $f\in L^2(\Omega,\vp)$ that are $L$-harmonic have traces in $B^{-1/2;2,2}(\Omega,\vp; X)$. Our motivation for the trace definition is
from \cite{BoCh}.
If we define the operator $S$ by
\[
S = -\sum_{j,k=1}^n \overline{a_{jk}} \frac{\p \rho}{\p x_k} X_j, 
\]
then for $v\in W^{2,2}(\Omega,\vp; X) \cap W^{1,2}_0(\Omega,\vp; X)$ and $\psi\in W^{2,2}(\Omega,\vp; X)$
\[
\big(L^*v, \psi\big)_\vp = \int_{\bd\Omega} \Tr Sv \overline{\Tr \psi}\, e^{-\vp}\, d\sigma + {\oldD}(v,\psi) = \int_{\bd\Omega} \Tr Sv \overline{\Tr \psi}\, e^{-\vp}\, d\sigma  + \big(v,L \psi\big)_\vp.
\]
If $L\psi =0$, then
\begin{equation}\label{eqn:trace formula def'n}
\big(L^*v, \psi\big)_\vp = \int_{\bd\Omega} \Tr Sv\, \overline{\Tr \psi}\, e^{-\vp}\, d\sigma.
\end{equation}
Since $v \in W^{2,2}(\Omega,\vp; X)$, we have $Sv \in W^{1,2}(\Omega,\vp; X)$ and $\Tr Sv \in B^{1/2;2,2}(\bd\Omega,\vp;T)$.

We would like to show a partial converse to the argument, i.e., that if  $\vartheta\in B^{1/2;2,2}(\Omega,\vp; X)$, then $\vartheta = \Tr Sv$ for some
$v\in W^{1,2}_0(\Omega,\vp; X)\cap W^{2,2}(\Omega,\vp; X)$.

Our goal is to show that if $f\in L^2(\Omega,\vp)$ and $Lf=0$, then there exists a well-defined $g\in B^{-1/2;2,2}(\bd\Omega,\vp;T)$ so that
$\Tr f = g$. Equation (\ref{eqn:trace formula def'n}) is the key. Motivated by Theorem \ref{thm:traces of normal derivatives}, we investigate operators $L$ of the form in \eqref{eqn:L has S normal}.
To define an element  $g \in B^{-1/2;2,2}(\bd\Omega,\vp;T)$, it suffices to determine the action of $g$ on elements $\psi \in B^{1/2;2,2}(\bd\Omega,\vp;T)$.
Let  $f\in L^2(\Omega,\vp)$ satisfy $Lf=0$, and let
$\psi \in B^{1/2;2,2}(\bd\Omega,\vp;T)$. From Theorem \ref{thm:traces of normal derivatives},
there exists a (nonunique) element $v\in W^{2,2}(\Omega,\vp; X)\cap W^{1,2}_0(\Omega,\vp; X)$ so that
\[
\frac{\p v}{\p \nu} = \psi \text{ on }\bd\Omega.
\]
Define $ \Tr f$ by
\begin{equation}\label{eqn:Tr f defined}
\langle \Tr f , \psi \rangle := (L^*v,f)_\vp.
\end{equation}
Observe that
\[
|(L^*v,f)_\vp| \les \| v \|_{W^{2,2}(\Omega,\vp; X)} \|f\|_{L^2(\Omega,\vp)} \les \| \psi \|_{B^{1/2;2,2}(\bd\Omega,\vp;T)} \|f\|_{L^2(\Omega,\vp)}.
\]
That $\Tr f$ is well-defined follows from approximating $f$ by functions in $W^{2,2}(\Omega,\vp; X)$ and following the argument that leads to
(\ref{eqn:trace formula def'n}).
In particular, if $\eta_j\to f$ in $L^2(\Omega,\vp)$ and $\eta_j\in W^{2,2}(\Omega,\vp; X)$, then
$L\eta_j\to Lf=0$ in $W^{-2,2}(\Omega,\vp; X)$. We need to show that $L\eta_j\to 0$ in $L^2(\Omega,\vp)$ so we can achieve (\ref{eqn:trace formula def'n}).
$C^\infty_c(\Omega)$ is dense in $L^2(\Omega,\vp)$, and
if $\zeta\in C^\infty_c(\Omega)$, then
\[
( L\eta_j, \zeta)_\vp = (\eta_j, L^* \zeta)_\vp \longrightarrow (f, L^*\zeta)_\vp = (Lf,\zeta)_\vp
\]
where the last equality follows from the pairing of $f$ as a distribution against the test function $\zeta$.

Thus $\Tr f$ is a well-defined element of $B^{-1/2;2,2}(\bd\Omega,\vp;T)$. The use of the name trace is appropriate because if
$f\in L^2(\Omega,\vp)\cap \ker L$ and
has enough regularity so that (\ref{eqn:trace formula def'n}) applies, then the two definitions of $\Tr f$ agree.
Thus we have proven Theorem \ref{thm:traces of L-harmonic functions}.

\appendix
%
%
\section{Background on interpolation -- the real method}

\subsection{The Bochner Integral}
Our discussion of the real interpolation method closely follows \cite{AdFo03}.

\subsection{$L^q$-spaces} Let $X$ be $\mathbb{R}$ or $\mathbb{C}$.
For $1\leq q \leq \infty$, let $L^q(a,b;d\mu(t))$ be the space of functions $f:(a,b)\to X$ such that the norm
\[
\| f; L^q(a,b;d\mu(t),X) \| = \begin{cases} \displaystyle \Big( \int_a^b \|f(t)\|^q_X\, d\mu(t) \Big)^{1/q} & 1 \leq q < \infty \\
\displaystyle \esssup_{a<t<b} \|f(t)\|_X & q=\infty \end{cases}
\]
is finite.

We focus on the special case where $d\mu = dt/t$. We denote $L^q(a,b;d\mu) = L^q_*$.

Let $X_0$ and $X_1$ be two Banach spaces that are continuously imbedded on a Hausdorff topological vector space $\mathfrak X$ and whose
intersection is nontrivial. Such a pair  of Banach spaces $\{X_0,X_1\}$ is called an \bfem{interpolation pair}, and we now turn to the construction of
Banach spaces $X$ suitably intermediate between $X_0$ and $X_1$. It is often the case that $X_1 \imbed X_0$, e.g.,
$X_0 = L^p(\Omega,\vp)$ and $X_1 = W^{m,p}(\Omega,\vp; X)$.

Let $\|\cdot \|_{X_j}$ denote the norm in $X_j$, $j=0,1$. The spaces $X_0 \cap X_1$ and $X_0 + X_1 = \{ u = u_0+u_1: u_0\in X_0,\ u_1 \in X_0\}$
are Banach spaces with norms
\[
\| u \|_{X_0\cap X_1} = \max \{ \|u_0\|_{X_0}, \|u_1 \|_{X_1}\}
\]
and
\[
\| u\|_{X_0+X_1} = \inf\{ \|u_0\|_{X_0} + \|u_1\|_{X_1}: u  = u_0+u_1,\ u_0\in X_0,\ u_1\in X_1\},
\]
respectively. Note that $X_0\cap X_1 \imbed X_j \imbed X_0+X_1$. We say that a Banach space $X$ is \bfem{intermediate} between
$X_0$ and $X_1$ if
\[
X_0\cap X_1 \imbed X \imbed X_0+X_1.
\]

\subsection{The $J$ and $K$ norms}
For a fixed $t>0$, set
\[
J(t;u) = \max\{ \|u\|_{X_0}, t\|u\|_{X_1}\}
\]
and
\[
K(t;u) = \inf\{ \|u_0\|_{X_0} + t \|u_1\|_{X_1} : u = u_0 + u_1,\ u_0\in X_0,\ u_1\in X_1 \}.
\]

\begin{defn}[The $K$-method]\label{defn:K method}If $0\leq \theta\leq 1$ and $1\leq q \leq \infty$, then we define
\[
(X_0,X_1)_{\theta,q;K} = \{u \in X_0+X_1 : t^{-\theta} K(t;u)\in L^q_* = L^q(0,\infty; dt/t)\}.
\]
\end{defn}

In fact,
\begin{thm}[Theorem 7.10, \cite{AdFo03}]\label{thm:K method produces intermediate spaces}
If and only if either $1\leq q < \infty$ and $0<\theta<1$ or $q=\infty$ and $0\leq \theta\leq 1$, then the space $(X_0,X_1)_{\theta,q;K}$ is a nontrivial
Banach space with norm
\[
\| u \|_{\theta,q;K} = \| t^{-\theta} K(t;u) : L^q_* \|.
\]
Furthermore,
\[
\|u\|_{X_0+X_1} \leq \frac{\|u\|_{\theta,q;K}}{\|t^{-\theta}\min\{1,t\};L^q_*\|} \leq \|u\|_{X_0\cap X_1},
\]
and there hold the embeddings
\[
X_0\cap X_1\imbed (X_0,X_1)_{\theta,q;K} \imbed X_0+X_1,
\]
and $(X_0,X_1)_{\theta,q;K}$ is an intermediate space between $X_0$ and $X_1$.
\end{thm}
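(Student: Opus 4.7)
The plan is to reduce everything to two fundamental estimates on the function $t \mapsto K(t;u)$. First I would record that for each fixed $t>0$, $K(t;\cdot)$ is a norm on $X_0+X_1$ (as an infimum of sums of two norms over all admissible decompositions), and that $K(1;u)=\|u\|_{X_0+X_1}$. The central structural property is the monotonicity/sublinearity: estimating each summand in the defining infimum shows that for $0<s\leq t$ one has
\[
K(s;u)\leq K(t;u)\leq (t/s)\,K(s;u),
\]
so $s\mapsto K(s;u)$ is nondecreasing and $s\mapsto K(s;u)/s$ is nonincreasing. Taking $s=1$ (or $t=1$) yields the two-sided bound $\min(1,t)\|u\|_{X_0+X_1}\leq K(t;u)\leq\max(1,t)\|u\|_{X_0+X_1}$. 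Moreover the trivial decompositions $u=u+0$ and $u=0+u$ give $K(t;u)\leq\min(\|u\|_{X_0},t\|u\|_{X_1})\leq\min(1,t)\|u\|_{X_0\cap X_1}$.

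With these estimates in hand, the two norm inequalities in the statement follow by taking $L^q_*$ norms against $t^{-\theta}$. The upper bound $K(t;u)\leq\min(1,t)\|u\|_{X_0\cap X_1}$ yields $\|u\|_{\theta,q;K}\leq \|t^{-\theta}\min(1,t);L^q_*\|\,\|u\|_{X_0\cap X_1}$, giving the embedding $X_0\cap X_1\hookrightarrow(X_0,X_1)_{\theta,q;K}$ as soon as $\|t^{-\theta}\min(1,t);L^q_*\|<\infty$. Splitting at $t=1$, the integral $\int_0^1 t^{q(1-\theta)-1}\,dt+\int_1^\infty t^{-q\theta -1}\,dt$ is finite precisely when $q(1-\theta)>0$ and $q\theta>0$, i.e., for $1\leq q<\infty$ with $0<\theta<1$ or for $q=\infty$ with $0\leq\theta\leq 1$; this characterizes the allowed range. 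The lower bound $K(t;u)\geq\min(1,t)\|u\|_{X_0+X_1}$ produces $\|u\|_{X_0+X_1}\leq\|u\|_{\theta,q;K}/\|t^{-\theta}\min(1,t);L^q_*\|$, which both delivers the second inequality and establishes the embedding $(X_0,X_1)_{\theta,q;K}\hookrightarrow X_0+X_1$.

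Norm axioms are inherited directly: homogeneity and the triangle inequality propagate from $K(t;\cdot)$ and from the $L^q_*$ norm, and definiteness follows from the embedding into $X_0+X_1$. Nontriviality (in the admissible range) follows because the upper embedding shows any nonzero element of $X_0\cap X_1$ already sits in the interpolation space. For completeness I would argue in the standard way: a Cauchy sequence $\{u_n\}$ in $(X_0,X_1)_{\theta,q;K}$ is Cauchy in $X_0+X_1$ by the left-hand inequality, so it converges to some $u\in X_0+X_1$; lower semicontinuity of $K(t;\cdot)$ under $X_0+X_1$-convergence (immediate from the infimum definition and the continuity of the $X_0$ and $X_1$ norms on their respective spaces) together with Fatou's lemma in $L^q_*$ then gives $\|u-u_n\|_{\theta,q;K}\to 0$. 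The main obstacle is the converse of the nontriviality claim: for $(\theta,q)$ outside the allowed range, one must show $(X_0,X_1)_{\theta,q;K}=\{0\}$. Here the lower bound $K(t;u)\geq\min(1,t)\|u\|_{X_0+X_1}$ is decisive: in the forbidden regime $\|t^{-\theta}\min(1,t);L^q_*\|=\infty$, so any $u$ with $\|u\|_{X_0+X_1}>0$ must have $\|u\|_{\theta,q;K}=\infty$, forcing triviality and completing the ``only if'' direction.
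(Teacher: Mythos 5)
The paper states this theorem with a citation to \cite{AdFo03} but does not supply a proof, so there is no in-paper argument to compare against. Your proposal is the standard textbook argument and is correct: the two pointwise bounds $\min(1,t)\,\|u\|_{X_0+X_1}\leq K(t;u)\leq \min(1,t)\,\|u\|_{X_0\cap X_1}$, obtained from monotonicity of $K(\cdot;u)$ and $K(\cdot;u)/\cdot$ together with the trivial decompositions, do all the work once one integrates against $t^{-\theta}$ in $L^q_*$; the finiteness criterion for $\|t^{-\theta}\min(1,t);L^q_*\|$ isolates exactly the admissible $(\theta,q)$ range; and the lower bound forces triviality outside that range. Two very small remarks: when you write the single displayed integral $\int_0^1 t^{q(1-\theta)-1}\,dt+\int_1^\infty t^{-q\theta-1}\,dt$ and then conclude ``i.e., for $q=\infty$ with $0\leq\theta\leq 1$,'' the $q=\infty$ case should be phrased as a separate essential-supremum computation, which you clearly understand but have folded into the same sentence; and the completeness argument is fine but should spell out that $K(t;\cdot)$ is actually continuous (not merely lower semicontinuous) on $X_0+X_1$, being an equivalent norm for each fixed $t$, and that Fatou is then applied both to show $u$ lies in the space and to show $\|u-u_n\|_{\theta,q;K}\to 0$.
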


\begin{defn}[The $J$-method]\label{defn:J method}If $0\leq \theta\leq 1$ and $1\leq q \leq \infty$, then we define
\begin{multline*}
(X_0,X_1)_{\theta,q;J} = \bigg\{u \in X_0+X_1 : u = \int_0^\infty f(t)\frac{dt}t,\ f\in L^1(0,\infty;dt/t,X_0+X_1)\\ \text{ having values in }
 X_0\cap X_1 \text{ and such that } t^{-\theta} J(t;f)\in L^q_* = L^q(0,\infty; dt/t) \bigg\}.
\end{multline*}
\end{defn}

In fact,
\begin{thm}[Theorem 7.13 \cite{AdFo03}]
\label{thm:J method produces intermediate spaces}
If either $1\leq q < \infty$ and $0<\theta<1$ or $q=\infty$ and $0\leq \theta\leq 1$, then the space $(X_0,X_1)_{\theta,q;J}$ is a
Banach space with norm
\[
\| u \|_{\theta,q;J} = \inf_{f\in S(u)} \| t^{-\theta} J\big(t;f(t)\big) : L^q_* \|
\]
where
\[
S(u) = \Big\{f\in L^1(0,\infty;dt/t, X_0+X_1): u = \int_0^\infty f(t)\, \frac{dt}t\Big\}.
\]
Furthermore,
\[
\|u\|_{X_0+X_1} \leq \|t^{-\theta} \min\{1,t\};L^{q'}_*\| \|u\|_{\theta,q;J} \leq \|u\|_{X_0\cap X_1},
\]
where $\frac 1q + \frac 1{q'} =1$. Consequently, there hold the embeddings
\[
X_0\cap X_1\imbed (X_0,X_1)_{\theta,q;J} \imbed X_0+X_1,
\]
and $(X_0,X_1)_{\theta,q;J}$ is an intermediate space between $X_0$ and $X_1$.
\end{thm}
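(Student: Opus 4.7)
The plan is to verify that $(X_0,X_1)_{\theta,q;J}$ is a normed space, establish the two-sided inequality relating $\|\cdot\|_{\theta,q;J}$ to $\|\cdot\|_{X_0+X_1}$ and $\|\cdot\|_{X_0\cap X_1}$, and then deduce completeness. I would first observe that the defining set of admissible representations $S(u)$ is a convex cone and that $J(t;\cdot)$ is a norm on $X_0\cap X_1$ for each fixed $t$, so linearity of the integral and the subadditivity of $J$ yield subadditivity of $\inf_{f\in S(u)}\|t^{-\theta}J(t;f(t));L^q_*\|$; homogeneity and positive-definiteness are then immediate, the latter using the lower inequality $\|u\|_{X_0+X_1}\le C\|u\|_{\theta,q;J}$ to conclude $u=0$ when the norm vanishes.

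The heart of the argument is the two-sided inequality. For the lower estimate $\|u\|_{X_0+X_1}\le \|t^{-\theta}\min\{1,t\};L^{q'}_*\|\,\|u\|_{\theta,q;J}$, I would start from the pointwise bound $\|f(t)\|_{X_0+X_1}\le \min(1,1/t)\,J(t;f(t))$, which holds because $f(t)=f(t)+0$ and $f(t)=0+f(t)$ give $\|f(t)\|_{X_0+X_1}\le\|f(t)\|_{X_0}$ and $\|f(t)\|_{X_0+X_1}\le\|f(t)\|_{X_1}$, combined with $\|f(t)\|_{X_0}\le J(t;f(t))$ and $t\|f(t)\|_{X_1}\le J(t;f(t))$. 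Integrating $u=\int_0^\infty f(t)\,dt/t$ in $X_0+X_1$ and applying Hölder's inequality in $L^q_*(dt/t)$ with the weight factored as $t^\theta\min(1,1/t)\cdot t^{-\theta}J(t;f(t))$, together with the change of variables $s=1/t$ showing $\|t^\theta\min(1,1/t);L^{q'}_*\|=\|t^{-\theta}\min(1,t);L^{q'}_*\|$, delivers the inequality after taking the infimum over $S(u)$. For the upper estimate $\|u\|_{\theta,q;J}\le\|u\|_{X_0\cap X_1}$, I would exhibit a specific representation: choose a scalar $\rho(t)\ge0$ with $\int_0^\infty\rho(t)\,dt/t=1$ and set $f(t)=\rho(t)u$, so that $J(t;f(t))=\rho(t)J(t;u)\le\rho(t)\max(1,t)\|u\|_{X_0\cap X_1}$, and then optimize the choice of $\rho$ (the natural candidate is $\rho(t)=c\,\min(1,t)/\max(1,t)$ suitably normalized) to match the constant $\|t^{-\theta}\min(1,t);L^{q'}_*\|^{-1}$.

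Completeness follows by the standard Cauchy-sequence-to-absolutely-convergent-series trick: given a Cauchy sequence $\{u_n\}$, pass to a subsequence with $\|u_{n_{k+1}}-u_{n_k}\|_{\theta,q;J}\le 2^{-k}$, pick representations $f_k\in S(u_{n_{k+1}}-u_{n_k})$ with $\|t^{-\theta}J(t;f_k(t));L^q_*\|\le 2^{1-k}$, and assemble $f=\sum_k f_k$. Minkowski's inequality in $L^q_*$ shows $\|t^{-\theta}J(t;f(t));L^q_*\|<\infty$ and the lower inequality already proved shows $\sum_k(u_{n_{k+1}}-u_{n_k})$ converges in $X_0+X_1$ to some limit $w$; then $u_{n_1}+w$ is the required limit in $(X_0,X_1)_{\theta,q;J}$. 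Once the two-sided inequality is in hand, the embeddings $X_0\cap X_1\hookrightarrow(X_0,X_1)_{\theta,q;J}\hookrightarrow X_0+X_1$ and the intermediate-space property are formal consequences.

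The main obstacles are, first, verifying that $\|t^{-\theta}\min(1,t);L^{q'}_*\|$ is finite precisely in the stated parameter range (this is a direct calculation of two elementary integrals, but pins down why one excludes $\theta=0,1$ when $q<\infty$); and second, the careful choice of $\rho$ in the upper bound so that the constant matches exactly. A subtlety is that the integral $\int_0^\infty f(t)\,dt/t$ must be interpreted as a Bochner integral in $X_0+X_1$, so one must check that the representation $f$ built in the completeness argument really lies in $L^1(0,\infty;dt/t;X_0+X_1)$, which again requires the finiteness of $\|t^{-\theta}\min(1,t);L^{q'}_*\|$ via Hölder.
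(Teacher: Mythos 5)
The paper does not supply a proof here: the result is quoted verbatim as Theorem 7.13 of Adams--Fournier and used as background. Judged on its own merits, your sketch follows the standard argument from that reference: pointwise bound $\|f(t)\|_{X_0+X_1}\leq\min(1,1/t)\,J(t;f(t))$, then H\"older in $L^q_*$ together with the change of variables $s=1/t$ to identify the dual weight, giving the lower inequality; a representation $f(t)=\rho(t)u$ for the upper inequality; and completeness by the absolutely-convergent-series criterion. The subadditivity/homogeneity remarks, the embeddings as formal consequences, and the finiteness observation for $\|t^{-\theta}\min(1,t);L^{q'}_*\|$ are all correct.

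There is, however, one genuine flaw in the upper inequality. Your proposed candidate $\rho(t)=c\,\min(1,t)/\max(1,t)$ does \emph{not} give the sharp constant except when $\theta=1/2$. Concretely, for $q=q'=2$, $\theta=1/4$ one has $\gamma^2:=\|t^{-\theta}\min(1,t);L^2_*\|^2=8/3$, while with the normalization $c=1/2$ one finds $\|t^{-\theta}\rho(t)\max(1,t);L^2_*\|=\tfrac12\|t^{-\theta}\min(1,t);L^2_*\|=\sqrt{2/3}$, which exceeds the required $\gamma^{-1}=\sqrt{3/8}$. The correct choice is dictated by equality in H\"older applied to
$1=\int_0^\infty\bigl[t^{-\theta}\rho(t)\max(1,t)\bigr]\bigl[t^{\theta}\min(1,1/t)\bigr]\,\tfrac{dt}{t}$:
for $1\leq q<\infty$ take $\rho(t)=c\,[t^{\theta}\min(1,1/t)]^{q'}$ with $c$ chosen so $\int_0^\infty\rho\,dt/t=1$; this makes the two H\"older factors dual-exponent conjugates and yields $\|t^{-\theta}\rho(t)\max(1,t);L^q_*\|=\gamma^{-1}$ exactly (for $q=1,\,q'=\infty$ the infimum $\gamma^{-1}$ is approached but not attained, which is still enough since $\|u\|_{\theta,q;J}$ is itself an infimum). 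For $q=\infty,\,q'=1$ the same formula gives $\rho(t)=\gamma^{-1}t^{\theta}\min(1,1/t)$, which makes $t^{-\theta}\rho(t)\max(1,t)\equiv\gamma^{-1}$ identically. With this replacement your argument is complete.
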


It is very useful to have a discrete version of the $J$ method.
\begin{thm}[Theorem 7.15, \cite{AdFo03}]
\label{thm:discrete version J method}
An element $u\in X_0+X_1$ belongs to $(X_0,X_1)_{\theta,q;J}$ if and only if $u= \sum_{j=-\infty}^\infty u_j$ where the series converges in $X_0+X_1$
and the sequence $\{2^{-j\theta}J(2^j;u_j)\}\in\ell^q$. In this case,
\[
\inf \Big\{ \|2^{-j\theta}J(2^j;u_j);\ell^q \| : u = \sum_{j=-\infty}^\infty u_j \big\}
\]
is a norm on $(X_0,X_1)_{\theta,q;J}$ equivalent to   $\|u\|_{\theta,q;J}$.
\end{thm}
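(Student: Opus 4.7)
\medskip

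The plan is to prove the equivalence between the continuous $J$-method (Definition \ref{defn:J method}) and the discrete formulation by passing back and forth via a dyadic decomposition of the half-line. There are two directions; both rely on the elementary observation that for $t\in(2^{j-1},2^j]$,
\[
2^{-j\theta}\leq t^{-\theta}\leq 2^{\theta}\cdot 2^{-j\theta},
\]
so the weights $t^{-\theta}$ and $2^{-j\theta}$ are comparable on each dyadic annulus, and $\int_{2^{j-1}}^{2^j}\frac{dt}{t}=\log 2$.

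Forward direction (continuous $\Rightarrow$ discrete). Suppose $u\in(X_0,X_1)_{\theta,q;J}$ with $u=\int_0^\infty f(t)\frac{dt}{t}$ and $t^{-\theta}J(t;f(t))\in L^q_*$. Define
\[
u_j=\int_{2^{j-1}}^{2^j}f(t)\,\frac{dt}{t},\qquad j\in\Z,
\]
so that $u=\sum_{j\in\Z}u_j$ with convergence in $X_0+X_1$. I would estimate both pieces of $J(2^j;u_j)$ separately: from $\|u_j\|_{X_0}\leq\int_{2^{j-1}}^{2^j}\|f(t)\|_{X_0}\frac{dt}{t}\leq\int_{2^{j-1}}^{2^j}J(t;f(t))\frac{dt}{t}$ and $2^j\|u_j\|_{X_1}\leq 2\int_{2^{j-1}}^{2^j}t\|f(t)\|_{X_1}\frac{dt}{t}\leq 2\int_{2^{j-1}}^{2^j}J(t;f(t))\frac{dt}{t}$, we obtain
\[
2^{-j\theta}J(2^j;u_j)\leq 2\int_{2^{j-1}}^{2^j}t^{-\theta}J(t;f(t))\,\frac{dt}{t}.
\]
Raising to the $q$-th power and applying H\"older's inequality (with $1/q+1/q'=1$) on the interval of $dt/t$-measure $\log 2$, summing in $j$, and using that the dyadic intervals partition $(0,\infty)$, yields
\[
\sum_{j\in\Z}\bigl(2^{-j\theta}J(2^j;u_j)\bigr)^q\leq C\int_0^\infty\bigl(t^{-\theta}J(t;f(t))\bigr)^q\,\frac{dt}{t},
\]
with the obvious $\sup$ version when $q=\infty$. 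Taking the infimum over all representations $f\in S(u)$ gives one half of the norm inequality.

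Reverse direction (discrete $\Rightarrow$ continuous). Conversely, suppose $u=\sum_{j\in\Z}u_j$ with $\{2^{-j\theta}J(2^j;u_j)\}\in\ell^q$. I would construct the continuous representative by making $f$ piecewise constant on the dyadic annuli:
\[
f(t)=\frac{1}{\log 2}\,u_j\quad\text{for }t\in(2^{j-1},2^j].
\]
Then $\int_0^\infty f(t)\frac{dt}{t}=\sum_{j\in\Z}u_j=u$ in $X_0+X_1$. Since for $t\in(2^{j-1},2^j]$ we have $J(t;u_j)\leq J(2^j;u_j)$ (because $t\leq 2^j$), together with $t^{-\theta}\leq 2^\theta\cdot 2^{-j\theta}$, we get
\[
t^{-\theta}J(t;f(t))\leq\frac{2^\theta}{\log 2}\,2^{-j\theta}J(2^j;u_j)\qquad\text{on }(2^{j-1},2^j].
\]
Integrating in $dt/t$ over each annulus and summing in $j$ produces
\[
\int_0^\infty\bigl(t^{-\theta}J(t;f(t))\bigr)^q\frac{dt}{t}\leq C\sum_{j\in\Z}\bigl(2^{-j\theta}J(2^j;u_j)\bigr)^q,
\]
showing $u\in(X_0,X_1)_{\theta,q;J}$ with $\|u\|_{\theta,q;J}$ bounded by a constant multiple of the $\ell^q$ norm.

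The two inequalities together establish that the discrete infimum defines a norm on $(X_0,X_1)_{\theta,q;J}$ equivalent to $\|\cdot\|_{\theta,q;J}$. The only mild subtlety — which I would address explicitly — is verifying that the series $\sum_j u_j$ really does converge in $X_0+X_1$; this follows because $\|u_j\|_{X_0+X_1}\leq\min\{1,2^j\}^{-1}\cdot\min\{1,2^j\}\cdot J(2^j;u_j)$ and the hypothesis $\{2^{-j\theta}J(2^j;u_j)\}\in\ell^q$ (combined with the weights $\min\{1,2^j\}2^{j\theta}\in\ell^{q'}$ via H\"older, exactly as in Theorem \ref{thm:J method produces intermediate spaces}) gives absolute convergence in $X_0+X_1$. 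This convergence issue, plus keeping careful track of the $q=\infty$ endpoint, is the only nontrivial technical point; the rest is bookkeeping around the comparability of $t^{-\theta}$ and $2^{-j\theta}$ on dyadic annuli.
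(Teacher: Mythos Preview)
Your argument is correct and is essentially the standard proof of this result (the one found in Adams--Fournier, Theorem 7.15). The paper does not give its own proof of this theorem: it is stated in the appendix as background, cited directly from \cite{AdFo03}, with no proof provided. So there is nothing in the paper to compare against beyond the reference itself, and your dyadic-averaging argument is exactly the expected one.

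One small correction: in your convergence check you wrote $\|u_j\|_{X_0+X_1}\leq\min\{1,2^j\}^{-1}\cdot\min\{1,2^j\}\cdot J(2^j;u_j)$, which as written is vacuous. What you mean is $\|u_j\|_{X_0+X_1}\leq\min\{1,2^{-j}\}\,J(2^j;u_j)$ (using $\|u_j\|_{X_0+X_1}\leq\|u_j\|_{X_0}\leq J(2^j;u_j)$ and $\|u_j\|_{X_0+X_1}\leq\|u_j\|_{X_1}\leq 2^{-j}J(2^j;u_j)$), after which the H\"older argument with $\{\min\{1,2^{-j}\}2^{j\theta}\}\in\ell^{q'}$ for $0<\theta<1$ goes through as you indicate.
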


If $0<\theta<1$, the $J$ and $K$ interpolations are equivalent. In fact,
\begin{thm}[Theorem 7.16, \cite{AdFo03}]
\label{thm:comparison of J and K}
If $0<\theta<1$ and $1\leq q \leq \infty$, then
\[
(X_0,X_1)_{\theta,q;J} = (X_0,X_1)_{\theta,q;K},
\]
the two spaces having equivalent norms.
\end{thm}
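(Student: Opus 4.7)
The plan is to prove two norm-equivalent embeddings separately, using the representation machinery already developed for the $J$- and $K$-methods.

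For the embedding $(X_0,X_1)_{\theta,q;J} \imbed (X_0,X_1)_{\theta,q;K}$, the key pointwise inequality is
\[
K(t;v) \leq \min\{1, t/s\} \, J(s;v) \quad \text{for all } v \in X_0 \cap X_1 \text{ and } s,t>0,
\]
which follows immediately from testing the trivial decompositions $v = v + 0$ and $v = 0 + v$ against the definition of $K$. Given an admissible representation $u = \int_0^\infty f(s)\, ds/s$, this gives
\[
K(t;u) \leq \int_0^\infty \min\{1, t/s\} \, J\bigl(s; f(s)\bigr)\, \frac{ds}{s}.
\]
Viewing this as a convolution on the multiplicative group $(0,\infty)$ equipped with Haar measure $dt/t$, Young's inequality weighted by $t^{-\theta}$ yields $\|t^{-\theta} K(t;u); L^q_*\| \leq C_\theta \|s^{-\theta} J(s;f(s)); L^q_*\|$, where $C_\theta = \int_0^\infty r^{-\theta} \min\{1,r\}\, dr/r$ is finite precisely because $0<\theta<1$. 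Taking the infimum over admissible $f$ completes this direction.

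For the reverse embedding, I would construct a discrete $J$-representation directly from the $K$-functional and invoke Theorem \ref{thm:discrete version J method}. For each $j \in \mathbb{Z}$, choose a near-optimal splitting $u = a_j + b_j$ with $a_j \in X_0$, $b_j \in X_1$, and $\|a_j\|_{X_0} + 2^j \|b_j\|_{X_1} \leq 2 K(2^j; u)$, and set $v_j := a_{j+1} - a_j = b_j - b_{j+1}$. Then $v_j \in X_0 \cap X_1$, and direct estimation gives
\[
J(2^j; v_j) = \max\bigl\{\|v_j\|_{X_0},\, 2^j\|v_j\|_{X_1}\bigr\} \leq 4\bigl( K(2^{j+1}; u) + K(2^j; u) \bigr).
\]
Using the standard equivalence between the $L^q_*$ norm of $t^{-\theta} K(t;u)$ and the $\ell^q(\mathbb{Z})$ norm of its dyadic sequence $\{2^{-j\theta} K(2^j;u)\}$ (a consequence of the monotonicity and subadditivity of $K$), a shift-invariance argument yields $\|\{2^{-j\theta} J(2^j; v_j)\}\|_{\ell^q(\mathbb{Z})} \leq C'_\theta \|u\|_{\theta,q;K}$. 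The discrete $J$-method then places $u$ in $(X_0,X_1)_{\theta,q;J}$ with the corresponding norm bound.

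The main technical obstacle is to verify that $\sum_{j \in \mathbb{Z}} v_j$ actually converges to $u$ in $X_0 + X_1$: the telescoping identity $\sum_{j=-N}^N v_j = a_{N+1} - a_{-N}$ is formal, so convergence requires showing $a_j \to 0$ in $X_0$ as $j \to -\infty$ and $b_j \to 0$ in $X_1$ as $j \to +\infty$. This tail control follows from the integrability of $t^{-\theta} K(t;u)$: for $0<\theta<1$, one deduces $K(t;u) \to 0$ as $t \to 0$ and $K(t;u)/t \to 0$ as $t \to \infty$, and the near-optimal bounds $\|a_j\|_{X_0} \leq 2K(2^j;u)$ and $2^j \|b_j\|_{X_1} \leq 2K(2^j;u)$ then give the required decay. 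Once convergence is secured, combining both embeddings yields equivalence of norms and hence the theorem.
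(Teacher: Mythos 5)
The paper does not prove this theorem; it cites it directly as Theorem 7.16 of Adams--Fournier, so there is no proof in the source to compare against. Your proposal is a correct, self-contained proof, and it follows the standard textbook argument for the $J$--$K$ equivalence (the one in Adams--Fournier itself, and also in Bergh--L\"ofstr\"om). The kernel estimate $K(t;v)\le\min\{1,t/s\}\,J(s;v)$ combined with Young's inequality on the multiplicative group $(0,\infty)$ with Haar measure $dt/t$ gives $J\hookrightarrow K$, the finiteness of $\int_0^\infty r^{-\theta}\min\{1,r\}\,dr/r$ being precisely where $0<\theta<1$ enters. For the reverse embedding, the telescoping decomposition $v_j=a_{j+1}-a_j$ built from near-optimal $K$-splittings, together with Theorem~\ref{thm:discrete version J method}, is exactly the right device; your bound $J(2^j;v_j)\le 4(K(2^{j+1};u)+K(2^j;u))$ and the dyadic discretization of the $K$-norm (using that $K(t;u)$ is nondecreasing with $K(2t;u)\le 2K(t;u)$) then give the $\ell^q$ control with a shift-invariance constant $1+2^\theta$. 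You also correctly isolate the one genuinely delicate step, convergence of $\sum_j v_j$ to $u$ in $X_0+X_1$: the partial sum is $a_{N+1}-a_{-N}=u-b_{N+1}-a_{-N}$, and the required tail decay follows from $K(t;u)\to 0$ as $t\to 0$ and $K(t;u)/t\to 0$ as $t\to\infty$, both of which hold for $u\in(X_0,X_1)_{\theta,q;K}$ with $0<\theta<1$ by monotonicity of $K(t;u)$ and of $K(t;u)/t$ (for $q=\infty$ this follows from the pointwise bound $K(t;u)\le Ct^\theta$; for $q<\infty$ from integrability of $t^{-\theta}K(t;u)$ against $dt/t$). I see no gaps.
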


%
%
\subsection{An important class of intermediate spaces}
\begin{defn}\label{defn:funny h interpolation space}
Let $\{X_0,X_1\}$ be an interpolation pair of Banach spaces.
We say that $X \in \H(\theta;X_0,X_1)$ if there exist constants $C_1, C_2>0$ so that for all $u\in X$ and $t>0$,
\[
C_1 K(t;u) \leq t^{\theta} \|u\|_X \leq C_2 J(t;u)
\]
\end{defn}

\begin{lem}\label{lem:embedding of J,K,H intermediate spaces}
Let $0\leq\theta\leq 1$ and let $X$ be an intermediate space between $X_0$ and $X_1$. Then
 $X \in \H(\theta;X_0,X_1)$ if and only if $(X_0,X_1)_{\theta,1;J} \imbed X\imbed (X_0,X_1)_{\theta,\infty;K}$.
\end{lem}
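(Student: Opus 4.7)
The plan is to prove the two directions separately, matching each of the two inequalities in the definition of $\H(\theta;X_0,X_1)$ with one of the two embeddings, and closing the reverse direction by constructing an explicit $J$-representation.

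For the forward direction, assume $X\in\H(\theta;X_0,X_1)$. The inequality $C_1 K(t;u)\leq t^\theta\|u\|_X$, valid for all $u\in X$ and $t>0$, rearranges to $t^{-\theta}K(t;u)\leq C_1^{-1}\|u\|_X$; taking the essential supremum in $t$ gives $\|u\|_{\theta,\infty;K}\leq C_1^{-1}\|u\|_X$, hence $X\imbed(X_0,X_1)_{\theta,\infty;K}$. For the other embedding, let $u\in(X_0,X_1)_{\theta,1;J}$ with representation $u=\int_0^\infty f(s)\,ds/s$, where $f(s)\in X_0\cap X_1$. Apply the right inequality of $\H$ pointwise in $s$ to get $\|f(s)\|_X\leq C_2 s^{-\theta}J(s;f(s))$, so
\[
\|u\|_X\leq\int_0^\infty \|f(s)\|_X\,\frac{ds}{s}\leq C_2\int_0^\infty s^{-\theta}J(s;f(s))\,\frac{ds}{s},
\]
and the infimum over representations gives $\|u\|_X\leq C_2\|u\|_{\theta,1;J}$.

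For the reverse direction, assume $(X_0,X_1)_{\theta,1;J}\imbed X\imbed(X_0,X_1)_{\theta,\infty;K}$. The right embedding immediately yields $\sup_{t>0}t^{-\theta}K(t;u)\leq C\|u\|_X$, i.e.\ $K(t;u)\leq Ct^\theta\|u\|_X$, which is the left half of the $\H$ condition. For the right half, given $u\in X_0\cap X_1$ and $t_0>0$, fix once and for all a nonnegative $\phi\in C_c((0,\infty))$ satisfying $\int_0^\infty\phi(s)\,ds/s=1$, and set $f(s)=\phi(s/t_0)u$. A change of variables $\sigma=s/t_0$ shows that $\int_0^\infty f(s)\,ds/s=u$, so $f$ is an admissible $J$-representation of $u$.

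The key auxiliary estimate is $J(s;u)\leq\max\{1,s/t_0\}J(t_0;u)$, which follows directly from $J(s;u)=\max\{\|u\|_{X_0},s\|u\|_{X_1}\}$ by splitting on $s\lessgtr t_0$. Using this together with the scalar factorization $J(s;f(s))=\phi(s/t_0)J(s;u)$,
\[
\|u\|_{\theta,1;J}\leq \int_0^\infty s^{-\theta}\phi(s/t_0)J(s;u)\,\frac{ds}{s}\leq J(t_0;u)\int_0^\infty s^{-\theta}\phi(s/t_0)\max\{1,s/t_0\}\,\frac{ds}{s},
\]
and the substitution $\sigma=s/t_0$ extracts a factor of $t_0^{-\theta}$ while leaving an integral in $\sigma$ that depends only on $\phi$ and $\theta$. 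Combined with $\|u\|_X\leq C\|u\|_{\theta,1;J}$ from the assumed embedding, this gives $t_0^\theta\|u\|_X\leq C_2 J(t_0;u)$. The only nontrivial step is this scaling trick in the reverse direction; everything else is bookkeeping with the definitions of $J$, $K$, and the interpolation norms from Theorems \ref{thm:K method produces intermediate spaces} and \ref{thm:J method produces intermediate spaces}.
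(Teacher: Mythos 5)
Your proof is correct, and since the paper states this lemma in the appendix without proof (it is a background fact, corresponding to a theorem in the Adams--Fournier reference that the surrounding results quote), what you have supplied is a faithful reconstruction of the standard argument. Both embeddings in the forward direction and the $K$-side of the reverse direction are the obvious rearrangements of the defining inequalities; the only step needing an idea is the $J$-side of the reverse direction, and there your scaling trick with the bump $\phi$ is exactly the standard device, usually presented with the indicator function of an interval $[t_0, et_0]$ in place of a smooth $\phi\in C_c((0,\infty))$ with $\int_0^\infty \phi\,ds/s=1$; the two choices are interchangeable, since all that matters is that $f(s)=\phi(s/t_0)u$ is a legitimate representation in $S(u)$ supported where $s\sim t_0$, and your auxiliary bound $J(s;u)\leq\max\{1,s/t_0\}J(t_0;u)$ then does the rest. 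Two small points you used implicitly and could flag for completeness: in the forward direction, the absolute convergence $\int_0^\infty\|f(s)\|_X\,ds/s<\infty$ is what lets you conclude the Bochner integral converges in $X$ (and hence, by the continuous embedding $X\imbed X_0+X_1$, to the same element $u$); and throughout, the $J$-inequality in the definition of $\H(\theta;X_0,X_1)$ is only applied to elements of $X_0\cap X_1$, which is where $J$ is defined and where your $f(s)$ lives.
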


\begin{cor}[Corollary 7.20, \cite{AdFo03}]
\label{cor:interpolation spaces live in H}
If $0<\theta<1$ and $1\leq q \leq \infty$, then
\[
(X_0,X_1)_{\theta,q;J} = (X_0,X_1)_{\theta,q;K} \in \H(\theta;X_0,X_1).
\]
Moreover, $X_0 \in \H(0;X_0,X_1)$ and $X_1 \in \H(1;X_0,X_1)$.
\end{cor}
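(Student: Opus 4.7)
The final statement is Corollary \ref{cor:interpolation spaces live in H}, asserting that $(X_0,X_1)_{\theta,q;J}=(X_0,X_1)_{\theta,q;K}\in\H(\theta;X_0,X_1)$ when $0<\theta<1$ and $1\le q\le\infty$, and that $X_0\in\H(0;X_0,X_1)$ and $X_1\in\H(1;X_0,X_1)$.

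The plan is to reduce the first claim to two embeddings via Lemma \ref{lem:embedding of J,K,H intermediate spaces}, and then prove the two ``endpoint'' statements by direct computation. For the main claim, note that $(X_0,X_1)_{\theta,q;J}=(X_0,X_1)_{\theta,q;K}$ is immediate from Theorem \ref{thm:comparison of J and K} on the range $0<\theta<1$. By Lemma \ref{lem:embedding of J,K,H intermediate spaces}, it then suffices to establish the two embeddings
\[
(X_0,X_1)_{\theta,1;J}\hookrightarrow (X_0,X_1)_{\theta,q;J}\quad\text{and}\quad (X_0,X_1)_{\theta,q;K}\hookrightarrow (X_0,X_1)_{\theta,\infty;K}.
\]
For the first embedding, I would invoke Theorem \ref{thm:discrete version J method} to replace the defining norms by the equivalent discrete norm $\inf\|\{2^{-j\theta}J(2^j;u_j)\};\ell^q\|$ over admissible decompositions $u=\sum_{j\in\Z}u_j$. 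Since $\ell^1\hookrightarrow\ell^q$ for all $q\in[1,\infty]$ with inclusion constant $1$, any admissible decomposition for the $L^1$ version is admissible for the $L^q$ version with a controlled norm, giving the embedding immediately.

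For the second embedding, the key point is to promote $L^q_*$-integrability of $t\mapsto t^{-\theta}K(t;u)$ to boundedness. The main obstacle is that $L^q_*$-functions on $(0,\infty)$ need not be bounded in general; however, $K(t;u)$ is nondecreasing in $t$ (any decomposition $u=u_0+u_1$ valid at $s$ is valid at $t\ge s$ with larger cost), and this monotonicity is what saves us. Given $t_0>0$, for every $t\in[t_0,2t_0]$ one has $K(t;u)\ge K(t_0;u)$, so
\[
\|t^{-\theta}K(t;u);L^q_*\|^q\ge \int_{t_0}^{2t_0}\bigl((2t_0)^{-\theta}K(t_0;u)\bigr)^q\,\frac{dt}{t}=(\ln 2)\,(2t_0)^{-\theta q}K(t_0;u)^q,
\]
yielding $t_0^{-\theta}K(t_0;u)\le 2^\theta(\ln 2)^{-1/q}\|u\|_{\theta,q;K}$, i.e.\ the required embedding with $q=\infty$. (The $q=\infty$ case of the starting hypothesis is trivial.) Combining the two embeddings with Lemma \ref{lem:embedding of J,K,H intermediate spaces} concludes the proof of $(X_0,X_1)_{\theta,q;J}\in\H(\theta;X_0,X_1)$.

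For the endpoint assertions, I would verify the definition in Definition \ref{defn:funny h interpolation space} directly. For $X_0\in\H(0;X_0,X_1)$: the trivial decomposition $u=u+0$ gives $K(t;u)\le\|u\|_{X_0}=t^0\|u\|_{X_0}$, while for $u\in X_0\cap X_1$ the definition of $J$ gives $J(t;u)=\max\{\|u\|_{X_0},t\|u\|_{X_1}\}\ge\|u\|_{X_0}=t^0\|u\|_{X_0}$, so both inequalities hold with $C_1=C_2=1$. For $X_1\in\H(1;X_0,X_1)$: the decomposition $u=0+u$ gives $K(t;u)\le t\|u\|_{X_1}$, and $J(t;u)\ge t\|u\|_{X_1}$, so again $C_1=C_2=1$ work. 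Both are routine once the definitions are unwound, so the substantive content of the corollary lies entirely in the $K$-monotonicity argument sketched above.
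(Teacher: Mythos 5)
Your proof is correct and complete. The paper does not actually prove this corollary; it is cited as Corollary 7.20 of Adams--Fournier, and the paper's appendix only records the surrounding theory (Definitions \ref{defn:K method}, \ref{defn:J method}, \ref{defn:funny h interpolation space}; Theorems \ref{thm:discrete version J method}, \ref{thm:comparison of J and K}; Lemma \ref{lem:embedding of J,K,H intermediate spaces}). Your reduction via Lemma \ref{lem:embedding of J,K,H intermediate spaces} is the same route Adams--Fournier take, but they deduce the two required embeddings from a separate monotonicity-in-$q$ result (their Theorem 7.17) that the present paper does not state; you instead prove those embeddings directly, using $\ell^1\hookrightarrow\ell^q$ in the discrete $J$-norm for the lower embedding and the monotonicity of $t\mapsto K(t;u)$ (infimum of functions nondecreasing in $t$) together with a dyadic averaging trick for the upper one. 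This makes your argument self-contained relative to what the appendix actually records, which is a modest but genuine advantage. The only thing worth flagging is cosmetic: Definition \ref{defn:funny h interpolation space} as printed asks for $t^{\theta}\|u\|_X\leq C_2 J(t;u)$ ``for all $u\in X$,'' but $J$ is only defined on $X_0\cap X_1$; the standard (and clearly intended) definition restricts that second inequality to $u\in X_0\cap X_1$, which is exactly what you use in the endpoint verifications $X_0\in\H(0)$ and $X_1\in\H(1)$.
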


The importance of the class $\H(\theta;X_0,X_1)$ is made clear from the following theorem (which is part of Theorem 7.21, \cite{AdFo03}).
\begin{thm}[The Reiteration Theorem]\label{thm:reiteration theorem} Let $0\leq \theta_0 < \theta_1 \leq 1$ and let $X_{\theta_0}$ and
$X_{\theta_1}$ be intermediate spaces between $X_0$ and $X_1$. For $0\leq \lambda \leq 1$, let
$\theta = (1-\lambda)\theta_0 + \lambda \theta_1$.
If $X_{\theta_i}\in \H(\theta_i;X_0,X_1)$ for $i=0,1$ and if either $0<\lambda<1$ and $1\leq q \leq \infty$ or $0\leq \lambda\leq 1$ and $q=\infty$, then
\[
(X_0,X_1)_{\theta,q;J}= (X_{\theta_0},X_{\theta_1})_{\lambda,q;J} = (X_{\theta_0},X_{\theta_1})_{\lambda,q;K} =(X_0,X_1)_{\theta,q;K}.
\]
\end{thm}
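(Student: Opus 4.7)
The strategy is to prove the $K$-method version of the Reiteration Theorem and invoke Theorem \ref{thm:comparison of J and K} to obtain the $J$-method version. The key tool is the two-sided bound built into Definition \ref{defn:funny h interpolation space}: the hypothesis $X_{\theta_i} \in \H(\theta_i; X_0, X_1)$ provides constants $C_1, C_2 > 0$ with
\[
C_1 K_{X_0, X_1}(t; v) \leq t^{\theta_i} \|v\|_{X_{\theta_i}} \leq C_2 J_{X_0, X_1}(t; v),
\]
the right-hand inequality being meaningful on $X_0 \cap X_1$. I will use the left inequality to transfer decompositions from $(X_0, X_1)$ back to $(X_{\theta_0}, X_{\theta_1})$ and the right inequality in the opposite direction.

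The central step is the scaling equivalence
\[
K_{X_{\theta_0}, X_{\theta_1}}(t; u) \sim s^{-\theta_0} K_{X_0, X_1}(s; u), \qquad s := t^{1/(\theta_1 - \theta_0)},
\]
with constants depending only on $C_1, C_2, \theta_0, \theta_1$. For the upper bound, take a near-optimal decomposition $u = u_0 + u_1$ realizing $K_{X_0,X_1}(s; u)$; passing through the discrete $J$-method (Theorem \ref{thm:discrete version J method}), I may assume each piece lies in $X_0 \cap X_1$, so the right-hand $\H$-bound produces $\|u_i\|_{X_{\theta_i}} \leq C_2 s^{-\theta_i} J(s; u_i)$. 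Summing gives $\|u_0\|_{X_{\theta_0}} + t \|u_1\|_{X_{\theta_1}} \leq C s^{-\theta_0}(1 + t s^{\theta_0 - \theta_1}) K_{X_0, X_1}(s; u)$, which is balanced exactly when $t = s^{\theta_1 - \theta_0}$. The lower bound is symmetric: take a near-optimal $u = v_0 + v_1$ for $K_{X_{\theta_0}, X_{\theta_1}}(t; u)$ and apply the left-hand $\H$-bound to each $v_i$ at scale $s$ to produce an $(X_0, X_1)$-decomposition whose norm controls $K_{X_0, X_1}(s; u)$.

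Finally, insert the equivalence into the integral defining $\|u\|_{\lambda, q; K}$ for $(X_{\theta_0}, X_{\theta_1})$ and change variables $t = s^{\theta_1 - \theta_0}$, so $dt/t = (\theta_1 - \theta_0) \, ds/s$. Because $\theta - \theta_0 = \lambda(\theta_1 - \theta_0)$, one computes $t^{-\lambda} \cdot s^{-\theta_0} = s^{-\theta}$, transforming the integral into the $(X_0, X_1)_{\theta, q; K}$ norm up to the factor $(\theta_1 - \theta_0)^{1/q}$; the case $q = \infty$ is handled by the corresponding supremum. The main technical obstacle is the lower bound in the scaling equivalence: one must refine an arbitrary $(X_{\theta_0}, X_{\theta_1})$-decomposition into an $(X_0, X_1)$-decomposition without losing the sharp exponent, which is precisely where the upper half of the $\H$-class definition (rather than mere intermediacy) is indispensable, and where care is needed to apply the bound at the matching scale $s = t^{1/(\theta_1 - \theta_0)}$.
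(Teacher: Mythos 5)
There is a genuine gap: the claimed pointwise scaling equivalence
$K_{X_{\theta_0},X_{\theta_1}}(t;u)\sim s^{-\theta_0}K_{X_0,X_1}(s;u)$ with $s=t^{1/(\theta_1-\theta_0)}$ is false for general $u$, and your argument for the half that fails does not repair it. The inequality $s^{-\theta_0}K_{X_0,X_1}(s;u)\lesssim K_{X_{\theta_0},X_{\theta_1}}(t;u)$ does hold, and your outline for it is correct: take a near-optimal $(X_{\theta_0},X_{\theta_1})$-decomposition $u=v_0+v_1$, apply the left-hand $\H$-bound $K_{X_0,X_1}(s;v_i)\leq C_1^{-1}s^{\theta_i}\|v_i\|_{X_{\theta_i}}$, and use subadditivity of $K$. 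Note, though, that you have mislabeled this as the ``main technical obstacle'' --- it is the easy direction, and it requires no refinement of decompositions because one only needs an upper bound on $K_{X_0,X_1}(s;u)$, not a new splitting of $u$. The reverse inequality is where the real difficulty lies, and it cannot be obtained the way you propose: a near-optimal $(X_0,X_1)$-decomposition $u=u_0+u_1$ realizing $K_{X_0,X_1}(s;u)$ has $u_0\in X_0$ and $u_1\in X_1$ only, so the right-hand $\H$-bound does not apply to the pieces. ``Passing through the discrete $J$-method'' does not fix this: the discrete $J$-method produces a doubly infinite series of pieces in $X_0\cap X_1$, not a two-piece decomposition, and even if one regroups, the right-hand $\H$-inequality controls $\|u_i\|_{X_{\theta_i}}$ by $J_{X_0,X_1}(s;u_i)$, a quantity involving $\max\{\|u_i\|_{X_0}, s\|u_i\|_{X_1}\}$, which can be arbitrarily large compared to $K_{X_0,X_1}(s;u)$. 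One can construct $u$ with $\sup_{\sigma\leq s}\sigma^{-\theta_0}K_{X_0,X_1}(\sigma;u)\gg s^{-\theta_0}K_{X_0,X_1}(s;u)$, which breaks the claimed pointwise comparison.

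The standard repair --- and the route taken in the reference \cite[Theorem 7.21]{AdFo03} that the paper cites in place of a proof --- is to prove the two inclusions with different functionals and only merge them at the end via Theorem \ref{thm:comparison of J and K}. The inclusion $(X_{\theta_0},X_{\theta_1})_{\lambda,q;K}\hookrightarrow(X_0,X_1)_{\theta,q;K}$ is exactly your ``lower bound'' argument. For the reverse inclusion one works on the $J$ side: given $u\in(X_0,X_1)_{\theta,q;J}$, write $u=\int_0^\infty f(\sigma)\,d\sigma/\sigma$ with $f(\sigma)\in X_0\cap X_1$; since $X_0\cap X_1\subset X_{\theta_0}\cap X_{\theta_1}$, the right-hand $\H$-bound applies to $f(\sigma)$ and gives
$J_{X_{\theta_0},X_{\theta_1}}\bigl(\sigma^{\theta_1-\theta_0};f(\sigma)\bigr)\leq C_2\,\sigma^{-\theta_0}J_{X_0,X_1}(\sigma;f(\sigma))$,
so the reparametrized $g(t)=(\theta_1-\theta_0)^{-1}f(t^{1/(\theta_1-\theta_0)})$ is a $J$-representation of $u$ over $(X_{\theta_0},X_{\theta_1})$ whose $L^q_*$ norm after the change of variables reproduces your exponent arithmetic $t^{-\lambda}\sigma^{-\theta_0}=\sigma^{-\theta}$. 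This produces $(X_0,X_1)_{\theta,q;J}\hookrightarrow(X_{\theta_0},X_{\theta_1})_{\lambda,q;J}$, and Theorem \ref{thm:comparison of J and K} then yields all four equalities. Your change-of-variables and exponent bookkeeping in the final paragraph are correct and would slot into this corrected structure, but the pointwise $K$-equivalence as stated cannot be the engine.
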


\subsection{Interpolation Spaces}
Let $P = \{X_0,X_1\}$ and $Q = \{Y_0,Y_1\}$ be two interpolation pairs of Banach spaces. Let $T\in\mathcal B(X_0+X_1,Y_0+Y_1)$ satisfy
$T\in \mathcal B(X_i,Y_i)$, $i=1,2$, with norm at most $M_i$. That is,
\[
\| T u_i \|_{Y_i} \leq M_i \| u_i\|_{X_i}
\]
for all $u_i\in X_i$, $i=1,2$.

If $X$ and $Y$ are intermediate spaces for $\{X_0,X_1\}$ and $\{Y_0,Y_1\}$, respectively, then we call $X$ and $Y$
\bfem{interpolation spaces of type $\theta$} for $P$ and $Q$, where $0\leq \theta\leq 1$ if every such linear operator $T$ maps $X$ to $Y$ with
norm $M$ satisfying
\begin{equation}\label{eqn:interpolation inequality}
M \leq C M_0^{1-\theta} M_1^\theta
\end{equation}
where $C$ is independent of $T$ and $C\geq 1$. If we can take $C=1$ in (\ref{eqn:interpolation inequality}), then we say that the interpolation
spaces $X$ and $Y$ are \bfem{exact}.

\begin{thm}[The Exact Interpolation Theorem, Theorem 7.23 \cite{AdFo03}]\label{thm:exact interpolation}Let
$P = \{X_0,X_1\}$ and $Q = \{Y_0,Y_1\}$ be two interpolation pairs.
\begin{enumerate}\renewcommand{\labelenumi}{(\roman{enumi})}
\item If either  $0<\theta<1$ and $1\leq q \leq \infty$ or $0\leq \theta\leq 1$ and $q=\infty$, then the intermediate spaces
$(X_0,X_1)_{\theta,q;K}$ and $(Y_0,Y_1)_{\theta,q:K}$ are exact interpolation spaces of type $\theta$ for $P$ and $Q$;

\item If either  $0<\theta<1$ and $1\leq q \leq \infty$ or $0\leq \theta\leq 1$ and $q=\infty$, then the intermediate spaces
$(X_0,X_1)_{\theta,q;J}$ and $(Y_0,Y_1)_{\theta,q:J}$ are exact interpolation spaces of type $\theta$ for $P$ and $Q$;
\end{enumerate}
\end{thm}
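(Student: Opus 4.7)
The plan is to handle (i) and (ii) in parallel, deducing the exactness (the constant $C=1$ in the bound $M \leq M_0^{1-\theta} M_1^\theta$) from the homogeneity of the $K$- and $J$-functionals under the linear operator $T$. The key observation, established in one line, is that for any decomposition $u = u_0 + u_1$ one has $\|Tu_0\|_{Y_0} + t\|Tu_1\|_{Y_1} \leq M_0\|u_0\|_{X_0} + tM_1\|u_1\|_{X_1} = M_0\bigl(\|u_0\|_{X_0} + (tM_1/M_0)\|u_1\|_{X_1}\bigr)$, and analogously for the maximum defining $J$. Taking infima and maxima, this furnishes the pointwise bounds
\[
K(t;Tu;Y_0,Y_1) \leq M_0\, K(tM_1/M_0;u;X_0,X_1), \qquad J(t;Tu;Y_0,Y_1) \leq M_0\, J(tM_1/M_0;u;X_0,X_1).
\]

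For part (i), I would insert the $K$-inequality into the definition of $\|Tu\|_{\theta,q;K}$ and perform the substitution $s = tM_1/M_0$, which preserves the Haar measure $dt/t = ds/s$. The resulting integrand becomes
\[
t^{-\theta} K(t;Tu;Y) \leq M_0 (sM_0/M_1)^{-\theta} K(s;u;X) = M_0^{1-\theta} M_1^\theta\, s^{-\theta} K(s;u;X),
\]
so taking the $L^q_*$-norm immediately gives $\|Tu\|_{\theta,q;K} \leq M_0^{1-\theta} M_1^\theta \|u\|_{\theta,q;K}$, with no stray constant.

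For part (ii), I would start from an admissible representation $u = \int_0^\infty f(t)\,dt/t$ with $t^{-\theta}J(t;f(t);X) \in L^q_*$, and construct a corresponding representation of $Tu$ by the change of variable $s = tM_0/M_1$: set $g(s) = Tf(sM_1/M_0)$, so that $Tu = \int_0^\infty g(s)\,ds/s$ converges in $Y_0+Y_1$ (since $T$ is continuous from $X_0+X_1$ to $Y_0+Y_1$). Applying the pointwise $J$-inequality and then substituting back in the variable $t$ yields
\[
s^{-\theta} J(s;g(s);Y) \leq M_0^{1-\theta} M_1^\theta\, t^{-\theta} J(t;f(t);X),
\]
and taking the $L^q_*$-norm followed by the infimum over representations $f$ gives the desired bound on $\|Tu\|_{\theta,q;J}$.

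There is no genuine obstacle: the proof is essentially a single substitution executed twice, and the exactness ($C=1$) is built into the fact that the scaling $t \mapsto tM_1/M_0$ is an isometry of $L^q_*$. The only thing to check carefully is the Bochner-integrability of $g(s)$ and the validity of applying $T$ under the integral sign, both of which follow from the continuity of $T : X_0+X_1 \to Y_0+Y_1$ and standard properties of the Bochner integral.
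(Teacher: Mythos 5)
The paper does not prove this statement: it is quoted verbatim from Adams and Fournier (Theorem 7.23) as background material in the appendix, with no proof supplied. So there is nothing in the paper to compare against; what can be checked is whether your argument is itself a valid proof of the cited result, and it is. Your rescaling argument is the standard Lions--Peetre one, and the two key pointwise bounds
\[
K(t;Tu;Y_0,Y_1)\leq M_0\,K(tM_1/M_0;u;X_0,X_1),\qquad J(t;Tu;Y_0,Y_1)\leq M_0\,J(tM_1/M_0;u;X_0,X_1)
\]
are exactly right, as is the observation that the substitution $s=tM_1/M_0$ preserves $dt/t$ and hence the $L^q_*$-norm, producing the factor $M_0^{1-\theta}M_1^{\theta}$ with constant $1$ in both the $K$- and $J$-methods. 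Two small hygiene points that you gesture at but should pin down: the change of variable presupposes $M_0,M_1>0$, which is harmless since the statement only requires $M_i$ to be an upper bound for the operator norm (replace a zero bound by an arbitrary $M_i'>0$ and let $M_i'\to 0$ if needed); and for the $J$-method one must confirm that $g(s)=Tf(sM_1/M_0)$ is an admissible representation, i.e., that $g$ is strongly measurable with values in $Y_0\cap Y_1$ and that $\int_0^\infty g(s)\,ds/s=Tu$ in $Y_0+Y_1$. Both follow from the boundedness of $T$ on $X_0$, on $X_1$, and on $X_0+X_1$ together with the standard hypotheses on $f$, as you note. The argument is complete and matches the textbook proof in structure.
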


\bibliographystyle{alpha}
\bibliography{mybib9-5-12}

\end{document}